\documentclass[a4paper]{amsart}
\usepackage{amsfonts,amssymb,verbatim,amsmath,amsthm,latexsym,textcomp,amscd,epsfig,etoolbox}
\usepackage[hidelinks]{hyperref}
\usepackage{tikz-cd}
\usepackage{graphics,textcomp}
\usepackage{graphicx}
\usepackage{color}
\usepackage{url}
\usepackage{enumitem}
\usepackage{comment}
\usepackage{placeins}

	\newtheorem{theorem}{Theorem}[section]
	\newtheorem{lemma}[theorem]{Lemma}
	
	\newtheorem{corollary}[theorem]{Corollary}
	\theoremstyle{definition}
	\newtheorem{definition}[theorem]{Definition}

	\newtheorem{remark}[theorem]{Remark}
	
	\newtheorem*{ack}{Acknowledgement}

\newcommand{\onlypaper}[1]{#1}
\newcommand{\onlythesis}[1]{}
\newcommand{\mythesis}{paper}
\newcommand{\myref}{section}
\newcommand{\mysubref}{subsection}
\excludecomment{thesisblock}       
        
\let\chapter\section
\let\section\subsection
\let\subsection\subsubsection
\let\subsubsection\paragraph

\title{Real rational knots in the quadric of signature~$(3,2)$}

\author{Shane D'Mello}
\address{Department of Mathematics, Indian Institute of Science Education and Research, Mohali, India}
\email{shane@iisermohali.ac.in}

\author{Priya Rani}
\address{Department of Mathematics, Indian Institute of Science Education and Research, Mohali, India}
\email{ph18014@iisermohali.ac.in}

\begin{document}

\maketitle
\begin{abstract}
	 \onlythesis{The classification of knots and links in real projective spaces is inspired by a question
	originating from Hilbert’s list of problems presented in 1900. One of the generalisations of this problem is the classification of real rational knots in different spaces. The study of real rational knots lies in the intersection of knot theory and real algebraic varieties. In $\mathbb{RP}^{4}$, there are two non-singular non-empty quadrics up to projective transformations, one with signature $(4,1)$ which is a $3$-sphere and  another one, a $3$-hyperboloid with signature $(3,2)$. Real rational knots of low degrees in the $3$-sphere have already been classified.} The \mythesis{}  primarily studies the space of real rational curves of low degree in the quadric of signature $(3,2)$ and provides a classificaton of real rational knots and nodal curves. Apart from the classification, we also study the relationship between the real rational knots in the quadric and the real rational knots in $\mathbb{RP}^{3}$. Furthermore, a construction for the representatives of all the real rational knots of degree $\leq 5$ in the quadric is presented. \onlythesis{The last part of the thesis focuses on the classification of the real rational knots in the space $\mathbb{RP}^{1} \times \mathbb{RP}^{2}$.}
		\end{abstract}

\chapter{Introduction}\label{chap-0}
\onlythesis{\section{Background and Motivation}
 The study of real algebraic knots focuses on the restrictions that the algebraic structure imposes on a real algebraic knot. This field is a generalization of the work of Hilbert.} In~1900, Hilbert introduced his famous list of problems, in which the first part of the 16th problem focuses on the classification of real algebraic curves in the projective plane. Since the non-singular real algebraic curves in $\mathbb{RP}^{2}$ are topologically just a union of circles, in this setting, the isotopy classification reduces to the relative arrangement of those circles. Yet, even in this simple setting, a complete classification has only succeeded up to degree 7.
 
 There are several ways to generalize this problem. One \onlythesis{is to drop the condition of non-singularity and allow even singular plane curves. Another way is to keep non-singularity, but increase the dimension and study real algebraic hypersurfaces. Yet another }generalization is to keep both non-singularity and the dimension of the object,  but increase the dimension of the ambient space. This increase in co-dimension makes the problem a lot more intricate because now the curves may be knotted, while the counterpart
 for plane curves were topologically simple and just a union of circles. Indeed, this study of real rational curves in $\mathbb{RP}^{3}$ was first studied by Bj\"{o}rklund \cite{MR2844809} who classified real rational curves up to degree~5. The concept of an isotopy between real algebraic curves may be strengthened to one of rigid isotopy, where one demands that the curves remain algebraic and non-singular throughout the isotopy. Rigid isotopy was introduced by Rohklin in \cite{MR511882} in the context of plane non-singular real algebraic curves. Rigid isotopy is a stronger notion than smooth isotopy. \onlythesis{Smooth isotopy allows any smooth deformations while rigid isotopy allows deformations in the space of non-singular real rational parametrisations of same degree. }In~\cite{MR2844809}, Bj\"{o}rklund classified these knots up to rigid isotopy. This was extended to degree 6 by Orevkov and Mikhalkin in~\cite{MR3571386}.

 While $\mathbb{RP}^{3}$ is a natural ambient space for algebraic curves, classical knots are usually studied in $\mathbb{R}^{3}$ or the 3-sphere $S^{3}$. \onlythesis{Attempts have been made to study polynomial knots in $\mathbb{R}^{3}$, but such knots are
 inevitably long knots (they are not closed) and not really classical knots. They are related to
 their classical counterparts in $S^{3}$ by taking the one point compactification which closes the long
 knot. However, this does not treat $S^{3}$ as a real algebraic variety and the closed knot no longer
 has an algebraic structure. However, if we consider real rational curves then we}\onlypaper{We} may easily treat
 the $3$-sphere as a sub-variety of $\mathbb{RP}^{4}$ and consider non-singular real algebraic curves that in $\mathbb{RP}^{4}$ that lie inside $S^{3}$. The classification of real rational knots up to degree 6 in $S^{3}$ treated as a sub-variety of $\mathbb{RP}^{4}$ has been done in~\cite{MR3632321}. 

However, $S^3$ is not the only quadric in $\mathbb{RP}^{4}$. There are two non-empty, non-singular quadrics: one is of signature $(4,1)$ which homeomorphic to $3$-sphere $S^{3}$ and another is of signature $(3,2)$ which is homeomorphic to a $3$-dimensional hyperboloid, which we will denote as $Q_{3,2}$.  

This \mythesis{} studies real rational knots in the quadric of signature $(3,2)$, i.e. $Q_{3,2}$, i.e. non-singular real rational curves in $\mathbb{RP}^{4}$ which lie on the quadric $Q_{3,2}$. In $\mathbb{RP}^4$, this quadric may be realized as the zero locus of the quadratic equation $x_{0}^{2}=x_{1}^{2}+x_{2}^{2}+x_{3}^{2}-x_{4}^{2}$. We study the space of real rational curves up to degree $5$ in this quadric and  give a classification for the real rational knots and nodal curves of degree $\leq 5$ up to rigid isotopy. We also understand the correspondence between the real rational curves in the quadric and the real rational curves in $\mathbb{RP}^{3}$. \onlythesis{The gluing technique of $\mathbb{RP}^{3}$ proposed by Bj\"{o}rklund in \cite{MR2844809} is extended to $Q_{3,2}$ to construct real rational knots in $Q_{3,2}$. This gluing technique in the quadric helps us to construct the representatives for each rigid isotopy class of real rational knots up to degree $5$.}

\begin{thesisblock}
	In the last part of the thesis, we begin the study of the space of real rational knots in another quadric: $\mathbb{RP}^{1} \times \mathbb{RP}^{2}$. We provide a classification for some low bi-degree knots in this space.
\end{thesisblock}
Now, we will introduce some basic definitions and terms in real rational knot theory which are used in this \mythesis{}:
\begin{definition} 
	A \textbf{real rational curve} of degree $d$ is an embedding of $\mathbb{RP}^{1}$ in $\mathbb{RP}^{4}$ given by homogeneous polynomials $p_{i}'s$ such that 
	\[C: \mathbb{RP}^{1} \rightarrow \mathbb{RP}^{4}\]
	\[[s:t]\rightarrow [p_{0}[s:t]:p_{1}[s:t]:p_{2}[s:t]:p_{3}[s:t]:p_{4}[s:t]]\]
	where $p_{i}'s$ are polynomials of degree $d$ and they have no common factor. Then, this $C$ is called a real rational parametrisation of degree $d$.
	
	When we extend this map to $\mathbb{CP}^{1}$, the image of this function is called complexification of the curve.
\end{definition}

\begin{definition}
	A \textbf{real rational knot} of degree $d$ is a real rational curve of degree $d$ without any singularities.
\end{definition}

In classical knot theory, curves are classified up to smooth isotopy i.e. deformation of one knot to another knot via path of knots.

In real algebraic knot theory, real rational curves are studied up to rigid isotopy i.e. deformation of one real rational knot to another real rational knot of same degree via a path of real rational knots of same degree.

\begin{definition}
	Two real rational knots of degree $d$ are said to be \textbf{rigidly isotopic} if there is a path between these knots which lies completely in the space of real rational knots of degree $d$.
\end{definition}

Rigid isotopy is a stronger notion than smooth isotopy as it preserves the degree of the real rational knot. Viro introduced a notion of the encomplexed writhe of a real algebraic knot in \cite{MR1819192} which is a rigid isotopy invariant in $\mathbb{RP}^{3}$.

\begin{definition}
	The \textbf{encomplexed writhe} of a real rational knot $K$ in $\mathbb{RP}^{3}$ is the sum of signs over all the double points in the projection of the complexification of knot $K$ on the projective plane $\mathbb{RP}^{2}$.
\end{definition}

By using this, we can define the encomplexed writhe $w$ for knots in $\mathbb{RP}^{4}$ lying on the quadric. For any knot $K$ lying on the quadric, consider a point $p$ on the quadric that does not lie on the knot. Project the knot from the point $p$ to a linear hypersurface in $\mathbb{RP}^4$, which will be a copy of $\mathbb{RP}^3$ (as discussed in lemma \ref{proj_not_on_curve}). The projection is a knot in $\mathbb{RP}^{3}$ and the encomplexed writhe of $K$ is defined as the encomplexed writhe of the projection. This is well-defined because projection of the knot from two different points are rigidly isotopic in $\mathbb{RP}^{3}$.
\begin{thesisblock}
\begin{definition}
	$K$ is a real rational curve of bi-degree $(m,n)$ in $\mathbb{RP}^{1} \times \mathbb{RP}^{2}$ if 
	\[K:\mathbb{RP}^{1} \rightarrow \mathbb{RP}^{1} \times \mathbb{RP}^{2}\]
	\[[s:t] \rightarrow ([p_{0}:p_{1}],[q_{0}:q_{1}:q_{2}]) \]
	where $p_{i}$'s \& $q_{i}$'s are homogeneous polynomials of degree $m$ and $n$ respectively.
	
	If $K$ is a non-singular curve then it is called real rational knot of bi-degree $(m,n)$.
\end{definition}
 
\begin{definition}
	Two real rational knots of bi-degree $(d_{1},d_{2})$ are said to be rigidly isotopic if there is a path between these knots which lies completely in the space of real rational knots of bi-degree $(d_{1},d_{2})$.
\end{definition}

\end{thesisblock}

\onlythesis{Now, we will briefly summarize the results of this thesis in the following section.

\section{Real rational knots in $Q_{3,2}$}}
The following theorem gives us a complete rigid isotopy classification of real rational knots up to degree $5$ in $Q_{3,2} \subset \mathbb{RP}^{4}$: 

\begin{theorem}
	There are $13$ rigid isotopy classes of real rational knots up to degree $5$ and their isotopy classes in $\mathbb{RP}^{3}$ after projection (as mentioned in lemma \ref{proj_not_on_curve}) are described below:
	\begin{enumerate}
		\item Degree~1: Two classes, both topologically isotopic to line and writhe number 0.
		\item Degree~2: Three classes, all topologically isotopic to circle i.e. unknots with same writhe 0.
		\item Degree~3: Two classes, both isotopic to a line with writhe number $\pm 1$. (Some of its representatives have writhe 1 and some have writhe -1).
		\item Degree ~4: Two classes, one is isotopic to the unknot and another one is isotopic to the two crossing knot with writhes $\pm1,\pm3$ respectively.
		\item Degree ~5: Four classes of degree 5 knots with writhes $0,2,4$ and ~6 up to reflections.
	\end{enumerate}
\end{theorem}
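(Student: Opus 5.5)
The approach is to reduce everything to Bj\"{o}rklund's classification in $\mathbb{RP}^{3}$ by stereographic-type projection from a point of $Q_{3,2}$. Then, degree by degree, we determine which $\mathbb{RP}^{3}$ classes actually lift to $Q_{3,2}$ and into how many rigid isotopy classes each one splits.

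\textbf{The projection correspondence.} Fix $p\in Q_{3,2}$. Projection from $p$ is a birational map $Q_{3,2}\dashrightarrow\mathbb{RP}^{3}$. It blows up $p$ and contracts the cone of lines through $p$, which is $T_pQ\cap Q$, onto a conic $\Sigma$ in a plane $H\subset\mathbb{RP}^{3}$; for signature $(3,2)$ this conic is real and nonempty.
\begin{itemize}
\item A degree $d$ curve on $Q$ avoiding $p$ maps to a degree $d$ curve in $\mathbb{RP}^{3}$. Its real and complex intersections with $H$ (there are $d$ of them counted with multiplicity) all lie on $\Sigma$.
\item Conversely, a rational curve in $\mathbb{RP}^{3}$ meeting $H$ only on $\Sigma$ lifts to $Q$.
\item Lemma~\ref{proj_not_on_curve} gives that different choices of $p$ off the curve yield rigidly isotopic images. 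So the writhe and the $\mathbb{RP}^{3}$ class are invariants of the rigid isotopy class in $Q$, exactly as in the $S^{3}$ case of~\cite{MR3632321}.
\end{itemize}
In parallel we track a second, purely $Q$-theoretic invariant: the class of the knot in $H_1(Q_{3,2})$. Since $Q_{3,2}\cong S^1\times S^2$ up to the relevant quotient, this is $\mathbb{Z}$ or $\mathbb{Z}_2$ depending on orientability conventions. It records how many times the knot winds around the $S^1$ direction, constrained by $d$ and by parity. This invariant is what separates classes with the same $\mathbb{RP}^{3}$ image, for example the two classes of lines and the three classes of conics.

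\textbf{Low degrees, $d\le 3$.}
\begin{itemize}
\item \emph{Lines.} These are isotropic lines, and the space of lines on $Q_{3,2}$ has two components, distinguished by winding. This gives two classes.
\item \emph{Conics.} These are plane sections $Q\cap\Pi$, and we classify them by the signature of the restricted form on the plane $\Pi$. The signatures $(2,1)$, $(1,2)$ and the degenerate-but-smooth-section case give the three components. For each, we check directly that the space of such planes is connected.
\item \emph{Cubics.} A twisted cubic in $Q$ spans a hyperplane $\mathbb{RP}^3$ meeting $Q$ in a quadric surface (a hyperboloid or a sphere type). The cubic must be a $(1,2)$ or $(2,1)$ curve on a hyperboloid. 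Connectivity of the space of hyperplanes of each type, together with the writhe $\pm1$ computed on a model, gives two classes.
\end{itemize}

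\textbf{Degrees 4 and 5.} Here we work with parametrisations. The space of degree $d$ maps $\mathbb{RP}^1\to Q$ is cut out by the quadratic condition $q(p(s,t))\equiv 0$, which amounts to $2d+1$ equations on $5(d+1)$ coefficients.
\begin{itemize}
\item \emph{Upper bound.} Every singular curve in the space lies in a discriminant. A generic wall-crossing passes through a curve with one real node, or one solitary node, or one pair of conjugate nodes. Only the real-node crossing changes the isotopy class, and it changes the writhe by $\pm2$ in a controlled way.
\item Under projection, the $\mathbb{RP}^{3}$ rigid isotopy classes of Bj\"{o}rklund (degrees 4 and 5) bound the number of classes. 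We keep only those writhes compatible with the constraint that all $d$ points of $C\cap H$ lie on $\Sigma$. For $d=4$ this should exclude writhe $\pm1$ variants other than the unknot and the two-crossing knot. For $d=5$ it should leave writhes $0,2,4,6$ up to mirror image.
\item We must also show that each surviving $\mathbb{RP}^3$ class has a connected preimage in $Q$. We would do this by deforming within the subspace of curves tangent to the incidence condition with $\Sigma$, using the connectedness of the configuration space of $d$ points on $\Sigma$ (with conjugate pairs allowed).
\item \emph{Realisation.} We exhibit explicit representatives by gluing lower-degree curves, for instance a conic plus a cubic, or a line plus a quartic, meeting at one point on $Q$, and then smoothing. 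We compute the writhe of each representative to realise each listed value.
\end{itemize}

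\textbf{Main obstacle.} The hardest step is the connectedness (upper bound) argument in degree 5. It requires showing that two $Q$-knots with the same projected $\mathbb{RP}^3$ class and the same homology class are rigidly isotopic. The dimension count of the discriminant strata inside the space of maps into $Q$ is not automatic, since $Q$ imposes a nonlinear constraint. My first attempt would be to fibre the space of maps over the hyperplane or secant data, as was done for $S^{3}$ in~\cite{MR3632321}, and reduce to a connectivity statement for rational curves on quadric surfaces. Failing that, I would show directly that the codimension-one walls in $Q$ are exactly the preimages of those in $\mathbb{RP}^{3}$.
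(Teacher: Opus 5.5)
\textbf{There is a genuine gap at the base of your plan.} You claim that the $\mathbb{RP}^{3}$ rigid isotopy class and the encomplexed writhe of the projection are invariants of rigid isotopy in $Q_{3,2}$. This is false, and Lemma~\ref{proj_not_on_curve} does not give it: that lemma is a bijection for one fixed centre $p$. If you move $p$ with $K$ fixed, or move $K$ with $p$ fixed, you can pass through a configuration where $K$ has two points on one line of the cone $T_{p}Q_{3,2}\cap Q_{3,2}$. At that moment the projection acquires a node on the special conic, and the projected path crosses a wall of $\mathbb{RP}^{3}$. This really happens. The paper notes that the encomplexed writhe is not an invariant on $Q_{3,2}$. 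Its degree-$3$ remark rotates a cubic onto its mirror image inside $Q_{3,2}$, and the two projections have writhe $+1$ and $-1$; this is why the statement lists writhes only up to sign. So every step where you separate classes by writhe is unsupported.

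The incidence condition also does not cut down Bj\"orklund's lists. For $d\le 5$ and a generic plane $H$, the $d$ points of $C\cap H$ lie on a non-degenerate real conic with real points, since five points determine a conic. A projective map carries that conic to the special conic, so every $\mathbb{RP}^{3}$ class of degree $\le 5$ occurs as a projection.

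The paper's route is different. It projects from a point \emph{on} the knot (Lemma~\ref{proj_curve_pt}) and works with degree $d-1$ curves having $d-2$ points on the special conic and one point off it. It moves that point between the two components with Lemma~\ref{rk1}, keeps the other points on the conic by continuous families of projective maps, and in degree $5$ classifies the pairs $(C,X)$.

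\textbf{Your $H_{1}$ invariant is the right kind of tool, but you need to make it precise and follow it through.} $Q_{3,2}$ has no real point with $x_{0}=x_{4}=0$, so $x\mapsto[x_{0}:x_{4}]$ is an $S^{2}$-bundle $Q_{3,2}\to\mathbb{RP}^{1}$. Hence $H_{1}(Q_{3,2})\cong\pi_{1}(Q_{3,2})\cong\mathbb{Z}$, not $\mathbb{Z}/2$. The winding $w(K)=\deg\,[p_{0}:p_{4}]$ is invariant under rigid isotopy, changes sign under reversal, and satisfies $w\equiv d\pmod 2$ and $|w|\le d$.

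It explains degrees $1$ and $2$. Lines have $w=\pm1$. Conics have $w=0$ on planes of signature $(2,1)$, and these can be reversed; they have $w=\pm2$ on planes of signature $(1,2)$. This orientation splitting is the third conic class. A ``degenerate-but-smooth'' plane section does not exist, by Lemma~\ref{conic}.

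In degree $3$, however, the invariant refutes the count you are trying to prove. Consider $[\,s^{3}-3st^{2}:s(s^{2}+t^{2}):t(s^{2}+t^{2}):0:3s^{2}t-t^{3}\,]$.
\begin{itemize}
\item It lies on $Q_{3,2}$ because $x_{0}+ix_{4}=(s+it)^{3}$ and $x_{1}+ix_{2}=(s^{2}+t^{2})(s+it)$.
\item It is an embedded twisted cubic because $[x_{1}:x_{2}]=[s:t]$.
\item It has $w=3$.
\end{itemize}
The paper's representative $[t^{3}+s^{3}:t^{3}-s^{3}:0:-st^{2}-s^{2}t:-st^{2}+s^{2}t]$ has $|w|=1$, since $p_{0}=s^{3}+t^{3}$ has a single real root. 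So your step ``connectivity of the hyperplanes of each type gives two classes'' fails. On a fixed hyperboloid section, the real $(1,2)$-curves split according to the homology class of their real locus, landing in $w=\pm1$ and $w=\pm3$.

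In degree $4$, the $(1,3)$-curves on a hyperboloid section likewise realise $|w|=0,2,4$. For example, $x_{0}+ix_{4}=(s+it)^{4}$, $x_{1}+ix_{2}=(s^{2}+t^{2})(s+it)^{2}$, $x_{3}=0$ is a smooth quartic knot with $w=4$.

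Consequently your ``connected preimage'' step cannot be established. Indeed these examples contradict the degree-$3$ and degree-$4$ counts in the statement. The paper's own degree-$3$ argument, which only tracks on which side of the special conic the extra point lies, does not see this invariant either.
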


Note that encomplexed writhe is not an invariant for real rational knots lying on $Q_{3,2}$.

We will extend Bj\"{o}rklund's gluing method \cite{MR2844809}
to real rational knots in the quadric:

\begin{theorem}
	Two real rational knots of degree $d_{1}$ and $d_{2}$ in $Q_{3,2}$ intersecting at a point can be glued together to get a new real rational knot of degree $d_{1}+d_{2}$ in $Q_{3,2}$ which is rigidly isotopic to the union of the original knots except in a small neighbourhood of the intersection point. (See theorem \ref{gluing in hyperboloid} on page \pageref{gluing in hyperboloid})
\end{theorem}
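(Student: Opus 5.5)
Fix the intersection point $p$ of the two knots $K_1$ and $K_2$ in $Q_{3,2}$. I assume they meet transversally at $p$ and have no other common point, since otherwise the union is not the intended nodal curve. The plan is to carry out Björklund's gluing in $\mathbb{RP}^4$ using the description of $K_1\cup K_2$ as a degenerate rational curve of degree $d_1+d_2$, and then to keep the deformation inside the quadric.

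\textbf{Parametrise the union.} Choose parametrisations $C_1=[p_0:\dots:p_4]$ of degree $d_1$ and $C_2=[q_0:\dots:q_4]$ of degree $d_2$. Reparametrise so that $C_1(0:1)=p=C_2(1:0)$, and rescale the coordinate vectors so that $C_1(0,1)=C_2(1,0)$ as vectors in $\mathbb{R}^5$. Consider the family
\[
C_\varepsilon[s:t]=\bigl[\,q_i(s,t)\,\tilde p_i(s,t)\,\bigr],
\]
built in the standard way. Concretely, in an affine chart $\phi_\varepsilon(s,t)$ is a parametrisation of degree $d_1+d_2$ such that, on $|s/t|<\delta$ and after rescaling the parameter, it converges to $C_1$, and on $|t/s|<\delta$ it converges to $C_2$. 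The simplest explicit choice is
\[
\Phi_\varepsilon(s,t)=t^{d_2}P(s,t)+\varepsilon^{-1}s^{d_1}Q(s,\varepsilon t)-s^{d_1}t^{d_2}\,C_1(0,1),
\]
with $P$ the coordinate vector of $C_1$ and $Q$ that of $C_2$. Away from the two ends, this curve converges as $\varepsilon\to0$ to the nodal union $K_1\cup K_2$.

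\textbf{Push into the quadric.} The curve $\Phi_\varepsilon$ need not lie on $Q_{3,2}$. Write $F(x)=x_1^2+x_2^2+x_3^2-x_4^2-x_0^2$. Then $F(\Phi_\varepsilon)$ is a form of degree $2(d_1+d_2)$ that vanishes to order $O(\varepsilon)$. I would therefore argue via deformation theory rather than explicit formulas. The nodal curve $f_0\colon \mathbb{CP}^1\cup\mathbb{CP}^1\to Q$ is a stable map, and its normal bundle in the $3$-dimensional quadric $Q$ satisfies $H^1(N)=0$. This holds because $TQ$ restricted to a rational curve is a quotient of $\mathcal O(1)^{\oplus 5}$, since $Q$ is homogeneous and hence convex. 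Consequently the space of genus-$0$ stable maps to $Q$ of degree $d_1+d_2$ is smooth of the expected dimension near $f_0$, and the node can be smoothed. The smoothing is realised by a real one-parameter family of real irreducible rational curves $f_\varepsilon\subset Q$ of degree $d_1+d_2$. Being real, the family can be chosen so that the real structure smooths the real node in either of the two real directions.

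\textbf{Check embeddedness and the isotopy claim.} For small $\varepsilon$, $f_\varepsilon$ is $C^1$-close to $K_1$ and $K_2$ outside a small neighbourhood $U$ of $p$. Since $K_1$ and $K_2$ are embedded and disjoint outside $U$, $f_\varepsilon$ has no singularities there. Inside $U$, the local model is the smoothing $xy=\varepsilon$ of a transverse node, which is embedded. So $f_\varepsilon$ is a real rational knot, and it agrees up to small isotopy with $K_1\cup K_2$ outside $U$.

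The main obstacle is the unobstructedness in the quadric, i.e.\ the vanishing $H^1(f_0^*TQ)=0$ on the nodal curve together with the existence of real smoothings, because explicit formulas cannot enforce $F=0$. I would prove this first by the convexity of $Q$ as a homogeneous space. If a more elementary route is wanted, I would instead project from a point of $Q$ not on the curves to $\mathbb{RP}^3$ (lemma \ref{proj_not_on_curve}), apply Björklund's gluing there, and lift back via the inverse of the birational projection. The degree bookkeeping for that lift, namely that the conic through the projection point is contracted, is the delicate part of this alternative.
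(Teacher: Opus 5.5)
Your argument is correct in outline, but it takes a genuinely different route from the paper.

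\textbf{The paper's route.} The paper never leaves the projection picture. It projects $K_1,K_2$ from a point $q\in Q_{3,2}$ lying on neither knot. It then applies Bj\"{o}rklund's formula $[p_0q_0:p_iq_0+q_ip_0]$ in $\mathbb{RP}^3$, taking $x_0=0$ as the plane at infinity. The key observation is that the first coordinate $p_0q_0$ vanishes exactly at the roots of $p_0$ and of $q_0$. At such a root the glued point is $[0:a_1b_0:a_2b_0:a_3b_0]=[0:a_1:a_2:a_3]$, which is the original point at infinity. So all $d_1+d_2$ points at infinity still lie on the special conic, and Lemma~\ref{proj_not_on_curve} lifts the glued curve to a degree $d_1+d_2$ knot in $Q_{3,2}$. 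This one-line observation is exactly the ``degree bookkeeping'' you flagged as delicate in your alternative. It is automatic, provided $p_0$ and $q_0$ share no root.

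\textbf{Your route.} You smooth the nodal stable map $f_0$ inside $Q$ via unobstructedness. This has real advantages: the curve stays on the quadric without any special-conic check, the argument works for any convex target, and it yields both real smoothings at once. The cost is heavier machinery: smoothness of $\overline{M}_{0,0}(Q,d_1+d_2)$ and of its real locus at $f_0$, lifting of the node-smoothing parameter, and a neck analysis near $p$ to rule out real and solitary double points. That neck analysis is only sketched in your write-up. Your answer is also non-constructive, whereas the paper's formula gives explicit parametrisations, which it then uses to build representatives of every class up to degree~5.

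\textbf{Corrections to your write-up.}
\begin{enumerate}
\item The claim that ``$TQ$ restricted to a rational curve is a quotient of $\mathcal{O}(1)^{\oplus 5}$'' is false. For a line $L\subset Q$ one has $TQ|_L\cong\mathcal{O}(2)\oplus\mathcal{O}(1)\oplus\mathcal{O}$, and the $\mathcal{O}$ summand admits no surjection from $\mathcal{O}(1)^{\oplus 5}$.
\item The correct justification is the one you add afterwards. $TQ$ is globally generated because $Q$ is homogeneous, so $f_i^*TQ$ splits with nonnegative degrees on each component. The normalization sequence then identifies $H^1(C,f_0^*TQ)$ with the cokernel of $H^0(f_1^*TQ)\oplus H^0(f_2^*TQ)\to T_pQ$. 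That map is surjective, so $H^1(C,f_0^*TQ)=0$.
\item This vanishing of $H^1(C,f_0^*TQ)$ is the relevant obstruction statement, not $H^1$ of a ``normal bundle''.
\item The explicit $\Phi_\varepsilon$ plays no role in your argument and can be dropped.
\end{enumerate}
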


This theorem helps us to show that all the knots up to degree~5 in the quadric can be obtained by gluing of lines in $Q_{3,2}$.

\begin{theorem}
	Every real rational knot of degree $d\leq 5$ in $\mathbb{RP}^{4}$ lying on the 3-dimensional hyperboloid is rigidly isotopic to a glued knot obtained by gluing a real rational knot of degree $d-1$ in $Q_{3,2}$ with a real rational knot of degree $1$ in $Q_{3,2}$. 
\end{theorem}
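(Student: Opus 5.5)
The plan is to combine the classification theorem (13 rigid isotopy classes of degree $\le 5$) with the gluing theorem. The statement is then proved class by class. For each degree $d\le 5$ and each rigid isotopy class of degree $d$ knots in $Q_{3,2}$, I will exhibit one representative obtained by gluing a degree $d-1$ knot in $Q_{3,2}$ to a line in $Q_{3,2}$ meeting it at one point. Since rigid isotopy classes are the connected components of the space of degree $d$ knots, every knot in that class is then rigidly isotopic to a glued knot, which is the claim.

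The first step is to understand lines in $Q_{3,2}$. The quadric $x_0^2+x_4^2=x_1^2+x_2^2+x_3^2$ contains real lines through every point: in the chart $x_0^2+x_4^2=1$ one gets, for instance, $[\cos\theta:\cos\theta:\sin\theta:0:\sin\theta]$. The two degree 1 classes should correspond to two rulings, which are distinguished by the orientation of the isometry of the $2$-planes $\langle x_0,x_4\rangle$ and $\langle x_1,x_2,x_3\rangle$. For degree 2, I will glue two lines meeting at a point. Depending on whether the lines lie in the same ruling or in opposite rulings, and on the choice of smoothing at the intersection point, this gives conics. I then match these with the three conic classes, which presumably are distinguished by how the plane of the conic meets the quadric and by the conic's homology class in $Q_{3,2}\simeq S^1\times S^2$.

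For degrees 3, 4 and 5, I will glue a line to the representatives already constructed in degree $d-1$. The gluing theorem says the result agrees with the union outside a small neighbourhood of the intersection point, so the knot type of the projection to $\mathbb{RP}^3$ can be read off directly. Viro's writhe of the projection changes by a computable amount: a local $\pm1$ from the smoothing, plus the complex crossings between the line and the old knot, as in Bj\"orklund's computation in $\mathbb{RP}^3$. I also track the ruling or homology type of each piece. Together these data let me match each glued knot to one of the listed classes: two in degree 3, two in degree 4 (the unknot and the two-crossing knot), and four in degree 5 (writhes $0,2,4,6$ up to reflection).

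The main obstacle is the identification step. Writhe alone is not a rigid isotopy invariant in $Q_{3,2}$, so for each class I need the invariants that the classification proof actually uses to separate classes, and I have to compute them for the glued knots. I would use the invariants from the classification theorem's proof, presumably the projected $\mathbb{RP}^3$ rigid isotopy type together with a homology or ruling datum. I would apply reflections of $Q_{3,2}$ (for example $x_3\mapsto -x_3$) to obtain mirror classes for free. If some class, such as degree 5 with writhe $0$, cannot be reached this way, the fallback is to vary the choice of line and intersection point, and in particular to let the line cross the old knot's complex points differently, until every class is hit.
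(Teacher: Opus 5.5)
Your skeleton matches the paper's: take the classification, exhibit one glued knot in each class, then conclude because classes are connected components. The trouble is the step that carries the content, showing that every listed class is actually reached. You leave it open, and the invariants you plan to use rest on false premises.

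\begin{itemize}
\item $Q_{3,2}$ is not doubly ruled. The lines through a point $p$ are the generators of the cone $T_p\cap Q_{3,2}$, a circle's worth of them. Unoriented lines form one connected family: isotropic $2$-planes are graphs of isometric embeddings $\langle x_0,x_4\rangle\to\langle x_1,x_2,x_3\rangle$, which form a connected set, and the paper shows any two lines are projectively equivalent.
\item The two degree-$1$ classes are therefore the two orientations of a single line, not two rulings.
\item $Q_{3,2}\cong (S^1\times S^2)/\pm$ is the non-orientable $S^2$-bundle over $S^1$, not $S^1\times S^2$.
\item Gluing an arbitrary line of $Q_{3,2}$ to your degree-$(d-1)$ representative gives no control over the resulting writhe, so ``vary the line until every class is hit'' is not an argument.
\end{itemize}

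The missing idea is the paper's: work backwards from $\mathbb{RP}^3$. By Bj\"orklund, each $\mathbb{RP}^3$ knot of degree $d\le 5$ with the required writhe is a line glued to a degree $d-1$ knot. The special conic only has to be \emph{some} conic of signature $(2,1)$ in the plane at infinity, so one chooses it through the points at infinity of both pieces. For writhes $4$ and $6$ in degree $5$, one takes it through the five points at infinity of the glued knot and moves it to the standard special conic by a projective transformation. Everything then pulls back by Lemma~\ref{proj_not_on_curve} and Theorem~\ref{gluing in hyperboloid}. The classes are identified by projected writhe ($1,3$ in degree $4$; $0,2,4,6$ in degree $5$). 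In degrees $2$ and $3$ they are identified by orientation and by the position of the relevant point relative to the special conic.

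Your homology instinct is still worth keeping, in the right form. Since $p_0^2+p_4^2=p_1^2+p_2^2+p_3^2$, the polynomials $p_0,p_4$ have no common real root. So the degree $m$ of $[p_0:p_4]\colon\mathbb{RP}^1\to\mathbb{RP}^1$ is constant along any path of degree-$d$ parametrisations. It is $\pm1$ on lines and takes the values $0,\pm2$ on the three conic classes.

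The same invariant shows that the list of $13$ classes you and the paper start from is not complete. The paper's cubic $[t^3+s^3:t^3-s^3:0:-st^2-s^2t:-st^2+s^2t]$ has $|m|=1$, since $t^3+s^3$ has a single real root. By contrast, $[s^3-3st^2:s(s^2+t^2):t(s^2+t^2):0:3s^2t-t^3]$ is an embedded cubic on $Q_{3,2}$ with $p_0+ip_4=(s+it)^3$, hence $|m|=3$. So exhibiting a glued knot in every listed class does not by itself prove the statement. A complete argument must also produce glued representatives of such extra classes (for instance a conic with $m=\pm2$ glued to a line), or avoid relying on the list altogether.
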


Every isotopy in $\mathbb{RP}^{3}$ can not be pulled back to the quadric. However, any given isotopy can be deformed into a complete isotopy that can be pulled back. The following theorem ensured that smooth isotopies in $\mathbb{RP}^{3}$ can be pulled back to the quadric:
\begin{theorem} 
	If $K_{1}$ and $K_{2}$ are two real rational knots of some degree in $Q_{3,2}$ such that their projections in $\mathbb{RP}^{3}$ are smoothly isotopic then $K_{1}$ and $K_{2}$ are also smoothly isotopic in $Q_{3,2}$.
	(proved in theorem \ref{B} on page \pageref{B})  
\end{theorem}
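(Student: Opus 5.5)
Projection from a point $p\in Q_{3,2}$ (the map of Lemma \ref{proj_not_on_curve}) will serve as a chart. Concretely, let $T_pQ_{3,2}$ be the tangent hyperplane at $p$. The cone $Q_{3,2}\cap T_pQ_{3,2}$ is a cone over a conic, and the lines through $p$ on $Q_{3,2}$ are its rulings. Projection from $p$ gives a diffeomorphism
\[
\pi_p\colon Q_{3,2}\setminus (Q_{3,2}\cap T_pQ_{3,2}) \longrightarrow \mathbb{RP}^3\setminus H,
\]
where $H$ is the image of $T_pQ_{3,2}$, a plane in $\mathbb{RP}^3$. The cone $Q_{3,2}\cap T_pQ_{3,2}$ is blown down onto the conic $C_0\subset H$ traced out by the rulings, and $\mathbb{RP}^3$ is identified with the blow-up of $Q_{3,2}$ at $p$ followed by contraction of the cone. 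Since $Q_{3,2}$ has signature $(3,2)$, this conic is nonempty: it is a real circle lying in $H\cong\mathbb{RP}^2$.

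We will use the same centre $p$ for both knots; this is legitimate since the writhe discussion already shows that projections from different points agree up to isotopy. Choose $p$ generic with respect to $K_1\cup K_2$. Then each $\pi_p(K_i)$ is a smooth knot in $\mathbb{RP}^3$ which meets $H$ transversally at finitely many points, avoids $C_0$, and avoids the distinguished point of $H$ over which $p$'s cone point sits.

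Now take a smooth isotopy $\Phi_t$ from $\pi_p(K_1)$ to $\pi_p(K_2)$. We will perturb it, by general position in the $4$-dimensional track $\mathbb{RP}^3\times[0,1]$, so that the moving knot never meets $C_0$. This works because $C_0\times[0,1]$ has dimension $2$ and the knot's track has dimension $2$ inside a $4$-manifold, so they meet generically in finitely many points. Hence we cannot simply avoid $C_0$; we must instead understand what happens when the knot crosses it. This is exactly the content of the remark before the theorem: not every isotopy pulls back, but it can be deformed into one that does.

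The lift is then built in three steps.
\begin{enumerate}
\item Away from $H$ the inverse $\pi_p^{-1}$ is defined, and the isotopy lifts directly.
\item Near a point of $H\setminus C_0$ the preimage of $H$ in $Q_{3,2}$ is the cone minus its rulings, modulo the relevant identifications. A strand crossing $H$ at such a point lifts to a strand crossing the cone transversally, and moving that crossing point within $H\setminus C_0$ lifts smoothly.
\item The real work is at the isolated moments where a strand passes through $C_0$. Each such moment is a local model: a line in $\mathbb{RP}^3$ sweeping across a circle. In the lift this amounts to the strand passing through a ruling line of the cone through $p$.
\end{enumerate}
For step~3 I would show that the lift can be completed by a local modification supported in a neighbourhood of that ruling. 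Concretely, the preimage of a neighbourhood of a point of $C_0$ is a neighbourhood of an entire ruling line $\ell\ni p$, and the two sides of the crossing correspond to the strand passing on the two sides of $\ell$. The claim would be that these two lifted configurations are isotopic in $Q_{3,2}$ by pushing the strand around $\ell$. This is possible because $\ell$ is $1$-dimensional in a $3$-manifold, so the strand can pass over it. Alternatively, one can modify $\Phi_t$ in $\mathbb{RP}^3$ to avoid $C_0$ altogether by rerouting the strand around the circle $C_0$ near the crossing time.

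I expect this step to be the main obstacle. The rerouting changes the isotopy class in the complement of $C_0$, and one must check that the resulting path still ends at $\pi_p(K_2)$. I would handle this by performing the detour and then undoing it after the crossing time. The net effect on the endpoints is trivial, because at the endpoints the knots avoid a neighbourhood of $C_0$. Concatenating the lifted pieces then gives a smooth isotopy from $K_1$ to $K_2$ in $Q_{3,2}$.
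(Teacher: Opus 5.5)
Your picture of $\pi_p$ near $H$ is inverted, and this is where the argument breaks. If $K\subset Q_{3,2}$ misses $p$, then $K\cap T_pQ_{3,2}$ lies on the cone minus its apex. So $\pi_p(K)$ meets $H$ \emph{only} at points of $C_0$ (Lemma~\ref{proj_not_on_curve}); it never meets $H$ while avoiding $C_0$. Conversely, the inverse map collapses all of $H\setminus C_0$ to $p$: with $H=\{x_1=0\}$, the formula in Lemma~\ref{proj_not_on_curve} gives $[q:q:0:0:0]$ there, where $q=x_2^2+x_3^2-x_4^2\neq 0$. So Step~2 is false. A strand crossing $H\setminus C_0$ lifts to a strand through $p$, and two such strands at the same time lift to a curve with a double point at $p$. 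Lifting therefore requires keeping $K'_t\cap H$ on $C_0$ (at most one point may leave it), which general position destroys rather than achieves.

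Step~3 fails as well. Take local coordinates $(u,w,z)$ with $H=\{z=0\}$ and $C_0=\{z=w=0\}$. The ruling over a point of $C_0$ is parametrised by $a=w/z$, with $p$ at $a=\infty$. The strand $(0,t,\tau)$ lifts to $a=t/\tau$, $z=\tau$; for $t\neq 0$ this runs along almost the whole ruling line $\ell$, through $p$, and back. So when a crossing point moves across $C_0$, the lifts are discontinuous: the knot jumps by a band with $\ell$. This is not a push of the strand past $\ell$. Since $\ell$ generates $\pi_1(Q_{3,2})$, the jump can change the homotopy class, and your ``detour'' off $C_0$ is exactly the non-liftable move.

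No repair is possible, because the statement is false. On $Q_{3,2}$, $x_0=x_4=0$ forces $x=0$, so $x\mapsto[x_0:x_4]$ makes $Q_{3,2}$ an $S^2$-bundle over $\mathbb{RP}^1$. Hence $\pi_1(Q_{3,2})\cong\mathbb{Z}$, and the class of a knot $[p_0:\dots:p_4]$ is the topological degree of $[p_0:p_4]$.

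The paper's degree-$3$ representative $[t^3+s^3:t^3-s^3:0:-st^2-s^2t:-st^2+s^2t]$ has class $\pm1$, since $p_0$ has a single simple real root. By contrast, the curve
\[ [\,s^3-3st^2 : s^3+st^2 : s^2t+t^3 : 0 : 3s^2t-t^3\,], \]
that is, $x_0+ix_4=(s+it)^3$ and $x_1+ix_2=(s^2+t^2)(s+it)$, lies on $Q_{3,2}$, is a non-degenerate twisted cubic in $\{x_3=0\}$, and has class $\pm3$.

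Project both knots from $[\sqrt2:0:1:1:0]\in Q_{3,2}$, which lies off the hyperplanes $\{x_2=0\}$ and $\{x_3=0\}$ containing them. The images are two twisted cubics in $\mathbb{RP}^3$, both isotopic to a line. Yet the two knots are not even homotopic in $Q_{3,2}$. The paper's own proof, which deforms $K'_t$ using the ellipse gluing of Theorem~\ref{C}, overlooks the same obstruction.
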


For degree~6, the complete classification of real rational knots is not yet known but we have an upper bound on the number of smooth isotopy classes.

\begin{theorem}
	There are at most $14$ smooth isotopy classes of degree $6$ real rational knots in $Q_{3,2}$.
\end{theorem}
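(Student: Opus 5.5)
The plan is to reduce the count to the known classification of degree~$6$ real rational knots in $\mathbb{RP}^{3}$ due to Mikhalkin and Orevkov \cite{MR3571386}, and to use the pull-back result (theorem \ref{B}). By that result, if two degree~$6$ knots in $Q_{3,2}$ have smoothly isotopic projections in $\mathbb{RP}^{3}$, then they are smoothly isotopic in $Q_{3,2}$. So the number of smooth isotopy classes of degree~$6$ knots in $Q_{3,2}$ is at most the number of smooth isotopy classes in $\mathbb{RP}^{3}$ that actually occur as projections of such knots. The whole proof therefore consists in bounding this set of projection types by $14$.

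\textbf{Step 1: degree of the projection.} I would first pin down what the projection of lemma \ref{proj_not_on_curve} does to degree. Take a point $p\in Q_{3,2}$ not on $K$ and project linearly from $p$ to a hyperplane $\mathbb{RP}^{3}$. For a curve of degree $d$ the image has degree $d$, because a hyperplane in $\mathbb{RP}^{3}$ pulls back to a hyperplane in $\mathbb{RP}^{4}$ through $p$, and $p\notin K$. I would choose $p$ generically so that the projection has no real double points. This uses the lemma's guarantee that the projection is again a real rational knot, i.e.\ no secant line of $K$ through $p$ is real with both endpoints real. The image is then a non-singular real rational knot of degree~$6$ in $\mathbb{RP}^{3}$.

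\textbf{Step 2: restrictions from the quadric.} Next I would use that the image is not arbitrary. The projection of $Q_{3,2}$ from a point $p$ on it is birational onto $\mathbb{RP}^{3}$: it blows up $p$ and contracts the tangent cone $T_pQ\cap Q$ onto a conic $C_\infty$ in a plane $H$. Intersection considerations then force the degree~$6$ image to meet $H$ only in points of $C_\infty$, with multiplicity pattern determined by $K\cdot T_pQ$. In particular the real points of the image meeting $H$ lie on the conic, which constrains the linking with $H$. I would combine this with the Mikhalkin--Orevkov list of rigid isotopy classes in degree~$6$ together with their smooth types and encomplexed writhes, and discard those classes that cannot arise, for instance by parity or range of the writhe. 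Different rigid classes with the same smooth type collapse, since only smooth isotopy is being counted. The expected surviving count is at most $14$.

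\textbf{Main obstacle.} The hardest step is the exclusion argument in Step~2: showing that enough of the degree~$6$ classes in $\mathbb{RP}^{3}$ fail to lift to $Q_{3,2}$. If the conic constraint proves too weak, my fallback is to count surviving smooth types directly. I would then argue that every degree~$6$ knot in $Q_{3,2}$ degenerates, through a nodal curve, to a glued knot of degree $5+1$ or a similar split. Using the degree~$5$ classification and the gluing theorem (theorem \ref{gluing in hyperboloid}), I would enumerate the possible smooth outcomes, and check that their number does not exceed $14$.
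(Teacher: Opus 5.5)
Your core argument is the paper's proof: Theorem~\ref{B}, applied to projections from a common point of $Q_{3,2}$ off both knots, makes ``smooth type of the projection'' injective on smooth isotopy classes, and Mikhalkin--Orevkov give exactly $14$ topological types of degree~$6$ real rational knots in $\mathbb{RP}^{3}$. Step~2 and your fallback are unnecessary, and the ``main obstacle'' you name does not arise: $14$ is already the total number of smooth types in $\mathbb{RP}^{3}$, so no class needs to be excluded. (One small correction: the projection is nonsingular because $p$ is chosen generically, not because of Lemma~\ref{proj_not_on_curve}.)
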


\begin{thesisblock}
\section{Real rational knots in $\mathbb{RP}^{1} \times \mathbb{RP}^{2}$}
	\begin{theorem}
		The following theorem gives us the classification of real rational knots of some low bi-degrees in $\mathbb{RP}^{1} \times \mathbb{RP}^{2}$:
		\begin{enumerate}
			\item Bi-degree $(0,1)$: One rigid isotopy class smoothly isotopic to a line.
			\item Bi-degree $(1,0)$: Two rigid isotopy classes, both are smoothly isotopic to a line.
			\item Bi-degree $(1,1)$: Two rigid isotopy classes, both are smoothly isotopic to a line.
			\item Bi-degree $(0,2)$: Only one class smoothly isotopic to a circle.
			\item Bi-degree $(n,0), n \geq 2$: Does not exist any real rational knot.
			\item Bi-degree $(2,1)$: Two rigid isotopy classes, both are smoothly isotopic to circle.
			\item Bi-degree $(1,2)$: Two rigid isotopy classes, both are smoothly isotopic to lines
			\item Bi-degree $(2,2)$: Two rigid isotopy classes smoothly isotopic to circles
			\item Bi-degree $(0,n), n \geq 3$: Does not exist any real rational knot of this bi-degree
			\item Bi-degree $(3,1)$: Two rigid isotopy classes, smoothly isotopic to line
			\item Bi-degree $(1,3)$: Two rigid isotopy classes smoothly isotopic to circle.
			\item Bi-degree $(3,2)$: Two rigid isotopy classes, both are smoothly isotopic to line
			\item Bi-degree $(n,1)$: Two rigid isotopy classes, smoothly isotopic to line if ~n is odd and smoothly isotopic to circle if ~n is even
			\item Bi-degree $(n,2)$: Two rigid isotopy classes smoothly isotopic to line if ~n is odd and smoothly isotopic to circle if ~n is even
		\end{enumerate}
	\end{theorem}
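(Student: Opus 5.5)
We think of a bi-degree $(m,n)$ parametrisation as a pair $(f,g)$. Here $f=[p_0:p_1]\colon\mathbb{RP}^1\to\mathbb{RP}^1$ has degree $m$ and $g=[q_0:q_1:q_2]\colon\mathbb{RP}^1\to\mathbb{RP}^2$ has degree $n$. The space of such pairs is an open subset of a product of projective spaces of coefficients, and the knots are the complement of a discriminant. A knot is one where $(f,g)$ is an embedding with nowhere vanishing differential. Throughout we read ``non-singular'' as including the complexification, as in the $Q_{3,2}$ part; this is what the claimed non-existence statements require. The proof has three steps: decide existence, produce representatives, then separate the classes and prove each class is connected.

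\emph{Existence.} If $n=0$, the curve lies in $\mathbb{RP}^1\times\{pt\}$, so it is $f$ itself. Then $f$ must be injective and unramified over $\mathbb{C}$, which forces $m=1$. There are two classes here, since the affine coordinate change $x\mapsto ax+b$ only connects maps with $\det$ of the same sign. This settles $(1,0)$ and $(n,0)$ with $n\ge2$. If $m=0$, the curve is a rational plane curve of degree $n$. The genus formula gives $(n-1)(n-2)/2$ complex double points, so a non-singular one exists only for $n\le2$. The spaces of lines and of conics with real points are connected, which gives $(0,1)$ and $(0,2)$. For $m,n\ge1$, the first projection $f$ has degree $m$ while $g$ is only required to separate the points identified by $f$ and its ramification points. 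A generic choice of $g$ achieves this, so knots exist in all remaining bi-degrees.

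\emph{Two classes, and their smooth type.} For $m\ge1$ we produce the two classes as $K$ and its image under the reflection $r(x,y)=(\bar x,y)$ of the $\mathbb{RP}^1$ factor. To see they differ, we use an invariant of the discriminant complement. Two candidates are the sign of the determinant of the leading $2\times2$ block of $f$ (whether $f$ preserves or reverses the orientation of the fibre direction), and a linking-type sign analogous to the encomplexed writhe. Either one is locally constant along paths of knots and changes under $r$. The smooth type comes from the homology class $(m \bmod 2,\, n \bmod 2)\in H_1(\mathbb{RP}^1\times\mathbb{RP}^2;\mathbb{Z}_2)$, together with whether the curve is isotopic to a fibre-type line or is null-homotopic. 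We would check this on an explicit representative in each listed bi-degree, for instance $f=[s^m:t^m]$ perturbed and $g$ a line or conic parametrisation.

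\emph{Connectivity (main obstacle).} We expect the hardest step to be showing that each sign class is connected. Our plan is to fix the first factor and deform. The set of $f$ of degree $m$ with a given orientation sign is connected. Over it, the set of $g$ of degree $n\le 2$ making $(f,g)$ an embedding is the complement of a subset of real codimension at least $1$. We would show that the walls one might be forced to cross are of complex, hence real-codimension-2, type, so their complement stays connected. Alternatively, we would use a gluing argument as in Theorem \ref{gluing in hyperboloid}: every $(m,n)$ knot is rigidly isotopic to the gluing of an $(m-1,n)$ knot with a $(1,0)$ knot. This reduces the problem to the low bi-degrees $(1,1)$, $(1,2)$, $(2,1)$ and $(2,2)$, which can be checked by explicit normal forms. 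The bi-degrees $(n,1)$ and $(n,2)$ then follow by induction on $n$.
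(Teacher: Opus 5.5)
The paper gives no proof of this statement; it appears only in excluded thesis material. Your argument therefore has to stand on its own, and it breaks at the ``two classes'' step as soon as $m\ge 2$.

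\textbf{The case $m=1$ works.} Here $f$ is a real M\"obius map, the sign of $\det f$ is locally constant, and $PGL_2(\mathbb{R})$ has two components. Since $f$ is an isomorphism, every admissible pair $(f,g)$ is the graph of $g\circ f^{-1}$ and hence an embedding of $\mathbb{CP}^1$. This gives exactly two oriented classes in every bi-degree $(1,n)$.

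\textbf{The case $m\ge 2$ fails.} The ``sign of the leading $2\times 2$ block'' is not an invariant: it vanishes at $([s^2:t^2],[s:t:0])$, which is a knot. The claim that the $f$'s with a given sign form a connected set is also false. The correct invariant is the topological degree of the real map $f\colon\mathbb{RP}^1\to\mathbb{RP}^1$. It is a homotopy invariant of the oriented knot and takes the $m+1$ values $-m,-m+2,\dots,m$.

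All of these values occur. If $g$ is linear, or a parametrised smooth conic, then $g$ is already an embedding of $\mathbb{CP}^1$, so $(f,g)$ is a knot for every admissible $f$. The space of $(m,1)$ knots is then the space of degree-$m$ real rational maps times a connected space. That space of maps has $m+1$ components, one per degree (classical results of Brockett and Segal). Concretely, the three $(2,1)$ knots
$([s^2-t^2:2st],[s:t:0])$, $([s^2:t^2],[s:t:0])$ and $([s^2-t^2:-2st],[s:t:0])$
have first projections of degrees $2,0,-2$. They are not even homotopic as oriented loops. Your reflection $r$ only exchanges the degree-$k$ and degree-$(-k)$ classes, so it cannot organise them into two.

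\textbf{Consequence for the statement.} The count you are aiming for cannot be proved under either reading.
\begin{itemize}
\item With oriented classes, which is the reading needed to get two classes in bi-degrees $(1,0)$ and $(1,1)$, bi-degrees $(m,1)$ and $(m,2)$ have at least $m+1$ classes. So the items for $(2,1)$, $(2,2)$, $(3,1)$, $(3,2)$, $(n,1)$ and $(n,2)$ undercount.
\item With unoriented classes, $(1,0)$ and $(1,1)$ drop to one class each, and $(4,1)$ still has at least three, since $|\deg f|\in\{0,2,4\}$.
\end{itemize}
Your gluing alternative would itself expose this: gluing a $(1,1)$ knot and a $(1,0)$ knot with the various orientations gives $\deg f=\pm1\pm1$.

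\textbf{The smooth-type step has a similar problem.} The $\mathbb{Z}_2$ class $(m \bmod 2,\, n\bmod 2)$ forgets the integer degree of $f$. Moreover, the graph description shows that $(1,1)$ and $(1,3)$ knots with the same sign of $\det f$ are smoothly isotopic, being graphs of homotopic maps $\mathbb{RP}^1\to\mathbb{RP}^2$. So the ``line'' versus ``circle'' labels in the statement cannot both be confirmed on representatives.

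The statement needs correcting first, for example by indexing classes by $\deg f$. The connectivity step must then be carried out for each degree class of $f$, not each sign class.
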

	For bi-degree $(2,3)$, there are two classes for the real rational curves with exactly one double point in $\mathbb{RP}^{1} \times \mathbb{RP}^{2}$.\\
	The exact number of real rational knots of bi-degree $(2,3)$ is not known. But, it has been proved that number of rigid isotopy classes are either two or four.
\end{thesisblock}

\onlypaper{
		\begin{ack}
		The second author is grateful for the IISER Mohali PhD fellowship for its support.\\ 
		All the figures in this paper were created using GeoGebra~\cite{geogebra}.
	\end{ack}}

\chapter{Background}\label{chap-1}
	In this \myref{}, we will mention some previous results and theorems of real rational knot theory. \onlythesis{We also recall the background concepts that will be used throughout the thesis. This provides the necessary foundation for the results presented in the following \myref{}s. In the last \mysubref{} of this \myref{}, we present an analogy between knots on two dimensional hyperboloid and knots on the quadric $Q_{3,2}$. This comparison helps to understand the geometric ideas behind our results.
	\section{Real rational knots in $\mathbb{RP}^{3}$}}
	
	The number of double points in the real rational plane curve of degree $d$ are exactly $\frac{(d-1)(d-2)}{2}$ counted with multiplicity (including the complex ones). So, the encomplexed writhe $w(K)$ defined in \cite{MR1819192} of a real rational knot of degree $d$ in $\mathbb{RP}^{3}$  satisfies the following:
	\[ w(K)= \frac{(d-1)(d-2)}{2} \mathrm{mod}\  2\]
	\[|w(K)| \leq \frac{(d-1)(d-2)}{2} \]
	Real rational knots in $\mathbb{RP}^{3}$ were studied by Bj\"{o}rklund in \cite{MR2844809} and proved the classification of real rational knots up to degree $5$ as follows:  
	\begin{theorem}	In $\mathbb{RP}^{3}$, the classification of real rational knots is as follows: 
	\begin{enumerate}
		\item Degree 1: One class with writhe $0$ isotopic to line
		\item Degree 2: One class with writhe $0$ isotopic to circle
		\item Degree 3: Two classes, reflections of each other isotopic to the line with writhe $\pm1$
		\item Degree 4: Four classes, isotopic to the line and the two crossing knot with writhe $\pm1, \pm3$
		\item Degree 5: 7 classes with writhe $0,\pm2,\pm4, \pm6$
	\end{enumerate}
\end{theorem}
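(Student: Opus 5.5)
This statement is Bj\"{o}rklund's classification, quoted from \cite{MR2844809}, and the plan follows his strategy. It has two halves: a lower bound on the number of rigid isotopy classes, coming from invariants, and an upper bound, coming from a construction together with a deformation argument.

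\emph{Lower bound.} For the lower bound I would use Viro's encomplexed writhe $w$, which is a rigid isotopy invariant, together with the smooth isotopy type of the knot. The constraints $w(K)\equiv \frac{(d-1)(d-2)}{2} \pmod 2$ and $|w(K)|\le \frac{(d-1)(d-2)}{2}$ restrict the possible writhes. In degrees $1$ and $2$ they force $w=0$, and in degree $3$ they force $w=\pm1$. I would realize each allowed value explicitly:
\begin{itemize}
\item rational normal cubics of both handednesses in degree $3$;
\item in degree $4$, knots with $w=\pm1$ (unknotted) and $w=\pm3$ (two-crossing knot);
\item in degree $5$, knots with $w=0,\pm2,\pm4,\pm6$.
\end{itemize}
Distinct writhes give distinct classes, which yields at least $1,1,2,4,7$ classes in degrees $1$ to $5$.

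\emph{Upper bound.} I would parametrize the space of degree $d$ real rational curves in $\mathbb{RP}^3$ as an open subset of $\mathbb{RP}^{4(d+1)-1}$ modulo reparametrization. Inside it sits the discriminant $\Delta$ of singular curves. For curves with one real node, the generic part of $\Delta$, I would show that it is connected, or at least has few components. The idea is to project from the node: this reduces a nodal degree $d$ curve to a degree $d-1$ object together with marked data. Crossing a wall of $\Delta$ changes $w$ by exactly $\pm2$.

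The key step is to show that two knots with the same writhe can be connected while avoiding $\Delta$. I would do this by induction on $d$ using gluing. Any knot of degree $d$ is rigidly isotopic to a glued knot: a degree $d-1$ knot glued with a line at a real point. To prove this, I would degenerate the parametrization so that the curve splits as the union of a degree $(d-1)$ curve and a line, and apply inductively the degree $\le d-1$ classification. The classes of glued knots are then enumerated by the class of the degree $d-1$ knot and the position of the line, and one checks that this enumeration produces no more classes than there are writhe values.

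The main obstacle is the degree $5$ case, and specifically showing that each writhe value carries exactly one class; smooth type is not an independent invariant there. I would first try to establish that the space of knots of fixed writhe is connected. To do this I would show that every such knot can be moved, avoiding $\Delta$, to one with a real point of maximal tangency order (a flat point) with some plane. Then I would project from a suitably chosen point on the curve, reducing degree $5$ to degree $4$ plane-curve data where the classification is already known. Any failure here would likely come from non-generic strata of $\Delta$ (cusps, tacnodes), which have to be shown to have codimension at least $2$ in the relevant region.
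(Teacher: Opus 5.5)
The paper does not prove this statement; it quotes it from Bj\"{o}rklund \cite{MR2844809}, so your proposal has to stand on its own. Your lower bound is fine. The encomplexed writhe is a rigid isotopy invariant, the parity and size constraints leave exactly $1,1,2,4,7$ admissible values in degrees $1$ to $5$, and each value can be realised by an explicit (e.g.\ glued) knot.

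\textbf{The gap is the upper bound.} Your key step is that every degree-$d$ knot is rigidly isotopic to a knot glued from a degree-$(d-1)$ knot and a line. This cannot be obtained by ``degenerating the parametrisation''. Gluing only produces knots \emph{near} a given configuration $K\cup L$. To push an arbitrary knot, through knots, up to such a configuration, you would need to know that the closure of every chamber of the complement of $\Delta$ reaches that stratum. That is exactly the chamber information you are trying to obtain. In \cite{MR2844809}, and in this paper's $Q_{3,2}$ gluing section, the statement that every knot of degree $\le 5$ is a glued knot is derived \emph{after} the classification, from completeness of the writhe plus explicit constructions.

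Even granting that step, the enumeration does not close. The complexified projections of $L$ and of the degree-$(d-1)$ knot meet in $d-2$ further points, and each real one contributes $\pm1$. So the glued class depends on where $L$ sits relative to the diagram, and showing that glued knots of equal writhe are rigidly isotopic is the original problem again. Your degree-$5$ plan has the same defect: projecting from a point of the curve to a plane forgets the lift, so plane-quartic data do not determine the rigid isotopy class of the space curve.

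You already list the right ingredients ($\Delta$, its one-node part, projection from the node, the $\pm2$ jump) but never turn them into a bound. That conversion is the missing idea; the paper's own $Q_{3,2}$ sections prepare the same ingredients.
\begin{itemize}
\item The space of degree-$d$ parametrisations is connected, since the common-root locus has codimension $3$.
\item The codimension-one part $W$ of $\Delta$ consists of curves with exactly one double point (a real node or a solitary one), together with ordinary cusps. Across the cusps the two kinds of node join into one smooth hypersurface. The rest of $\Delta$ has codimension $\ge 2$.
\item The writhe jumps by $\pm2$ across $W$, so each component of $W$ is two-sided and touches at most two chambers. General position (or the Borel--Moore/Poincar\'e duality inequality) then gives $\#\{\text{knot classes}\}\le\#\pi_0(W)+1$.
\item You count components of $W$ by projecting from the double point. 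This turns a nodal degree-$d$ curve into a rational plane curve of degree $d-2$ with two marked points (real or conjugate), plus lift data carrying finitely many sign invariants.
\item In degrees $1$ and $2$ the discriminant has codimension $\ge2$, so there is one chamber. The bound then matches your lower bound exactly when $W$ has $1,3,6$ components in degrees $3,4,5$; verifying these counts is the work that remains. (In degree $4$ the three walls separate the writhe pairs $(-3,-1)$, $(-1,1)$ and $(1,3)$.)
\end{itemize}
Your fallback of showing $W$ connected cannot hold for $d\ge4$, because a connected $W$ allows at most two chambers.
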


He proved the following ``gluing theorem'' that allows one to construct real rational knots in $\mathbb{RP}^{3}$ from knots of lower degree. 

\begin{theorem}\label{glu}
	Suppose $C_{1}$ and $C_{2}$ are two oriented real rational knots of degree $d_{1}$ and $d_{2}$ respectively in $\mathbb{RP}^{3}$ such that they intersect transversally at a point say $p$ then there exists knot parametrisations $[p_{o}:p_{1}:p_{2}:p_{3}]$ and $[q_{o}:q_{1}:q_{2}:q_{3}]$ such that $[p_{o}q_{0}:p_{1}q_{0}+p_{0}q_{1}:p_{2}q_{0}+p_{0}q_{2}:p_{3}q_{0}+q_{3}p_{0}]$ is a parametrisation of degree $d_{1}+d_{2}$ which defines a knot that is rigidly isotopic to the union of the two knots $C_{1}$ and $C_{2}$ except in a small neighbourhood of the point $p$. The resulting knot is the knot obtained by smoothing of strands of $C_{1}$ and $C_{2}$ at the intersection point.
\end{theorem}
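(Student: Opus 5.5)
We prove the statement (Theorem~\ref{glu}) by an explicit perturbation argument in an affine chart around $p$. First choose coordinates and parametrisations adapted to $p$. After a projective transformation we may assume $p=[1:0:0:0]$ and that $p$ lies in the affine chart $x_0\neq 0$. Reparametrise $C_1$ by $[s:t]=[1:0]$ at $p$, and $C_2$ by $[s:t]=[0:1]$ at $p$. Then in the chart $x_0=1$ the curves read $u\mapsto (p_1/p_0,p_2/p_0,p_3/p_0)$ and $v\mapsto (q_1/q_0,\dots)$, and both pass through the origin.

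The key identity is that, on the open set where $p_0q_0\neq 0$, the glued parametrisation in affine coordinates is
\[
\Big(\tfrac{p_i}{p_0}+\tfrac{q_i}{q_0}\Big)_{i=1,2,3}.
\]
So it is the pointwise sum $C_1(s:t)+C_2(s:t)$ of the two affine curves, evaluated at the \emph{same} parameter. We then introduce a scaling parameter. Replace $q$ by its reparametrisation $q(\lambda s,t)$, or equivalently choose the zeros of $q_0$ and $p_0$ suitably, so that the following holds. Away from a small neighbourhood $U$ of $[0:1]$ the curve $C_2$ stays within $\varepsilon$ of $p$, while away from a small neighbourhood of $[1:0]$ the curve $C_1$ stays near $p$. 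Concretely, a M\"obius rescaling $[s:t]\mapsto[\lambda s:t]$ with $\lambda\to\infty$ concentrates almost all of $C_2$'s parameter domain near $p$.

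Next we analyse the family of parametrisations $K_\lambda=p\,q_\lambda$-gluing as $\lambda\to\infty$. There are three regimes.
\begin{enumerate}
\item On $\{|t/s|\ \text{bounded}\}$, $K_\lambda$ converges in $C^1$ to $C_1$.
\item On the rescaled region near $[0:1]$, after reparametrising, $K_\lambda$ converges to $C_2$.
\item In the transition annulus, both summands are small, so the curve stays in a small ball $B$ around $p$.
\end{enumerate}
By $C^1$-convergence and the fact that embeddings form an open set, $K_\lambda$ is an embedding outside $B$. Moreover, $K_\lambda$ is isotopic there to $C_1\cup C_2$ minus $B$, via the straight-line path in $\lambda$. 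Degree and coprimality are checked directly. The glued map has degree $d_1+d_2$. Common factors cannot occur for generic choices, since $p_0q_0$ and the other components share no root once $p_0$ and $q_0$ have no common root with the $p_i,q_i$.

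The main obstacle is showing that inside $B$ the curve $K_\lambda$ is embedded and equals the smoothing of the crossing. To do this, linearise at $p$: $C_1(u)\approx u\,a$ and $C_2(v)\approx v\,b$, with $a,b$ independent tangent vectors, using transversality. In the transition region $K_\lambda\approx u\,a+\lambda^{-1}u^{-1}b$ (with the appropriate local parameter). This is a hyperbola in the plane spanned by $a,b$, i.e.\ the standard smoothing of two transverse lines. Higher-order terms are a small $C^1$ perturbation, which preserves embeddedness in $B$. The orientation hypothesis picks which of the two smoothings arises, by the sign of $\lambda$. Finally, rigid isotopy for all large $\lambda$ of the same sign follows because the whole path consists of knots of degree $d_1+d_2$.
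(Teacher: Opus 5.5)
The paper gives no proof of this statement. It is Bj\"{o}rklund's gluing theorem, quoted as background and used as a black box, so your argument has to stand on its own. Your strategy is the right one: the glued map is the affine sum $C_1(u)+C_2(u)$, one parametrisation is rescaled by a M\"obius map, and a hyperbola models the curve near $p$. However, the rescaling is set up backwards, and with the choices you wrote the central step fails.

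In $K(u)=C_1(u)+C_2^\lambda(u)$, the curve follows $C_2$ only where $C_1(u)\approx p$, i.e.\ near $u_1=[1:0]$. It follows $C_1$ only where $C_2^\lambda(u)\approx p$. So the reparametrisation $\phi_\lambda$ of $C_2$ must squeeze the complement of a small neighbourhood of $[1:0]$ into a small neighbourhood of $C_2$'s own $p$-parameter $[0:1]$.

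Replacing $q$ by $q(\lambda s,t)$ with $\lambda\to\infty$ does the opposite. Since $[\lambda s:t]\to[1:0]$ for $s\neq 0$, you get $C_2^\lambda(u)\to C_2([1:0])\neq p$. Away from $[0:1]$, $K_\lambda$ then approaches (generically) a translate of $C_1$, not $C_1$. For two lines, $C_1(u)=(t/s)a$ and $C_2(u)=(s/t)b$, your recipe gives $xa+\lambda x^{-1}b$ with $x=t/s$. In coordinates with respect to $a,b$ this is the hyperbola $XY=\lambda$, which escapes to the line at infinity instead of converging to $C_1\cup C_2$.

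Your transition model $ua+\lambda^{-1}u^{-1}b$ actually corresponds to the opposite rescaling, so the write-up is internally inconsistent. Likewise, the sentences ``away from $U\ni[0:1]$, $C_2$ stays near $p$'' and ``away from a neighbourhood of $[1:0]$, $C_1$ stays near $p$'' are false, and regimes (1) and (2) are attached to the wrong parameter regions.

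The repair is to use $q(s,\lambda t)$ with $\lambda\to\infty$ (equivalently $q(\lambda s,t)$ with $\lambda\to 0$). Then:
\begin{enumerate}
\item $K_\lambda\to C_1$ in $C^1$ on compact subsets of $\mathbb{RP}^1\setminus\{[1:0]\}$;
\item $K_\lambda\circ\phi_\lambda^{-1}\to C_2$ in $C^1$ on compact subsets of $\mathbb{RP}^1\setminus\{[0:1]\}$;
\item the transition sits at $|t/s|\sim\lambda^{-1/2}$, with model $\tau a+(\lambda\tau)^{-1}b$, $\tau=t/s$.
\end{enumerate}

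Even after this correction, several steps need more than you give.

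\textbf{Points at infinity.} The affine formula is valid only where $p_0q_0\neq 0$, and the curves generally have real points at infinity (always, in odd degree). So the convergence must be proved projectively. Note that $K(u)$ is the image of $C_1(u)$ under the projective extension of the translation by the affine vector $C_2^\lambda(u)$. This is valid wherever $q_0(\phi_\lambda(u))\neq 0$, which for large $\lambda$ holds off a small neighbourhood of $[1:0]$, since $q_0([0:1])\neq 0$. Symmetrically, near $[1:0]$ exchange the roles of $C_1$ and $C_2$; this is valid since $p_0([1:0])\neq 0$. Together these give the $C^1$ convergence through infinity.

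\textbf{Embeddedness in $B$.} The claim that ``a small $C^1$ perturbation of the hyperbola preserves embeddedness in $B$'' only controls $M^{-1}\lambda^{-1/2}\le|\tau|\le M\lambda^{-1/2}$, after rescaling by $\lambda^{1/2}$. The ranges joining this to the outer regimes are non-compact in the rescaled picture and need their own argument. For example, for $M\lambda^{-1/2}\le|\tau|\le\delta$ one has $K'(\tau)=a+O(\delta)+O(M^{-2})$, so the $a$-coordinate is strictly monotone there. The same holds for the $b$-coordinate in the variable $\sigma=(\lambda\tau)^{-1}$.

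\textbf{Remaining points.} You implicitly need $C_1\cap C_2=\{p\}$ to keep the two outer pieces apart. Also, the correct coprimality condition is that $p_0$ and $q_0\circ\phi_\lambda$ have no common root. This is automatic for large $\lambda$, because the roots of $q_0\circ\phi_\lambda$ tend in $\mathbb{CP}^1$ to $[1:0]$, where $p_0\neq 0$.
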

By using this gluing method, he was able to construct the representatives of real rational knots up to degree $5$ with all possible values of encomplexed writhe. He also proved that encomplexed writhe is a complete invariant for real rational knots up to degree $5$ in $\mathbb{RP}^{3}$. He also constructed an example of two degree~6 non rigid isotopic knots having same writhe numbers which proves that encomplexed writhe is not a complete invariant for higher degrees.

In 2016, Mikhalkin and Orevkov study the knots and links of low degree in $\mathbb{RP}^{3}$ \cite{MR3571386}. They give the classification for the real algebraic curves of genus~0 and~1 up to degree~6. They proved that
\begin{theorem}
	There are~14 topological isotopy types of rational real algebraic knots of
	degree 6 embedded in the projective space $\mathbb{RP}^{3}$.
\end{theorem}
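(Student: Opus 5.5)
This statement is the result of Mikhalkin and Orevkov \cite{MR3571386}, quoted as background, so I would argue along their lines rather than with the quadric machinery of this paper. The proof has two halves: an upper bound on the number of smooth isotopy types of degree~6 rational knots in $\mathbb{RP}^{3}$, and explicit realisations showing that each surviving type occurs.

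For the upper bound, I would project a degree~6 rational knot $K$ from a generic point of $\mathbb{RP}^{3}$ to get a rational plane sextic. Such a curve has $\frac{(6-1)(6-2)}{2}=10$ double points counted with multiplicity, split between real crossings, solitary real points and pairs of complex conjugate nodes. So $K$ has a diagram with at most $10$ crossings, and the encomplexed writhe satisfies $w(K)\equiv 0 \pmod 2$ and $|w(K)|\le 10$. The restrictions that actually cut the list down come from counting intersections with lines. A real line meets the plane sextic in at most $6$ points, and a real plane through a line secant to $K$ meets $K$ in at most $6$ points. Bézout's theorem therefore forbids many diagram configurations. For example, no line can pass through a crossing and, in total, more than $4$ further branch points, and every alternating-type pattern of crossings must be compatible with a pencil of lines through a node. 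I would list all knot types with at most $10$ crossings, together with their mirror images and their non-affine (projective) versions. I would then eliminate every candidate whose minimal diagrams force a configuration violating these Bézout constraints; for torus-type or high-bridge knots, a bridge-number bound of $3$ coming from the degree already does this. The parity of the number of real crossings, tied to $w$, gives a further restriction. The candidates that remain should be the $14$ types.

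For realisation, I would use Theorem~\ref{glu}. Gluing lines, conics, rational cubics and the degree~4 and~5 representatives from Bjorklund's classification, with degrees adding up to $6$, produces the unknot, the trefoil, the figure-eight and the remaining two-bridge and projective types. Each gluing is a smoothing at a transversal intersection, and I would track the resulting diagram. For any type that gluing misses, I would construct an explicit parametrisation, for instance a small perturbation of a nodal degree~6 curve.

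I expect the main obstacle to be the exclusion step. Ruling out knots such as the $10$-crossing candidates, or certain three-bridge knots that have admissible crossing numbers, needs a delicate argument that Bézout with lines and planes alone may not supply. The first thing I would try is the pencil of planes through a real secant line of $K$ (equivalently, the pencil of lines through a crossing point of the projection), and I would analyse how the remaining $4$ intersection points move across the pencil. This is the kind of constraint that gave the degree~5 classification, and I would follow that model.
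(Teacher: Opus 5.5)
The paper does not prove this statement; it only quotes it from Mikhalkin--Orevkov \cite{MR3571386}. So your proposal has to stand on its own as a proof, and it has a genuine gap at its core.

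The content of the theorem is the exact count $14$. That needs a named list, a realisation of each member, and an exclusion of everything else. You never say which $14$ types you mean, so neither half can be carried out: ``the remaining two-bridge and projective types'' presupposes the list. The exclusion step you flag as the main obstacle is the substance of the theorem, not a detail, and the filters you propose cannot do it.
\begin{itemize}
\item A generic projection only leaves every knot with a diagram of at most $10$ crossings, which is hundreds of affine and projective candidates.
\item The parity filter is vacuous. Writing $10=r+s+2c$ ($r$ real crossings, $s$ solitary real nodes, $c$ conjugate pairs), you only get $r\equiv s \pmod 2$, and $s$ is unconstrained. Moreover $w$ is not a topological invariant: in degree~$5$ the line type occurs with $w=0,\pm2$.
\item The bridge bound $b\le 3$ is asserted rather than derived. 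That each plane meets $K$ in at most $6$ points bounds the number of points per level, not the number of maxima. Even granted, it excludes no torus knot with at most $10$ crossings, since $T(2,q)$ is $2$-bridge and $T(3,4)$, $T(3,5)$ are $3$-bridge.
\item Most seriously, testing B\'ezout constraints on \emph{minimal} diagrams is logically insufficient. The projection of $K$ need not be a minimal diagram of its knot type. To exclude a type you would have to rule out every diagram with at most $10$ crossings that represents it, and nothing in your plan does that.
\end{itemize}

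What is missing is a restriction that cuts the problem down before you enumerate. For instance, project from a point $p\in K$ itself, as in Lemma~\ref{proj_curve_pt}. This gives a real rational plane quintic, hence at most $6$ nodes. Via the line bundle $\mathbb{RP}^{3}\setminus\{p\}\to\mathbb{RP}^{2}$, you get a diagram of $K$ over $\mathbb{RP}^{2}$ with at most $6$ crossings. On such a curve, B\'ezout with lines and conics through the nodes becomes a finite, checkable constraint on an explicit list of configurations. Only after a reduction of this kind does it make sense to name the surviving types and match each with an explicit degree~$6$ curve.

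Your gluing paragraph also asserts specific outcomes (e.g.\ the figure-eight) without tracking any resulting diagram, so the realisation half is unverified as well.
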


\onlythesis{
\section{Real rational knots in ${S}^{3}$}

We can consider the $3$-sphere as a sub-variety of $\mathbb{RP}^{4}$ and study non-singular real algebraic curves in $\mathbb{RP}^{4}$ that lie inside $S^{3}$. The $3$-sphere in $\mathbb{RP}^{4}$ can be defined by the equation $x_{0}^{2}=x_{1}^{2}+x_{2}^{2}+x_{3}^{2}+x_{4}^{2}$. 

In~\cite{MR3632321}, the real rational knots in $S^{3}$ were studied and classified up to degree $6$. This was largely done by a  correspondence between the real rational knots in $S^{3}$ and the real rational knots in $\mathbb{RP}^{3}$.

\begin{theorem}
	In $S^{3}$, the rigid isotopy classes up to degree $6$ are classified as follows:
	\begin{enumerate}
		\item Degree 2: One isotopy class with writhe $0$ topologically isotopic to the unknot
		\item Degree 4: Two rigid isotopy classes of writhe $\pm{2}$ each topologically isotopic to the unknot
		\item Degree 6: One rigid isotopy class corresponding to each $0,\pm2,\pm4$
	\end{enumerate}
\end{theorem}

No odd degree curve can lie in $3$-sphere in $\mathbb{RP}^{4}$ and for degree~2 and degree~4, it was shown that rigidly isotopic knots are also projectively equivalent.

For the construction of knots in $S^{3}$, Bj\"{o}rklund's gluing theorem in $\mathbb{RP}^{3}$ was extended for knots in $S^{3}$ as follows:

\begin{theorem}
	If there are two $m$ and $n$ degree real rational knots in $S^{3}$ which intersects at a point say $p$ then there is a degree $m+n$ real rational knot in $S^{3}$ which is isotopic to the union of the original knots except a small neighbourhood around the point of intersection.
\end{theorem}

\section{Analogy between~2-dimensional hyperboloid $H^{2}$ and $Q_{3,2}$}

Although this thesis is mainly concerned with curves in the 3-dimensional quadric, $Q_{3,2}$, it lives in 4-dimensions and so it is impossible to visualize. Therefore, it is useful to consider the analogous situation in one dimension lower to understand the geometric motivation behind the arguments.

The two dimensional hyperboloid $H^{2}$ in $\mathbb{RP}^{3}$ is given by the zero set of the quadratic polynomial  $-x_{0}^{2}+x_{1}^{2}+x_{2}^{2}-x_{3}^{2}$.  It is a doubly ruled surface. From each point of hyperboloid, there pass exactly two straight lines  that lie entirely on the hyperboloid. These two lines belong to two different families of rulings. Two lines from same family never intersect and two lines from different families always intersect in exactly one point. Any line in $\mathbb{RP}^3$ intersects the $H^{2}$ in two points unless it completely lies on the hyperboloid.

 Take a point $p$ on $H^{2}$ and consider a tangent plane $T_{p}$ at that point. The tangent plane $T_{p}$ intersects the hyperboloid $H^{2}$ in a pair of straight lines $l_{1}$ and $l_{2}$ passing through the point $p$ and lying entirely on $H^{2}$. The Projection of any point $q$ on $H^{2}$ from the point $p$ on a copy of $\mathbb{RP}^{2}$ is given by the intersection point of the line joining $p$ and $q$ with $\mathbb{RP}^{2}$ (on which we are projecting). Under this projection map, the image of $T_{p}$ is a projective line in the plane of projection and the point $p$ blows up to this projective line, which we will think of as the ``line at infinity''.  The lines $l_{1}$ and $l_{2}$ blow down to a pair of points $q_{1}$ and $q_{2}$ on the line at infinity. Observe that these pair of points are a 0-dimensional conic. They are the lower dimensional counterpart of the so called ``special conic" that will be introduced in the next \myref{} in the context of the higher dimensional $Q_{3,2}$.
 
 By using a projective transformation in $\mathbb{RP}^{3}$ which preserves $H^{2}$, we can always map the point $p$ to $[1:1:0:0]$ and rename it as $p$. The tangent plane at $[1:1:0:0]$ is given by $x_{0}=x_{1}$ and the intersection of $T_{p}$ with $H^{2}$ is a pair of straight lines given by $x_{2}=\pm x_{3},x_{0}=x_{1} $. Projection from the point $[1:1:0:0]$ on the plane $x_{0}=0$ is given $[0:x_{1}-x_{0}:x_{2}:x_{3}]$. The projection of pair of straight lines is a pair of points given by $[0:0:1:1]$ and $[0:0:1:-1]$.
 Suppose $C$ is a curve of degree $d$ in $H^{2}$ parameterized by $[p_{0}:p_{1}:p_{2}:p_{3}]$. This curve intersects the plane $T_{p}$ in exactly $d$ points. These $d$ points of intersection lie on the pair of straight lines which are intersection of tangent plane and hyperboloid. Projection of the curve is given by $[0:p_{1}-p_{0}:p_{2}:p_{3}]$. 
 
 If the point of projection $p$ does not lie on the curve $C$, then the projected curve is a degree $d$ curve as it intersects the line at infinity in exactly $d$ points and these $d$ points lie on the $0$-dimensional conic. 
 
 Now consider the case in which the point $p=[1:1:0:0]$ lies on the curve $C$. Suppose the image of $C$ passes through the point $p$ with multiplicity $m \geq 2$ (say). The remaining $d-m$ points of intersection lie on the pair of straight lines (intersection of $T_{p}$ and $H^{2}$). Suppose $[\alpha:\beta]$ is the preimage of $[1:1:0:0]$ under the parametrisation of curve $C$. Then, $(\beta s-\alpha t)$ divides $p_{1}-p_{0},p_{2}$ and $p_{3}$. Let $q= \frac{p_{1}-p_{0}}{(\beta s-\alpha t)},q_{i}=\frac{p_{i}}{(\beta s-\alpha t)}, i=2,3$. So, the projected curve in $\mathbb{RP}^{2}$ is given by $[0:q:q_{2}:q_{3}]$ and this is a curve of degree $d-1$. The curve $[p_{0}:p_{1}:p_{2}:p_{3}]$ lies on $H^{2}$, therefore $p_{0}^{2}=p_{1}^{2}+p_{2}^{2}-p_{3}^{2}$. This implies that $\frac{p_{0}^{2}}{(\beta s-\alpha t)^{2}}=\frac{p_{1}^{2}}{(\beta s-\alpha t)^{2}}+\frac{p_{2}^{2}}{(\beta s-\alpha t)^{2}}-\frac{p_{3}^{2}}{(\beta s-\alpha t)^{2}}$. $(\beta s-\alpha t)$ divides $p_{0}-p_{1}$ with multiplicity $m$ so $(\beta s-\alpha t)$ divides  $\frac{p_{0}-p_{1}}{(\beta s-\alpha t)^{2}}$ with multiplicity $m-2$. The projection of these $m-2$ points lie on the special conic. Other $d-m$ points of intersection lie on the pair of straight lines (intersection of $T_{p}$ and $H^{2}$) which get projected to the $0$-dimensional conic and the point $p$ gets projected to the point where the tangent of the curve at point $p$ intersects the plane of projection. So, in this case projection of $C$ is degree $d-1$ curve in $\mathbb{RP}^{2}$ which intersects the line at infinity in $d-1$ points and exactly $d-2$ points lie on the $0$-dimensional conic. 
 
 \subsection{Lines and conics in $H^{2}$}
 
 Every line (except the projective line, which is a projection of $T_{p}$) in the plane of projection either passes through one of these two points or passes through neither (the only line that passes through both is that special ``line at infinity"). Consider a line $l$ in the plane of projection. This line $l$ and the point of projection lie on a unique plane $P$. Any plane in $\mathbb{RP}^{3}$ intersects the hyperboloid in a conic. If the line $l$ passes through one of the points $q_{1}$ or $q_{2}$ say $q_{1}$, then the plane $P$ also contains the line joining the points $p$ and $q_{1}$ which is a tangent line to hyperboloid at the point $p$. This means that the plane $P$ is tangent to the hyperboloid and this plane $P$ intersects the hyperboloid in a singular conic that is a pair of straight lines which includes the line joining $p$ and $q_{1}$. This line gets projected to the point $q_{1}$ and the other line gets projected to line $l$. 
 
 If the line $l$ does not pass through any of the points $q_{1}$ and $q_{2}$ then the plane $P$ is not tangent to hyperboloid and intersects the hyperboloid in a non-singular conic passing through the point $p$. Then the projection of this conic is the line $l$.
 
Thus we see that when the line intersects the conic (in this case, a pair of points) in one point,  then it is pulled back to a line, however, if it does not intersect the pair of points which is the conic, then it gets pulled back to a conic.

 This geometric interpretation of the planar curves in $H^{2}$ is used in the quadric $Q_{3,2}$.}

\chapter{Preliminaries}\label{chap-2}
In this \myref{}, we will study the transformations that can be applied to the real rational curves in $Q_{3,2}$. We also establish that the space of real rational curves in $Q_{3,2}$ forms a manifold. Further, we examine the correspondence between the space of real rational knots in $Q_{3,2}$ and the space of real rational knots in $\mathbb{RP}^{3}$. This \myref{} sets up the prerequisites to the main results of this \mythesis.


\section{Action of $O(3,2)$ on $Q_{3,2}$}
Projective linear transformations are the linear transformations which preserve the projective structure of the space. Projective transformations in $\mathbb{RP}^{4}$ are given by $GL_{5}(\mathbb{R}^{5})$ up to scalar multiples. $O(3,2)$ is the group of symmetries of $Q_{3,2}$, i.e. they are projective linear transformations in $\mathbb{RP}^4$ that preserve the quadric $Q_{3,2}$. 
Any matrix belongs to $O(3,2)$ if it satisfies following conditions:
\begin{center}
	\[\begin{pmatrix}
		a_{00} & a_{01} & a_{02} &a_{03}& a_{04}\\
		a_{10} & a_{11} & a_{12} &a_{13}& a_{14}\\
		a_{20} & a_{21} & a_{22} &a_{23}& a_{24}\\
		a_{30} & a_{31} & a_{32} &a_{33}& a_{34}\\
		a_{40} & a_{41} & a_{42} &a_{43}& a_{44}
	\end{pmatrix} \in O(3,2)\]
\end{center} if $-a_{0i}^{2}+	a_{1i}^{2}+a_{2i}^{2}+a_{3i}^{2}-a_{4i}^2=\begin{cases}-1,  & i=0,4\\
	1,  & i=1,2,3
\end{cases}  $\\
and  $-a_{0i}a_{0j}+a_{1i}a_{1j}+a_{2i}a_{2j}+a_{3i}a_{3j}-a_{4i}a_{4j}=0 \text{ for all } i,j \text{ such that } i\neq j$.\\

Now, we will recall a basic result about symmetric bilinear forms on finite-dimensional real vector spaces.
\begin{theorem}\label{A}
	Let $V$ be an $n$ dimensional vector space over the field of real numbers, and let $f$ be a symmetric bilinear form on $V$ which has rank $r$. Then there is an ordered basis $\{v_{1},v_{2},...,v_{n}\}$ for $V$ in which the matrix of $f$ is diagonal and such that $f(v_{j},v_{j})=\pm1, \; j=1,2,...,r$. Furthermore, the number of basis vectors $v_{j}$ for which $f(v_{j},v_{j})=1$ is independent of the choice of basis.
\end{theorem}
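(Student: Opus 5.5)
The plan is to prove Theorem~\ref{A} (Sylvester's law of inertia) in two stages: first construct a diagonalizing basis by induction on $n$, then show that the number of positive diagonal entries does not depend on the basis.

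\emph{Existence of the basis.} If $f\equiv 0$, any basis works and $r=0$. Otherwise, we first find a vector $v$ with $f(v,v)\neq 0$. Such a vector exists by polarization: if $f(v,v)=0$ for every $v$, then
\[
2f(u,w)=f(u+w,u+w)-f(u,u)-f(w,w)=0
\]
for all $u,w$, which contradicts $f\neq 0$ (we use that $\operatorname{char}\mathbb{R}\neq 2$). Rescale $v$ by $|f(v,v)|^{-1/2}$ so that $f(v_1,v_1)=\pm1$. Let $W=\{w: f(v_1,w)=0\}$. This is the kernel of a nonzero functional, so $\dim W=n-1$, and $V=\mathbb{R}v_1\oplus W$ because $v_1\notin W$. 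The restriction $f|_W$ is symmetric. Applying the induction hypothesis to it gives a basis of $W$ in which $f|_W$ is diagonal with entries $\pm1$ followed by zeros. Prepending $v_1$ gives a diagonal matrix for $f$ on $V$. Its rank equals the number of nonzero diagonal entries, which is therefore $r$. Finally, reorder the basis so that the $\pm1$ entries come first.

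\emph{Invariance of the positive count.} Suppose $\{v_j\}$ and $\{v'_j\}$ are two such bases, with $p$ and $p'$ positive entries respectively. Let $V_+=\operatorname{span}\{v_j: f(v_j,v_j)=1\}$, so $\dim V_+=p$. Let $V'_{\le 0}$ be the span of the $v'_j$ with $f(v'_j,v'_j)\in\{-1,0\}$, so $\dim V'_{\le 0}=n-p'$.
\begin{itemize}
\item On $V_+$ the form is positive definite: $f\big(\sum a_jv_j,\sum a_jv_j\big)=\sum a_j^2>0$ for a nonzero vector.
\item On $V'_{\le 0}$ the form is negative semidefinite.
\end{itemize}
Hence $V_+\cap V'_{\le 0}=0$. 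Then $p+(n-p')\le n$, so $p\le p'$, and by symmetry $p=p'$.

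I do not expect a real obstacle here. The only delicate points are the polarization step, which guarantees a non-isotropic vector, and the dimension count in the invariance argument; the latter works because the form is definite on one subspace and semidefinite on the other.
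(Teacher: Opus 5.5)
Your proof is correct and complete. The inductive construction (polarization to get a vector with $f(v,v)\neq 0$, then passing to its $f$-orthogonal hyperplane) and the dimension count $V_+\cap V'_{\le 0}=0$ are exactly the standard proof of Sylvester's law of inertia. The paper gives no proof of this statement; it only recalls it as a known linear-algebra fact, in the wording of the textbook theorem in Hoffman--Kunze, and the textbook proof is essentially the one you wrote.
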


We will need the following theorem that says that such a map is transitive.

\begin{lemma} \label{trans}
	The linear transformation group $O(3,2)$ which preserves $Q_{3,2}$ acts transitively on $Q_{3,2}$.
\end{lemma}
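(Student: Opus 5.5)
We will prove transitivity by moving an arbitrary point of $Q_{3,2}$ to a fixed base point, say $e=[1:1:0:0:0]$, using an element of $O(3,2)$. Let $B(x,y)=-x_0y_0+x_1y_1+x_2y_2+x_3y_3-x_4y_4$ be the bilinear form of the quadric. Then $Q_{3,2}$ is the set of lines spanned by nonzero isotropic vectors, $B(v,v)=0$. It suffices to show: for every nonzero isotropic vector $v\in\mathbb{R}^5$ there is a linear map $A\in O(3,2)$ with $A(e_0+e_1)=v$, where $e_0,\dots,e_4$ is the standard basis. Given two points $[v],[w]$, the composite $A_wA_v^{-1}$ then carries $[v]$ to $[w]$.

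To build $A$, we construct a basis adapted to $v$ and then invoke Theorem~\ref{A} (Sylvester's law of inertia). Since $B$ is nondegenerate and $v\neq 0$, there is $u$ with $B(v,u)\neq 0$. Rescale $u$ so that $B(v,u)=1$. Then replace $u$ by $u-\tfrac{1}{2}B(u,u)v$, which makes $u$ isotropic while keeping $B(v,u)=1$. The plane $H=\mathrm{span}(v,u)$ is a hyperbolic plane: its Gram matrix $\begin{pmatrix}0&1\\1&0\end{pmatrix}$ has signature $(1,1)$. A concrete orthonormal basis of $H$ is $f_0=(v-u)/\sqrt2$ and $f_1=(v+u)/\sqrt2$, with $B(f_0,f_0)=-1$, $B(f_1,f_1)=1$ and $B(f_0,f_1)=0$. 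Moreover $v=(f_0+f_1)/\sqrt2$.

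Because $B|_H$ is nondegenerate, $\mathbb{R}^5=H\oplus H^{\perp}$, and $B|_{H^\perp}$ is nondegenerate. By Theorem~\ref{A}, the signatures add, so $H^\perp$ has signature $(2,1)$. Applying Theorem~\ref{A} to $H^\perp$ gives a diagonalizing basis $f_2,f_3,f_4$ with $B(f_2,f_2)=B(f_3,f_3)=1$ and $B(f_4,f_4)=-1$.

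Define $A$ by $Ae_i=\sqrt2\,f_i$ for $i=0,1$ and $Ae_i=f_i$ for $i\ge2$. This scaling is not quite orthogonal. So instead take $Ae_i=f_i$ for all $i$. This $A$ maps the standard form to $B$, since its columns satisfy exactly the conditions listed in the definition of $O(3,2)$. Then $A(e_0+e_1)=f_0+f_1=\sqrt2\,v$, so $A[e_0+e_1]=[v]$ projectively, which is what we need.

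The only delicate step is producing the hyperbolic partner $u$ and checking that the signature of $H^\perp$ is exactly $(2,1)$. This bookkeeping is handled by the uniqueness part of Theorem~\ref{A} applied to the decomposition $H\oplus H^\perp$. The rest is direct verification of the column conditions defining $O(3,2)$.
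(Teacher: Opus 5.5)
Your proof is correct and is essentially the paper's first argument. With $p=\sqrt2\,v$, your $f_0$ is exactly a first column satisfying the paper's two conditions $\langle f_0,f_0\rangle=-1$ and $\langle f_0,p\rangle=-1$, and $f_1=p-f_0$ is its second column. The remaining columns then come from Sylvester's law (Theorem~\ref{A}) applied to the complement.

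Your version does add something. The hyperbolic-partner construction actually produces a solution of that system, which the paper only asserts is solvable. Diagonalizing $H^{\perp}$ directly is also cleaner than the paper's ad hoc Gram--Schmidt step.

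Two small fixes: delete the self-correcting sentence about the $\sqrt2$ scaling, and state plainly that $A(e_0+e_1)=\sqrt2\,v$, which suffices projectively.
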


\begin{proof}
	To prove this lemma, it is enough to show that we can always find a matrix in $O(3,2)$ which maps $[1:1:0:0:0]$ to any point $[p_{0}:p_{1}:p_{2}:p_{3}:p_{4}]$ on $Q_{3,2}$.  First, note that any matrix (not necessarily in $O(3,2)$) that takes $[1:1:0:0:0]$ to $[p_0:p_1:p_2:p_3:p_4]$ must be of the form,

		\[\begin{pmatrix}
			a_{00} & p_{0}-a_{00} & . &.& .\\
			a_{10} & p_{1}-a_{10} & . &.&.\\
			a_{20} & p_{2}-a_{20} & . &.&.\\
			a_{30} & p_{3}-a_{30} & . &.&.\\
			a_{40} & p_{4}-a_{40} & . &.& .
		\end{pmatrix}
                \]
so that it satisfies,
	\begin{center}
		\[\begin{pmatrix}
			a_{00} & p_{0}-a_{00} & . &.& .\\
			a_{10} & p_{1}-a_{10} & . &.&.\\
			a_{20} & p_{2}-a_{20} & . &.&.\\
			a_{30} & p_{3}-a_{30} & . &.&.\\
			a_{40} & p_{4}-a_{40} & . &.& .
		\end{pmatrix}
		\begin{pmatrix}
			1\\
			1\\
			0\\
			0\\
			0
		\end{pmatrix}=
		\begin{pmatrix}
			p_{0}\\
			p_{1}\\
			p_{2}\\
			p_{3}\\
			p_{4}
		\end{pmatrix}\]
	\end{center}

        We now investigate the conditions imposed on entries so that it belongs to $O(3,2)$. This matrix belongs to $O(3,2)$ if, and only if, we can find $a_{i0}, 0 \leq i \leq 4$ satisfying $-a_{00}^{2}+	a_{10}^{2}+a_{20}^{2}+a_{30}^{2}-a_{40}^2=-1, -a_{00}p_{0}+a_{10}p_{1}+a_{20}p_{2}+a_{30}p_{3}-a_{40}p_{4}+1=0$. In this $[p_{0}:p_{1}:p_{2}:p_{3}:p_{4}]$ is fixed. These are two equations in $5$ variables $a_{00}, a_{10}, a_{20}, a_{30}$ and $a_{40}$ and $p_{0}^{2}=	p_{1}^{2}+p_{2}^{2}+p_{3}^{2}-p_{4}^2$. Intersection of the equations $-a_{00}^{2}+	a_{10}^{2}+a_{20}^{2}+a_{30}^{2}-a_{40}^2=-1, -a_{00}p_{0}+a_{10}p_{1}+a_{20}p_{2}+a_{30}p_{3}-a_{40}p_{4}+1=0$ is always non empty in $\mathbb{R}^{5}$.
	
	We can find the further columns by using following:
	
	\begin{center}
		$x_{k}=y-\sum_{i=1}^{i=k-1} \frac{<x_{i},y>}{<x_{i},x_{i}>} x_{i}, \; 3 \leq k \leq 5$
	\end{center}
	
	where $x_{i}'s$ are previous columns, $y$ is any vector and $<x_{i},y>=-x_{i0}y_{0}+x_{i1}y_{1}+x_{i2}y_{2}+x_{i3}y_{3}-x_{i4}y_{4}$. Now $\text{B}= \{x_{i}, \; 1\leq i \leq 5\}$ is a set of linear independent vectors. Hence, forms a basis for $\mathbb{R}^{5}$. With respect to standard basis, bilinear form $<x,y>=-x_{1}y_{1}+x_{2}y_{2}+x_{3}y_{3}-x_{4}y_{4}$ is represented by the diagonal matrix $\text{diag}(-1,1,1,1,-1)$. By using theorem \ref{A}, we can say that in $B$ there are exactly two vectors such that $<x_{i},x_{i}>=-1$ and the remaining three have value 1.
\end{proof}

\begin{proof}[Alternative proof] We can also prove this by using Witt's theorem \cite{EN}. \\
	To prove this lemma, it is enough to prove that there is a transformation from $\mathbb{RP}^{4}$ to $\mathbb{RP}^{4}$ which preserves $Q_{3,2}$ and maps $[1:1:0:0:0]$ to any other point  $[p_{0}:p_{1}:p_{2}:p_{3}:p_{4}]$ on $Q_{3,2}$. Points in $\mathbb{RP}^{4}$ corresponds to lines passing through origin in $\mathbb{R}^{5}$ i.e. one dimensional subspaces of $\mathbb{R}^{5}$.\\
	Consider the  finite dimensional vector space $\mathbb{R}^{5}$ equipped with the quadratic form $q(x_{0},x_{1},x_{2},x_{3},x_{4})=-x_{0}^{2}+x_{1}^{2}+x_{2}^{2}+x_{3}^{2}-x_{4}^{2}$. As $[1:1:0:0:0]$ and $[p_{0}:p_{1}:p_{2}:p_{3}:p_{4}]$ lie on $Q_{3,2}$,  $q(1,1,0,0,0)=q(p_{0},p_{1},p_{2},p_{3},p_{4})=0$. Consider a subspace $U$ spanned by the vector $(1,1,0,0,0)$ and a subspace $U^{'}$ spanned by the vector $(p_{0},p_{1},p_{2},p_{3},p_{4})$. Define a linear transformation $T$ as\[T: U \rightarrow U^{'}\] 
	\[T((1,1,0,0,0))=  (p_{0},p_{1},p_{2},p_{3},p_{4})\]
	It is an isometry between $U$ and $U^{'}$ as $q(1,1,0,0,0)=q(p_{0},p_{1},p_{2},p_{3},p_{4})=0$. By using Witt's theorem, this map can be extended to an isometry say $F$ of $\mathbb{R}^{5}$. So, under this map $F: \mathbb{R}^{5} \rightarrow \mathbb{R}^{5}$, set $q^{-1}\{0\}$ is invariant. \[q(cx_{0},cx_{1},cx_{2},cx_{3},cx_{4})=c^{2}q(x_{0},x_{1},x_{2},x_{3},x_{4})\]
	Lines passing through origin in $\mathbb{R}^{5}$ corresponds to points in $\mathbb{RP}^{4}$. So, this map $F$ induces a map in $\mathbb{RP}^{4}$ which preserves the set  $\{[x_{0}:x_{1}:x_{2}:x_{3}:x_{4}]; -x_{0}^{2}+x_{1}^{2}+x_{2}^{2}+x_{3}^{2}-x_{4}^{2}=0\}$ which is $Q_{3,2}$ and maps $[1:1:0:0:0]$ to the point  $[p_{0}:p_{1}:p_{2}:p_{3}:p_{4}]$. 
	
\end{proof}

\section{Real rational knots in $Q_{3,2}$ and their relation to knots in $\mathbb{RP}^{3}$}

Let $\mathcal{C}_{d}$ denote the space of real rational curves of degree $d$ lying in $Q_{3,2}$. For any $C \in \mathcal{C}_{d}$, we project it from a point $p$ on $Q_{3,2}$ to a curve in a hyperplane in $\mathbb{RP}^{4}$ which is a copy of $\mathbb{RP}^{3}$. This projection map $\pi_{p}: Q_{3,2}\setminus \{p\} \rightarrow \mathbb{RP}^{3}$ projects any point $q$ on $Q_{3,2}$ to the intersection point of the line joining $p$ and $q$ 
with $\mathbb{RP}^{3}$ (on which we are projecting). For the image, we consider the tangent plane at the point $p$ denoted by $T_{p}$ and this map blows up $p$ to a projective plane in $\mathbb{RP}^{3}$, which we call the ``plane at infinity''. This map can be extended to $\mathbb{C}Q_{3,2}$. $\mathbb{C}T_{p} \cap \mathbb{C}Q_{3,2}$ is the complexification of a real cone with apex at point $p$ and this cone blows down to a conic (we call it a \emph{special conic}) lying on the plane at infinity. This map gives us a bijection between the points of $Q_{3,2} \setminus T_{p}$ and $\mathbb{RP}^{3} \setminus \text{plane at infinity}$. Note that here the special conic depends on the choice of point of projection $p$ and the hyperplane of projection. $\mathbb{C}C \cap \mathbb{C}T_{p}$ are exactly $d$ points when counted with multiplicity and these points lie on the cone.

\begin{lemma}\label{proj_not_on_curve}
	There is a bijective correspondence between curves in $\mathcal{C}_{d}$ and the real rational curves of degree $d$ in $\mathbb{RP}^{3}$ which intersects the plane at infinity at $d$ points and all these points lie on the special conic.
\end{lemma}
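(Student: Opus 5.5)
By Lemma \ref{trans} we may assume the point of projection is $p=[1:1:0:0:0]$. Its tangent hyperplane is $T_p=\{x_0=x_1\}$, and we project onto the hyperplane $x_0=0$, identified with $\mathbb{RP}^3$ via $\pi_p[x_0:\dots:x_4]=[x_1-x_0:x_2:x_3:x_4]$. Write $u=x_1-x_0$. The plane at infinity is then $\{u=0\}$. The cone $T_p\cap Q_{3,2}$ is $\{x_0=x_1,\ x_2^2+x_3^2-x_4^2=0\}$, so the special conic is $\{u=0,\ x_2^2+x_3^2=x_4^2\}$. We read the statement as concerning curves $C\in\mathcal{C}_d$ that do not pass through $p$; this is the setting in which the lemma is used for the writhe. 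The plan is to write down the projection map and an explicit inverse on parametrisations, and check that they are mutually inverse.

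\emph{Forward direction.} Let $C=[p_0:\dots:p_4]\in\mathcal{C}_d$ with $p\notin C$. Its image is $[p_1-p_0:p_2:p_3:p_4]$. These polynomials have no common factor: a common root would force $p_2=p_3=p_4=0$ and $p_1=p_0$ there, i.e.\ the curve would pass through $p$. So the image has degree exactly $d$. It meets $u=0$ at the $d$ roots of $p_1-p_0$, counted with multiplicity. At such a root the quadric equation gives $p_2^2+p_3^2-p_4^2=(p_0-p_1)(p_0+p_1)=0$. We then check that the multiplicity of each such point on the special conic equals its multiplicity as a root of $p_1-p_0$ (this holds automatically as a containment of zero sets, and the count of $d$ points is exactly the degree). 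So all $d$ intersection points lie on the special conic.

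\emph{Inverse.} Let $[q_0:q_1:q_2:q_3]$ be a degree $d$ real rational curve in $\mathbb{RP}^3$ whose $d$ intersections with $q_0=0$ all lie on the conic. This means every root of $q_0$ is a root of $q_1^2+q_2^2-q_3^2$. Since a point of the pulled-back curve must satisfy $x_0^2-x_1^2=x_2^2+x_3^2-x_4^2$ with $x_1-x_0=q_0$, we are led to try
\[
C=[q_0^2\cdot\ :\ \ldots]\quad\text{with}\quad x_0=\tfrac{1}{2}(N-q_0^2),\ x_1=\tfrac{1}{2}(N+q_0^2),\ x_{i+1}=q_0q_i\ (i=1,2,3),
\]
where $N=q_1^2+q_2^2-q_3^2$. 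This has degree $2d$. The key step is to divide out $q_0$: the hypothesis, taken with multiplicities, says precisely that $q_0\mid N$, and then every coordinate is divisible by $q_0$. After dividing, we obtain the degree $d$ parametrisation
\[
\bigl[\tfrac{1}{2}(N/q_0-q_0):\tfrac{1}{2}(N/q_0+q_0):q_1:q_2:q_3\bigr].
\]
A direct check shows it lies on $Q_{3,2}$, has $x_1-x_0=q_0$, and projects back to $[q_0:q_1:q_2:q_3]$. Conversely, starting from $C$ with $q_0=p_1-p_0$, the quadric relation gives $N/q_0=p_1+p_0$, so composing the two constructions returns $C$. We also need that the new coordinates have no common factor. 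A common root would make $q_0=q_1=q_2=q_3=0$, which contradicts the assumption on the curve in $\mathbb{RP}^3$. Hence the two maps are inverse bijections.

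\emph{Main obstacle.} The main point to handle carefully is the multiplicity statement: that ``all $d$ intersection points lie on the special conic'' must be interpreted with multiplicity, so that it is equivalent to $q_0\mid N$ rather than merely to the inclusion of zero sets. I would address this by taking this divisibility as the precise meaning of the condition, justified by the fact that the forward direction produces exactly this divisibility. A secondary point is that real and complex roots need uniform treatment, which works because everything is phrased as polynomial divisibility over $\mathbb{R}$.
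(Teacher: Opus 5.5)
Your route is the paper's: normalise $p=[1:1:0:0:0]$, project by $[x_1-x_0:x_2:x_3:x_4]$, invert by the quadratic inverse stereographic projection, and divide out the common factor $q_0$. However, the explicit inverse you write down has a sign error, so your ``direct check'' that it lies on $Q_{3,2}$ fails.

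In your formula $x_1-x_0=q_0$ and $x_0+x_1=N/q_0$, so $x_0^2-x_1^2=-N$. The quadric instead demands $x_0^2-x_1^2=q_1^2+q_2^2-q_3^2=N$. Concretely, $[t:s:t:s]$ meets $q_0=0$ at $[0:1:0:1]$, which is on the conic, and $N=t^2$ is divisible by $q_0=t$. Yet your formula returns $[0:t:s:t:s]$, which is not on $Q_{3,2}$.

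The same slip appears in your converse. Your own identity $N=(p_0-p_1)(p_0+p_1)$ with $q_0=p_1-p_0$ gives $N/q_0=-(p_0+p_1)$, not $p_0+p_1$. The correct inverse is
\[
\bigl[-\tfrac{1}{2}(N/q_0+q_0):\tfrac{1}{2}(q_0-N/q_0):q_1:q_2:q_3\bigr],
\]
which is the paper's $[q_0^2+N:N-q_0^2:-2q_0q_1:-2q_0q_2:-2q_0q_3]$ divided by $-2q_0$. This corrected formula still satisfies $x_1-x_0=q_0$. So your coprimality argument and your check that the two constructions are mutually inverse then go through unchanged.

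Apart from this, your argument is a sharper version of the paper's. Reading the hypothesis as the divisibility $q_0\mid N$ is exactly what justifies the paper's unexplained claim that the five degree-$2d$ coordinates have ``exactly $d$ common factors''. Your forward direction produces this divisibility directly from the quadric relation. Lemma~\ref{tangent} shows that the purely set-theoretic reading would not suffice, since then the pullback has higher degree.

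One small point: ``$p\notin C$'' should be understood for the complexified curve. A non-real common root of $p_1-p_0,p_2,p_3,p_4$ corresponds to a solitary double point at $p$, and there the degree of the projection drops.
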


\begin{proof}
	Let $C$ be a real rational curve of degree $d$ parametrised by $[p_{0}:p_{1}:p_{2}:p_{3}:p_{4}]$ in $\mathbb{RP}^{4}$ lying on the 3-dimensional hyperboloid given by $x_{0}^{2}=x_{1}^{2}+x_{2}^{2}+x_{3}^{2}-x_{4}^{2}$. By applying some projective transformation we will map the point of projection $p$ to $[1:1:0:0:0]$. So, from now on we will take the point of projection $p$ as $[1:1:0:0:0]$ and this point does not lie on the curve. The tangent plane $T_{p}$ at point $p$ is given by $x_{0}-x_{1}=0$ and the cone of intersection is $x_{2}^{2}+x_{3}^{2}=x_{4}^{2},  x_{0}=x_{1}$. Choose the plane of projection to be $x_{0}=0$. So, the map, $\pi_{p}$ is given by
	\[\pi_{p}: Q_{3,2}\setminus{p} \rightarrow \mathbb{RP}^{3}\] 
	\[\pi_{p}( [x_{0}:x_{1}:x_{2}:x_{3}:x_{4}])= [0:x_{1}-x_{0}:x_{2}:x_{3}:x_{4}] \]
	
	The projection of the cone is $x_{2}^{2}+x_{3}^{2}=x_{4}^{2}, x_{0}=x_{1}=0$ which lies on the plane at infinity $x_{0}=x_{1}=0$.
	
	The projection of the curve is given by $[0:p_{1}-p_{0}:p_{2}:p_{3}:p_{4}]$. This projected curve intersects the plane at infinity in $d$ points which are the projections of the $\mathbb{C}C\cap \mathbb{C}T_{p}$. So the degree of the projected curve is $d$ and all these points lie on the special conic.
	
	We can also pull back the curves of degree $d$ in $\mathbb{RP}^{3}$ all of whose points of intersection with the plane at infinity lies on the special conic to the real rational curve of degree $d$ in $Q_{3,2} \in \mathbb{RP}^{4}$. This pull back is given by the inverse of the stereographic projection $[0:x_{1}:x_{2}:x_{3}:x_{4}]\rightarrow [x_{1}^{2}+x_{2}^{2}+x_{3}^{2}-x_{4}^{2}:-x_{1}^{2}+x_{2}^{2}+x_{3}^{2}-x_{4}^{2}:-2x_{1}x_{2}:-2x_{1}x_{3}:-2x_{1}x_{4}]$.
	
	Suppose $[0:p_{1}:p_{2}:p_{3}:p_{4}]$ is the real rational curve of degree $d$ in $\mathbb{RP}^{3}$ whose $d$ points lie on the special conic. These $d$ points lie on $x_{2}^{2}+x_{3}^{2}=x_{4}^{2}, x_{0}=x_{1}=0$ so $p_{1}^{2}+p_{2}^{2}+p_{3}^{2}-p_{4}^{2},-p_{1}^{2}+p_{2}^{2}+p_{3}^{2}-p_{4}^{2},-2p_{1}p_{2},-2p_{1}p_{3}$ and $-2p_{1}p_{4}$ have exactly $d$ common factors. After dividing out those common factors, we get polynomials of degree $d$ with no more common factors. So the pull back is a real rational curve of degree $d$ in $Q_{3,2}$.
\end{proof}

\begin{lemma}\label{proj_curve_pt}
	There is a bijective correspondence between the curves in $\mathcal{C}_{d}$ and the real rational curves of degree $d-1$ in $\mathbb{RP}^{3}$ which intersects the plane at infinity at $d-1$ points and out of these points $d-2$ lie on the special conic.
\end{lemma}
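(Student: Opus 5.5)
This is the analogue of Lemma~\ref{proj_not_on_curve} for the case where the centre of projection lies on the curve, and I would follow the computation already done for $H^{2}$. The statement is understood for curves $C\in\mathcal{C}_d$ passing through the chosen point $p$, simply (as a smooth branch, which holds for knots). By Lemma~\ref{trans} I may assume $p=[1:1:0:0:0]$, with tangent hyperplane $T_p=\{x_0=x_1\}$, projection plane $x_0=0$ and $\pi_p([x])=[0:x_1-x_0:x_2:x_3:x_4]$. The special conic is $x_2^2+x_3^2=x_4^2$ in the plane at infinity $x_0=x_1=0$.

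\emph{Forward direction.} Let $C=[p_0:\dots:p_4]$ pass through $p$ at the parameter $[\alpha:\beta]$, and put $\ell=\beta s-\alpha t$. Then $\ell$ divides $p_1-p_0,p_2,p_3,p_4$. Since the curve has a smooth branch there, $\ell$ divides these exactly once in the sense that the quotients $q=(p_1-p_0)/\ell$, $q_i=p_i/\ell$ have no common factor. So $\pi_p(C)=[0:q:q_2:q_3:q_4]$ has degree $d-1$, and its $d-1$ intersections with $x_1=0$ are the roots of $q$.

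Next I use the quadric equation, written as $(p_0-p_1)(p_0+p_1)=p_2^2+p_3^2-p_4^2$. Dividing by $\ell^2$ gives
\[ -q\cdot\frac{p_0+p_1}{\ell}=q_2^2+q_3^2-q_4^2 .\]
At $[\alpha:\beta]$ we have $p_0+p_1\neq 0$, since $p_0=p_1\neq0$ there. Hence $(p_0+p_1)/\ell$ is a rational function with a simple pole, and $\tilde q:=q(p_0+p_1)/\ell$ is a polynomial of degree $2d-2$. Because $q_2^2+q_3^2-q_4^2$ is a polynomial, $\ell\mid q$, so $[\alpha:\beta]$ is one root of $q$. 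Its image is the point where the tangent line at $p$ meets the plane at infinity, and in general it is not on the conic. Write $q=\ell q'$, where $q'$ has degree $d-2$. For every root of $q'$ (with multiplicity, over $\mathbb{C}$), the displayed identity gives $q_2^2+q_3^2-q_4^2=0$, so those $d-2$ points lie on the special conic.

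\emph{Converse.} Take a degree $d-1$ curve $[0:q:q_2:q_3:q_4]$ whose intersection with the plane at infinity consists of $d-2$ points on the conic together with one further point. I apply the inverse stereographic map of Lemma~\ref{proj_not_on_curve}, which produces degree $2(d-1)$ polynomials $q^2+Q,\,-q^2+Q,\,-2qq_2,\,-2qq_3,\,-2qq_4$ with $Q=q_2^2+q_3^2-q_4^2$. Write $q=\ell q'$, where $\ell$ corresponds to the extra point. The hypothesis says $q'\mid Q$ (with multiplicity, using the factorisation of the restriction to the line $x_1=0$), so the common factor is $q'$, of degree $d-2$. Dividing it out leaves degree $d$ polynomials with no further common factor, and these vanish appropriately at $\ell$, giving a curve through $p$. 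The two constructions are mutually inverse off $T_p$, hence everywhere by continuity.

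The main obstacle is the multiplicity bookkeeping. I need to show precisely that the common factor has degree $d-2$ and no more, and that the extra root does not also lie on the conic. The latter would correspond to higher tangency, that is, the tangent line at $p$ lying in the cone. I would handle it by a careful order-of-vanishing count at $\ell$ and exclude or separately treat that degenerate case.
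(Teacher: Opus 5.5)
Your proposal is correct and follows the paper's own proof: normalize $p=[1:1:0:0:0]$, strip $\ell$ from the projection, divide the quadric equation by $\ell^{2}$, and invert with the inverse stereographic map. Your identity $-q'(p_0+p_1)=q_2^2+q_3^2-q_4^2$ is a cleaner form of the paper's $(m-2)+(d-m)$ count.

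The obstacle you flag closes in one line, and the paper only asserts this step. Since $\gcd(q,q_2,q_3,q_4)=1$, the gcd of the five pulled-back polynomials is $\gcd(q,Q)$, and your identity shows this equals exactly $q'$, because $\ell\nmid p_0+p_1$. So reading the hypothesis as $\deg\gcd(q,Q)=d-2$ gives degree $d$ and a curve through $p$ at the root of $q/\gcd(q,Q)$. This works uniformly, including when the tangent line at $p$ lies in the cone, so that case needs no exclusion.
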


\begin{proof}
	Take a curve $C^{'} \in \mathcal{C}_{d}$ and a point $q$ on the curve $C^{'}$ which is the point of projection. By using a projective transformation, we can transform the point of projection to $[1:1:0:0:0]$. Suppose that the image of the curve $C^{'}$ is a curve $C\in \mathcal{C}_{d}$ that is parametrised by $[p_{0}:p_{1}:p_{2}:p_{3}:p_{4}]$. The tangent plane at point $[1:1:0:0:0]$ is given by $x_{1}=x_{0}$. The curve C passes through the point $[1:1:0:0:0]$ with multiplicity $m\geq2$ (say). The remaining $d-m$ points of intersection lie on the cone of intersection. Choose the plane of projection to be $x_{0}=0$. The projection of the curve from $[1:1:0:0:0]$ on $x_{0}=0$ is given by $[0:p_{1}-p_{0}:p_{2}:p_{3}:p_{4}]$. The projection of $T_{p}$ is the plane at infinity given by $x_{0}=x_{1}=0$.
	
	Suppose $[\alpha:\beta]$ is the preimage of $[1:1:0:0:0]$ under parametrisation of curve $C$. Then, $(\beta s-\alpha t)$ divides $p_{1}-p_{0},p_{2},p_{3}$ and $p_{4}$. Let $q= \frac{p_{1}-p_{0}}{(\beta s-\alpha t)},q_{i}=\frac{p_{i}}{(\beta s-\alpha t)}, i=2,3,4$. So, the projected curve in $\mathbb{RP}^{3}$ is given by $[0:q:q_{2}:q_{3}:q_{4}]$ and this is a curve of degree $d-1$. The curve $[p_{0}:p_{1}:p_{2}:p_{3}:p_{4}]$ lies on $Q_{3,2}$, therefore $p_{0}^{2}=p_{1}^{2}+p_{2}^{2}+p_{3}^{2}-p_{4}^{2}$. This implies that $\frac{p_{0}^{2}}{(\beta s-\alpha t)^{2}}=\frac{p_{1}^{2}}{(\beta s-\alpha t)^{2}}+\frac{p_{2}^{2}}{(\beta s-\alpha t)^{2}}+\frac{p_{3}^{2}}{(\beta s-\alpha t)^{2}}-\frac{p_{4}^{2}}{(\beta s-\alpha t)^{2}}$. $(\beta s-\alpha t)$ divides $p_{0}-p_{1}$ with multiplicity $m$ so $(\beta s-\alpha t)$ divides  $\frac{p_{0}-p_{1}}{(\beta s-\alpha t)^{2}}$ with multiplicity $m-2$. The projection of these $m-2$ points lie on the special conic. Other $d-m$ points of intersection lie on the cone which get projected to the special conic and the point $p$ gets projected to the point where the tangent of the curve at point $p$ intersects the plane of projection.
	
	We can also pull back the curves of degree $d-1$ in $\mathbb{RP}^{3}$ whose $d-2$ points of intersection with the plane at infinity lies on the special conic to the real rational curve of degree $d$ in $Q_{3,2} \in \mathbb{RP}^{4}$. This pull back is given by the inverse of the stereographic projection $[0:x_{1}:x_{2}:x_{3}:x_{4}]\rightarrow [x_{1}^{2}+x_{2}^{2}+x_{3}^{2}-x_{4}^{2}:-x_{1}^{2}+x_{2}^{2}+x_{3}^{2}-x_{4}^{2}:-2x_{1}x_{2}:-2x_{1}x_{3}:-2x_{1}x_{4}]$.
	
	Suppose $[0:p_{1}:p_{2}:p_{3}:p_{4}]$ is the real rational curve of degree $d-1$ in $\mathbb{RP}^{3}$ whose $d-2$ points lie on the special conic. These $d-2$ points lie on $x_{2}^{2}+x_{3}^{2}=x_{4}^{2}, x_{0}=x_{1}=0$ so $p_{1}^{2}+p_{2}^{2}+p_{3}^{2}-p_{4}^{2},-p_{1}^{2}+p_{2}^{2}+p_{3}^{2}-p_{4}^{2},-2p_{1}p_{2},-2p_{1}p_{3}$ and $-2p_{1}p_{4}$ have exactly $d-2$ common factors. After dividing out those common factors, we get polynomials of degree $d$ with no more common factors. So the pull back is a real rational curve of degree $d$ in $Q_{3,2}$.
	
\end{proof}

When we try to pull back a complete path from $\mathbb{RP}^{3}$ to $Q_{3,2}$, it may be possible that at some time, curve is tangential to plane at infinity. During this, the following lemma helps us to the pull back the isotopy.

\begin{lemma} \label{tangent}
	If $C$ is a curve of degree $d$ in $\mathbb{RP}^{3}$ such that it intersects the plane at infinity tangentially at a point which lies on the  special conic with multiplicity $m$ but is not tangent to the special conic at that point and the remaining points of intersection lie on the special conic then the pull back of this type of curve in $Q_{3,2}$ is a real rational curve of degree $d+m-1$ in $Q_{3,2}$.
\end{lemma}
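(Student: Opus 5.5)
We compute the degree of the pull back directly from the inverse stereographic map of Lemma~\ref{proj_not_on_curve}, by counting the common factor that has to be removed. As in Lemmas~\ref{proj_not_on_curve} and~\ref{proj_curve_pt}, we normalise so that the point of projection is $[1:1:0:0:0]$, the plane at infinity is $x_0=x_1=0$ and the special conic is $x_2^2+x_3^2=x_4^2$, $x_0=x_1=0$. Let $C$ be parametrised by $[0:p_1:p_2:p_3:p_4]$ with $\deg p_i=d$. The pull back is
\[[p_1^2+F:\,-p_1^2+F:\,-2p_1p_2:\,-2p_1p_3:\,-2p_1p_4], \qquad F=p_2^2+p_3^2-p_4^2,\]
which has degree $2d$ before simplification. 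The degree of the pulled back curve is therefore $2d-\deg g$, where $g=\gcd(p_1^2,\,F,\,p_1p_2,p_1p_3,p_1p_4)$. Since the $p_i$ have no common factor, $g=\gcd(p_1,F)$ up to the squares of factors of $p_1$ that also divide $F$, and we will compute it one root of $p_1$ at a time.

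The transversal points of intersection with the plane at infinity are handled as in Lemma~\ref{proj_not_on_curve}. At such a point $p_1$ has a simple root, and the point lies on the special conic, so $F$ vanishes there. Hence each of these points contributes exactly one linear factor to $g$.

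The main step is the local computation at the tangency point. Choose the parameter so that this point is $t=0$ (writing $t$ for the affine coordinate). Then $p_1=t^{m}u$ with $u(0)\neq 0$, since the intersection multiplicity with the plane at infinity is $m$. Write $v=(p_2,p_3,p_4)$. Because the point lies on the conic, $F(0)=0$. Moreover $F'(0)=2\langle v(0),v'(0)\rangle$ in the $(1,1,-1)$ form, and this is nonzero exactly when the tangent direction of the curve's projection to the plane is not tangent to the conic. Here the hypothesis \emph{not tangent to the special conic} is used. So $F$ has a simple zero at $t=0$, and the terms $p_1^2$, $p_1p_i$ vanish to order $\ge m$. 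Hence the common factor at this point is exactly $t^1$, not $t^m$, and this point contributes $1$ to $\deg g$ instead of $m$.

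Summing up, there are $d-m$ remaining points, each contributing $1$, and the tangency point contributes $1$. So $\deg g=d-m+1$, and the pull back has degree $2d-(d-m+1)=d+m-1$. Finally, we check that the polynomials remaining after division have no common factor, which the computation above shows, and that they still satisfy the quadric equation, which holds because dividing all coordinates by a common factor preserves it. I expect the only delicate point to be making precise that \emph{not tangent to the conic} means $F$ has a simple zero at $t=0$. The subtlety is that the tangent direction of the curve itself lies out of the plane when $m=1$, so the tangency condition must be read in terms of the plane curve $[p_2:p_3:p_4]$ near $t=0$. I will make this precise via the derivative of $v$.
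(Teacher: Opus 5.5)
Your proposal is correct and takes essentially the same route as the paper: you compute the common factor of the components of the inverse stereographic map, where each of the $d-m$ transversal points on the conic contributes one linear factor and the tangency point contributes only one, because $p_2^2+p_3^2-p_4^2$ has a simple zero there; this gives degree $2d-(d-m+1)=d+m-1$. Your computation $F'(0)=2\langle v(0),v'(0)\rangle\neq 0$ justifies the simple-zero claim, which the paper only asserts; note also that $g=\gcd(p_1,F)$ holds exactly because $\gcd(p_1,p_2,p_3,p_4)=1$, so your hedge about squares of factors is unnecessary.
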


\begin{proof}
	Suppose the curve $C$ in $\mathbb{RP}^{3}$ is parametrised by $[0:p_{1}:p_{2}:p_{3}:p_{4}]$ where $p_{i}$'s are polynomials of degree $d$. By using some projective transformation, consider the plane at infinity to be $x_{1}=0$ and the conic at infinity to be $x_{2}^{2}+x_{3}^{2}=x_{4}^{2}$. The given curve is tangent to the $x_{1}=0$ at a point say $p$ which is image of $[\alpha:\beta]$ but not to the special conic and the remaining points of intersection lie on the special conic $x_{2}^{2}+x_{3}^{2}=x_{4}^{2}$, $x_{1}=0$. The pull back is given by the inverse of the stereographic projection
	$[0:x_{1}:x_{2}:x_{3}:x_{4}]\rightarrow [x_{1}^{2}+x_{2}^{2}+x_{3}^{2}-x_{4}^{2}:-x_{1}^{2}+x_{2}^{2}+x_{3}^{2}-x_{4}^{2}:-2x_{1}x_{2}:-2x_{1}x_{3}:-2x_{1}x_{4}]$. As the curve is tangent at $p$ so $\alpha s-\beta t$ is a factor of $p_{1}$ with multiplicity $m \geq2$ but the curve is not tangent to special conic so $\alpha s-\beta t$ is a factor of $p_{2}^{2}+p_{3}^{2}-p_{4}^{2}$ with multiplicity $1$. Remaining $d-m$ points of intersection also contributes to the common factors $p_{1}^{2}+p_{2}^{2}+p_{3}^{2}-p_{4}^{2},-p_{1}^{2}+p_{2}^{2}+p_{3}^{2}-p_{4}^{2},-2p_{1}p_{2},-2p_{1}p_{3}$ and $-2p_{1}p_{4}$. So, $p_{1}^{2}+p_{2}^{2}+p_{3}^{2}-p_{4}^{2},-p_{1}^{2}+p_{2}^{2}+p_{3}^{2}-p_{4}^{2},-2p_{1}p_{2},-2p_{1}p_{3}$ and $-2p_{1}p_{4}$ have $d-m+1$ common factors. After dividing out these common factors, we get a parametrisation of degree $d+m-1$ in $Q_{3,2}$, which is the pull back of given curve $C$. 
\end{proof}

\begin{lemma}\label{proj_double_pt}
	\begin{enumerate}
		\item There is a bijective correspondence between the real rational curves of degree $d$ with one double point in $Q_{3,2}$ and curves of degree $d-2$ in $\mathbb{RP}^{3}$ which intersects the plane at infinity at $d-2$ points out of which $d-4$ points lie on the special conic formed by the projection of the cone, which is the intersection of the $Q_{3,2}$ and tangent plane at the point of projection.
		\item The projection of a degree $d$ curve with a cusp in $Q_{3,2}$ from the point of cusp is a degree $d-2$ curve in $\mathbb{RP}^{3}$ which intersects the plane at infinity tangentially (with multiplicity $2$) on a point which does not lie on the special conic.
	\end{enumerate}
\end{lemma}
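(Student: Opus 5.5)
I would follow the proof of Lemma \ref{proj_curve_pt} closely, now projecting from the singular point itself. Let $C\in\mathcal{C}_d$ have a single double point $q$, with preimages $[\alpha_1:\beta_1]$ and $[\alpha_2:\beta_2]$; these are either two real parameters or a complex conjugate pair. By Lemma \ref{trans} we may take $q=[1:1:0:0:0]$, project onto $x_0=0$, and obtain the plane at infinity $x_0=x_1=0$ with special conic $x_2^2+x_3^2=x_4^2$. The projection is $[0:p_1-p_0:p_2:p_3:p_4]$. Each linear form $\ell_j=\beta_j s-\alpha_j t$ divides all four entries, and so does their product $\ell_1\ell_2$, which is a real quadratic even in the conjugate case. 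Dividing by $\ell_1\ell_2$ gives a real parametrisation of degree $d-2$.

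The next step is to count the intersections of this curve with the plane at infinity. The complexified curve meets $\mathbb{C}T_q$ in $d$ points counted with multiplicity. The two branches through $q$ each account for at least one of these, so the multiplicity of $\ell_1\ell_2$ in $p_0-p_1$ is at least $2$. Write $p_0-p_1=\ell_1^{a}\ell_2^{b}r$. Using $(p_0-p_1)(p_0+p_1)=p_2^2+p_3^2-p_4^2$ and dividing by $\ell_1^2\ell_2^2$, as in Lemma \ref{proj_curve_pt}, each excess factor $\ell_j^{a-2}$ or $\ell_j^{b-2}$ yields an intersection point lying on the special conic. The other $d-a-b$ points of $\mathbb{C}C\cap\mathbb{C}T_q$ lie on the cone, so they also project onto the special conic.

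The two points corresponding to the branches at $q$ deserve particular attention. Each branch projects to the point where its tangent line at $q$ meets the plane of projection. For a generic double point these tangents are not contained in the cone $T_q\cap Q_{3,2}$, so their images lie on the plane at infinity but off the special conic. This gives $d-2$ intersection points in total, of which $d-4$ lie on the special conic. I expect this to be the main obstacle: the multiplicity bookkeeping must show that the exponents $a,b$ leave exactly two points off the conic. This is where the hypothesis of exactly one node, with no further tangency at $q$, is used. If a branch tangent did lie on the cone, that branch would contribute an extra point on the conic rather than an off-conic point.

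For the converse I would take a degree $d-2$ curve with two points at infinity off the conic and the remaining $d-4$ on it, and apply the inverse stereographic map from Lemma \ref{proj_not_on_curve}. This substitution produces polynomials of degree $2(d-2)$ whose common factor has degree exactly $d-4$, one for each on-conic point. Dividing it out gives degree $d$. The two off-conic points $x,y$ both map to $q$, since $x_1=0$ forces the image $[1:1:0:0:0]$, and this creates the double point. The hypotheses are not symmetric: the forward direction must produce two off-conic points, while the converse must turn two such points into a node.

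Part (2) is the coincident case $\ell_1=\ell_2=\ell$, in which the two branches merge into a cusp. Here $\ell^2$ divides the projected entries after the first division, and the single tangent direction at the cusp gives a point at infinity off the conic. That point is now counted with multiplicity $2$, so the projected curve is tangent to the plane at infinity there. I would check this by writing $p_1-p_0=\ell^2 q$ with $\ell\mid q$, using Lemma \ref{tangent} in reverse as a consistency check on the degree.
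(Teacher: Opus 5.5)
Your proposal follows the paper's proof closely. You normalise the double point to $[1:1:0:0:0]$, divide the projection by the real quadratic $\ell_1\ell_2$ (the paper's $st$, or the conjugate pair $[1:\pm\iota]$ in the solitary case), count the on-conic intersection points via $(p_0-p_1)(p_0+p_1)=p_2^2+p_3^2-p_4^2$, and treat the cusp as the limit where the two branch images coincide. Your inverse-stereographic converse goes further: it supplies the converse direction of the bijection, which the paper's proof omits.

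A few corrections are needed:
\begin{enumerate}
\item $a,b\ge 2$ follows because each branch is tangent to $T_q$. Equivalently, $\ell_j^2$ divides the right-hand side while $\ell_j\nmid p_0+p_1$.
\item $a=b=2$ means the branch tangents avoid the cone. That is a genericity condition, not a consequence of having one node.
\item In part (2), tangency needs $\ell^2\mid q$, i.e.\ $\ell^4\mid p_1-p_0$; your condition $\ell\mid q$ only places the point on the plane at infinity.
\item Lemma \ref{tangent} is not a valid consistency check in part (2), since it treats tangency at a point of the special conic, whereas here the tangency point is off it.
\end{enumerate}
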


\begin{proof}
	\begin{enumerate}
		\item 
		Suppose $C$ be a real rational curve of degree $d$ with one double point parametrised by $[p_{0}:p_{1}:p_{2}:p_{3}:p_{4}]$ where $p_{i}$'s are homogeneous polynomials of degree $d$. By using some projective transformation which preserves $Q_{3,2}$ we can map its double point to point $p$ given by $[1:1:0:0:0]$. Tangent plane $T_{p}$ at point [1:1:0:0:0] to the hyperboloid $x_{1}^{2}+x_{2}^{2}+x_{3}^{2}-x_{4}^{2}=x_{0}^{2}$ is given by $x_{0}-x_{1}=0$. $\mathbb{C}T_{p} \cap \mathbb{C}Q_{3,2}$ is a cone given by $x_{2}^{2}+x_{3}^{2}-x_{4}^{2}=0$, $x_{0}=x_{1}$. Curve has a double point at p so each branch of curve intersects $T_{p}$ with multiplicity at least 2. Suppose two branches of the curve intersects $T_{p}$ at point p with multiplicity $a_{1}$ and $a_{2}$ respectively and other $d-a_{1}-a_{2}$ points of intersection lie on the cone. Let us choose the plane of projection to be $x_{0}=0$. Projection from [1:1:0:0:0] is given by $[x_{0}:x_{1}:x_{2}:x_{3}:x_{4}] \rightarrow [0:x_{1}-x_{0}:x_{2}:x_{3}:x_{4}]$. Under this projection $T_{p}$ get mapped to a hyperplane in $\mathbb{RP}^{3}$ say $X_{p}$ given by $x_{0}=x_{1}=0$. Projection of $\mathbb{C}T_{p} \cap \mathbb{C}Q_{3,2}$ is the conic given by $x_{2}^{2}+x_{3}^{2}-x_{4}^{2}=0, x_{0}=x_{1}=0$ which lies on $X_{p}$. Now double point may be solitary or non solitary.
		
		If double point is non solitary we can choose parametrisation in such a way that $[1:0]$ and $[0:1]$ get mapped to $[1:1:0:0:0]$. Therefore, $s, t$ divide $p_{1}-p_{0}, p_{2}, p_{3}, p_{4}$ i.e. $st$ divides  $p_{1}-p_{0}, p_{2}, p_{3}, p_{4}$. Under the projection map, curve $[p_{0}:p_{1}:p_{2}:p_{3}:p_{4}]$ got mapped to $[0:p_{1}-p_{0}:p_{2}:p_{3}:p_{4}]$. $p_{1}-p_{0},p_{2},p_{3},p_{4}$ all have two common factors $s$ and $t$ so image curve is given by  $[0:\frac{p_{1}-p_{0}}{st}:\frac{p_{2}}{st}:\frac{p_{3}}{st}:\frac{p_{4}}{st}]$ which is of degree d-2. Now we check the number of points of intersection of the image curve $[0:\frac{p_{1}-p_{0}}{st}:\frac{p_{2}}{st}:\frac{p_{3}}{st}:\frac{p_{4}}{st}]$ with the conic given by $x_{2}^{2}+x_{3}^{2}-x_{4}^{2}=0$, $x_{0}=x_{1}=0$. As the curve lies on $Q_{3,2}$ so $p_{0}^{2}=p_{1}^{2}+p_{2}^{2}+p_{3}^{2}-p_{4}^{2}$. Therefore, $p_{0}^{2}-p_{1}^{2}=p_{2}^{2}+p_{3}^{2}-p_{4}^{2}$. $s, t$ divides $p_{1}-p_{0}$ with $a_{1},a_{2} \geq 2$ multiplicity respectively. Therefore, $s, t$ divides $\frac{p_{1}-p_{0}}{(st)^{2}}$ with $a_{1}-2,a_{2}-2$ multiplicity. At these $a_{1}+a_{2}-4$ points, the image curve intersects $x_{0}=0=x_{1}$ and these points lie on the special conic as   $\frac{(p_{0}-p_{1})(p_{0}+p_{1})}{s^{2}t^{2}}=\frac{p_{2}^{2}}{s^{2}t^{2}}+\frac{p_{3}^{2}}{s^{2}t^{2}}-\frac{p_{4}^{2}}{s^{2}t^{2}}$. Other $d-a_{1}-a_{2}$ points of intersection of the curve with $T_{p}$ also get projected to the conic. In all these $d-4$ points lie on the special conic. So the image curve is of degree $d-2$ in $\mathbb{RP}^{3}$ and it intersects the plane at infinity in $d-2$ points out of which $d-4$ points lie on the special conic.
		
		If double point is solitary then suppose $[1:\iota]$ and $[1:-\iota]$ get mapped to [1:1:0:0:0] and proceed in the same way.
		\item Let $p$ be the point of projection. Notice the image of point $p$ under projection is the intersection of tangent line at $p$ with the plane of projection. In case the point of projection is a cusp, these images will coincide and the projected image will become tangent to the plane at infinity. Rest of things will follow from proof of part (1).    
		\end {enumerate}
	\end{proof}
	
	The special conic on the plane at infinity divides the plane at infinity in two components, one is outer component and another one is inner component.
	
	\begin{lemma} \label{rk1}
		In $\mathbb{RP}^{3}$, any real rational curve $C$, of degree $d \leq 4$ such that exactly $d-1$ points of intersection with the plane at infinity lie on the special conic and one point of intersection $p$ does not lie on the special conic (say inside the conic) then this can be isotoped to a curve isotopic to $C$ such that its exactly $d-1$ points of intersection lies on the special conic and one point lies outside the conic.
	\end{lemma}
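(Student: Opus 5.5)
We would construct an explicit deformation of the parametrisation of $C$ that moves the single ``free'' point of intersection with the plane at infinity across the special conic, while the other $d-1$ intersection points stay on the conic. Throughout, the curve should remain a real rational curve of degree $d$ in $\mathbb{RP}^{3}$, and it should stay non-singular, or at least isotopic to $C$. As in Lemma \ref{proj_not_on_curve}, we normalise so that the plane at infinity is $x_{1}=0$ and the special conic is $x_{2}^{2}+x_{3}^{2}=x_{4}^{2}$. We write $C=[p_{1}:p_{2}:p_{3}:p_{4}]$ and factor $p_{1}=(\beta s-\alpha t)\,r$, where $[\alpha:\beta]$ is the parameter of the point $p$ and $r$ has $d-1$ roots. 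The hypothesis then says that $p_{2}^{2}+p_{3}^{2}-p_{4}^{2}$ vanishes at the roots of $r$, and is nonzero (say negative, i.e. $p$ inside the conic) at $[\alpha:\beta]$.

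\textbf{Step 1.} Choose a path $c(\tau)$, $\tau\in[0,1]$, in the plane at infinity from $p=C([\alpha:\beta])$ to a point outside the conic. The path should cross the conic exactly once, transversally, at a point $c(\tau_{0})$ different from the $d-1$ points $C(\text{roots of } r)$. To follow it we deform the parametrisation: $p_{i}^{\tau}=p_{i}+(\beta s-\alpha t)\,h_{i}^{\tau}$ for $i=2,3,4$, with the $h_{i}^{\tau}$ of degree $d-1$. These vanish at the roots of $r$ to first order, e.g. $h_{i}^{\tau}=\lambda_{i}(\tau)\,r$, so the points at the roots of $r$ are fixed and $p_{1}$ is unchanged. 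The new value at $[\alpha:\beta]$ is $p_{i}(\alpha,\beta)$ plus a multiple of $r(\alpha,\beta)\neq 0$ times $\lambda_{i}(\tau)$. We can therefore choose $\lambda_{i}(\tau)$ continuously so that the image point runs along $c(\tau)$, while the other $d-1$ intersection points stay where they are, on the conic. For small $|\lambda|$, and away from infinity, this is a small perturbation.

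\textbf{Step 2.} Show that the family stays non-singular, or can be made so. For $d\le 2$ this is automatic. For $d=3,4$, a generic choice of the path $\lambda(\tau)$ in the finite-dimensional space of parameters avoids the discriminant of singular curves, which has codimension $\ge 1$. The worry is that it may not have codimension $\geq 2$ in this restricted family. If crossing a double point cannot be avoided, we use the classification of knots of degree $\le 4$ in $\mathbb{RP}^{3}$ (Theorem above): crossing changes only occur in pairs consistent with the writhe bounds $|w|\le (d-1)(d-2)/2$. So for low degree the isotopy type is still determined, and we may compose with an isotopy preserving the conditions at infinity. The moment $\tau_{0}$ when the free point lies on the conic is harmless for an isotopy in $\mathbb{RP}^{3}$: the curve still meets the plane transversally there.

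The main obstacle will be Step 2, namely guaranteeing that no singularity is forced, and in particular that the deformation remains small enough to preserve the isotopy class. We would first try to make the whole motion small. Since $p$ can be moved arbitrarily far along the plane only via large $\lambda$, we would instead first apply a projective transformation fixing the plane at infinity and the conic (an element of the stabiliser, which acts on the plane like $O(2,1)$). This brings $p$ close to the conic, after which the crossing requires only an arbitrarily small perturbation. Small perturbations of a non-singular curve remain non-singular and isotopic, which settles Step 2 in that case.
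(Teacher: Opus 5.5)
Your Step~1 does not move $p$ at all. With $h_i^\tau=\lambda_i(\tau)\,r$, the added term is $(\beta s-\alpha t)\lambda_i(\tau)r=\lambda_i(\tau)\,p_1$. This vanishes at $[\alpha:\beta]$ (since $\beta\alpha-\alpha\beta=0$) as well as at the roots of $r$. So the family $[p_1:p_2+\lambda_2p_1:p_3+\lambda_3p_1:p_4+\lambda_4p_1]$ is just $C$ moved by $x_i\mapsto x_i+\lambda_i x_1$, a projective map that fixes the plane at infinity pointwise, and no intersection point moves. This is easy to repair: replace $\beta s-\alpha t$ by a linear form not vanishing at $[\alpha:\beta]$. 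The repaired argument, however, still has a genuine gap.

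The gap is the transversal crossing of the conic at $\tau_0$, which you call harmless.

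\begin{itemize}
\item \emph{What the lemma is for.} It is used in the degree $3$, $4$ and $5$ classifications to switch the component of the free point \emph{by an isotopy that lifts to $Q_{3,2}$}. The configuration is the projection, as in Lemma~\ref{proj_curve_pt}, of a degree $d+1$ curve from a point $P$ on it. The free point is where the tangent line at $P$ meets the plane at infinity.
\item \emph{Why your crossing fails.} At $\tau_0$ all $d$ intersection points lie on the special conic, so by Lemma~\ref{proj_not_on_curve} the pull back has degree only $d$. The degree $d+1$ parametrisations acquire a base point, and the lifted family leaves the space of degree $d+1$ knots.
\item \emph{Why a tangency is forced.} A curve on $Q_{3,2}$ whose tangent line at $P$ is a ruling automatically meets $T_P$ with multiplicity $\geq 3$. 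Hence, among liftable configurations, the free point can reach the conic \emph{only} through a tangency of $C$ with the plane at infinity. Read as a bare isotopy in $\mathbb{RP}^{3}$, the statement would be nearly empty and useless for these applications.
\end{itemize}

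This tangency route is exactly the paper's proof:
\begin{enumerate}
\item Keep $C$ fixed and move the plane at infinity so that $p$ slides along $C$ into an intersection point $q$ on the conic.
\item At each time, choose a signature $(2,1)$ conic through $q$ and the other $d-2$ intersection points; this is where $d\le 4$ enters.
\item At the merge, the plane is tangent to $C$ at $q$ but $C$ is not tangent to the conic, so Lemma~\ref{tangent} with $m=2$ still gives a degree $d+1$ pull back.
\item Continuing past $q$ puts $p$ on the other side of the conic.
\end{enumerate}
Because only the plane moves, non-singularity is automatic, so your Step~2 is unnecessary. Its writhe argument is also invalid: passing through a double point changes the encomplexed writhe by $\pm 2$ and in general changes the rigid isotopy class.
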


	\begin{proof}
		
		Isotopy between these two can be constructed as follows: During the isotopy we are taking the point $p$ towards a point of intersection on the conic say $q$ (near to it) along the curve, correspondingly changes the plane at infinity which intersects at curve at $p$ and $q$. This plane intersects the curve at points $p$, $q$ and $d-2$ more points. $4$ points determines a pencil of conics. As $d\leq 4$, along with $q$ we can always able to choose the other three points such that no three are collinear and we can avoid the situation of degenerate conic and conic must be non empty. So, we can always choose a conic of signature $(2,1)$ passing through these $d-2$ points and the point $q$. During the isotopy when $p$ is merged with $q$, the plane at infinity becomes tangent at the point $q$ as shown in figure \ref{figure 5.1} (this can be pulled back to $Q_{3,2}$ by using lemma \ref{tangent}). To move $p$ outside the conic, now we move $p$ away from $q$ along the curve but in the opposite direction and choose the plane at infinity and the special conic as we did previously. This way we are done. 
		
		\begin{figure}[h]
			\centering
			\includegraphics[width=0.4\textwidth]{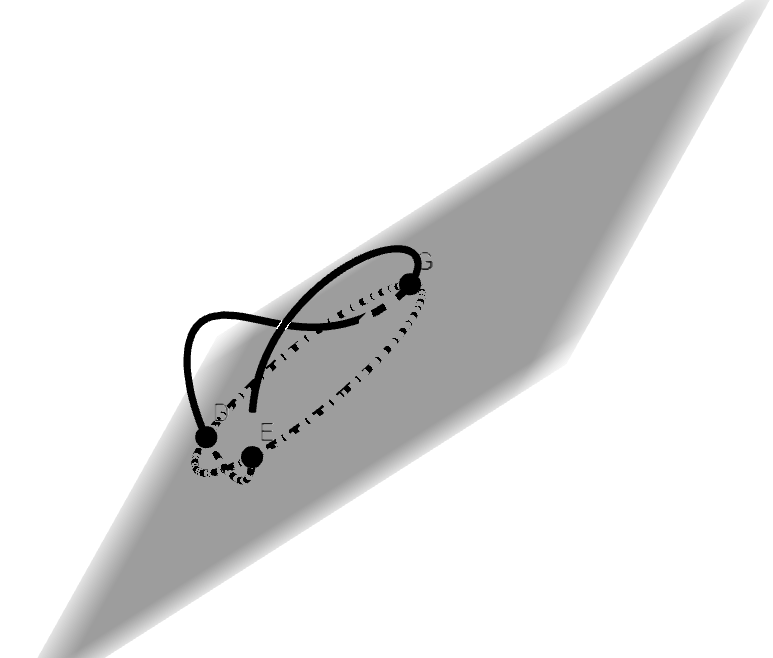}
			\includegraphics[width=0.4\textwidth]{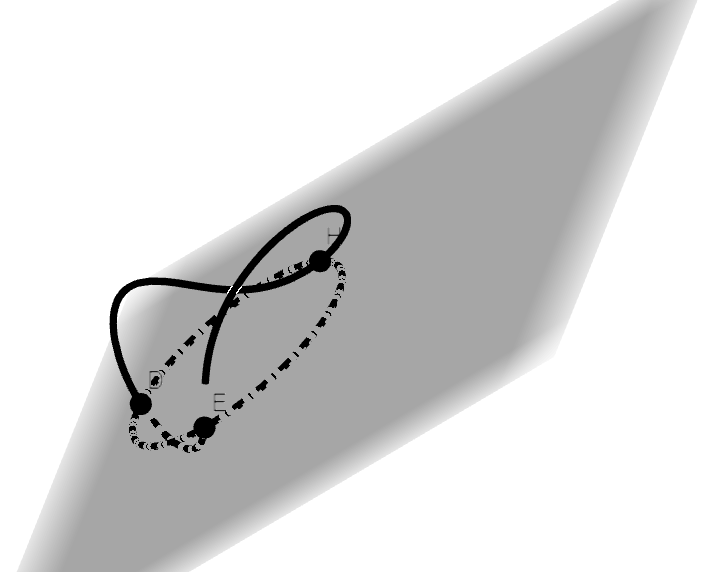}
			\includegraphics[width=0.4\textwidth]{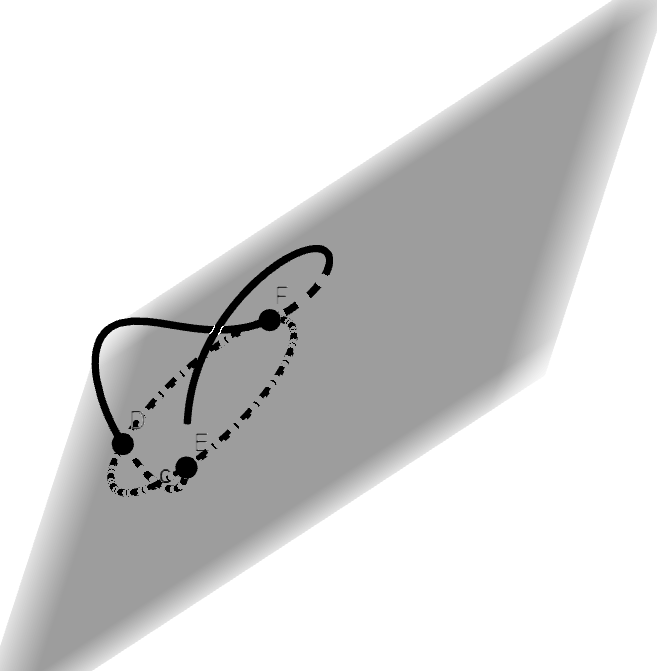}
			\caption{Curve of degree 4 having (a) one point not on the conic inside the conic, (b) plane at infinity tangent to the curve and (c) one point not on the conic outside the conic.}
			\label{figure 5.1}
		\end{figure}
	\end{proof}


	\section{The Space of Real Rational parametrisations of degree $d$ curves in $Q_{3,2}$:}
	The space of real rational parametrisations of degree $d$ in $\mathbb{RP}^{4}$ is a projective space of dimension $\mathbb{RP}^{5d+4}$. 
	
	Now, we state the following lemmas whose proofs are obvious.

	\begin{lemma}\label{1}
		If $h_{0}, h_{1},..., h_{d}$ are linearly independent polynomials of degree $\leq$ d,
		then the determinant of the following matrix will be non-zero:
		\begin{center}
			\[\begin{pmatrix}
				h_{0}(t_{1}) & h_{1}(t_{1}) & ... & h_{d}(t_{1})\\
				h_{0}(t_{2}) & h_{1}(t_{2}) & ... & h_{d}(t_{2})\\
				. & & & .\\
				. & ... & ... & .\\
				. & & & .\\
				h_{0}(t_{d+1}) &  h_{1}(t_{d+1}) & ... & h_{d}(t_{d+1})
			\end{pmatrix}\]
		\end{center}
		as long as all the $t_{i}$’s are distinct.
	\end{lemma}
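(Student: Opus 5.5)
The statement is Lemma~\ref{1}. The cleanest route is to reduce it to the Vandermonde determinant by a change of basis in the space of polynomials. Let $P_d$ denote the real (or complex) vector space of polynomials in one variable of degree $\leq d$; it has dimension $d+1$, with monomial basis $1,t,\dots,t^{d}$. Since $h_0,\dots,h_d$ are $d+1$ linearly independent elements of $P_d$, they form a basis. Hence there is an invertible $(d+1)\times(d+1)$ matrix $A=(a_{jk})$ with
\[ h_k(t)=\sum_{j=0}^{d} a_{jk}\, t^{j}, \qquad k=0,\dots,d. \]

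Evaluating at $t_1,\dots,t_{d+1}$ factors the matrix $M=(h_k(t_i))$ as $M=VA$. Here $V=(t_i^{\,j})$ is the Vandermonde matrix of the nodes $t_1,\dots,t_{d+1}$. Then $\det M=\det V\cdot\det A$, where
\[ \det V=\prod_{1\leq i<l\leq d+1}(t_l-t_i) \]
is nonzero because the $t_i$ are distinct, and $\det A\neq 0$ by invertibility. So $\det M\neq 0$.

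An alternative argument avoids the Vandermonde formula. Suppose $\det M=0$. Then some nonzero vector $c$ satisfies $Mc=0$, so the polynomial $h=\sum_k c_k h_k$ vanishes at the $d+1$ distinct points $t_1,\dots,t_{d+1}$. Since $\deg h\leq d$, $h$ must be the zero polynomial. This gives a nontrivial linear relation among the $h_k$, a contradiction.

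If the lemma is meant for homogeneous polynomials in $[s:t]$, as elsewhere in the paper, the same argument works after dehomogenizing. Alternatively, replace the Vandermonde matrix by $(s_i^{\,d-j}t_i^{\,j})$, whose determinant is $\prod_{i<l}(s_it_l-s_lt_i)$. This is nonzero exactly when the points $[s_i:t_i]$ are pairwise distinct in $\mathbb{RP}^1$.

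There is no real obstacle here. The only point needing care is that ``distinct'' must mean distinct as points of $\mathbb{RP}^1$ in the homogeneous setting, which is what the root-counting step uses.
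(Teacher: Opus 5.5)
Your proof is correct. Both arguments are complete. The factorization $M=VA$, with $V$ the Vandermonde matrix of the nodes and $A$ the invertible change-of-basis matrix, gives $\det M=\det V\det A\neq 0$. The kernel argument, where a nonzero $c$ with $Mc=0$ would produce a nonzero polynomial of degree $\leq d$ with $d+1$ distinct roots, works equally well.

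The paper gives no proof at all and only states that it is obvious, so your root-counting argument is presumably what was intended. Your remark on the homogeneous version is worth keeping. In Theorem~\ref{dimension} the nodes $t_i$ are points of $\mathbb{RP}^1$, and the lemma is applied there with $d$ replaced by $2d$.
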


	\begin{lemma} \label{2}
		Let $p_{0}(t)$ and $p_{1}(t)$ be two polynomials of degree d that do not share
		a root. Then the set 
		\[ \{p_{0}(t), t p_{0}(t), ..., t^{d-1}p_{0}(t), p_{1}(t), t p_{1}(t), ..., t^{d-1}p_{1}(t), t^{d}p_{1}(t), ..., t^{d+k} p_{1}(t)\} \]
		is linearly independent in the space of polynomials of degree 2d + k.
	\end{lemma}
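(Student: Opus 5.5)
The plan is to reduce linear independence to the injectivity of a Sylvester-type linear map and then to use the coprimality of $p_{0}$ and $p_{1}$. Write $V_{k}$ for the real vector space of polynomials of degree at most $k$. A linear combination of the listed polynomials with coefficients $a_{0},\dots ,a_{d-1}$ on the $t^{i}p_{0}$ and $b_{0},\dots ,b_{d+k}$ on the $t^{j}p_{1}$ can be written as $A(t)p_{0}(t)+B(t)p_{1}(t)$, where $A=\sum a_{i}t^{i}\in V_{d-1}$ and $B=\sum b_{j}t^{j}\in V_{d+k}$. Every term has degree at most $2d+k$, so the whole family lies in $V_{2d+k}$. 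Linear independence is therefore equivalent to the statement that
\[
A p_{0}+B p_{1}=0,\qquad \deg A\le d-1,\ \deg B\le d+k,
\]
forces $A=B=0$.

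Suppose that $Ap_{0}=-Bp_{1}$. Since $p_{0}$ and $p_{1}$ share no root over $\mathbb{C}$, they are coprime in $\mathbb{R}[t]$. Because $p_{0}$ divides $Bp_{1}$, it follows that $p_{0}$ divides $B$, and symmetrically $p_{1}$ divides $Ap_{0}$, so $p_{1}$ divides $A$. Now $p_{1}$ has degree exactly $d$ (the hypothesis says both are polynomials of degree $d$), while $\deg A\le d-1<d$. Hence $A=0$. Then $Bp_{1}=0$, and since $p_{1}\neq 0$ we get $B=0$. This proves independence. Note that the $p_{1}$-side may carry the extra multipliers $t^{d},\dots ,t^{d+k}$ precisely because only the degree bound on $A$ is used.

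The one point that needs care is the degree convention. If the polynomials are viewed as dehomogenised from forms in $[s:t]$, then ``degree $d$'' could allow the leading coefficient of $p_{1}$ to vanish, which would correspond to a common root at infinity. I would state explicitly that $\deg p_{1}=d$ in the affine sense. Alternatively, I would observe that the lemma is only applied in that setting, for instance after a projective change of parameter so that neither polynomial vanishes at $t=\infty$. With that convention in place the argument above is complete, and no step is a real obstacle.

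As a sanity check on the count, the family has $d+(d+k+1)=2d+k+1=\dim V_{2d+k}$ elements. So the family is in fact a basis of $V_{2d+k}$, in the same way that the Sylvester resultant is nonzero for coprime polynomials.
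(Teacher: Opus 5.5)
Your proof is correct. Write a dependence as $Ap_0+Bp_1=0$ with $\deg A\le d-1$ and $\deg B\le d+k$. Coprimality gives $p_1\mid A$, so the degree bound forces $A=0$, and then $B=0$.

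The paper gives no proof here; it lists this among lemmas ``whose proofs are obvious''. Your Sylvester-type argument is the natural way to fill that gap, and the dimension count showing the family is a basis is a good check.

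Your two interpretive caveats are genuinely needed.
\begin{itemize}
\item ``Share no root'' must be read over $\mathbb{C}$. Otherwise $p_0=t^2+1$, $p_1=2(t^2+1)$ is a counterexample.
\item $p_1$ must have affine degree exactly $d$. The forms $t$ and $s$ have no common zero on $\mathbb{CP}^1$, but dehomogenising gives $p_0=t$, $p_1=1$. For $d=1$, $k=0$ this yields the dependent set $\{t,1,t\}$.
\end{itemize}

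In the paper's application (Theorem~\ref{dimension}), you do not need a projective change of parameter. Of two coprime forms, at most one vanishes at $t=\infty$. Let that one, if any, play the role of $p_0$; your argument never uses the degree of $p_0$.
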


	\begin{theorem}\label{dimension}
		The space of real parametrisations of degree $d$ curves in $\mathbb{RP}^{4}$ that lies on $3$-dimensional hyperboloid is a manifold of dimension $3(d+1)$.
	\end{theorem}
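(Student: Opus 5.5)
Consider the map $F:\mathbb{R}^{5(d+1)}\setminus\{0\}\to \mathbb{R}^{2d+1}$ sending a tuple of degree-$d$ homogeneous polynomials $(p_0,\dots,p_4)$ to
\[
q(p)=-p_0^2+p_1^2+p_2^2+p_3^2-p_4^2 .
\]
Here $\mathbb{R}^{2d+1}$ is the space of binary forms of degree $2d$. The space of parametrisations lying on $Q_{3,2}$ is $F^{-1}(0)$, restricted to the open set where the $p_i$ have no common factor, and then taken modulo scalars. The plan is to show that $0$ is a regular value of $F$ on this open set. The regular value theorem then gives a submanifold of dimension $5(d+1)-(2d+1)=3d+4$. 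Passing to the projectivisation (the sphere, or quotient by $\mathbb{R}^*$) removes one dimension, giving $3d+3=3(d+1)$. This agrees with the count $\dim \mathbb{RP}^{5d+4}-(2d+1)$.

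The derivative of $F$ at $p$ is
\[
dF_p(h)=2\big(-p_0h_0+p_1h_1+p_2h_2+p_3h_3-p_4h_4\big).
\]
We must show this linear map is onto the forms of degree $2d$. The image contains every combination $\sum_i \epsilon_i p_i h_i$ with arbitrary degree-$d$ forms $h_i$. So it contains the degree-$2d$ part of the ideal $(p_0,\dots,p_4)$. It therefore suffices to show that this ideal contains all forms of degree $2d$ whenever the $p_i$ have no common factor. This is where Lemmas \ref{1} and \ref{2} enter.

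First apply a real linear change of the five coordinates. It does not preserve $q$, but it does not change the ideal spanned by the $p_i$. After it, two of the forms, say $p_0,p_1$, share no root. This is possible because if all the $p_i$ have no common factor, a generic pair of real linear combinations of them has no common complex root. Dehomogenise and call them $a,b$. If they have degree exactly $d$ after a generic choice, Lemma \ref{2} with $k=-1$ and the $d$ shifts $a,ta,\dots,t^{d-1}a,\,b,tb,\dots,t^{d-1}b$ gives $2d$ independent polynomials of degree $\le 2d-1$, inside the ideal of degree $\le 2d-1$. Multiplying by $t^{d}$ is permitted since the $h_i$ have degree $d$, not $d-1$. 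So, using the multipliers $h$ of degree $\le d$, the products $t^j a,\ t^j b$ for $0\le j\le d$ give $2d+2$ polynomials of degree $\le 2d$ spanning a space of dimension $2d+1$. Concretely, Lemma \ref{2} with $k=0$ shows that $\{a,\dots,t^{d-1}a,b,\dots,t^{d}b\}$ are $2d+1$ independent polynomials of degree $\le 2d$. Hence $ah_0+bh_1$ already fills the whole space, and $dF_p$ is surjective. Lemma \ref{1} can be used instead to check independence by evaluating at $2d+1$ distinct points if one prefers a determinant argument.

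The main obstacle is the reduction to two coprime forms of full degree. One has to argue carefully that a generic real linear combination of the $p_i$ avoids the finitely many common roots, and that the leading coefficients do not vanish, so that degree $d$ is exactly attained. This is handled by a generic-choice argument over $\mathbb{R}$, which is valid since each condition fails only on a proper Zariski-closed set. A secondary point is that parametrisations with common factors are excluded, so we work on an open subset of $F^{-1}(0)$. Finally, the $\mathbb{R}^*$-action is free and compatible with $F$ because $F$ is homogeneous, so the quotient is again a manifold of one less dimension.
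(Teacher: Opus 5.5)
Your proposal is correct and follows essentially the paper's argument. Both use the regular value theorem for $p\mapsto q(p)$ into forms of degree $2d$ (the paper uses the values at $2d+1$ fixed points, which is the same thing via Lemma~\ref{1}), with surjectivity of the differential coming from Lemma~\ref{2} applied to two coprime components. Your departures are the reduction to a coprime pair by an arbitrary invertible linear change of the $p_i$, which preserves the image of the differential (the degree-$2d$ part of the ideal) though not $q$, and the explicit restriction to the no-common-factor open set followed by the $\mathbb{R}^*$-quotient; these are cleaner than the paper's appeal to transversality and a $Q_{3,2}$-preserving map, and make precise a point the paper leaves loose.
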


	\begin{proof}
		$[p_{0}:p_{1}:p_{2}:p_{3}:p_{4}]$ is a real rational parametrisation of a degree $d$ curve where $p_{i}'s$ are the homogeneous polynomials of degree d. Define $q(t)=p_{1}(t)^{2}+p_{2}(t)^{2}+p_{3}(t)^{2}-p_{4}(t)^{2}-p_{0}(t)^{2}$ for any $t \in \mathbb{RP}^{1} $. If for any real parametrisation, $q(t)$ is zero for 2d+1 points then that parametrisation completely lies on the $Q_{3,2}$. The space of real rational parametrisations of degree d in $\mathbb{RP}^{4}$ is a projective space of dimension 5d+4. Fix 2d+1 points $t_{1},t_{2},...,t_{2d+1}$ in $\mathbb{RP}^{1}$ So we define a map 
		\[f:\mathbb{R}^{5d+4}\rightarrow\mathbb{R}^{2d+1}\]
		\[f([p_{0}:p_{1}:p_{2}:p_{3}:p_{4}])=[q(t_{1}),q_(t_{2}),...,q(t_{2d+1})] \]
		$f^{-1}(0,0,0,...,0)$ is the set of real parametrisations of the curves which lies on the hyperboloid in $\mathbb{RP}^{4}$. By using regular value theorem, we will show that 	$f^{-1}(0,0,0,...,0)$ is a closed manifold of dimension $3d+3$. Here $[df]_{K}$ is a matrix of order $(2d+1) \times (5d+4)$ with entries $([df]_{K})_{i,j}= \pm 2p_{k}(t_{i})t_{i}^{l}$ where $a_{j}$ is the coefficient of $t^{l}$ in the polynomial $p_{k}$. Entries of the matrix is negative when $k=0,4$. To prove that this map $[df]_{K}$ is surjective we have to show that its rank is 2d+1 that is we have to find a minor of order 2d+1 which has non zero determinant.
		
		Firstly we suppose among $p_{i}^{'}$s, there are two polynomials which do not share any root say $p'$ and $p''$. Now by using lemma \ref{2} the set
		\[ \{p'(t), t p'(t), ..., t^{d-1}p'(t), p''(t), t p''(t), ..., t^{d-1}p''(t), t^{d}p''(t)\} \] 
		is linearly independent set of 2d+1 polynomials. Now by using lemma \ref{1} following matrix
		
		\begin{center}
			\[\begin{pmatrix}
				p'(t_{1})  & ...&t_{1}^{d-1} p'(t_{1})&p''(t_{1})&t_{1}p''(t_{1})&... &t_{1}^{d}p''(t_{1})\\
				p'(t_{2})  & ...&t_{2}^{d-1} p'(t_{2})&p''(t_{2})&t_{2}p''(t_{2})&... &t_{2}^{d}p''(t_{2})\\
				. & & & .& & & .\\
				. & ... & ... & .&...&...&.\\
				. & & & . & & &.\\
				p'(t_{2d+1})  & ...&t_{2d+1}^{d-1} p'(t_{2d+1})&p''(t_{2d+1})&t_{2d+1}p''(t_{2d+1})&... &t_{2d+1}^{d}p''(t_{2d+1})\\
			\end{pmatrix}\]
		\end{center}
		
		has non zero determinant. Notice that this is a $(2d+1) \times (2d+1)$ minor. So in this case $[df]_{K}$  is surjective.
		
		Now suppose among these five polynomials $p_{i}, 0 \leq i \leq 4$ any pair of polynomials share a common root then this means that the curve intersects two codimensional linear space. By using transversality theorem, we can find a diffeomorphism which preserves $Q_{3,2}$ by which we can avoid that intersection. Compose this with $f$ and we will do the same as above. This implies that $[df]_{K}$ is surjective for each $K \in f^{-1}(0,0,0,...,0)$ i.e. $(0,0,...,0)$ is a regular value. So $f^{-1}(0,0,0,...,0)$ is a manifold of dimension $3d+3$.
	\end{proof}

	\begin{theorem}\label{dim}
		The space of real rational parametrisations of degree $d \geq 4$ in $Q_{3,2}$ with exactly one double point is a manifold of dimension $3d+2$.
	\end{theorem}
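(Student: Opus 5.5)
The plan is to model this on the proof of Theorem \ref{dimension}, but to work in the space of parametrisations with a marked pair of parameter values. Consider the incidence space
\[
\widetilde{\mathcal{D}}_d=\{(K,\{\tau_1,\tau_2\}) : K\in f^{-1}(0),\ K(\tau_1)=K(\tau_2),\ \tau_1\neq\tau_2\},
\]
where $\{\tau_1,\tau_2\}$ is either a pair of distinct real points of $\mathbb{RP}^1$ (non-solitary double point) or a pair of complex conjugate points (solitary double point). The unordered pairs form a smooth $2$-dimensional parameter space $P$, the complement of the diagonal in $\mathrm{Sym}^2\mathbb{RP}^1$ union the conjugate pairs. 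The curves with exactly one double point form an open subset of $\widetilde{\mathcal{D}}_d$, namely those with no other coincidences and no cusps, and on that open set the forgetful map $(K,\{\tau_1,\tau_2\})\mapsto K$ is injective. So it will be enough to show that $\widetilde{\mathcal{D}}_d$ is a manifold of dimension $3d+2$ near such points.

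Next I will write $\widetilde{\mathcal{D}}_d$ as a regular level set inside $f^{-1}(0)\times P$, which is a manifold of dimension $3(d+1)+2=3d+5$ by Theorem \ref{dimension}. The condition $K(\tau_1)=K(\tau_2)$ in $\mathbb{RP}^4$ amounts to the vectors $K(\tau_1)$ and $K(\tau_2)$ in $\mathbb{R}^5$ being proportional, which gives $4$ equations. One of them, however, is automatic on $Q_{3,2}$: both points already lie on the $3$-dimensional quadric, so coincidence inside $Q_{3,2}$ costs only $3$ conditions. Concretely, I will use Lemma \ref{trans} to normalise the double point to $[1:1:0:0:0]$ and the parameters to $\{0,\infty\}$ (or $\{\iota,-\iota\}$). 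Locally, $Q_{3,2}$ near $[1:1:0:0:0]$ has coordinates $(x_2/x_0,x_3/x_0,x_4/x_0)$, and the condition becomes
\[
g(K,\tau)=\Bigl(\tfrac{p_j}{p_0}(\tau_1)-\tfrac{p_j}{p_0}(\tau_2)\Bigr)_{j=2,3,4}=0\in\mathbb{R}^3 .
\]
This gives the expected dimension $3d+5-3=3d+2$.

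The key step is to show that $dg$ restricted to $T(f^{-1}(0)\times P)$ has rank $3$. To do this I will exhibit tangent vectors to $f^{-1}(0)$ that move $K(\tau_1)$ within $Q_{3,2}$ while fixing $K(\tau_2)$. The natural source is the infinitesimal action of $O(3,2)$ combined with reparametrisation, but a rigid motion moves both points at once, so I will instead use polynomial deformations. Take a first-order variation $\delta p$ of the parametrisation with $\delta p(\tau_2)=0$ and $\delta p(\tau_1)=v$, where $v$ is an arbitrary tangent vector to the cone at $K(\tau_1)$. The tangency constraint $\langle K,\delta K\rangle\equiv 0$ is exactly the linearisation of $f$, which is the equation cutting out $T f^{-1}(0)$. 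Its solvability with prescribed values at two points is the same surjectivity argument as in Theorem \ref{dimension}, via Lemmas \ref{1} and \ref{2}, now with $2d+1$ evaluation points together with two point-value constraints. This is where $d\ge 4$ enters: once the two interpolation conditions are imposed, there must still be enough freedom left to solve the linear system $\langle K,\delta K\rangle=0$, which consists of $2d+1$ scalar conditions on $5(d+1)$ unknowns. I expect this rank count to be the main obstacle, especially when $K$ has degenerate coordinate polynomials. As in the proof of Theorem \ref{dimension}, I will first apply a generic element of $O(3,2)$ so that two coordinate polynomials share no root.

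An alternative route is to use Lemma \ref{proj_double_pt}(1). Projecting from the double point identifies curves with one double point at a fixed point $p$ with degree $d-2$ curves in $\mathbb{RP}^3$ meeting the plane at infinity in $d-2$ points, $d-4$ of them on the special conic. That space has dimension $4(d-2)+3-(d-4)=3d-1$, since there are $4(d-3)+3$ parameters plus projective scaling, and each of the $d-4$ conic incidences costs one condition. Adding the $3$ dimensions for the choice of $p\in Q_{3,2}$, using Lemma \ref{trans}, gives $3d+2$. I would use this as a consistency check on the count.
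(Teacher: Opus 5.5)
Your main route, a regular-value argument on the incidence space inside $f^{-1}(0)\times P$, is genuinely different from the paper's proof. The paper uses transitivity of $O(3,2)$ to split the space locally as $Q_{3,2}\times X_d^q$. It writes $p_1-p_0,p_2,p_3,p_4$ as $m$ times forms of degree $d-2$, and counts parameters against the $2d-1$ coefficient conditions imposed by the quadric equation to get $\dim X_d^q=3d-1$. It never checks that those conditions are independent. Your projection-from-the-node count is essentially the same heuristic. The transversality argument would supply exactly the missing independence, but only if the rank-$3$ claim is proved. As written it is not, and the tool you point to falls one dimension short.

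Here is why, and what closes the gap. Write $V_k$ for real binary forms of degree $k$ and $S=\mathrm{span}(p_0,\dots,p_4)\subset V_d$. Given $v\perp K(\tau_1)$, take any $\delta K_0\in V_d^5$ with $\delta K_0(\tau_1)=v$ and $\delta K_0(\tau_2)=0$. Then $\langle K,\delta K_0\rangle$ vanishes at $\tau_1,\tau_2$, so it equals $mh$ with $h\in V_{2d-2}$. You need $u\in V_{d-2}^5$ with $\langle K,u\rangle=h$, and then $\delta K=\delta K_0-mu$ works. The image of $u\mapsto\langle K,u\rangle$ is $S\cdot V_{d-2}$.

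Two coprime $p',p''\in S$ give only $\{t^ip',t^jp''\}_{0\le i,j\le d-2}$. That is $2d-2$ independent forms in the $(2d-1)$-dimensional $V_{2d-2}$, so they span only a hyperplane. The Sylvester-type independence behind Lemma~\ref{2} fills $V_k$ only for $k\ge 2d-1$.

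The missing ingredient has two parts:
\begin{enumerate}
\item A third element $p'''\in S\setminus\mathrm{span}(p',p'')$. It exists because a nodal curve is not contained in a line.
\item The fact that $\mathbb{R}[s,t]/(p',p'')$ is a complete intersection with one-dimensional socle in degree $2d-2$. Hence the multiplication pairing between degrees $d$ and $d-2$ is perfect, and some $\ell p'''$ with $\ell\in V_{d-2}$ lies outside $p'V_{d-2}+p''V_{d-2}$.
\end{enumerate}
With this, $(v,0)$ is attained for every $v$, and the conjugate-pair case is identical with $m$ irreducible. So $dg$ is onto $\mathbb{R}^3$. Note that this uses nothing about $d\ge 4$. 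That hypothesis only guarantees non-emptiness (Lemmas~\ref{lemma 4} and~\ref{lemma 5}), so your guess about where it enters is mistaken.

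Finally, injectivity of the forgetful map $(K,\{\tau_1,\tau_2\})\mapsto K$ is not enough to transport the manifold structure; two further properties are needed.
\begin{itemize}
\item It must be an immersion. This requires the two branches at the node to have distinct tangent lines in $T_qQ_{3,2}$; otherwise some $(0,\delta\tau)\neq 0$ satisfies the linearised coincidence equation.
\item It must be a homeomorphism onto its image. This follows from compactness of $\mathbb{RP}^2\supset P$ and the absence of cusps, since node pairs of nearby curves cannot then converge to the diagonal.
\end{itemize}
Both conditions should be built into what you mean by ``exactly one double point''.
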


	\begin{proof}
		Suppose $X_{d}$ denote the space of parametrisations of curves of degree d with at least one double point lying on $Q_{3,2}$ in $\mathbb{RP}^{4}$. $X^{'}_{d}$ denote the space of parametrisations of curves of degree d with more than one double point lying on $Q_{3,2}$ in $\mathbb{RP}^{4}$ and $X_{d}^{q}$ be the space of parametrisations of degree d curves with exactly one double point at point $q=[1:1:0:0:0]$. If C is a real rational parametrisation of curve of degree d with exactly one double point say p then we can define a map 
		\[g:X_{d} \setminus X^{'}_{d} \rightarrow Q_{3,2}\times X_{d}^{q}\]
		which take $C$ to the pair $(p,C^{'})$ where $C^{'}$ is the image of $C$ under a transformation which takes point $p$ to $q$ and preserves $Q_{3,2}$. To prove $g$ an isomorphism it is enough to show that there is a transformation which preserves $Q_{3,2}$ and maps point $p$ to $q$.
		
		For a given $p=[x_{0}:x_{1}:x_{2}:x_{3}:x_{4}]$ if we take a matrix whose first two columns are given by
		
		\begin{center}	
			\[\begin{pmatrix}
				a_{00}&x_{0}-a_{00}&.&.&.\\
				a_{10}&x_{1}-a_{10}&.&.&.\\
				a_{20}&x_{2}-a_{20}&.&.&.\\
				a_{30}&x_{3}-a_{30}&.&.&.\\
				a_{40}&x_{4}-a_{40}&.&.&.	
			\end{pmatrix}\]
		\end{center}
		
		satisfies the conditions 
		\[-a_{00}^{2}+a_{10}^{2}+a_{20}^{2}+a_{30}^{2}-a_{40}^{2}=-1\]
		\[-x_{0}a_{00}+x_{1}a_{10}+x_{2}a_{20}+x_{3}a_{30}-x_{4}a_{40}+1=0\]
		and the subsequent columns are determined as follows:
		
		Define
		\[<x,y>=-x_{0}y_{0}+x_{1}y_{1}+x_{2}y_{2}+x_{3}y_{3}-x_{4}y_{4}\]
		\[y'=[y_{0}:y_{1}:y_{2}:y_{3}:y_{4}]-\dfrac{-x_{0}y_{0}+x_{1}y_{1}+x_{2}y_{2}+x_{3}y_{3}-x_{4}y_{4}}{-x_{0}^{2}+x_{1}^{2}+x_{2}^{2}+x_{3}^{2}-x_{4}^{2}} [x_{0}:x_{1}:x_{2}:x_{3}:x_{4}] \]
		This is a process similar to Gram Schmidt's Process for orthogonalisation But here we had changed orthogonal condition. Continuing in this way we get a matrix which preserves $Q_{3,2}$ and maps $q$ to $p$.
		
		Now we will count dimension of $X_{d}^{q}$. Suppose $C \in X_{d}^{q}$ and $[\alpha : \beta],[\gamma: \delta]$ got mapped to [1:1:0:0:0] under the parametrisation $[p_{0}:p_{1}:p_{2}:p_{3}:p_{4}]$ of the curve C. $(\alpha s- \beta t),(\gamma s- \delta t)$ divides $p_{0}-p_{1},p_{2},p_{3},p_{4}$. $p_{2}=r_{2}m, p_{3}=r_{3}m, p_{4}=r_{4}m, p_{1}-p_{0}=rm$  where $m=(\alpha s- \beta t)(\gamma s- \delta t)$ and $r_{2},r_{3},r_{4}$ and $r$ are the homogeneous polynomials of degree d-2. This double point q with 2d-1 other points ensure that this curve lies on the $Q_{3,2}$. So this space $X_{d}^{q}$ is a manifold of dimension 3d-1.
		
		So $X_{d} \setminus X_{d}^{'}$ is a manifold of dimension 3d+2.
	\end{proof}

\chapter{Classification of knots on $Q_{3,2}$ in $\mathbb{RP}^{4}$}\label{chap-3}
In this \myref{}, we studied the space of real rational curves up to degree 5 which lies on $Q_{3,2}$ in $\mathbb{RP}^{4}$ and give the
classification of real rational knots up to rigid isotopy.
A two dimensional projective plane in $\mathbb{RP}^{4}$ is determined by three points. So any curve in $\mathbb{RP}^{4}$ having degree $\leq 2$ has to lie on two dimensional plane in $\mathbb{RP}^{4}$. Intersection of a $2$ dimensional plane in $\mathbb{RP}^{4}$ with $Q_{3,2}$ is either a conic or a pair of straight lines. So, knots up to degree $2$ lying on $Q_{3,2}$ have to be these of two.
\begin{lemma}\label{conic}
	A two dimensional plane intersects $Q_{3,2}$ at a conic. This conic is singular if and only if it contains a line of the cone.
\end{lemma}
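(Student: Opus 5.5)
The plan is to reduce the statement to linear algebra of the quadratic form $q(x)=-x_{0}^{2}+x_{1}^{2}+x_{2}^{2}+x_{3}^{2}-x_{4}^{2}$ restricted to the $3$-dimensional linear subspace $W\subset\mathbb{R}^{5}$ that defines the plane $P$. Write $\bar q=q|_{W}$. The intersection $P\cap Q_{3,2}$ is the conic $\{\bar q=0\}$ in $P\cong\mathbb{RP}^{2}$. This conic is singular exactly when $\bar q$ is degenerate, that is, when the radical $R=W\cap W^{\perp}$ is nonzero (orthogonality taken with respect to $q$). So the statement becomes: $W\cap W^{\perp}\neq 0$ if and only if the conic contains a line of a cone $\mathbb{C}T_{p}\cap\mathbb{C}Q_{3,2}$, for some point $p\in Q_{3,2}$.

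Here I read ``the cone'' as the cone of the setup of lemma~\ref{proj_not_on_curve}. That is, I read it as a line lying on $Q_{3,2}$ that passes through a point $p$ and lies in $T_{p}$. Since every line on $Q_{3,2}$ through $p$ lies in $T_{p}\cap Q_{3,2}$, the claim is equivalent to saying the conic is singular iff it contains a line of $Q_{3,2}$. I will prove the two implications separately.

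For ($\Leftarrow$), suppose the conic contains a line $L\subset Q_{3,2}$, spanned by an isotropic $2$-plane $U\subset W$. Pick $u\in U\setminus 0$ and compute $q(u,w)$ for $w\in W$. Write $W=U\oplus\langle w_{0}\rangle$. The form $\bar q$ vanishes on $U$, so its Gram matrix in a basis adapted to $U$ has a $2\times2$ zero block. A symmetric $3\times3$ matrix with a $2\times 2$ zero block has determinant $0$, since the determinant expands to $-a_{13}^{2}a_{22}$-type terms that all vanish. Hence $\bar q$ is degenerate and the conic is singular. Concretely, there is a nonzero $u\in U$ orthogonal to $w_{0}$, and this $u$ is the singular point $p$, with $L\subset T_{p}$.

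For ($\Rightarrow$), take $0\neq r\in R$ and let $p=[r]$. Then $q(r)=0$, so $p\in Q_{3,2}$, and $r\perp W$ gives $P\subset T_{p}$. Hence $P\cap Q_{3,2}\subset T_{p}\cap Q_{3,2}$, which is the cone with apex $p$. A conic that lies on a cone through its apex is a union of rulings of that cone, namely the lines joining $p$ to the points of $P\cap(\text{base conic})$. Therefore the conic is a pair of lines of the cone, possibly complex conjugate, or a double line. This proves the statement in the complexified sense, which is how ``the cone'' is used in the paper.

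The main obstacle is the reality issue in the ($\Rightarrow$) direction. If $\bar q$ has signature $(+,+,0)$ or $(-,-,0)$, the singular conic is a pair of complex conjugate lines whose only real point is $p$. This cannot happen for $Q_{3,2}$ only if it is excluded by index considerations. To handle it, I would note that a $3$-plane $W$ with $1$-dimensional radical $R$ induces a nondegenerate form on $W/R$ that embeds in $R^{\perp}/R$, which has signature $(2,1)$. Both definite $2$-dimensional forms do embed in $(2,1)$. So the conjugate-line case genuinely occurs, and I would state the lemma with ``line of the (complexified) cone''. The same argument gives a real line of the cone exactly when $\bar q$ is indefinite on $W/R$ or has rank $1$.
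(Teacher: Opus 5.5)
Your proof is correct, and it is a more careful version of the paper's argument. For ($\Leftarrow$) the paper says the same thing you do: a conic containing a line is singular.

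For ($\Rightarrow$) the paper makes two moves. It asserts, without reason, that some hyperplane in the pencil through $P$ ``must'' be tangent to $Q_{3,2}$. It then stops at $P\cap Q_{3,2}\subset H\cap Q_{3,2}$. Containment in a tangent cone is not enough on its own. For example, the plane $x_{0}=x_{1}=0$ lies in $T_{p}$ for $p=[1:1:0:0:0]$, yet it meets $Q_{3,2}$ in the nonsingular conic $x_{2}^{2}+x_{3}^{2}=x_{4}^{2}$.

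Your radical vector $r\in W\cap W^{\perp}$ supplies both missing ingredients at once. It produces the tangent hyperplane $T_{[r]}\supset P$. It also shows that the apex $[r]$ lies in $P$, and that is what forces the conic to be a union of rulings.

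Your reality caveat is a real correction, not pedantry. The plane $x_{0}=x_{1}$, $x_{4}=0$ meets $Q_{3,2}$ only in $[1:1:0:0:0]$. So the lemma holds only if the lines are taken in the complexified cone $\mathbb{C}T_{p}\cap\mathbb{C}Q_{3,2}$, which is how the paper defines the cone.

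Two small fixes:
\begin{itemize}
\item A negative definite $2$-dimensional form does \emph{not} embed in signature $(2,1)$, so only the $(+,+,0)$ case occurs. Your conclusion stands, since one definite case is all you need.
\item The first sentence of the lemma says $P\cap Q_{3,2}$ is a conic rather than all of $P$. To justify it, note that $\bar{q}\not\equiv 0$, because a form of signature $(3,2)$ has no $3$-dimensional totally isotropic subspace.
\end{itemize}
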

\begin{proof}
	Suppose the conic contains a line of the cone. Conics which contains a line are always singular.
	
	Conversely, suppose the two dimensional plane $P$ intersects $Q_{3,2}$ in a singular conic. There is a pencil of hyperplanes containing that plane $P$. So, there must be a hyperplane $H$ which contains the plane $P$ and is tangent to $Q_{3,2}$. This hyperplane $H$ intersects $Q_{3,2}$ in a cone denoted by $C$. This means intersection of $P$ with $Q_{3,2}$ is contained in the cone $C$.  
\end{proof}

\section{Degree 1} Degree~1 knots are planar curves and lines are the only degree~1 planar curves in $\mathbb{RP}^{4}$. Some of these lines may lie on the quadric.

Suppose the image of $C$ under the projection is a line in $\mathbb{RP}^{3}$ which intersects the plane at infinity at a point which lies on the special conic. Denote the intersection point by $q$. Consider the unique 2-dimensional plane $X$ containing the line $C$ and the point of projection $p$. This plane also contains the line $L$ joining point $p$ and $q$ which is a line of the cone (intersection of $T_{p}$ and $Q_{3,2}$). By using lemma \ref{conic}, the intersection of $X$ and $Q_{3,2}$ is a singular conic which is a pair of straight lines containing the line $L$. The other line in this conic is the pull back of the line $C$ under the projection map from point $p$ and a degree $1$ curve in $Q_{3,2}$.

\begin{lemma}\label{lemma 1}
	In $Q_{3,2}$, degree~1 curves cannot have any double points.
\end{lemma}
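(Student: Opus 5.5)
The plan is to show that a degree~1 parametrisation $[p_0:\dots:p_4]$ with linear $p_i$ having no common factor is injective on $\mathbb{CP}^1$, and in fact an immersion. Such a curve therefore has no double points, solitary or not, and no cusps. This is pure linear algebra and does not use the quadric at all; the condition of lying on $Q_{3,2}$ only singles out which lines we consider.

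The key steps, in order, are as follows.

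\begin{enumerate}
\item Write $p_i(s,t)=a_i s+b_i t$ and set $a=(a_0,\dots,a_4)$ and $b=(b_0,\dots,b_4)$ in $\mathbb{R}^5$. The map is then $[s:t]\mapsto [sa+tb]$.
\item Check that $a$ and $b$ are linearly independent. If $b=\lambda a$, then every $p_i$ equals $a_i(s+\lambda t)$, so the $p_i$ share the factor $s+\lambda t$; this contradicts the definition of a real rational curve. The case $a=0$ or $b=0$ is similar, since then $t$ or $s$ is a common factor.
\item Deduce injectivity. If $[s_1a+t_1b]=[s_2a+t_2b]$, then $s_1a+t_1b=\mu(s_2a+t_2b)$ for some nonzero $\mu$. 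Independence of $a,b$ gives $(s_1,t_1)=\mu(s_2,t_2)$, so the two parameters define the same point of $\mathbb{CP}^1$. Hence there are no real double points, and no pair of complex conjugate parameters mapping to a real point, so there are no solitary double points either.
\item Rule out cusps. The derivative of the affine lift is $a$ or $b$, which is never proportional to the point itself. Equivalently, the image is the projectivisation of the 2-plane $\mathrm{span}(a,b)$, a smooth projective line.
\end{enumerate}

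Alternatively, one could argue with Lemma~\ref{proj_double_pt}(1). Projecting from a hypothetical double point would produce a curve of degree $d-2=-1$, which is absurd. This argument is less clean because that lemma is stated for general $d$ and implicitly assumes $d\ge 2$.

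There is no real obstacle here. The only point needing care is to note that the no-common-factor hypothesis is exactly what forces $a$ and $b$ to be independent, and to treat double points in the complexified sense, so that solitary nodes at conjugate parameters are also excluded. The injectivity argument on $\mathbb{CP}^1$ handles both kinds at once.
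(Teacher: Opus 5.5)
Your proposal is correct and follows the same idea as the paper, which simply observes that a degree~1 curve is a line and that a line has no self-intersections. Your linear-algebra argument supplies the details the paper calls trivial: the no-common-factor condition forces $a,b$ to be independent, hence also independent over $\mathbb{C}$, so $[s:t]\mapsto[sa+tb]$ is an injective immersion of $\mathbb{CP}^1$ onto the projectivisation of $\mathrm{span}(a,b)$, which also rules out solitary double points.
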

\begin{proof}
	As degree~1 curves are planar curves so these type of curves are specifically lines. Lines must not contain any self intersections. So this lemma trivially holds.
\end{proof}

\begin{lemma}
	Any two degree~1 knots (considered without orientation) are rigidly isotopic in $Q_{3,2}$.
\end{lemma}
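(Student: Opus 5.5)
Since a degree~1 knot is simply a projective line contained in $Q_{3,2}$ and, by Lemma~\ref{lemma 1}, can never acquire a double point, the space of degree~1 knots in $Q_{3,2}$ (lines taken without orientation) is exactly the Fano variety of real lines on $Q_{3,2}$. Any path of lines in $Q_{3,2}$ is automatically a rigid isotopy. So it suffices to show that this space of lines is path connected. The plan is to show that the identity component of $O(3,2)$ acts transitively on lines, so that the space of lines is the continuous image of a connected group.

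The first step is to translate lines into linear algebra. A line in $Q_{3,2}$ corresponds to a $2$-dimensional subspace $W\subset\mathbb{R}^5$ that is totally isotropic for the form $q=-x_0^2+x_1^2+x_2^2+x_3^2-x_4^2$. Because $q$ has signature $(3,2)$, the maximal isotropic subspaces have dimension $2$, which is consistent with $Q_{3,2}$ containing lines but no planes. A concrete model line is $L_0$, spanned by $(1,1,0,0,0)$ and $(0,0,0,1,1)$.

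The second step is transitivity. Given a totally isotropic $W$, the map $L_0\to W$ sending one basis to another is an isometry, since both spaces carry the zero form. Exactly as in the alternative proof of Lemma~\ref{trans}, Witt's theorem extends this map to an isometry of $\mathbb{R}^5$. Hence $O(3,2)$ acts transitively on the set of lines in $Q_{3,2}$.

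The third step, which I expect to be the main obstacle, is connectedness. The group $O(3,2)$ has four components, but projectively $-I$ acts trivially, and one can also use stabilizers. The plan is to show that the stabilizer of $L_0$ meets every component of $O(3,2)$; it then follows that the identity component $SO_0(3,2)$ already acts transitively. To do this, I would write down explicit elements fixing $L_0$ setwise that realize each of the two component invariants, namely the sign of the determinant on the positive $3$-space and on the negative $2$-space:
\begin{itemize}
\item The reflection $x_2\mapsto -x_2$ fixes both basis vectors of $L_0$, and it flips the orientation of the positive part only.
\item The map $(x_0,x_1,x_3,x_4)\mapsto(-x_0,-x_1,-x_3,-x_4)$ preserves $L_0$ and has the effect $\det=1$ on the positive part restricted appropriately. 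If necessary, compose it with a suitable element to flip the negative orientation alone. For instance, $x_0\mapsto -x_0,\ x_1\mapsto -x_1$ composed with $x_2\mapsto -x_2$ scales the vector $(1,1,0,0,0)$ by $-1$, still preserving $L_0$. Checking the spinor-norm components of these elements is routine.
\end{itemize}

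Once the identity component acts transitively, I would take a path $g_t$ in $SO_0(3,2)$ from $I$ to a $g$ with $gL_0=W$. Then $g_tL_0$ is a path of lines in $Q_{3,2}$, and hence a rigid isotopy from $L_0$ to any given line. Composing two such paths joins any two degree~1 knots, which proves the lemma.

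Alternatively, one can use the projection picture from Lemma~\ref{proj_not_on_curve}. Lines in $Q_{3,2}$ avoiding $p$ correspond to lines in $\mathbb{RP}^3$ meeting the special conic. This space is a bundle over the conic, which is a circle, with fibre an affine $\mathbb{R}^2$, so it is connected. Moving the projection point handles the lines through $p$. I would mention this as a cross-check on the connectedness claim.
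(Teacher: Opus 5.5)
Your proof is correct, and your main route is genuinely different from the paper's.

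\textbf{The paper's proof.} It argues in two ways:
\begin{enumerate}
\item It moves a point of $K_1$ onto a point of $K_2$ using Lemma~\ref{trans}, so both lines become rulings of the cone $T_p\cap Q_{3,2}$, and then rotates about the cone's axis.
\item It projects from a point of $Q_{3,2}$ off both lines and takes a rigid isotopy of lines in $\mathbb{RP}^3$. It then corrects this isotopy by a family of translations of the plane at infinity, so that the intersection point runs along the special conic, and pulls back via Lemma~\ref{proj_not_on_curve}.
\end{enumerate}
Your cross-check is essentially the second argument. Your description of it as a bundle over the conic with fibre $\mathbb{R}^2$ is cleaner than the paper's explicit matrices. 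You should simply choose the projection point off both given lines instead of ``moving'' it.

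\textbf{Your main route and what it buys.} You treat the space of lines as an $O(3,2)$-homogeneous space and prove that the identity component $SO_0(3,2)$ already acts transitively. This addresses exactly what the paper's first argument glosses over. A transformation in $O(3,2)$ only gives projective equivalence. It gives a rigid isotopy only if it lies in the identity component.

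\textbf{A muddle in your second bullet.} The map $(x_0,x_1,x_3,x_4)\mapsto(-x_0,-x_1,-x_3,-x_4)$ has determinant $+1$ on both the positive block $(x_1,x_2,x_3)$ and the negative block $(x_0,x_4)$. So it already lies in $SO_0(3,2)$ and contributes nothing. The element that does the work is $\mathrm{diag}(-1,-1,-1,1,1)$, which preserves $L_0$ and has block signs $(+,-)$. Together with $x_2\mapsto -x_2$ (signs $(-,+)$) and their product (signs $(-,-)$), it shows that the stabilizer of $L_0$ meets all four components. So your conclusion stands.

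\textbf{A shortcut.} You could avoid the component count entirely. A totally isotropic plane meets the positive-definite $3$-space $\{x_0=x_4=0\}$ only in $0$. Hence it is the graph of an isometric embedding of the negative plane $\{x_1=x_2=x_3=0\}$ into that $3$-space. The space of lines is therefore the Stiefel manifold $V_2(\mathbb{R}^3)\cong SO(3)$, which is visibly connected. Transporting orientations to the fixed negative plane also shows at once that oriented lines form exactly two components, consistent with Lemma~\ref{lemma 2}.
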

\begin{proof}
	
	Suppose $K_{1}$ and $K_{2}$ are two degree $1$ knots in $Q_{3,2}$. Consider points $p_{1}$ and $p_{2}$ on each of the lines $K_{1}$ and $K_{2}$ respectively. By using lemma \ref{trans}, we can find a projective transformation which maps $p_{1}$ to $p_{2}$. Under this transformation, the respective tangent planes of both the points will coincide. Now the knot $K_{1}$ and $K_{2}$ lie on the same cone which is the intersection of the tangent plane with $Q_{3,2}$. So, we can isotope $K_{1}$ and $K_{2}$ via a rotation around the axis of the cone. This proves a stronger result that any two degree $1$ knots are projectively equivalent: Any two degree $1$ knots can be transformed to each other by a projective linear transformation in $\mathbb{RP}^{4}$ which preserves $Q_{3,2}$.
	
	We will use alternative method to classify degree~1 knots which is also applicable for the higher degrees. If we project a degree one knot from a point which does not lie on the knot but lies on the $Q_{3,2}$, it projects to a degree 1 knot in $\mathbb{RP}^{3}$ which intersects the plane at infinity in one real point and that point lies on the special conic at infinity as described in lemma \ref{proj_not_on_curve}.
	
	Suppose $K_{0}$ and $K_{1}$ are any two knots of degree 1 on $Q_{3,2}$. Project them from the same point which does not lie on both of them then they get projected to degree 1 knots in $\mathbb{RP}^{3}$ whose point of intersection with the plane at infinity lies on the special conic. We can always choose the point of projection and plane of projection (copy of $\mathbb{RP}^{3}$) in such a way that the plane at infinity is $x_{3}=0$ and the special conic is $x_{0}^{2}=x_{1}^{2}+x_{2}^{2}$. Denote the points of intersection of projected curves with the plane at infinity by $\mu_{0}=[a_{0}:b_{0}:c_{0}:0]$ and $\mu_{1}=[a_{1}:b_{1}:c_{1}:0]$ respectively. Any two degree 1 knots in $\mathbb{RP}^{3}$ are rigidly isotopic. So there is a path $K_{t}, 0 \leq t \leq 1$ between $K_{0}$ and $K_{1}$. Every $K_{t}$ intersects the plane at infinity in one real point, say $\lambda_{t}=[a_{t}^{'}:b_{t}^{'}:c_{t}^{'}:0]$. Since we are dealing with degree 1 curves, which are lines, we can always choose an isotopy whose points of intersection with the plane at infinity avoid a specific line $x_{0}=0$ in the plane at infinity so that the points of intersection of the line with the plane at infinity are in a single affine chart throughout the isotopy. So, $a_{t}^{'} \neq 0$. As the conic is path connected so there exists a path $[a_{t}:b_{t}:c_{t}:0], a_{t} \neq 0$ between $\mu_{0}$ and $\mu_{1}$ along the conic. Define a translation in the plane at infinity which translates $\lambda_{t}$ to $\mu_{t}$ given by 
	\begin{center}
		\[\begin{pmatrix}
			1&0&0\\
			\frac{b_{t}}{a_{t}}-\frac{b_{t}^{'}}{a_{t}^{'}}&1&0\\
			\frac{c_{t}}{a_{t}}-\frac{b_{t}^{'}}{a_{t}^{'}}&0&1
		\end{pmatrix}\]
	\end{center} We extend this map to $\mathbb{RP}^{3}$ by defining it as 
	\begin{center}
		\[\begin{pmatrix}
			1&0&0&0\\
			\frac{b_{t}}{a_{t}}-\frac{b_{t}^{'}}{a_{t}^{'}}&1&0&0\\
			\frac{c_{t}}{a_{t}}-\frac{b_{t}^{'}}{a_{t}^{'}}&0&1&0\\
			0&0&0&1
		\end{pmatrix}\]
	\end{center}

	Clearly $T_{0}$ and $T_{1}$ are identity transformations. So $T_{t}(K_{t})$ gives us the required path which can be pulled back. So there is only rigid isotopy class of degree 1 knots smoothly isotopic to line whose projection is a line in $\mathbb{RP}^{3}$ of writhe $0$.

	One representative of this class can be given by $[s:0:s:t:t]$.
\end{proof}

\begin{lemma}\label{lemma 2}
	In $\mathbb{RP}^{3}$, there are two rigid isotopy classes of degree $1$ real rational oriented knots upto orientation which has one point lying on the special conic. 
\end{lemma}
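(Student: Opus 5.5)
Throughout, I fix the normalization of the preceding proof: the plane at infinity is $x_{3}=0$ and the special conic $S$ is $x_{0}^{2}=x_{1}^{2}+x_{2}^{2}$. A degree~1 knot of the kind in the statement is then an oriented line $L\subset\mathbb{RP}^{3}$, not contained in the plane at infinity, that meets this plane at a point $q\in S$. I would describe the space of such lines and show it has exactly two components, which forces exactly two rigid isotopy classes. Rigid isotopy here is understood inside this constrained space, i.e.\ the intersection point must stay on $S$ throughout, since that is exactly what makes the path pull back to $Q_{3,2}$ by lemma \ref{proj_not_on_curve}.

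The parametrisation runs as follows. Such a line is determined by $q\in S$ together with a second point $a$ in the affine chart $\mathbb{R}^{3}=\{x_{3}\neq 0\}$; two points $a$ give the same line exactly when they differ along the direction $q$. So the space of unoriented lines is a bundle over $S\cong S^{1}$ whose fibre is $\mathbb{R}^{3}/\langle q\rangle\cong\mathbb{R}^{2}$, and it is connected. Choosing an orientation adds a double cover. The oriented line $L$ determines an orientation of the projective line $L$ at the point $q$. Combined with the choice of a side of the affine chart, this gives an orientation of the direction vector $v\in\mathbb{R}^{3}$ with $[v]=q$. Following the loop around $S$ shows whether the double cover is trivial.

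\textbf{Step 1 (at most two classes).} Using the translation trick of the previous lemma, together with path-connectedness of $S$ and of the fibre, I would show that any two oriented lines with the same orientation datum can be joined by a path. Concretely, first slide $q$ along $S$, carrying the orientation continuously, and then translate inside the fibre.

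\textbf{Step 2 (at least two classes).} I expect this to be the main obstacle. The lift of $S$ to the sphere of directions $S^{2}$ is the cone $x_{0}^{2}=x_{1}^{2}+x_{2}^{2}$ intersected with $S^{2}$. This is two disjoint circles, the two nappes $x_{0}>0$ and $x_{0}<0$. Hence the sign of $x_{0}$ of the oriented direction $v$ is a continuous invariant along any path in the space, and it is never zero because $S$ misses $x_{0}=0$. The two oriented lines $(q,v)$ and $(q,-v)$ have opposite signs, so the space has exactly two components: the double cover over $S$ is trivial.

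The delicate point is making sure the orientation convention is well defined projectively, in particular that rescaling $v$ by a positive scalar does not change anything. One must also check that reversing the orientation really corresponds to passing to the other nappe, and is not absorbed by a projective symmetry preserving $S$ and the chart. I would verify this by checking that the transformations used in Step 1 fix the $x_{0}$-coordinate. Finally I would exhibit representatives, $[s:0:s:t]$ with its two orientations, and note that both classes have writhe $0$.
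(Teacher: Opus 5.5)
Your proof is correct. Step 1 is essentially the paper's own argument: same-orientation lines are joined by the translation trick of the preceding lemma, and those matrices do fix the $x_0$-coordinate. Your Step 2 obstruction, however, is genuinely different.

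The paper takes $K_0=[s:t:t:0]$ and $K_1=[-s:t:t:0]$ with plane at infinity $x_0=0$. It argues that the first coordinate of the image of $[1:1]$ must pass from $1$ to $-1$, and concludes that some intermediate line lies in the plane at infinity. You instead use a continuous $\{\pm1\}$-valued invariant: the sign of the $x_0$-component of the oriented direction $v$. It cannot vanish because the oval $S$ misses the line $x_0=0$ of the plane at infinity, so the cone over $S$ has two nappes. (If you read $x_0^2=x_1^2+x_2^2$ as a light cone, this is the time orientation of a null line.)

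What your route buys is that the constraint $q\in S$ is actually used, and that ``same orientation'' for two different lines acquires a precise meaning. The paper's deduction relies only on no intermediate line lying in the plane at infinity. That alone cannot obstruct reversal, since an unconstrained line can be turned around by rotating it through $\pi$ in the affine chart. In addition, the sign of one homogeneous coordinate of one point is not projectively meaningful, and the sample point $[0:1:1:0]$ does not lie on $x_1^2+x_2^2=x_3^2$.

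The delicate point you flag can be settled by writing $p_i=\alpha_i s+\beta_i t$ and $P_{i3}=\alpha_i\beta_3-\beta_i\alpha_3$. Then:
\begin{itemize}
\item the line meets $x_3=0$ at $[P_{03}:P_{13}:P_{23}:0]$;
\item the vector $(P_{03},P_{13},P_{23})$ is your $v$ for the orientation of increasing $s/t$;
\item rescaling the parametrisation by $\lambda$ multiplies $P_{03}$ by $\lambda^2>0$, while reparametrising by $A$ multiplies it by $\det A$;
\item the condition $q\in S$ reads $P_{03}^2=P_{13}^2+P_{23}^2$, forcing $P_{03}\neq0$.
\end{itemize}
Thus $\mathrm{sign}\,P_{03}$ is a continuous invariant directly on the space of parametrisations, taking the values $\pm1$ on the two orientations of $[s:0:s:t]$.
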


\begin{proof}
	Degree~1 knots with same orientation can be isotoped to each other as above but a degree~1 knot cannot be isotoped to its reverse orientation knot. Any degree~1 knot can be isotoped to $K_{0}$ parametrised by $[s:t:t:0]$ and its reverse orientation knot $K_{1}$ is $[-s:t:t:0]$. Take the plane at infinity as $x_{0}=0$ so that the special conic is $x_{1}^{2}+x_{2}^{2}=x_{3}^{2}$. Both $K_{0}$ and $K_{1}$ intersects the plane at infinity at $[0:1:1:0]$ which lies on the special conic. The images of $[1:1]$ under the parametrisations of $K_{0}$ and $K_{1}$ are $[1:1:1:0]$ and $[-1:1:1:0]$. If there is an isotopy $K_{i}$ between $K_{0}$ and $K_{1}$, first coordinate $x_{0}$ has to change from $1$ to $-1$. Then, there must be some $\lambda_{0}$, $0 \leq \lambda_{0} \leq 1$, such that first coordinate $x_{0}=0$ for that $\lambda_{0}$ and then $K_{\lambda_{0}}$ will completely lie on the plane at infinity which cannot be pulled back to $Q_{3,2}$ to degree $1$ knot.
	
	So, a degree $1$ knot in $Q_{3,2}$ cannot be rigidly isotoped to the same knot with opposite orientation.
\end{proof}

\begin{theorem}
	The space of the parametrisations of all real rational curves of degree $1$ has two connected components.
\end{theorem}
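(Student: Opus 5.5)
We prove the statement by showing two things: the space of degree $1$ parametrisations lying on $Q_{3,2}$ is not connected, and it has at most two components. Throughout we identify this space, via Theorem \ref{dimension}, with the smooth manifold of parametrisations $[p_0:\dots:p_4]$ with $p_i$ linear forms and $q\equiv 0$.

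\textbf{At most two components.} Let $K_0$ and $K_1$ be two oriented degree $1$ knots. By Lemma \ref{trans} and the projective equivalence shown in the lemma preceding Lemma \ref{lemma 2}, $K_1$ is the image of $K_0$, up to orientation, under some element $A\in O(3,2)$. In $\mathbb{RP}^4$, $-A$ induces the same map as $A$. The projectivised group $PO(3,2)$ has two components when acting as projective transformations; equivalently, the identity component $SO_0(3,2)$ has index $2$ in $PO(3,2)$. So we may compose with a fixed element of the other component if necessary.

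We then check that this element can be chosen to fix the line $K_0=[s:0:s:t:t]$ together with its orientation. Concretely, we look for an element of $O(3,2)$ acting on the isotropic plane spanned by $(1,0,1,0,0)$ and $(0,0,0,1,1)$ as the identity, and acting nontrivially on the complement. Such elements exist because the stabiliser of an isotropic $2$-plane meets both components.

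Combining this with a path in $SO_0(3,2)$ from the identity to $A$ (or to $A$ composed with that element), we obtain a path of parametrisations from $K_0$ to $K_1$ or to $\overline{K_1}$, the reverse of $K_1$. Reparametrising by a path in $GL_2(\mathbb{R})$ only gives a connected family of parametrisations of the same oriented line, except that orientation reversal requires a matrix of negative determinant. Hence every parametrisation lies in the component of $[s:0:s:t:t]$ or in that of $[-s:0:s:t:t]$.

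\textbf{At least two components.} We follow Lemma \ref{lemma 2}. Suppose there were a path of parametrisations in $Q_{3,2}$ from $K=[s:0:s:t:t]$ to its reverse. After a generic choice of projection point $p\in Q_{3,2}$ avoiding all lines in the compact path, Lemma \ref{proj_not_on_curve} projects the path to a path of lines in $\mathbb{RP}^3$ meeting the plane at infinity at a point of the special conic. The argument of Lemma \ref{lemma 2} then forces some line in this path to lie in the plane at infinity, which corresponds to no degree $1$ curve in $Q_{3,2}$. This contradiction shows that $K$ and its reverse lie in different components.

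\textbf{Main obstacle.} The delicate step is the separation argument, because the sign comparison of the coordinate $x_0$ in Lemma \ref{lemma 2} depends on choosing representatives continuously. I would make it rigorous with an invariant instead. The isotropic $2$-plane $W\subset\mathbb{R}^5$ spanned by the line carries the orientation induced by the parametrisation, and the quotient $W^\perp/W$ is $1$-dimensional and positive definite. One can attach to the oriented isotropic plane a sign, via the orientation of a maximal positive subspace of $W\oplus W'$ for a complementary isotropic $W'$, and check that this sign is locally constant and flips under orientation reversal. A path connecting $K$ to its reverse would then change a discrete invariant continuously, giving the required contradiction.
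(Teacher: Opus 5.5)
Your plan follows the paper's: transitivity and projective equivalence of unoriented lines give the upper bound, and Lemma~\ref{lemma 2} gives the lower bound. Two steps need repair.

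\textbf{The stabiliser element.} In the upper bound, the element you ask for does not exist. No element of $O(3,2)$ that is the identity on $W=\mathrm{span}\{(1,0,1,0,0),(0,0,0,1,1)\}$ lies outside the identity component of $PO(3,2)$. The stabiliser of the \emph{unoriented} plane meets the other component only through elements that reverse the orientation of $W$. An example is $h=\mathrm{diag}(1,1,1,-1,-1)$, which sends $K_0$ to $[s:0:s:-t:-t]$, its reverse. If your element $g$ existed, then $g^{-1}h$ would lie in the identity component and send $K_0$ to its reverse. A path in $SO_0(3,2)$ would then join $K_0$ to its reverse, contradicting your own second half. The conclusion survives, since you only claim a path to $K_1$ or $\overline{K_1}$: use $h$, or drop the step. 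Also, $[-s:0:s:t:t]$ is a different line, not the reverse of $K_0$ (that is $[-s:0:-s:t:t]$). It happens to lie in the other component.

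\textbf{The lower bound.} This is the real content, and the proposal does not yet establish it. You are right that the argument of Lemma~\ref{lemma 2} is not rigorous. The sign of $x_0$ at $[1:1]$ is not defined on projective classes: a lifted path may end at $(s,-t,-t,0)$ rather than $(-s,t,t,0)$. Moreover, $x_0$ vanishing at one parameter puts one point, not the whole line, in the plane at infinity.

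Your substitute is never actually defined. An orientation of $W$ does orient a maximal positive subspace of $W\oplus W'$, but that subspace is only $2$-dimensional. Positive definite $2$-planes in $\mathbb{R}^{3,2}$ cannot be coherently oriented: their space retracts onto $\mathrm{Gr}_2(\mathbb{R}^3)\cong\mathbb{RP}^2$, whose tautological bundle is non-orientable. So there is nothing fixed to compare the orientation with, and $W^\perp/W$ does not help unless you orient it, which you do not.

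Work on the negative side instead. Write $p_i=a_is+b_it$ and set $\delta=a_0b_4-a_4b_0$.
\begin{itemize}
\item A nonzero vector of $W$ with $x_0=x_4=0$ would be isotropic in the positive definite space $\{x_0=x_4=0\}$, which is impossible. So the projection of $W$ to the $(x_0,x_4)$-plane is injective, and $\delta\neq 0$ everywhere.
\item Under $p\mapsto\lambda p$, $\delta$ is multiplied by $\lambda^2$. Under reparametrisation by $M\in GL_2(\mathbb{R})$, it is multiplied by $\det M$.
\end{itemize}
Hence $\operatorname{sign}\delta$ is a well-defined, locally constant function. It equals $+1$ on $[s:0:s:t:t]$ and $-1$ on its reverse, which gives the lower bound directly, with no projection or genericity argument. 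It also shows that $h$, and hence every element of the non-identity component of $PO(3,2)$, flips $\operatorname{sign}\delta$. That is exactly why your stabiliser element cannot exist.

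In the projected picture of Lemma~\ref{lemma 2}, the corresponding invariant is which nappe of the cone over the special conic contains the direction vector of the oriented affine line.
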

\begin{proof}
	Proof directly follows from lemma \ref{lemma 1} and \ref{lemma 2}.
\end{proof}

\section{Degree 2} 
\begin{lemma}\label{lemma 4}
	In $Q_{3,2}$, there does not exist any degree~2 real rational curve with one or more double points.
\end{lemma}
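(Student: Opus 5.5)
The cleanest route is the planarity argument set up at the start of this section. Suppose $C$ is a degree~2 real rational curve in $Q_{3,2}$, parametrised by $[p_0:\dots:p_4]$ with the $p_i$ quadratic forms in $(s,t)$ having no common factor. The linear span of the coefficient vectors of the $p_i$ has dimension at most $3$. So the image of $C$ lies in a projective plane $P\subset\mathbb{RP}^4$, and $C$ is a plane curve of degree~2 in $P\cong\mathbb{RP}^2$. We treat this as a map $\mathbb{CP}^1\to P$ given by three quadratic forms without common factor.

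We split into two cases according to whether the map $\mathbb{CP}^1\to P$ is birational onto its image.

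\emph{Birational case.} Apply the genus formula recalled in the Background: a real rational plane curve of degree $d$ has exactly $\frac{(d-1)(d-2)}{2}$ double points counted with multiplicity, including complex ones. For $d=2$ this number is $0$. Hence the image is a nonsingular conic, and $C$ has no double point, real, solitary, or cuspidal.

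\emph{Degenerate case.} The map is not birational onto its image. Then the image is a line and the parametrisation is a degree~2 cover of it, composed with a linear map. Such a map is $2$-to-$1$ everywhere rather than having a single double point, so it is not a real rational curve of degree~2 in the sense of the definition. Equivalently, its reduced degree is $1$; one should rule it out explicitly, since it is the only way a degree~2 map can have points with coinciding images. Alternatively, Lemma~\ref{conic} gives the same picture: $P\cap Q_{3,2}$ is either a smooth conic or a pair of lines, and a degree~2 curve whose image is one line (or two lines) is not an embedding.

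A second argument, in the style of the paper, uses Lemma~\ref{proj_double_pt}(1). A degree $d$ curve on $Q_{3,2}$ with one double point corresponds to a curve of degree $d-2=0$ in $\mathbb{RP}^3$, that is, a constant map. Moreover, that degree $d-2$ curve must meet the plane at infinity in $d-4=-2$ points on the special conic, which is impossible. The cusp case is handled by part~(2), which would likewise require a degree~$0$ curve tangent to the plane at infinity.

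The main obstacle is stating the degenerate double-cover case carefully. The double-point count applies only to curves birational onto their image, so the double cover of a line has to be excluded by appeal to the definition rather than by the genus formula.
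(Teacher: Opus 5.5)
Your main argument is correct, but it takes a different route from the paper.

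\textbf{The two routes.} The paper projects $C$ from a point of the curve other than the double point and uses Lemma~\ref{proj_curve_pt}: the image would be a degree~$1$ curve in $\mathbb{RP}^3$ with a double point, and a line has none. You instead use that a degree~$2$ curve spans at most a plane, and then the double-point count for plane curves. An irreducible conic is nonsingular, so a birational parametrisation of it is an isomorphism; this also rules out cusps.

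You can even skip the genus formula. If the three coefficient vectors are independent, the map is a linear image of the Veronese embedding $[s:t]\mapsto[s^2:st:t^2]$.

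\textbf{What each route buys.} Your route needs neither the quadric nor the projection machinery. It also isolates the one genuine exception, a $2{:}1$ map onto a line, which the paper's proof passes over silently. Projecting such a double cover from a point of its image line gives a constant map, not a line with a double point. So the paper's argument tacitly assumes the curve is birational onto its image, as the word ``embedding'' in its definition suggests. The paper's argument, for its part, is a one-liner inside the projection framework used throughout, and it is the template reused in degree~$3$ (Lemma~\ref{lemma 5}).

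\textbf{Your second argument.} The one via Lemma~\ref{proj_double_pt}(1) does not stand on its own. The count ``$d-4$ points on the special conic'' rests on $s^{a_1}t^{a_2}$, with $a_1,a_2\ge 2$, dividing $p_1-p_0$. For $d=2$ this forces $p_1-p_0\equiv 0$. Then $p_2,p_3,p_4$ are constant multiples of $st$, and the curve is $[p_0:p_0:c_2st:c_3st:c_4st]$ with $c_2^2+c_3^2=c_4^2$. That is exactly the double cover of a line of the cone. So this computation rediscovers the degenerate case rather than giving a contradiction, and it needs the same definitional exclusion as your first argument. It is also not what the paper does, since the paper projects from a non-double point.
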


\begin{proof}
	Suppose $C$ is a real rational curve of degree~2 in $Q_{3,2}$ with at least one double point. When we project this curve $C$ from a point on the curve, which is not the double point, on a copy of $\mathbb{RP}^{3}$ we get a degree $1$ curve with at least double point in $\mathbb{RP}^{3}$ which is not possible. Hence, there does not exist any real rational curve of degree~2 with one or more double points in $Q_{3,2}$. 
\end{proof}

In this degree, one can see the correspondence between the conics in $Q_{3,2}$ and lines in $\mathbb{RP}^3$ geometrically:

Suppose $C$ is a line in $\mathbb{RP}^{3}$ which intersects the plane at infinity at a point $q$ which does not lie on the special conic. Consider the unique 2-dimensional plane $X$ containing the line $C$ and the point of projection $p$.  This plane does not contain any line of the cone (intersection of $T_{p}$ and $Q_{3,2}$) so by lemma \ref{conic}, the intersection of $X$ with $Q_{3,2}$ is a non-singular conic which is the pull back of $C$ in $Q_{3,2}$. 

We now give a geometric argument for the rigid isotopy classes before giving an algebraic proof: Any conic in $\mathbb{RP}^{4}$ which is an intersection of a two dimensional plane and $Q_{3,2}$, is completely determined by the two dimensional plane. Consider the pencil of hyperplanes containing the two dimensional plane in $\mathbb{RP}^{4}$. The conic is singular if and only if the two dimensional plane that it lies on is the intersection of two such hyperplanes which are tangent to $Q_{3,2}$. Let us consider the dual space of $\mathbb{RP}^{4}$. A pencil of hyperplanes in $\mathbb{RP}^{4}$ represents a line in the dual space and tangent planes to $Q_{3,2}$ represents a quadric $Q$ of signature $(3,2)$ in the dual space. Suppose $q$ is the quadratic form associated to the quadric $Q$. Any line in the dual space intersects the quadric $Q$ in either two real points or two imaginary points. 

$q$ can not be zero for any point on the line which intersects the quadric $Q$ in imaginary points, either $q$ is positive for all the points lying on such lines or $q$ is negative for all the points on the line. In the dual space, the space of lines which intersects the quadric in imaginary points have two components. Any two points on such lines in the actual space corresponds to two hyperplanes in $\mathbb{RP}^{4}$ which intersects $Q_{3,2}$ in a non-singular conic. Therefore, there are exactly two classes of non-singular conics in $Q_{3,2}$.

The above argument maybe summarized in this lemma, which we shall now prove algebraically.
\begin{lemma}
	There are two rigid isotopy classes of degree ~2 knots (considered without orientation) in $Q_{3,2}$.
\end{lemma}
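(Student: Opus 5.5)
Every degree~2 knot in $Q_{3,2}$ is a non-singular conic $P\cap Q_{3,2}$, where $P$ is a real $2$-plane in $\mathbb{RP}^{4}$. The conic determines $P$, and $P$ determines the conic, so the space of degree~2 knots (unoriented, up to reparametrisation) is the space of planes $P$ for which $P\cap Q_{3,2}$ is a non-empty non-singular real conic. I will therefore study this space of planes through the restriction of the quadratic form $q=-x_0^2+x_1^2+x_2^2+x_3^2-x_4^2$ to the $3$-dimensional subspace $\hat P\subset\mathbb{R}^5$. By Lemma~\ref{conic} and the discussion above, non-singularity of the conic is equivalent to $q|_{\hat P}$ being non-degenerate. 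Non-emptiness of the real conic means $q|_{\hat P}$ is indefinite. So the signature of $q|_{\hat P}$ is either $(2,1)$ or $(1,2)$.

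\textbf{Invariant separating the two classes.} For a non-degenerate restriction, $\hat P^{\perp}$ is a complementary $2$-plane, and the signature of $q|_{\hat P^\perp}$ is determined by Theorem~\ref{A}. The total signature is $(3,2)$, so signature $(2,1)$ on $\hat P$ forces $(1,1)$ on $\hat P^\perp$, while $(1,2)$ forces $(2,0)$. These are the dual-space statements above: the line $\hat P^\perp$ either meets the dual quadric in two real points or misses it, in which case $q$ is definite on it. Along a rigid isotopy of conics the plane moves continuously and $q|_{\hat P}$ stays non-degenerate, so its signature is constant. Hence the two types cannot be rigidly isotoped into each other. Both types occur: $x_3=x_4=0$ gives signature $(1,2)$ with conic $x_0^2=x_1^2+x_2^2$, and $x_0=x_4=0$ gives $(2,1)$ with conic $x_1^2+x_2^2=x_3^2$, but there $q$ is indefinite of type $(2,1)$. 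I will adjust the coordinates so that both examples are honestly non-empty.

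\textbf{Connectedness of each class.} The space of non-degenerate $3$-subspaces of fixed signature is the homogeneous space $O(3,2)/(O(2,1)\times O(1,1))$, respectively $O(3,2)/(O(1,2)\times O(2,0))$, by Witt's theorem as in the alternative proof of Lemma~\ref{trans}. I need it to be connected. The identity component $SO_0(3,2)$ acts transitively on each type, because the stabiliser $O(2,1)\times O(1,1)$ (or $O(1,2)\times O(2))$ meets every component of $O(3,2)$. The four components of $O(3,2)$ are detected by the orientation of a maximal positive subspace and of a maximal negative subspace, and the stabiliser contains reflections in a positive vector and in a negative vector lying in $\hat P$ or $\hat P^\perp$. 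So each type is a single orbit of a connected group. Moving the plane by a path in $SO_0(3,2)$ moves the conic through non-singular conics on $Q_{3,2}$, and lifting to parametrisations gives the rigid isotopy, since the parametrisations of a fixed conic differ by $PGL_2(\mathbb{R})$ and orientation is ignored.

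The main obstacle is the component bookkeeping: checking precisely that the stabilisers hit all components of $O(3,2)$, so that the orbit is connected. I would alternatively verify it concretely by projecting from a point of $Q_{3,2}$ off the conic, as in Lemma~\ref{proj_not_on_curve}. The conic becomes a line in $\mathbb{RP}^3$ meeting the plane at infinity at a point off the special conic. The two classes then correspond to that point lying inside or outside the special conic, and within each region the space of such lines is clearly connected.
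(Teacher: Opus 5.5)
Your main argument is correct, and it is a genuinely different route from the paper's. The component count you flag as the main obstacle is in fact already done correctly: reflections in a positive and in a negative vector of $\hat P$ generate $\pi_0(O(3,2))\cong(\mathbb{Z}/2)^2$.

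The paper argues differently. It uses Lemma~\ref{trans} to make the two conics share a point $p$, and projects from $p$ via Lemma~\ref{proj_curve_pt} to lines in $\mathbb{RP}^3$ whose point at infinity avoids the special conic. The class is read off from whether that point lies in the disc or in the M\"obius band, and connectedness of each class comes from moving lines in $\mathbb{RP}^3$.

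Your signature is the same invariant in intrinsic form. Let $B$ be the bilinear form of $q$, let $v$ span the tangent direction $\hat P\cap p^{\perp}$ modulo $p$, and pick $w\in\hat P$ with $B(p,w)\neq 0$. Then $\hat P=\mathrm{span}(p,w)\oplus\mathrm{span}(v')$ with $\mathrm{span}(p,w)$ hyperbolic and $q(v')=q(v)$. So $q|_{\hat P}$ has type $(2,1)$ or $(1,2)$ according as $q(v)>0$ or $q(v)<0$. In the coordinates of Lemma~\ref{proj_curve_pt} the point at infinity is $[0:0:v_2:v_3:v_4]$ with $q(v)=v_2^2+v_3^2-v_4^2$, so type $(2,1)$ means outside the special conic and type $(1,2)$ means inside.

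Your version buys rigour. The invariant is visibly independent of the projection point and locally constant along any rigid isotopy; the paper tacitly keeps the projection point on the moving curve. Connectedness comes directly from Witt's theorem and the component count, and conics of the same type are even $SO_0(3,2)$-equivalent. The same stabiliser count applied to oriented conics also gives the three oriented classes of Lemma~\ref{lemma 3}. The paper's projection route, in turn, is the one that persists in higher degree, where a curve is no longer determined by its linear span.

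Your $(1,1)$ versus $(2,0)$ description of $\hat P^{\perp}$ is not what the paper's dual-space heuristic says. That heuristic attributes the two classes to dual lines on which $q$ is positive or negative definite. Yours is the correct one, because a negative definite $\hat P^{\perp}$ makes $q|_{\hat P}$ positive definite and the real conic empty.

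Two slips must be fixed.

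First, your examples are wrong. On $x_3=x_4=0$ the form is $-x_0^2+x_1^2+x_2^2$, of type $(2,1)$, not $(1,2)$. On $x_0=x_4=0$ it is $x_1^2+x_2^2+x_3^2$, which is positive definite, so that real conic is empty. As written you exhibit no conic of type $(1,2)$, and the count ``two'' needs one. Use for instance $x_2=x_3=0$: there $q=-x_0^2+x_1^2-x_4^2$ has type $(1,2)$, and the conic is parametrised by $[s^2-t^2:s^2+t^2:0:0:2st]$. The paper's representatives also do the job:
\begin{itemize}
\item $[2st:0:4st:-3s^2+t^2:-3s^2-t^2]$ spans $\{x_1=0,\ x_2=2x_0\}$, where $q$ restricts to $3x_0^2+x_3^2-x_4^2$, of type $(2,1)$;
\item $[2st:0:4s^2:t^2-3s^2:t^2-5s^2]$ spans $\{x_1=0,\ x_2=2x_3-2x_4\}$, where $q$ restricts to $-x_0^2+(5x_3-3x_4)(x_3-x_4)$, of type $(1,2)$.
\end{itemize}

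Second, your concrete alternative uses the wrong projection. Projecting from a point of $Q_{3,2}$ off the conic gives, by Lemma~\ref{proj_not_on_curve}, a conic in $\mathbb{RP}^3$ meeting the plane at infinity in two points of the special conic. It is projection from a point on the conic (Lemma~\ref{proj_curve_pt}) that produces a line whose point at infinity is off the special conic, and that is exactly the paper's proof.
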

\begin{proof}
	
	Suppose $K$ is a knot of degree 2 in $Q_{3,2} \in \mathbb{RP}^{4}$. By using lemma \ref{proj_curve_pt}, the projection of the curve from a point on the knot is a knot of degree~1 in $\mathbb{RP}^{3}$ such that point of intersection of the plane at infinity with the projected knot does not lie on the special conic. The special conic divides the plane at infinity in two components. So, the intersection point of the knot with the plane at infinity can lie in one of these two components. The degree 1 knot will lie on a two dimensional projective plane, say $X$. The intersection of $X$ with the plane at infinity is a line (copy of $\mathbb{RP}^{1}$), say $L$. Therefore, the special conic which lies on the plane at infinity intersects the plane X in two points, say $a_{1}$ and $a_{2}$. These two points divide $L$ in two components. Now the point of intersection of the knot with the plane at infinity can lie on either of these two components.

	Suppose $K_{1}$ and $K_{2}$ are two degree two knots in $Q_{3,2}$. By using a projective transformation in $\mathbb{RP}^{4}$ which preserves $Q_{3,2}$, these can be transformed so that they intersect at a point, say $p$. The projection of these two from $p$ are two degree $1$ knots in $\mathbb{RP}^{3}$ whose point of intersection with the plane at infinity lies in one of the components of $L \setminus \{a_{1},a_{2}\}$. Any two knots of degree 1 in $\mathbb{RP}^{3}$ are rigidly isotopic. Therefore, degree ~1 knots whose point of intersection lie in same component can be isotoped to each other. Knots whose points of intersection with the plane at infinity are in different components cannot be isotoped as during the isotopy between them, the knot must intersect the conic which is not allowed. 
	
	This will correspond to two different isotopy classes of degree $2$ real rational knots in $Q_{3,2}$. The projection of both classes in $\mathbb{RP}^{3}$ are smoothly isotopic to a circle having writhe $0$.

	Representatives of these two classes are parametrised as $[2st:0:4st:-3s^{2}+t^{2}:-3s^{2}-t^{2}]$ and $[2st:0:4s^{2}:t^{2}-3s^{2}:t^{2}-5s^{2}]$.
	
\end{proof}

However, when we consider the orientation then one of the classes splits into two.

\begin{lemma} \label{lemma 3}
	There are three rigid isotopy classes of degree $2$ real rational oriented knots in $Q_{3,2}$ up to orientation.
\end{lemma}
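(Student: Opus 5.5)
Start from the two unoriented classes of the previous lemma and decide, for each one, whether a conic can be rigidly isotoped to itself with the orientation reversed. Project from a point $p$ on the knot, as in lemma \ref{proj_curve_pt}. An oriented degree~$2$ knot then becomes an oriented line $\ell$ in $\mathbb{RP}^{3}$. Its point $q=\ell\cap\{\text{plane at infinity}\}$ lies off the special conic, either inside or outside it.

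The first step is to fix the classes in the picture. The plane $X$ spanned by $\ell$ and $p$ meets the plane at infinity in a line $L$. In the previous lemma the two unoriented classes were distinguished by which component of $L\setminus\{a_{1},a_{2}\}$ contains $q$. Concretely, they correspond to $q$ lying inside the special conic (so $L$ always meets the conic in two real points) or outside it. To make this independent of $X$, I would record the invariant as ``$q$ is inside or outside the conic in the plane at infinity'' and restate the previous lemma in these terms.

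Next, I would look at what orientation reversal does in the projection. As in lemma \ref{lemma 2}, reversing orientation corresponds to the sign change $[s:t]\mapsto[-s:t]$. This amounts to asking whether one can move the oriented line $\ell$ to itself with the opposite orientation, while keeping $q$ off the special conic and never letting $\ell$ lie inside the plane at infinity.

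\emph{Outside case.} When $q$ is outside the conic, the complement of the conic in the plane at infinity is a M\"obius band. Its core is a noncontractible loop $\gamma$, and $\gamma$ can be chosen to avoid the conic. I would move $q$ once around $\gamma$ and carry the line along by a family of translations, as in the degree~$1$ proof, keeping $\ell$ transverse to the plane at infinity. Returning to the starting point after going once around a one-sided loop should reverse the induced orientation of $\ell$. The pulled-back path then joins the knot to its reverse, so this unoriented class gives one oriented class.

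\emph{Inside case.} When $q$ is inside the conic, its region is a disc. I would use the orientation of the affine chart $\mathbb{RP}^{3}\setminus\{\text{plane at infinity}\}$ together with a coorientation of the line at $q$. Along any isotopy, $q$ stays in this disc, which is contractible, so the sign comparing the direction of $\ell$ with a fixed transverse direction of the plane at infinity over the disc is preserved. Reversal flips this sign, so this class splits into two.

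Adding up gives $1+2=3$ oriented classes, matching the statement.

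The main obstacle is making the invariant in the inside case rigorous. One has to check that the sign is well defined for lines crossing the plane at infinity at a point of the disc, and that it does not change when the projection point $p$ is moved along the knot. For the latter I would argue that the choices of $p$ form a connected set, namely the knot itself, and that the sign varies continuously with these choices.
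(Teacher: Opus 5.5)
Your route is the paper's: project from a point of the conic, split by whether $q$ lies inside or outside the special conic, and reverse the outside class by turning the line so its direction sweeps a noncontractible loop outside the conic. The paper's rotation about an affine point is exactly your loop around the core of the M\"obius band. Do it with rotations rather than the shears of the degree~$1$ proof, since those fix the line $x_0=0$ of the plane at infinity pointwise and cannot carry $q$ around a noncontractible loop. For the inside case, your sign (which of the two lifts of the disc to $S^2$ contains the oriented direction) is cleaner than the paper's heuristic that every rotation pushes the line through the conic.

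The inside case nevertheless has a genuine gap, which the paper's argument also glosses over and which your proposed remedy does not touch. The sign is only shown constant along isotopies of lines projected from one fixed centre $p$ with one fixed normalisation. A rigid isotopy $C_t$ in $Q_{3,2}$ from $C$ to $\bar C$ need not keep the conic through any fixed point, so the centre $p_t\in C_t$ must move over $Q_{3,2}$, not merely along one knot. You can re-normalise by a continuous family $g_t$ with $g_t(p_t)=p$ and $g_0=\mathrm{id}$. This turns the isotopy into one through $p$ from $C$ to $h\bar C$, where $h=g_1$ fixes $p$. You must then rule out that $h$ swaps the two lifts of the disc, and this is not automatic. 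For instance, the reflection $x_4\mapsto -x_4$ fixes $[1:1:0:0:0]$, preserves the special conic and the disc, and flips your sign.

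The missing step is to show this monodromy preserves the sign. Suppose $h(p)=\lambda p$, and let $N\in O(2,1)$ be the induced map on $p^\perp/p$. Then $h$ scales the transverse coordinate $\langle x,p\rangle$ by $\lambda^{-1}$, so it acts on oriented affine directions by $\lambda^{-1}N$. Hence your sign is multiplied by $\mathrm{sign}(\lambda)$ times $\pm1$, according as $N$ preserves or swaps the two nappes of the light cone. That product is the sign of the determinant of $h$ compressed to a maximal negative-definite $2$-plane. This is a character of $O(3,2)$, trivial on $-I$, hence trivial on the identity component of $PO(3,2)$. That component contains $h$, because $t\mapsto g_t$ is a path from the identity to $h$.

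An equivalent, intrinsic formulation: inside-type conics are exactly those whose plane has signature $(1,2)$. Their tangent vectors are therefore everywhere timelike for the conformal Lorentzian structure on $Q_{3,2}\cong (S^2\times S^1)/\pm1$. This space is time-orientable, so \emph{future-directed} versus \emph{past-directed} is a rigid-isotopy invariant that orientation reversal flips. With either addition, your count $1+2=3$ goes through.
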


\begin{proof}
	
	Two classes of degree~2 knots (considered without orientation) are the pullback of the lines in $\mathbb{RP}^{3}$ whose points of intersection lies inside and outside the special conic respectively. If we try to isotope a line to a line with its reverse orientation, this can be done by rotating the line to itself. A line whose point of intersection with the plane at infinity lies inside the conic cannot be isotoped to its reverse because if we try to rotate the line in any direction it will either lie entirely on the plane at infinity or it will intersect the conic at the plane at infinity which will prevent it from being pulled back to a conic in $Q_{3,2}$ (it will then pull back to a line). Lines whose point of intersection with the plane at infinity lie outside the conic can be isotoped to its reverse orientation line via rotating the line around a point which lies on the intersection of line and a plane parallel to the plane at infinity (in an affine chart where the plane at infinity is visible in affine coordinates) as shown in figure \ref{fig 1}. During this rotation, at every instance the line intersects the plane at infinity outside the special conic which makes the knots isotopic.
	\begin{figure}[h]
		\centering
		\includegraphics[width=0.8\textwidth]{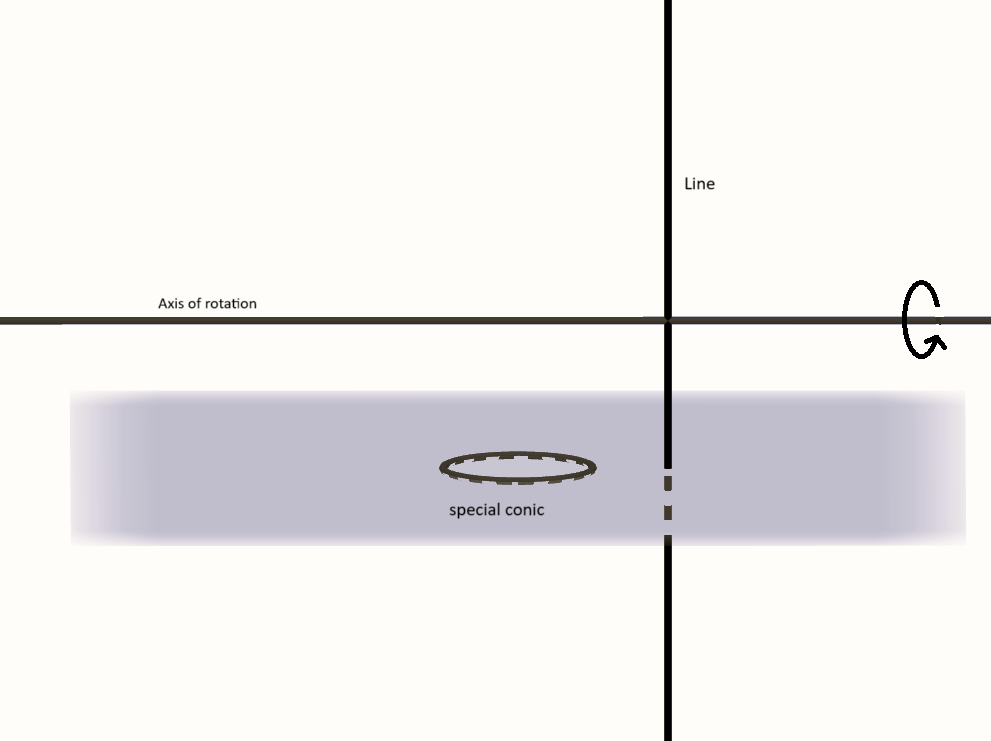}\hfill
		\caption{}
		\label{fig 1}
	\end{figure}
	\end{proof}
	\begin{proof}[Alternative proof]
	Now we check the isotopy of these two classes of knots via their projection in $\mathbb{RP}^{3}$ as degree $2$ knots. In $\mathbb{RP}^{3}$, there is always a plane which intersects the degree $2$ knot in imaginary points. This plane may or may not be parallel to the plane at infinity. The position of this plane with respect to the knot decides the isotopy class. If the plane which intersects the knot in only complex points is parallel to the plane at infinity (in an affine chart where the plane at infinity is visible in affine coordinates) then the knot can be isotoped to its reverse orientation via the rotation described in figure \ref{fig 2}. But if this plane is not parallel then any type of isotopy does not work and this gives rise to the two classes.\\\\
	$[s^{2}+t^{2}:0:s^{2}-t^{2}:2st]$ is a degree 2 knot in $\mathbb{RP}^{3}$ as in figure \ref{fig 2}. Take the plane at infinity to be $x_{3}=0$ and the conic at infinity to be the set in the plane satisfying $x_{0}^{2}=x_{1}^{2}+x_{2}^{2}$. From figure \ref{fig 2} (a), it is clear that plane which intersects the knot in imaginary points is parallel to the plane at infinity. This type of knot can be isotoped to its reverse orientation via the rotation in $\mathbb{RP}^{3}$ given by $[s^{2}+t^{2}:(s^{2}-t^{2})sin\theta:(s^{2}-t^{2})cos\theta:2st]$. The pull back of this isotopy in $\mathbb{RP}^{4}$ on $Q_{3,2}$ is given $[s^{2}+t^{2}:(s^{2}-t^{2})sin\theta:(s^{2}-t^{2})cos\theta:2st:0]$ which isotopes one class of degree $2$ knots having representative $[s^{2}+t^{2}:0:s^{2}-t^{2}:2st:0]$ to the knot with reverse orientation. 
	\begin{figure}[h]
		\centering
		\includegraphics[width=0.49\textwidth]{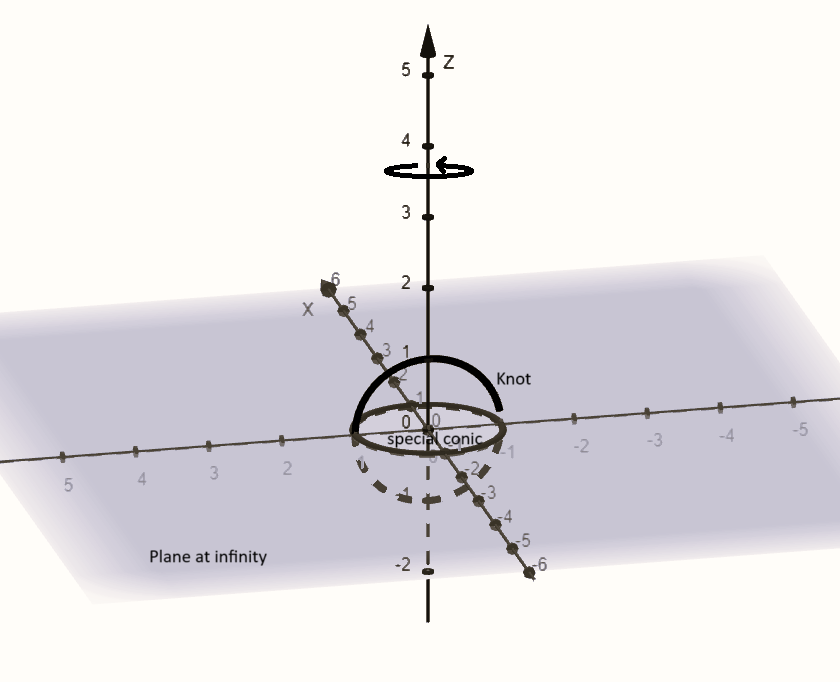}\hfill
		\includegraphics[width=0.49\textwidth]{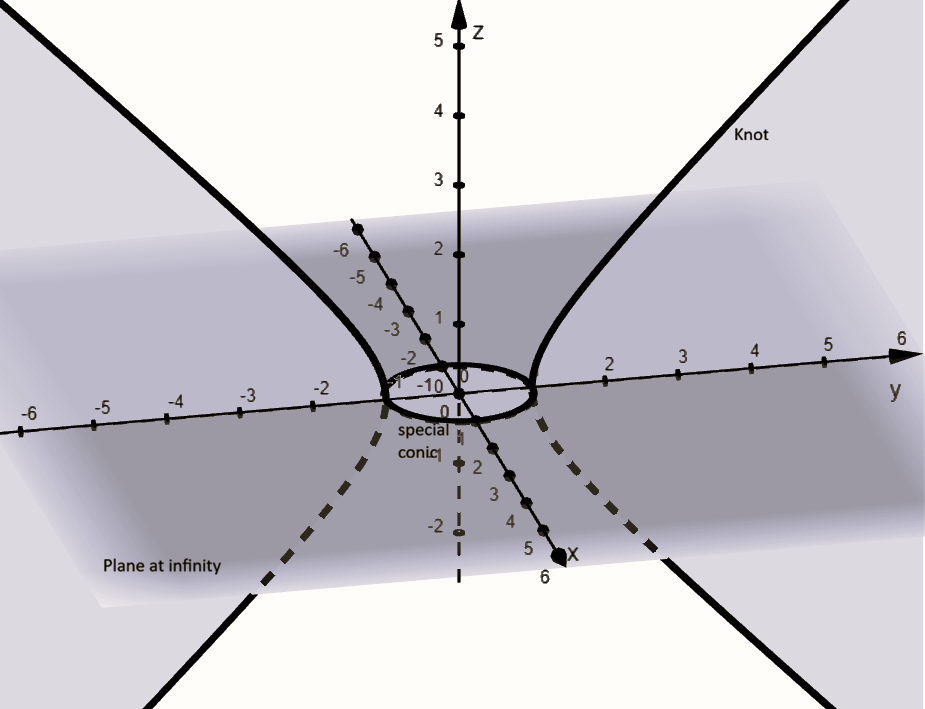}\hfill
		\caption{(a) degree $2$ knot which can be isotoped to its reverse orientation (b) degree $2$ knot which cannot to be isotoped to its reverse orientation}
		\label{fig 2}
	\end{figure}\\
	This type of isotopy does not work for the second class of degree $2$ knots (see figure \ref{fig 2} (b)).\\
\end{proof}

\begin{theorem}
	The space of real rational parametrisations of degree $2$ curves has three connected components.
\end{theorem}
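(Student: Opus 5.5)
The claim follows by combining the results just established, in the same way as the degree~$1$ theorem was deduced from Lemmas~\ref{lemma 1} and~\ref{lemma 2}. By Lemma~\ref{lemma 4}, a degree~$2$ real rational curve in $Q_{3,2}$ has no double points. So the space of real rational parametrisations of degree~$2$ curves in $Q_{3,2}$ coincides with the space of degree~$2$ real rational knots. Its connected components are then, by definition, the rigid isotopy classes of oriented degree~$2$ knots, and Lemma~\ref{lemma 3} gives exactly three such classes.

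To make the identification precise, I will use Theorem~\ref{dimension}: the parametrisation space is a manifold of dimension $9$. Hence path components and connected components agree, and a path of parametrisations is exactly a rigid isotopy, since every curve along the path is non-singular by Lemma~\ref{lemma 4}. A parametrisation determines an oriented knot, because $[s:t]\mapsto[-s:t]$ and similar reparametrisations reverse orientation. So I count oriented classes, which is what Lemma~\ref{lemma 3} does.

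I then recall the structure from the unoriented lemma. Projecting from a point of the knot (Lemma~\ref{proj_curve_pt}) sends each knot to a line in $\mathbb{RP}^3$ whose intersection with the plane at infinity lies either inside or outside the special conic. These two regions cannot be joined without crossing the conic, which would force a line pullback rather than a conic. This gives two unoriented classes. The outside class is connected to its reverse by the explicit rotation $[s^{2}+t^{2}:(s^{2}-t^{2})\sin\theta:(s^{2}-t^{2})\cos\theta:2st:0]$. The inside class splits into two orientations, giving $1+2=3$ components.

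The main obstacle is showing that the inside class genuinely cannot be joined to its reverse. This requires that any path in parametrisation space projects, with the centre of projection chosen to move continuously along the curve (possible by Lemma~\ref{trans}), to a path of oriented lines meeting the plane at infinity strictly inside the conic. The set of oriented lines through the open disc, with a fixed point of projection, has two components distinguished by orientation, for example by the sign of a fixed coordinate function as in the proof of Lemma~\ref{lemma 2}. I would cite Lemma~\ref{lemma 3} for this point rather than reprove it.
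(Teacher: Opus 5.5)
Your proposal is correct and follows the paper's own route. The paper proves this theorem only by citing Lemma~\ref{lemma 4} (no degree~$2$ curve in $Q_{3,2}$ has a double point, so the parametrisation space consists of knots and its components are the oriented rigid isotopy classes) together with Lemma~\ref{lemma 3} (there are exactly three such classes). Your extra remarks are consistent with the paper and do not change the argument: local path-connectedness via Theorem~\ref{dimension}, the outside class being reversible by the explicit rotation, and the inside class splitting under orientation.
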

\begin{proof}
	Trivially follows from lemma \ref{lemma 4} and lemma \ref{lemma 3}	
\end{proof}

\begin{remark}
	There can be non-rigid isotopy classes of knots in $Q_{3,2}$ which have same writhe number when projected in $\mathbb{RP}^{3}$.
\end{remark}

\section{Degree 3}

\begin{lemma} \label{lemma 5}
	There does not exist any degree~3 curve with double points lying on $Q_{3,2}$ in $\mathbb{RP}^{4}$.
\end{lemma}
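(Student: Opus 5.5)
We argue by contradiction, following the projection technique used in Lemma \ref{lemma 4} for degree~2. Suppose $C$ is a real rational curve of degree~3 lying on $Q_{3,2}$ with a double point $q$. The double point may be non-solitary, with two real preimages, or solitary, with a pair of complex conjugate preimages. In either case we project from $q$ itself. By Lemma \ref{trans} we may first move $q$ to $[1:1:0:0:0]$ using an element of $O(3,2)$, which does not change the degree or the number of singular points.

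By Lemma \ref{proj_double_pt}(1), the projection of $C$ from $q$ is then a real rational curve of degree $d-2=1$ in $\mathbb{RP}^{3}$, meeting the plane at infinity in $d-2=1$ point. The bijective correspondence in that lemma also says the number of intersection points lying on the special conic is $d-4=-1$. A negative count is impossible, so no such curve exists in $\mathbb{RP}^{3}$, and hence no such $C$ exists.

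To make this concrete, we write the parametrisation as $p_{1}-p_{0}=rm$ and $p_{i}=r_{i}m$ for $i=2,3,4$, where $m$ is the quadratic form vanishing at the two preimages of $q$. This is the same decomposition as in the proof of Theorem \ref{dim}. Substituting into $p_{0}^{2}=p_{1}^{2}+p_{2}^{2}+p_{3}^{2}-p_{4}^{2}$ gives
\[
-(p_{0}+p_{1})\,r\,m = m^{2}\left(r_{2}^{2}+r_{3}^{2}-r_{4}^{2}\right).
\]
Each branch through $q$ is tangent to $T_{q}$, so $p_{1}-p_{0}$ vanishes to order at least $2$ at each preimage. Hence $m^{2}$ divides $p_{1}-p_{0}$, a polynomial of degree~3. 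This forces $p_{1}=p_{0}$. Then $(r_{2}^{2}+r_{3}^{2}-r_{4}^{2})m^{2}=0$, so the degree-one polynomials $r_{2},r_{3},r_{4}$ satisfy $r_{2}^{2}+r_{3}^{2}=r_{4}^{2}$ identically.

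It follows that the whole curve lies in $T_{q}\cap Q_{3,2}$, which is the cone with apex $q$, and all of its points other than $q$ project to the special conic. So the projection $[0:r:r_{2}:r_{3}:r_{4}]$, with $r$ now identically zero, is a line lying entirely inside the special conic, which is impossible. Equivalently, $[r_{2}:r_{3}:r_{4}]$ is a degree-one parametrisation of a nonsingular conic, which cannot happen. In both readings we reach a contradiction.

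The step I expect to need care is the claim that each branch of $C$ meets $T_{q}$ with multiplicity at least $2$ at $q$, so that $m^{2}$ divides $p_{1}-p_{0}$. This holds because every tangent line to $Q_{3,2}$ at $q$ lies in $T_{q}$. For the solitary case the same computation goes through with complex conjugate roots of $m$, and $m$ is still a real quadratic form. As a sanity check, one can alternatively project from a smooth point of $C$: by Lemma \ref{proj_curve_pt} this gives a degree~2 curve in $\mathbb{RP}^{3}$ that still has a double point. Such a curve is planar, so it would be a singular conic, i.e. a pair of lines, which is not the image of an irreducible rational curve. This gives the same conclusion in the spirit of Lemma \ref{lemma 4}.
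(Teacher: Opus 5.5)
Your argument is correct, but its main line differs from the paper's. The paper projects from a smooth point of $C$ via Lemma~\ref{proj_curve_pt}. It then notes that the image would be a degree~$2$ curve in $\mathbb{RP}^{3}$ with a double point, which cannot be rational. That is exactly the ``sanity check'' in your last paragraph.

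Your main route works at the node $q$ itself. Tangency of both branches to $T_{q}$ gives $m^{2}\mid p_{1}-p_{0}$, hence $p_{1}=p_{0}$ by degree. So $C$ lies in the cone $T_{q}\cap Q_{3,2}$, and projecting from its apex would give a linear map into the nonsingular special conic. This is explicit and self-contained, and it treats solitary nodes uniformly through the complex roots of $m$. It also shows why the count $d-4$ of Lemma~\ref{proj_double_pt} breaks down for $d=3$: the branch multiplicities satisfy $a_{1}+a_{2}\geq 4>d$, which forces $C\subset T_{q}$. The paper's route is shorter. However, it tacitly needs the projected degree~$2$ map to be birational onto its image, i.e.\ that $C$ is not planar. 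That rests on planes meeting $Q_{3,2}$ only in conics.

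Two points to tidy up. First, your opening paragraph is not a proof on its own. The counts in Lemma~\ref{proj_double_pt} presuppose $C\not\subset T_{q}$. Your own computation shows that in the hypothetical case the projection lies entirely in the plane at infinity. So ``$d-4=-1$'' is only a heuristic until the concrete computation is done; present it as motivation or drop it.

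Second, in the last step $r_{2},r_{3},r_{4}$ may be proportional. Then the projection is a single point of the special conic rather than a line, and $C$ is a triple cover of a ruling of the cone. An example is $[s^{3}:s^{3}:0:t^{3}-ts^{2}:t^{3}-ts^{2}]$: it lies on $Q_{3,2}$, its coordinates have no common factor, and it sends $[1:0]$, $[1:1]$, $[1:-1]$ all to $[1:1:0:0:0]$. Such parametrisations must be excluded by the convention that the parametrisation is birational onto its image. The paper relies on the same convention, but you should state it explicitly to close that case.
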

\begin{proof}
	Suppose $C$ is real rational curve of degree~3 with at least one double point in $Q_{3,2}$. By lemma \ref{proj_curve_pt}, the projection of this curve, from a point on the curve but not from the double point to a copy of $\mathbb{RP}^{3}$, is a degree ~2 curve in $\mathbb{RP}^{3}$ with some double points. But such a curve can not be rational in $\mathbb{RP}^{3}$. 
\end{proof}

\begin{lemma} \label{lemma 7}
	There is one rigid isotopy class of degree $3$ real rational knots (without considering orientation) in $Q_{3,2}$.
\end{lemma}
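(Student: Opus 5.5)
We follow the pattern used for degree~2 and project from a point on the knot. Let $K_{1},K_{2}$ be two degree~3 knots in $Q_{3,2}$. By lemma \ref{trans} we may assume, after an element of $O(3,2)$, that both pass through $p=[1:1:0:0:0]$. By lemma \ref{proj_curve_pt}, projecting from $p$ sends each $K_{i}$ to a real rational curve of degree~2 in $\mathbb{RP}^{3}$. This curve meets the plane at infinity $x_{0}=x_{1}=0$ in two points. One of them, call it $\tau_{i}$, is the image of $p$, i.e. where the tangent line at $p$ meets the plane at infinity. The other, $\sigma_{i}$, lies on the special conic. Since $\sigma_{i}$ is real and the intersection is real of degree two, $\tau_{i}$ is real as well. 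The projection has no double points: by lemma \ref{lemma 5} $K_{i}$ has none, and the correspondence of lemma \ref{proj_curve_pt} is bijective off the plane at infinity. So each projection is a conic (degree~2 knot) in $\mathbb{RP}^{3}$.

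The task is then to connect the two projected conics through conics that keep exactly one point at infinity on the special conic, and to pull the path back by lemma \ref{proj_curve_pt}. First, the degree~2 knots in $\mathbb{RP}^{3}$ form a single rigid isotopy class by Bj\"{o}rklund's classification. A conic is determined by its plane and its conic in that plane, so I would build the path explicitly in two stages.

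\begin{enumerate}
\item Move the plane of the first conic to that of the second. This is a path in the space of planes, and I choose it so that the line in which the plane meets the plane at infinity always crosses the special conic at real points.
\item Inside the moving plane, deform the conic so that it passes through a chosen moving point $\sigma_{t}$ on the special conic and meets the line at infinity at one further real point $\tau_{t}\neq\sigma_{t}$.
\end{enumerate}

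In stage~2 we may also let $\tau_{t}$ cross the special conic. At such a crossing the curve meets the plane at infinity in two points of the conic, and by lemma \ref{proj_not_on_curve} this is the projection from a point off the curve. That is still a degree~3 knot, viewed from another point, so the crossing is harmless. Alternatively, lemma \ref{rk1} with $d=2$ moves $\tau$ from inside the conic to outside. Hence, unlike degree~2, the inside/outside position of $\tau$ gives no invariant. This is the key difference from the degree~2 case and explains why the two degree~2 classes merge.

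The step I expect to be the main obstacle is the degeneration $\tau_{t}=\sigma_{t}$. There the conic becomes tangent to the plane at infinity at a point of the special conic. By lemma \ref{tangent} with $m=2$, the pullback then has degree $d+m-1=3$, so this degeneration is in fact allowed, provided the conic is not tangent to the special conic there. I would therefore perturb the path to stay transverse to the special conic. This is possible because tangency to the special conic is a codimension-one condition inside that locus, and we have freedom both in the plane and in the point $\sigma_{t}$. The remaining genericity conditions are that the conic stays non-degenerate and that the plane never lies in the plane at infinity. These are avoided by dimension count: parabola-type and degenerate conics sit in codimension at least one, and I would route around them as in the degree~1 translation argument. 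Pulling back the resulting path then gives a rigid isotopy $K_{1}\simeq K_{2}$ through degree~3 knots.
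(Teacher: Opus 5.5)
Your route is the paper's: make both knots pass through $p$, project to conics meeting the plane at infinity in a point $\sigma$ of the special conic $S$ and a point $\tau\notin S$, join conics whose $\tau$ is on the same side of $S$ directly, and switch sides through the tangency $\tau=\sigma$ (Lemmas~\ref{rk1} and~\ref{tangent}). Two of your steps are wrong.

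\emph{First}, $\tau_t$ may not cross $S$ away from $\sigma_t$. The centre $p$ is fixed, so such a conic is not ``the same knot seen from another point''. By Lemma~\ref{proj_not_on_curve} with $d=2$ it pulls back to a degree-$2$ curve: the five quartics of the pullback formula acquire a second common linear factor. In the limit, the degree-$3$ curves break into a conic plus the line of the cone $T_p\cap Q_{3,2}$ lying over $\tau$. This is the analogue of the wall separating the two degree-$2$ classes. It is the tangency $\tau_t=\sigma_t$, not the transverse crossing, that is harmless.

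\emph{Second}, stage~2 cannot be done ``by dimension count''. Degenerate conics form a codimension-one wall in your family, and so do the parabola-type ones, which you actually need to cross for Lemma~\ref{rk1}. The degenerate wall genuinely disconnects the family.

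In fact the gap cannot be closed: the statement is false, and the paper's proof breaks at the same point (``they can be easily isotoped''). Consider
\[
K_{+}=[t(s^{2}+t^{2}):t(3s^{2}+t^{2}):2s^{3}:0:2s(s^{2}+t^{2})],\qquad K_{-}=[t(3s^{2}-t^{2}):t(s^{2}-t^{2}):2s^{3}:0:2s(s^{2}-t^{2})].
\]
Both have the following properties:
\begin{enumerate}
\item they satisfy $x_{0}^{2}=x_{1}^{2}+x_{2}^{2}+x_{3}^{2}-x_{4}^{2}$;
\item they are twisted cubics in $\{x_{3}=0\}$, hence knots;
\item they pass through $p=[1:1:0:0:0]$ at $s=0$;
\item they project from $p$ to $[0:st:s^{2}:0:s^{2}+t^{2}]$ and $[0:-st:s^{2}:0:s^{2}-t^{2}]$, i.e.\ to the hyperbolas $X(Z-X)=\pm1$ with $X=x_{2}/x_{1}$, $Z=x_{4}/x_{1}$.
\end{enumerate}
These two hyperbolas share $\sigma=[0:0:1:0:1]\in S$ and $\tau=[0:0:0:0:1]$ inside $S$. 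This is exactly the ``same component'' situation.

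Yet $K_{\pm}$ are not even homotopic. The map $[x]\mapsto[x_{0}:x_{4}]$ is well defined on $Q_{3,2}$, since $x_0=x_4=0$ forces $x=0$ there. It is an $S^{2}$-bundle $Q_{3,2}\to\mathbb{RP}^{1}$, so $\pi_{1}(Q_{3,2})\cong\mathbb{Z}$ is read off from the degree of $[x_{0}:x_{4}]$ along the knot.
\begin{itemize}
\item On $K_{+}$ this map is $[t:2s]$, of degree $\pm1$.
\item On $K_{-}$, with $s=\cos\theta$, $t=\sin\theta$, it is $[\sin3\theta:\cos3\theta+\cos\theta]$. Its fibre over $[1:0]$ is the three points $s=0,\pm t$, with equal local signs, so the degree is $\pm3$.
\end{itemize}
This invariant agrees with the paper in degrees $1$ and $2$: lines have class $\pm1$, and the two conic classes have classes $0$ and $\pm2$.

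Projectively, stage~2 overlooks whether the disc bounded by the projected conic meets the line at infinity of its plane in the arc containing the second point $\sigma'=[0:0:1:0:-1]$ of that line on $S$. It does for $K_{-}$ and not for $K_{+}$. Only inside conics of the $K_{+}$ type can be pushed into the tangency configuration of Lemma~\ref{rk1}.

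So there are at least two unoriented rigid isotopy classes of degree-$3$ knots in $Q_{3,2}$, of homology classes $\pm1$ and $\pm3$.
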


\begin{proof}
	
	Suppose $K_{1}$ and $K_{2}$ are two real rational knots of degree $3$ in $\mathbb{RP}^{4}$ lying on $Q_{3,2}$. By using a projective transformation in $\mathbb{RP}^{4}$ which preserves $Q_{3,2}$ we can make $K_{1}$ and $K_{2}$ intersect at a point. When we project $K_{1}$ and $K_{2}$ from their common point we get degree $2$ knots in $\mathbb{RP}^{3}$ which intersects the plane at infinity in two points out of which one lies on the conic. Notice that these points of intersection have to be real because if they are imaginary then they must be in conjugate pairs so either both lie on the conic or neither does because the conic is real. Any degree $2$ knot is projectively equivalent to a circle so we assume that the projection is a circle and one point of intersection with the plane at infinity lies on the special conic and one does not. The degree $2$ knot is a planar curve. The intersection of the plane containing the curve with the plane at infinity is a line. The special conic lying at the plane at infinity divides the plane at infinity in two components and intersects the plane containing the curve at exactly two points. The point of intersection of the curve with the plane at infinity not lying on the special conic lies in either of these two components. If two circles are such that the point of intersection not on the conic is in the same component of the plane at infinity then they can be easily isotoped to each other such that the other point of intersection always lies on the special conic. So, such an isotopy can be lifted to $Q_{3,2}$. That is why we have to check only for the case where the points of intersection lie in different components. By using lemma \ref{rk1}, we can move the point which does not lie on the conic from one component to another component by making it tangent to line at infinity as shown in figure \ref{fig 3f}.
	
		\begin{figure}[h]
		\centering
		\includegraphics[width=0.49\textwidth]{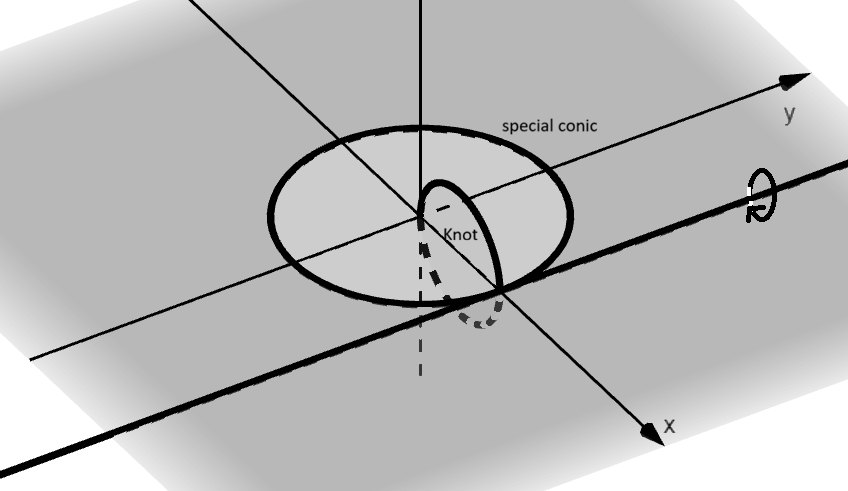}\hfill
		\caption{degree $2$ knot whose point not on the conic lies inside the conic is isotoped to degree $2$ knot whose point of intersection lies outside the conic via rotation}
		\label{fig 3f}
	\end{figure}
	
	Hence, in all there is only one rigid isotopy class of degree $3$ knots in $\mathbb{RP}^{4}$ which lies on $Q_{3,2}$. The projection of this class in $\mathbb{RP}^{3}$ is smoothly isotopic to a line. One representative of this class is given by $[t^{3}+s^{3}:t^{3}-s^{3}:0:-st^{2}-s^{2}t:-st^{2}+s^{2}t]$.
\end{proof}

\begin{lemma} \label{lemma 6}
	An oriented degree $3$ real rational knot in $\mathbb{RP}^{4}$ which lies on $Q_{3,2}$ can not be isotoped to its reverse orientation.
\end{lemma}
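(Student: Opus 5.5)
The plan is to find an invariant of oriented degree~$3$ knots in $Q_{3,2}$ that is constant along rigid isotopies and changes sign under reversal of orientation. The analogy is Lemma~\ref{lemma 2}: there, for degree~$1$, the sign of a coordinate at a fixed parameter value could not change without the curve becoming degenerate. Here the natural candidate is the encomplexed writhe of a projection. It is $\pm1$ in degree~$3$, but reversing orientation does not change writhe, so writhe alone cannot work. Instead I would use a \emph{determinant-type} invariant built from the parametrisation.

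Write the knot as $[p_0:\dots:p_4]$ with the $p_i$ cubic. The coefficient vectors give a $5\times 4$ matrix $M$ of rank $4$; if the rank were lower, the curve would lie in a plane and would be a degree~$3$ planar rational curve, which has a double point, contradicting Lemma~\ref{lemma 5}. The span of the columns of $M$ is a hyperplane $H\subset\mathbb{R}^5$. Let $n$ be its $q$-normal, i.e.\ the vector with $\langle n,x\rangle=0$ on $H$ for the form $\langle\cdot,\cdot\rangle$ of signature $(3,2)$. Choose $n$ by the generalized cross product of the four columns, so that it depends continuously on the parametrisation. Under a reparametrisation $[s:t]\mapsto[-s:t]$, which reverses orientation, the columns are multiplied by $1,-1,1,-1$ in the monomial basis. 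This leaves the sign of the cross product unchanged but affects how it combines with the overall scalar of the parametrisation. The correct quantity is therefore the sign of a suitable expression such as $q(n)$ together with an orientation of $H$ induced by the monomial ordering. I would first try to compute this directly on the representative $[t^3+s^3:t^3-s^3:0:-st^2-s^2t:-st^2+s^2t]$ from Lemma~\ref{lemma 7} and on its reverse.

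A cleaner alternative, and the one I would actually pursue, is projection. Project from a point of the knot, as in the proof of Lemma~\ref{lemma 7}, to get an oriented conic in $\mathbb{RP}^3$. It meets the plane at infinity in one point on the special conic and one point $r$ off it. Orientation data appear as the side of the special conic on which $r$ lies, relative to the direction in which the oriented conic crosses the plane at infinity at the special-conic point. Lemma~\ref{rk1} lets $r$ cross from one component to the other, but only through a tangency. I would check that this tangency move preserves a combined quantity, namely the side of $r$ multiplied by the crossing sign, while orientation reversal flips it, exactly as in Lemma~\ref{lemma 2}. Invariance under the choice of projection point would follow from connectedness of the knot and transitivity (Lemma~\ref{trans}), since the projection point moves continuously along the curve.

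The main obstacle is making this sign genuinely invariant. The group $O(3,2)$ has several components, and some of them may reverse a naive orientation. So I must make sure that rigid isotopy, which moves in a connected space, never passes through a degeneration where the sign could flip. By Theorem~\ref{dimension} and Lemma~\ref{lemma 5}, the space of degree~$3$ parametrisations on $Q_{3,2}$ contains no singular curves at all. It therefore suffices to show that the invariant is continuous and nonvanishing on the whole space of such parametrisations. Nonvanishing should reduce to the rank-$4$ condition together with the nonexistence of planar degree~$3$ curves on $Q_{3,2}$.
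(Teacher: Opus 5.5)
Your proposal has a genuine gap: neither invariant is actually constructed. The first one provably cannot work, and for the second, the steps that would carry the whole argument are deferred.

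\textbf{The determinant invariant.} A reparametrisation by $A\in GL_2(\mathbb{R})$ replaces the four coefficient columns by their images under $\mathrm{Sym}^3A$. So the generalised cross product is multiplied by $\det(\mathrm{Sym}^3A)=(\det A)^6>0$, and the orientation of $H$ given by the ordered columns is unchanged. Rescaling all $p_i$ by $\lambda$ multiplies the cross product by $\lambda^4>0$. Hence $n$, $q(n)$ and that orientation of $H$ are invariants of the \emph{unoriented} curve. There is no interaction with the overall scalar that could separate $K$ from its reverse.

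\textbf{The projection invariant.} Here the quantity is not defined, and its invariance is not automatic.
\begin{itemize}
\item The plane at infinity is one-sided in $\mathbb{RP}^3$. A crossing sign at $q$ therefore needs a co-orientation, which exists only near the closed disc bounded by the special conic. It must be chosen canonically and continuously as the projection point $P$ moves.
\item The side of $r$ changes along a single knot. It is the causal type of the tangent at $P$: $r$ is inside iff $\langle\dot\gamma,\dot\gamma\rangle<0$. For the representative $[t^3+s^3:t^3-s^3:0:-st^2-s^2t:-st^2+s^2t]$ of Lemma~\ref{lemma 7}, with $\gamma(s)=(p_i(s,1))_i$, one gets $\langle\dot\gamma,\dot\gamma\rangle=8s$. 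So $r$ is inside for $s<0$ and outside for $s>0$. At $s=0$, i.e.\ projecting from $[1:1:0:0:0]$, the tangent line lies on $Q_{3,2}$.
\item Consequently, independence of $P$ does not follow from connectedness and Lemma~\ref{trans}. It requires exactly the tangency analysis you postpone.
\item You must also handle degenerate positions. For example, $[s^3+t^3:s^3+t^3:s^3-st^2:2s^2t:s^3+st^2]$ lies in the tangent hyperplane $x_0=x_1$ at $p=[1:1:0:0:0]$ and passes through $p$. Projected from $p$, it becomes the special conic itself.
\end{itemize}
The paper also works in the projected picture, but argues differently: it normalises a putative reversing isotopy to a rotation about the line through the two intersection points, which would push the conic into the plane at infinity.

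\textbf{The missing idea} is a global invariant that sees orientation, and one is available at once. On $Q_{3,2}$ we have $x_0^2+x_4^2=x_1^2+x_2^2+x_3^2$, so $\rho=[x_0:x_4]\colon Q_{3,2}\to\mathbb{RP}^1$ is everywhere defined (it is an $S^2$-bundle inducing $\pi_1(Q_{3,2})\cong\mathbb{Z}$). For a knot $K=[p_0:\dots:p_4]$ of degree $d$, $\rho\circ K=[p_0:p_4]$.
\begin{itemize}
\item A common real zero of $p_0,p_4$ would force all $p_i$ to vanish there. So $h=\gcd(p_0,p_4)$ has no real roots, $\deg h$ is even, and $[p_0:p_4]=[p_0/h:p_4/h]$ on $\mathbb{RP}^1$.
\item Hence the topological degree of $[p_0:p_4]$ is $\equiv d\pmod 2$, in particular odd and nonzero for $d=3$.
\item This degree is unchanged by any homotopy of loops in $Q_{3,2}$, hence by rigid isotopy.
\item Precomposing with $[s:t]\mapsto[-s:t]$ negates it. So $K$ is not even homotopic to its reverse.
\end{itemize}
This is the global form of the coordinate-sign argument of Lemma~\ref{lemma 2} that you cite.

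Finally, this degree is $\pm1$ on the representative above but $\pm3$ on $[s^3-11st^2:0:s^3+7st^2:-6t^3:6t^3-6s^2t]$, which is also a degree-$3$ knot on $Q_{3,2}$ lying in $x_1=0$. So you should not rely on Lemma~\ref{lemma 7} to reduce any computation to a single representative.
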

\begin{proof}
	When we project a degree $3$ knot in $Q_{3,2}$ from a point on the knot we get degree $2$ knot in $\mathbb{RP}^{3}$ such that exactly one point lies on the special conic or to a degree $2$ knot such that the plane at infinity is tangent to the knot at a point on the special conic but the knot is not tangent to the special conic. 
	
	The degree $2$ knot in $\mathbb{RP}^{3}$ has one point on the special conic and the other point not on the conic. Suppose there is an isotopy between the oriented degree $2$ knot whose one point is on the conic, say $q_{1}$, and the other is not on the conic, say $q_{2}$, and its reverse orientation, then by using a continuously varying family of projective transformation we can transform this isotopy in such a way that $q_{1}$ and $q_{2}$ are fixed throughout the isotopy. As we have fixed $q_{1}$ and $q_{2}$ throughout the isotopy and it isotopes to its reverse orientation then the transformed isotopy is forced to be the rotation about the axis passing through $q_{1}$ and $q_{2}$. This makes the axis of rotation is along the intersection of the plane at infinity and the plane containing the knot then during the rotation, for some t, the knot will completely lie on the plane at infinity as shown in figure \ref{fig 3"}. So, the degree $2$ knot cannot be isotoped to its reverse while maintaining right number of intersection with the special conic.\\

	\begin{figure}[h]
		\centering
		\includegraphics[width=0.49\textwidth]{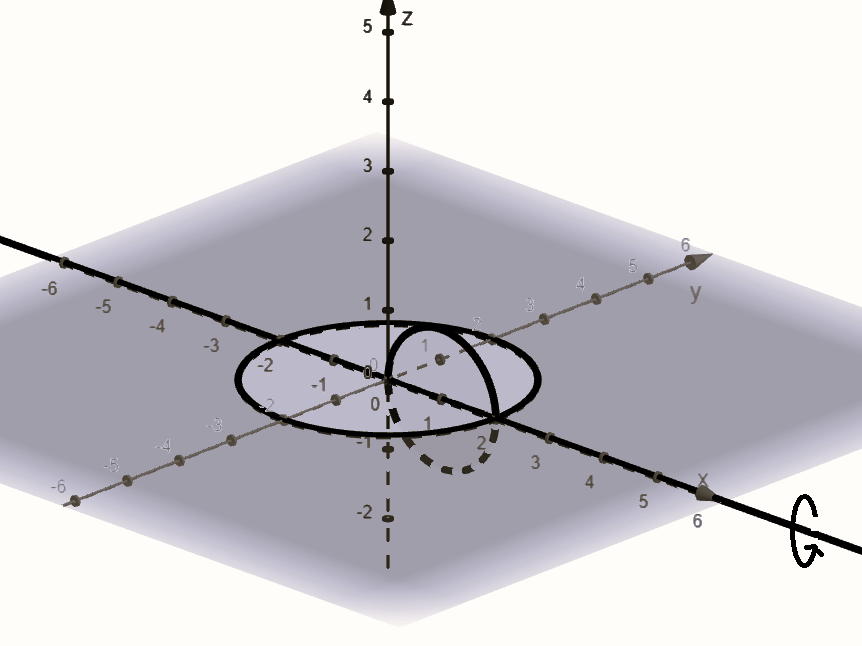}\hfill
		\caption{ axis of rotation is along intersection of plane at infinity $\&$ the plane containing knot }
		\label{fig 3"}
	\end{figure}
	
	If the plane at infinity is tangent to the degree $2$ curve at a point on the special conic but not tangent to the special conic, then as before, any isotopy that takes it to its orientation reversed, can be transformed to one which fixes the tangent vector of the knot at the point of intersection. As before, this will transform the isotopy to a rotation around this tangent vector as an axis as shown in figure \ref{fig 3}. In figure \ref{fig 3}, the knot is represented by $[s^{2}+t^{2}:2s^{2}:0:2st-s^{2}-t^{2}]$, the plane at infinity by $x_{3}=0$ and the special conic at infinity is $x_{0}^{2}=x_{1}^{2}+x_{2}^{2}$. This knot is tangent to the plane at infinity at the point $[1:1:0:0]$ but not tangent to the special conic. This can be rigidly isotoped to its reverse orientation via a rotation. Without loss of generality assume the isotopy to be  $[s^{2}+t^{2}:(s^{2}-t^{2}) \mathrm{cos} \theta+s^{2}+t^{2}:-(s^{2}-t^{2})\mathrm{sin} \theta:2st-s^{2}-t^{2}], 0\leq \theta \leq \pi$. For $\theta \neq \frac{\pi}{2}$, by using lemma \ref{tangent} pull back of curves in this isotopy are degree~3 knots in $Q_{3,2}$. For $\theta=\frac{\pi}{2}$, the knot $C_{\frac{\pi}{2}}$ is given by $[s^{2}+t^{2}:s^{2}+t^{2}:-(s^{2}-t^{2}):2st-s^{2}-t^{2}]$ is tangent to the plane at infinity $x_{3}=0$ and also tangent to the special conic $x_{0}^{2}=x_{1}^{2}+x_{2}^{2}$, $x_{3}=0$. The pull back of this curve in $Q_{3,2}$ is a degree~2 real rational knot parametrised by $[-2s^{2}-2t^{2}:-2s^{2}-2t^{2}:2s^{2}-2t^{2}:-4st:-2s^{2}-2t^{2}]$. So, the isotopy $C_{\theta}$ can not be pulled back to a rigid isotopy in $Q_{3,2}$. 
	\begin{figure}[h]
		\centering
		\includegraphics[width=0.8\textwidth]{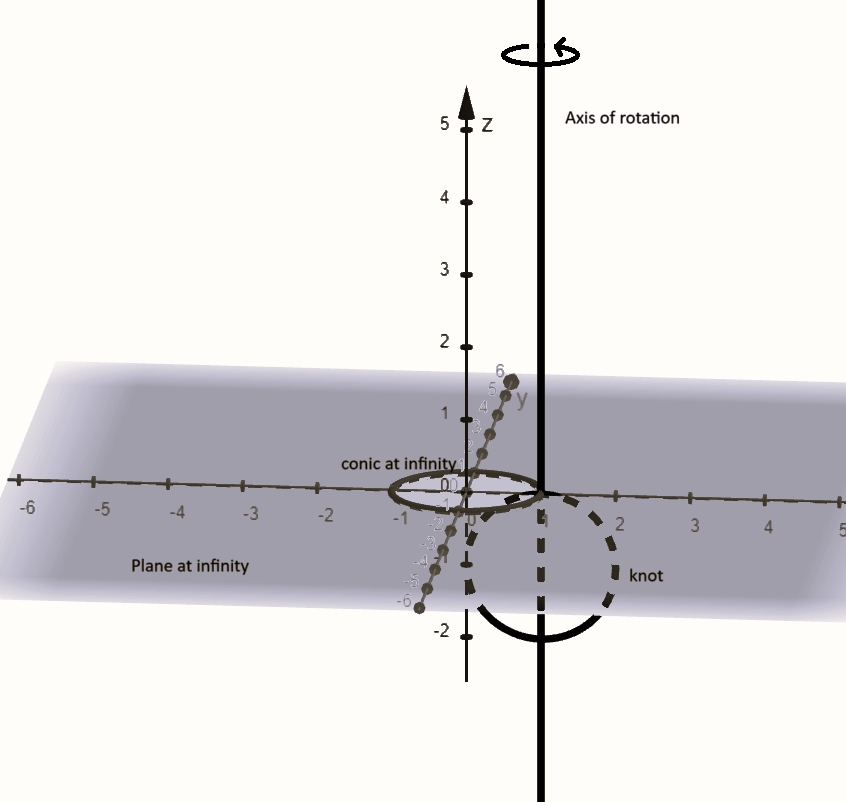}\hfill
		\caption{}
		\label{fig 3}
	\end{figure}
\end{proof}

\begin{lemma}
	There are $2$ rigid isotopy classes of degree $3$ real rational knots in $Q_{3,2}$.
\end{lemma}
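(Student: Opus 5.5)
The statement concerns oriented degree~$3$ knots, and the plan is to combine Lemma~\ref{lemma 7} with Lemma~\ref{lemma 6}. By Lemma~\ref{lemma 7}, any two degree~$3$ real rational knots in $Q_{3,2}$ are rigidly isotopic once orientations are ignored. Given two oriented knots $K_1,K_2$, that unoriented rigid isotopy is a path of parametrisations. Such a path carries the orientation of $K_1$ continuously, so $K_1$ is rigidly isotopic either to $K_2$ or to $K_2$ with reversed orientation, $\overline{K_2}$. Hence there are \emph{at most} two oriented classes, represented by a fixed knot $K$ and its reverse $\overline{K}$. For $K$ one may take the representative $[t^{3}+s^{3}:t^{3}-s^{3}:0:-st^{2}-s^{2}t:-st^{2}+s^{2}t]$ from the proof of Lemma~\ref{lemma 7}; its reverse is obtained by the reparametrisation $[s:t]\mapsto[-s:t]$.

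For the lower bound we invoke Lemma~\ref{lemma 6}: $K$ cannot be rigidly isotoped to $\overline{K}$, so these two classes are distinct, giving exactly two.

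Two side points are needed for this to be a statement about the space of curves. First, by Lemma~\ref{lemma 5} there are no degree~$3$ curves with double points in $Q_{3,2}$, so the classes are components of the space of degree~$3$ parametrisations up to the relevant equivalence. Second, we must handle the convention that parametrisations are taken up to projective reparametrisation, matching the degree~$1$ and degree~$2$ cases, where orientation refers to the induced orientation of $\mathbb{RP}^1$. Reparametrisations by $PGL_2(\mathbb{R})$ in the identity component preserve orientation, and the other component reverses it. So the two components correspond exactly to the two oriented classes.

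The main obstacle lies in the inputs rather than the assembly, and especially in Lemma~\ref{lemma 6}. Its argument reduces an arbitrary isotopy to a rotation by normalising with projective transformations, and one must check that this normalisation uses transformations preserving the special conic, i.e.\ coming from $O(3,2)$, so that the reduction really lifts back to $Q_{3,2}$. If that step were shaky, I would instead look for an explicit invariant distinguishing $K$ from $\overline{K}$. One candidate is the component of the plane at infinity containing the non-conic intersection point, together with the direction in which the curve crosses the plane at infinity, tracked consistently under the projection of Lemma~\ref{proj_curve_pt}.
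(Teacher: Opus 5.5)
Your proposal is correct and is essentially the paper's argument. The paper's proof simply combines Lemma~\ref{lemma 7} (a single unoriented class) with Lemma~\ref{lemma 6} (a degree~$3$ knot is not rigidly isotopic to its reverse); your remarks on carrying the orientation along a path of parametrisations and on the two components of $PGL_2(\mathbb{R})$ only spell out bookkeeping the paper leaves implicit, and your caution about whether the normalising transformations in Lemma~\ref{lemma 6} preserve the special conic concerns that cited input, not this assembly.
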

\begin{proof}
	The proof follows from lemma \ref{lemma 7} and lemma \ref{lemma 6}.
\end{proof}

\begin{theorem}
	The space of real rational parametrisations of degree $3$ curves has two connected components.
\end{theorem}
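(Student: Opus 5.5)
The statement has two halves. First, every real rational parametrisation of degree $3$ on $Q_{3,2}$ is non-singular. Second, the non-singular ones form exactly two connected components. I would follow the same template as the degree $1$ and degree $2$ theorems and combine the lemmas just established.

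\textbf{Step 1: no singular curves.} By Lemma \ref{lemma 5}, no degree $3$ curve on $Q_{3,2}$ has a double point. The same projection argument should exclude a cusp. Projecting from a non-singular point on the curve, Lemma \ref{proj_curve_pt} produces a degree $2$ rational curve in $\mathbb{RP}^{3}$ that still carries the singularity, and a conic cannot be singular. If a reducible degenerate image has to be ruled out as well, the hypothesis that the $p_i$ have no common factor should handle it. Hence the space of degree $3$ parametrisations on $Q_{3,2}$ coincides with the space of degree $3$ knots. By Theorem \ref{dimension}, it is a manifold of dimension $12$.

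\textbf{Step 2: at most two components.} Lemma \ref{lemma 7} says that, ignoring orientation, any two degree $3$ knots are rigidly isotopic. A rigid isotopy between the underlying knots lifts to a path of parametrisations, which may reverse the induced orientation. Precomposing with reparametrisations by $PGL_{2}(\mathbb{R})$, a group with two components, shows the following: every parametrisation is connected either to a fixed representative $[t^{3}+s^{3}:t^{3}-s^{3}:0:-st^{2}-s^{2}t:-st^{2}+s^{2}t]$ or to that representative with reversed orientation (substitute $s\mapsto -s$). So there are at most two components.

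\textbf{Step 3: exactly two.} Lemma \ref{lemma 6} says an oriented degree $3$ knot cannot be rigidly isotoped to its reverse. Therefore the representative and its reverse lie in different components, and there are exactly two.

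The main obstacle is the bookkeeping in Step 2, namely relating paths of parametrisations to rigid isotopies of oriented knots. Since every curve in the space is non-singular by Step 1, the only work is checking that the reparametrisation group accounts for nothing beyond orientation. That follows because the identity component of $PGL_{2}(\mathbb{R})$ is connected, so reparametrising within it never leaves a component.
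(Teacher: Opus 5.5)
You follow exactly the paper's route: Lemma \ref{lemma 5} to exclude singular curves, Lemma \ref{lemma 7} for a single unoriented class, and Lemma \ref{lemma 6} to separate the two orientations. Step 2 cannot work, however, because Lemma \ref{lemma 7} is false, and with it the statement itself.

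On $Q_{3,2}$ one has $x_0^2+x_4^2=x_1^2+x_2^2+x_3^2$, so $x_0$ and $x_4$ never vanish simultaneously at a real point. Hence $[x_0:x_4]$ is a continuous map $Q_{3,2}\to\mathbb{RP}^1$. For every parametrisation $[p_0:\dots:p_4]$ on $Q_{3,2}$, the topological degree $k$ of $[s:t]\mapsto[p_0:p_4]$ is therefore defined. It has these properties:
\begin{enumerate}
\item in degree $3$, $k$ is odd with $|k|\le 3$;
\item $k$ is constant along any path of parametrisations (it is even a homotopy invariant of the oriented loop in $Q_{3,2}$);
\item $k$ changes sign when the orientation is reversed.
\end{enumerate}
For the paper's representative $[t^3+s^3:t^3-s^3:0:-st^2-s^2t:-st^2+s^2t]$, the form $p_0=s^3+t^3$ has exactly one real zero, and it is simple, so $k=\pm1$. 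Now take
\[ [\,s^3-3st^2 : s^3-st^2 : 2s^2t : s^2t+t^3 : 3s^2t-t^3\,]. \]
\begin{enumerate}
\item It lies on $Q_{3,2}$: here $p_0+ip_4=(s+it)^3$ and $p_1^2+p_2^2+p_3^2=(s^2+t^2)^3=p_0^2+p_4^2$.
\item It is a degree-$3$ knot: its coordinates span all binary cubics, so it is a smooth twisted cubic in the hyperplane $x_4=2x_2-x_3$.
\item It has $k=\pm3$: for $[s:t]=[\cos\theta:\sin\theta]$ one gets $[p_0:p_4]=[\cos 3\theta:\sin 3\theta]$.
\end{enumerate}
So degree-$3$ knots occur with $k=1,-1,3,-3$. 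The parametrisation space has at least four components, and even ignoring orientation $|k|$ separates two classes.

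Your Step 2 claim, that every parametrisation is joined to the representative or its reverse, is therefore false. No bookkeeping with $PGL_2(\mathbb{R})$ can repair it. In the paper's proof of Lemma \ref{lemma 7}, the unjustified step is the assertion that two projected conics whose off-conic intersection point lies in the same component ``can be easily isotoped'' to each other. As $k$ shows, that component is not the only discrete datum of the configuration.

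Two smaller remarks. Step 3 is sound; Lemma \ref{lemma 6} in fact follows at once from $k\mapsto -k$ with $k$ odd. In Step 1, the no-common-factor hypothesis does not rule out triple covers of lines, such as $[s^3:s^3:t^3:0:t^3]$. These lie on $Q_{3,2}$ and are not immersions, and for them the projection argument degenerates, since a line projected from one of its points is a point. So the parametrisation space is not literally the knot space. This does not affect the lower bound above, because $k$ is defined on the whole space.
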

\begin{proof}
	This directly follows from lemma \ref{lemma 5} and lemma \ref{lemma 6}
\end{proof}

Reflections in $Q_{3,2}$ are defined as the restrictions of reflections in $\mathbb{RP}^{4}$ to $Q_{3,2}$ which preserves $Q_{3,2}$.

\begin{remark}
	In $\mathbb{RP}^{3}$, a degree~3 knot is not isotopic to its mirror image. However, in $Q_{3,2}$, they are isotopic because the reflection can be achieved by rotating the ambient space $Q_{3,2}$ in $\mathbb{RP}^{4}$. The main idea which we use is that a reflection in a one dimension lower subspace can be carried out by using a rotation in the one dimension higher space $\mathbb{RP}^{4}$. 
	
	Any degree $3$ curve in $\mathbb{RP}^{4}$ lies on a linear hypersurface. $[t^{3}+s^{3}:t^{3}-s^{3}:0:-st^{2}-s^{2}t:-st^{2}+s^{2}t]$ say $K$ is a knot of degree $3$ in $\mathbb{RP}^{4}$ lying on the $3$-hyperboloid given by $x_{1}^{2}+x_{2}^{2}+x_{3}^{2}-x_{4}^{2}=x_{0}^{2}$. It lies on the hypersurface given by $x_{2}=0$. This knot K can be isotoped to its reflection $[t^{3}+s^{3}:-t^{3}+s^{3}:0:-st^{2}-s^{2}t:-st^{2}+s^{2}t]$ via using rotation $[t^{3}+s^{3}:(t^{3}-s^{3})\mathrm{cos} \theta:(t^{3}-s^{3})\mathrm{sin} \theta:-st^{2}-s^{2}t:-st^{2}+s^{2}t]$ which preserves $Q_{3,2}$. We project these from the point $[0:0:1:0:-1]$ to the hyperplane $x_{2}=0$. Notice that point of projection lies on the $Q_{3,2}$ but not on this path, for any $0 \leq \theta \leq \pi$. Projection of this path are degree 3 curves in $\mathbb{RP}^{3}$ but this does not give us an isotopy in $\mathbb{RP}^{3}$ as at $\theta= \pi/{2}$, the projected curve is a planar curve and planar degree 3 curve is singular. So by this, we can isotope reflections in the $Q_{3,2}$ in $\mathbb{RP}^{4}$.
	
	This happens for any isotopy in $Q_{3,2}$. 
\end{remark}

\begin{remark}
	It may be possible that pull back of two rigidly non-isotopic knots of the same degree in $\mathbb{RP}^{3}$ are rigidly isotopic in $Q_{3,2}$.
\end{remark}

\begin{theorem}{\label{reflection}}
	Any $d$ degree real rational curve lying on a hypersurface in $Q_{3,2}$ can be rigidly isotoped to its reflection by using a rotation.
\end{theorem}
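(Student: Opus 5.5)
Following the degree $3$ example in the preceding remark, the plan is to rotate the hyperplane containing the curve through the extra dimension of $\mathbb{RP}^{4}$, inside $O(3,2)$, so that we end at the reflected curve. Let $C$ be a real rational curve of degree $d$ in $Q_{3,2}$ lying in a hyperplane $H$, and let $\rho$ be the reflection of $\mathbb{RP}^{4}$ in $H$ which preserves $Q_{3,2}$. We take $\rho$ to be the reflection in the $\langle\cdot,\cdot\rangle$-orthogonal complement of a non-isotropic vector $v \notin H$; its fixed set is $v^{\perp}$, and we arrange $H=v^{\perp}$ whenever $H$ is non-degenerate.

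\emph{Step 1 (normal form).} Using Lemma \ref{trans} together with Witt's theorem, as in the alternative proof of Lemma \ref{trans}, we move $H$ by an element of $O(3,2)$ to a coordinate hyperplane. If the restricted form on $H$ is non-degenerate, its signature is $(2,2)$ or $(3,1)$, so we may take $H=\{x_{2}=0\}$ or $H=\{x_{4}=0\}$. The reflection of $C$ is then obtained by changing the sign of one coordinate $x_{j}$ that is non-zero on $C$ and has the same sign in the form as $x_{2}$ (respectively $x_{4}$), chosen so that $\rho$ preserves $Q_{3,2}$. In the example in the remark this coordinate is $x_{1}$, which is paired with the normal coordinate $x_{2}$.

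\emph{Step 2 (the rotation).} Writing $C=[p_{0}:\dots:p_{4}]$ with $p_{2}\equiv 0$, we define
\[C_{\theta}=[p_{0}:p_{1}\cos\theta:p_{1}\sin\theta:p_{3}:p_{4}],\qquad 0\le\theta\le\pi.\]
This is the action of a one-parameter subgroup $R_{\theta}\subset SO(3,2)$ that rotates the positive-definite $(x_{1},x_{2})$-plane. In the $x_{4}=0$ case we use the corresponding rotation in the negative-definite $(x_{0},x_{4})$-plane. Since $R_{\theta}$ is a projective linear automorphism of $\mathbb{RP}^{4}$ preserving $Q_{3,2}$, the following hold for every $\theta$.
\begin{enumerate}
\item $C_{\theta}$ lies on $Q_{3,2}$.
\item $C_{\theta}$ is parametrised by polynomials of degree $d$ with no common factor.
\item $C_{\theta}$ has exactly the singularities of $C$.
\end{enumerate}
Hence $\theta\mapsto C_{\theta}$ is a path in the space of real rational knots of degree $d$, and more generally it preserves the nodal stratum. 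At $\theta=\pi$ we obtain $[p_{0}:-p_{1}:0:p_{3}:p_{4}]$, which is the image of $C$ under the reflection $x_{1}\mapsto -x_{1}$ of $H$. So $C$ is rigidly isotopic to its reflection.

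\emph{Step 3 (degenerate hyperplanes and the choice of reflection).} The case I expect to be the main obstacle is when $H$ is tangent to $Q_{3,2}$, so that $H\cap Q_{3,2}$ is the cone of Lemma \ref{conic}. Then no orthogonal normal coordinate is available, and the phrase ``its reflection'' has to be interpreted. I would first try to avoid this case: a small element of $O(3,2)$ close to the identity does not change the rigid isotopy class, but it does not change the hyperplane spanned by $C$ either. So instead I would argue directly. After normalising $H=\{x_{0}=x_{1}\}$, we choose a positive coordinate lying inside $H$, say $x_{3}$, and rotate the $(x_{2},x_{3})$-plane through $\pi$. This rotation lies in $SO(3,2)$, and it realises the reflection $x_{3}\mapsto -x_{3}$ (composed with $x_{2}\mapsto -x_{2}$, which is itself realised by a further rotation) while preserving the quadric.

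Finally, I would check that any reflection of $Q_{3,2}$ restricting to a reflection on $H$ differs from the ones above by an element of the identity component of $O(3,2)$ acting on $C$ through $SO(3,2)$ rotations. This guarantees that the resulting class is independent of the chosen reflection.
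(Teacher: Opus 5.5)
Your Steps 1--2 are the paper's argument. The paper gives no proof beyond the degree~$3$ example in the preceding remark, where a curve in $\{x_2=0\}$ is rotated in the $(x_1,x_2)$-plane. Your normal form plus a $\pi$-rotation in the plane spanned by the normal coordinate and a same-sign coordinate of $H$ is the natural generalisation of that example.

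One point in your set-up is muddled. The reflection with mirror $H=v^{\perp}$ fixes $C$ pointwise, so it does nothing. What you actually use is a reflection $\sigma_u$ with $u\in H$. It agrees on $H$ with $\sigma_u\sigma_v$, and $\sigma_u\sigma_v$ lies in the identity component exactly when $\langle u,u\rangle$ and $\langle v,v\rangle$ have the same sign. The problems are in the parts that go beyond the paper.

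\textbf{Step 3 is circular.} For $H=\{x_0=x_1\}$, the $\pi$-rotation of the $(x_2,x_3)$-plane induces $(x_2,x_3)\mapsto(-x_2,-x_3)$ on $H$. That map is orientation preserving on $H\cong\mathbb{RP}^3$, so it is not a reflection. Your claim that ``$x_2\mapsto -x_2$ is itself realised by a further rotation'' is precisely the statement you are trying to prove for a tangent hyperplane.

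The missing idea is to work projectively. The maps $\sigma$ and $-\sigma$ induce the same map of $\mathbb{RP}^4$. If $\langle u,u\rangle>0$, then $-\sigma_u$ acts as $-1$ on $u^{\perp}$. That is a product of $\pi$-rotations in a positive-definite and a negative-definite $2$-plane of $u^{\perp}$, so it lies in the identity component of $SO(3,2)$. For example, $x_3\mapsto -x_3$ equals $\mathrm{diag}(-1,-1,-1,1,-1)$ projectively. You reach it by rotating the $(x_1,x_2)$- and $(x_0,x_4)$-planes simultaneously through $\pi$. This settles the tangent case. It also shows the hyperplane hypothesis is not needed at all for reflections in positive vectors.

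\textbf{The final ``independence'' claim is false.} $PO(3,2)$ has two components, and $[\sigma_u]$ lies in the identity component iff $\langle u,u\rangle>0$. When $H$ has signature $(2,2)$ or is tangent, no element of the identity component agrees on $H$ with a reflection of $H$ in a negative vector of $H$.

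Here is a concrete failure. The oriented line $L=[s:0:s:t:t]$ lies in $\{x_1=0\}$, and $x_0\mapsto -x_0$ restricts to a reflection of that hyperplane preserving $Q_{3,2}$. Write $Q_{3,2}\cong(S^1\times S^2)/\pm 1$ with the $S^1$ in the $(x_0,x_4)$-plane. This reflection reverses the $S^1$-factor. It therefore sends the generator $[L]$ of $H_1(Q_{3,2};\mathbb{Z})\cong\mathbb{Z}$ to $-[L]$, so the image is not even homotopic to $L$ as an oriented loop.

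So the statement holds only for a suitable reflection: one in a positive vector, or one of the same sign as the normal of a non-degenerate $H$. That is exactly what the paper's example and your Steps 1--2 prove. Drop the last paragraph rather than try to prove it.
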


\section{Degree 4}

By using lemma \ref{proj_curve_pt}, the projection of degree $4$ curve in $\mathbb{RP}^{3}$ is a degree $3$ curve whose two points of intersection with the plane at infinity lie on the special conic and one does not lie on the special conic. Points which lie on the special conic can be complex conjugates but the point not on the conic is a real point. If we consider the point not on the conic and two real points on the curve then these three points define a plane which intersects the curve in only real points. These two planes are isotopic via the rotation along the line which lies on both of the planes. Throughout, the isotopy we can choose the special conic to contain the two points of intersection. So, now onward we suppose that the plane at infinity intersects degree $3$ curve in only real points of intersection. 

\begin{lemma} The space of real rational parametrisations of degree 4 curves in $\mathbb{RP}^{4}$ which lie on $Q_{3,2}$ is connected.
\end{lemma}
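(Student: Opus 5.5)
We reduce the claim to a statement about cubics in $\mathbb{RP}^{3}$ via Lemma \ref{proj_curve_pt}. Given any two degree~4 parametrisations $C_0, C_1$ on $Q_{3,2}$, we first use the transitivity of $O(3,2)$ (Lemma \ref{trans}) together with connectedness of the relevant component of $O(3,2)$ (or composing with a reflection, which by Theorem \ref{reflection} can be realised by a rotation) to arrange that both curves pass through the common point $p=[1:1:0:0:0]$. Projecting from $p$, we get degree~3 rational curves in $\mathbb{RP}^{3}$ meeting the plane at infinity $x_0=x_1=0$ in three points: two on the special conic and one real point $r$ off it. By the discussion preceding the lemma we may assume all three intersection points are real. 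Connecting $C_0$ to $C_1$ in the space of all degree~4 parametrisations on $Q_{3,2}$ (singular ones allowed, since the lemma concerns the whole parametrisation space) is then equivalent to connecting the projected cubics through cubics keeping two intersection points on the special conic, allowing the degenerations controlled by Lemmas \ref{tangent} and \ref{proj_double_pt}.

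The key steps, in order, are as follows. (i) Normalise. Using a continuous family of projective transformations of $\mathbb{RP}^{3}$ preserving the plane at infinity and the special conic (the stabiliser of $p$ in $O(3,2)$ acts this way), we move the two conic points and the point $r$ of each cubic to fixed positions $q_1,q_2$ on the conic and $r$ off it. The component of $r$, inside or outside the conic, can be switched by Lemma \ref{rk1}, since $d-1=3\le 4$ and there are only three points. (ii) Parametrise. Cubics with prescribed intersection points with the plane $x_1=0$ at prescribed parameter values $0,1,\infty$ have $p_1=c\,st(s-t)$ fixed up to scalar. The remaining coordinates $p_2,p_3,p_4$ are cubics constrained only by linear conditions: their values at the three parameters are fixed up to scale to give $q_1,q_2,r$. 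This is an open subset of a vector space cut out by nonvanishing scale factors. (iii) Connect. We use the scalings to connect by straight-line paths, noting that only the non-degeneracy condition (no common factor, i.e.\ the curve does not drop degree) must be avoided. That locus has codimension at least $2$ in the real parameter space, except for sign choices of the scale factors, which we handle by reparametrisation $s\mapsto -s$ and the rotations of Lemma \ref{lemma 6} type.

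The main obstacle is step (iii): verifying that the locus where the pull-back drops degree is truly of codimension $\ge 2$, so that the complement is connected. This locus includes cubics through a conic point tangentially, and cubics where a conic point coincides with the projection of $p$. Equally, the sign of the scalars at the three fixed parameters must not separate components. If the codimension argument fails, we would instead argue with dimensions: the whole space has dimension $15$ by Theorem \ref{dimension}, and the curves with a double point form a hypersurface of dimension $14$ by Theorem \ref{dim}. We would then show the two sides of that wall are joined by a path crossing it transversally, which would yield connectedness of the whole parametrisation space.
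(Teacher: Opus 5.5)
Your reduction to the projected cubics is the same starting point as the paper's. However, the two steps that are supposed to carry the proof do not work.

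\textbf{The normalisation in step (i) is impossible with the group you use.} Maps preserving the plane at infinity and the special conic act on that plane only through $PO(2,1)$, which is $3$-dimensional. Configurations $(q_1,q_2,r)$ with $q_1,q_2$ on the conic depend on $4$ parameters. Once $q_1,q_2$ are fixed, only a one-parameter group of boosts remains, and it moves $r$ only along one conic of the pencil spanned by the special conic and the double line $\overline{q_1q_2}$. So you cannot push every cubic into a single fibre this way.

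The paper never fixes the intersection data. It takes a path in the connected space of all degree-$3$ parametrisations of $\mathbb{RP}^3$. It then composes that path with projective maps $T_t$ that preserve the plane at infinity but not the conic. These maps are chosen only to drag the moving intersection points along prescribed paths: two on the conic, one in a fixed component.

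\textbf{The more serious gap is step (iii), where you misplace the difficulty.} In your fibre, $p_1=c\,st(s-t)$ and $(p_2,p_3,p_4)(\tau_i)=\lambda_i v_i$.
A common factor must divide $p_1$, so it can only occur when some $\lambda_i=0$. Conversely, every point with $c\lambda_1\lambda_2\lambda_3\neq 0$ pulls back to a genuine degree-$4$ curve on $Q_{3,2}$. So the fibre is $(\mathbb{R}^*)^4\times\mathbb{R}^3$ modulo overall scaling. That is $8$ components, separated by a codimension-one locus.

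Consequently there is no codimension-$\ge 2$ statement to verify. The tangency configurations you list cannot even occur in a fibre with fixed distinct $q_1,q_2,r$. The sign classes are the entire content of the lemma, and your remedies do not address them:
\begin{itemize}
\item $s\mapsto -s$ is a single orientation-reversing reparametrisation, not a path. It also sends the parameter value $1$ to $-1$, so it leaves the fibre.
\item Lemma~\ref{lemma 6} is an obstruction (a degree-$3$ knot cannot be reversed), not a source of connecting paths.
\end{itemize}

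To join the eight classes you would have to leave the fibre and compute how the signs change along the way. Candidate routes include configurations where the conic points collide and become complex conjugate, where $r$ crosses the conic as in Lemma~\ref{rk1}, where the cubic becomes planar, or loops in the configuration space. None of this is in the proposal.

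The wall-crossing fallback has the same problem. It presupposes a list of knot components and of the wall components each one abuts, and you do not provide either.
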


\begin{proof}

	Suppose $C_{0}$ and $C_{1}$ are two degree $4$ real rational curves in $Q_{3,2}$. By using a projective transformation which preserves $Q_{3,2}$, we can make these two curves intersect at a point say $p$. By using lemma \ref{proj_curve_pt}, when we project these two curves from the point $p$, the projected curves $C_{0}^{'}$ and $C_{1}^{'}$ are two degree $3$ curves in $\mathbb{RP}^{3}$ whose two points of intersection with the plane at infinity lie on the special conic and the third one is not on the special conic. Denote the points of intersection of these curves with the plane at infinity by $a_{t},b_{t}$ and $c_{t}$ for $t=0,1$. Suppose $a_{0},b_{0},a_{1},b_{1}$ lie on the special conic and $c_{0}$, $c_{1}$ are points not on the special conic. Without loss of generality, we can assume that both $c_{0}$ and $c_{1}$ lie in the same component of the plane at infinity minus the special conic (if not we can switch the position of the point by using lemma \ref{rk1}).
	
	The space of real rational curves of degree $3$ in $\mathbb{RP}^{3}$ is a connected space. Therefore, there exists a path $C_{t}^{'}$ between $C_{0}^{'}$ and $C_{1}^{'}$  in the space of degree $3$ curves in $\mathbb{RP}^{3}$. Each curve $C_{t}^{'}$ intersects the plane at infinity at three points say $a_{t}^{'},b_{t}^{'}$ and $c_{t}^{'}$.
	
	As the special conic is connected, we can always find a path $a_{t}$ between points $a_{0}$ and $a_{1}$ and a path $b_{t}$ between points $b_{0}$ and $b_{1}$ such that $a_{t}$ and $b_{t}$ lie completely on the special conic. $c_{0}$ and $c_{1}$ lie in the same component of the plane at infinity minus the special conic so we can find a path between $c_{0}$ and $c_{1}$ which lies entirely in that component. Take a point $d$ outside the plane at infinity. Define a projective transformation $T_{t}$ from $\mathbb{RP}^{3}$ to $\mathbb{RP}^{3}$ which maps $a_{t}^{'}$ to $a_{t}$, $b_{t}^{'}$ to $b_{t}$, $c_{t}^{'}$ to $c_{t}$ and fixes the point $d$. $T_{t}(C_{t}^{'})$ gives us a path between $C_{0}^{'}$ and $C_{1}^{'}$ which can be pulled back. The pull back of this path is a path which connects $C_{0}$ and $C_{1}$ in the space of degree $4$ curves in $Q_{3,2}$.
\end{proof}

\begin{proof}[Alternative proof]
	Suppose $C_{0}$ and $C_{1}$ are two real rational curves of degree $4$ in $Q_{3,2}$. By using lemma \ref{proj_not_on_curve}, when we project these curves we get two pairs $(C_{0}^{'},X_{0})$ and $(C_{1}^{'},X_{1})$ such that $C_{0}^{'}$ and $C_{1}^{'}$ are degree $4$ curves in $\mathbb{RP}^{3}$ which intersects planes $X_{0}$ and $X_{1}$ respectively at four points and all these points lie on the special conic. 
	
	Imaginary points of intersection always exists in conjugate pairs. If all the points of intersection with the plane at infinity are imaginary, consider the real line $L$ passing through a conjugate pair and isotope this plane at infinity to a plane which lie in pencil of planes through line $L$ and intersects the curve in two real points. This plane intersects the curve in two real and two imaginary points. Now consider the pencil of planes through real line passing through the two real intersection points. There must be a plane in this pencil which intersects the curve in only real points. Any four points determines a conic. We can avoid the situation that any three points out of the four intersection points are collinear so, we can always choose a conic of signature $(2,1)$ throughout the isotopy containing the all the four points of intersection. This way we can move the plane at infinity such that all the intersection points of plane with the curve are real points.
	
	So, we can assume that $C_{0}^{'}$ and $C_{1}^{'}$ intersects $X_{0}$ and $X_{1}$ planes respectively in real points. The space of real rational curves of degree $4$ in $\mathbb{RP}^{3}$, denoted by $Y$ is connected. So, we can connect $C_{0}^{'}$ and $C_{1}^{'}$ via a path in $Y$. $C_{0}^{'}$ intersects the plane $X_{0}$ in four points say, $a^{'},b^{'},c^{'},d^{'}$ and all the points lie on the special conic. During the homotopy between $C_{0}^{'}$ and $C_{1}^{'}$, suppose the intersection points $a^{'},b^{'},c^{'},d^{'}$ continously deformed to points $a_{0},b_{0},c_{0},d_{0}$. Consider the unique plane determined by $a_{0},b_{0}$ and $c_{0}$ and denote it by $X_{0}^{'}$. This gives us a path between $(C_{0}^{'},X_{0})$ and $(C_{1}^{'},X_{0}^{'})$. $C_{1}^{'}$ intersects the planes $X_{0}^{'}$ and $X_{1}$ in four points, denoted by $a_{0},b_{0},c_{0},d_{0}$ and $a_{1},b_{1},c_{1},d_{1}$ respectively and all these points lie on the special conic. Denote the preimages of $b_{0},c_{0},d_{0}$ under the parametrisation of knot $C_{1}^{'}$ by $p_{0}^{1}, p_{0}^{2},p_{0}^{3}$ and the preimages of $b_{1},c_{1},d_{1}$ under the parametrisation of knot $C_{1}^{'}$ by $p_{1}^{1}, p_{1}^{2},p_{1}^{3}$. We can always find a path $p_{t}$ in $\mathbb{RP}^{1}$ which takes $\{p_{0}^{1}, p_{0}^{2},p_{0}^{3}\}$ to $\{p_{1}^{1}, p_{1}^{2},p_{1}^{3}\}$. Any plane in $\mathbb{RP}^{3}$ is determined by three points. Consider the plane determined by the images of $\{p_{t}^{1}, p_{t}^{2},p_{t}^{3}\}$ under parametrisation of $C_{1}^{'}$ and denote it by $X_{t}$. This $X_{t}; 0 \leq t \leq 1$ gives an isotopy between the planes $X_{0}^{'}$ and $X_{1}$. $X_{t}$ intersects the knot $C_{1}^{'}$ in exactly four points say $a_{t},b_{t},c_{t}$ and $d_{t}$ for $0 \leq t \leq 1$. There is always an isotopy between $X_{0}^{'}$ and $X_{1}$ such that all these points are in generic position for any $0 \leq t \leq 1$. Four points determine a pencil of conics. As the points are in generic position, we can always find a conic of signature $(2,1)$ passing through points $a_{t},b_{t},c_{t}$ and $d_{t}$. This way, $(C_{1}^{'},X_{t})$ gives a isotopy between $(C_{1}^{'},X_{0}^{'})$ and $(C_{1}^{'},X_{1})$.  
	
\end{proof}

The next lemma will show that the space of of degree~4 real rational knots does not have edges.

\begin{lemma}
	There does not exist any degree ~4 real rational curve in $Q_{3,2}$ with two or more double points.
\end{lemma}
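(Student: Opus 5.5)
We argue by contradiction, following the pattern of Lemma \ref{lemma 4} and Lemma \ref{lemma 5}: project from a suitable point of the curve and reduce to a statement about rational curves of lower degree in $\mathbb{RP}^{3}$. Suppose $C\subset Q_{3,2}$ is a real rational curve of degree $4$ with at least two double points $q_{1}$ and $q_{2}$. Here a double point may be real non-solitary, real solitary, or a pair of complex conjugate double points, and in each case it is the image of two parameter values. By Lemma \ref{trans} we may move $q_{1}$ to $p=[1:1:0:0:0]$ and project from it. By part (1) of Lemma \ref{proj_double_pt}, the factor $st$ (or $(s-\iota t)(s+\iota t)$ in the solitary case) divides $p_{1}-p_{0},p_{2},p_{3},p_{4}$. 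Hence the projection is a real rational curve of degree $d-2=2$ in $\mathbb{RP}^{3}$.

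The second double point $q_{2}\neq p$ is the image of two parameter values distinct from those over $p$. Generically it stays a double point of the projected curve, since the projection from $p$ is injective off the cone $T_{p}\cap Q_{3,2}$. This would give a degree $2$ rational curve with a double point, which is impossible: a rational conic is planar and smooth, as already used in Lemma \ref{lemma 5}. The step I expect to need care is the case where the two preimages of $q_{2}$ are not separated, namely when $q_{2}$ lies on the line through $p$ in $Q_{3,2}$. In that case the projection could identify $q_{2}$ with another point, or collapse it onto the special conic.

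To handle that case I would argue directly with a line count, which avoids the case distinction entirely. The line $L$ through $q_{1}$ and $q_{2}$ meets $C$ with total multiplicity at least $4$, counting two branches at each double point. Take a hyperplane $H$ containing $L$ and one further general point of $C$. Then $H\cdot C\geq 5>4=\deg C$, so $C\subset H$. Now choose a $2$-plane through $L$ and a further point of $C$. It meets $C$ in at least $5$ points counted with multiplicity, so by Bézout $C$ lies in that plane. Therefore $C$ is a plane quartic lying in $Q_{3,2}$.

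The intersection of a plane with $Q_{3,2}$ is a conic by Lemma \ref{conic}, or the whole plane. A $2$-plane cannot be contained in $Q_{3,2}$, since the maximal isotropic subspaces of a form of signature $(3,2)$ have dimension $2$, so they projectivise to lines. Hence $C$ would be contained in a conic, contradicting $\deg C=4$ with $C$ irreducible. The same argument applies whether the double points are real or complex conjugate, because the line $L$ through a conjugate pair is real.

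I therefore plan to present this Bézout/line argument as the main proof, and to keep the projection via Lemma \ref{proj_double_pt} as motivation. The main obstacle is checking the multiplicity count on $L$ carefully when the double points are solitary or complex, but intersection multiplicities are additive over the complexification, so the count still gives at least $4$.
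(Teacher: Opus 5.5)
Your proof is correct, and it takes a different route from the paper's.

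\textbf{The paper's route.} The paper moves one double point to $[1:1:0:0:0]$, projects from it, and applies Lemma \ref{proj_double_pt}. The image is a rational curve of degree $2$ in $\mathbb{RP}^{3}$ that still carries a double point, namely the image of the second one. No such curve exists, which gives the contradiction.

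\textbf{Your route.} You avoid projections altogether. Every hyperplane through $L=\overline{q_{1}q_{2}}$ pulls back to a binary quartic that already vanishes at the four preimage parameters. So one more point of $C$ off $L$ forces $C$ into the plane $P$ spanned by $L$ and that point. Then $C\subset P\cap Q_{3,2}$, which is a conic, because a form of Witt index $2$ admits no isotropic $3$-space. A birational quartic cannot lie in a conic.

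\textbf{Comparison.} Your version is self-contained: it needs neither Lemma \ref{trans} nor Lemma \ref{proj_double_pt}. It works verbatim over $\mathbb{C}$, so real, solitary and complex-conjugate double points need no separate treatment. It also exposes the underlying reason, namely that a rational quartic with two double points is planar. The paper's version is shorter, given the projection machinery it has already built.

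\textbf{The case you flagged.} This case is not actually an obstacle for the projection argument. The map $\pi_{p}$ is well defined on $Q_{3,2}\setminus\{p\}$ and $q_{2}\neq p$, so the two parameters over $q_{2}$ still have a common image. That image merely lands on the special conic when $q_{2}\in T_{p}$. Projection can merge points but never separate them. The only degeneracy there is a projected degree-$2$ parametrisation that double-covers a line, and that again forces $C$ into a plane.

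\textbf{A shared tacit assumption.} Both arguments assume the parametrisation is birational onto its image. A degree-$4$ map double-covering a conic has every point as a "double point" and is not excluded by either argument. Birationality is clearly the intended reading of the lemma.
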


\begin{proof}
	Suppose $C$ is a real rational curve of degree~4 in $Q_{3,2}$ with at least two double points. By using lemma \ref{proj_double_pt}, project this curve from one of its double points to a hyperplane in $\mathbb{RP}^{4}$. The projected curve is a degree ~2 curve with at least one double point in $\mathbb{RP}^{3}$ such that both the points of intersection with plane at infinity do not lie on the special conic. But, there does not exist any real rational curve of degree ~2 with double points in $\mathbb{RP}^{3}$. Hence, $Q_{3,2}$ does not have any real rational curve of degree $4$ with at least two double points.
\end{proof}

Now, we will classify degree 4 real rational curves with exactly one double point in $Q_{3,2}$. We can easily see that degree 4 curves with one double point in $Q_{3,2}$ lie on a hyperplane in $\mathbb{RP}^{4}$.

\begin{theorem} \label{wall}
	There are two rigid isotopy classes of degree $4$ curves with exactly one double point (without considering orientation) in $Q_{3,2}$.
\end{theorem}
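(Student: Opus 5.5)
Following the pattern of the lower degrees, I would reduce the statement to a problem about degree~2 curves in $\mathbb{RP}^{3}$. Let $C$ be a degree~4 curve in $Q_{3,2}$ with exactly one double point. Using Lemma~\ref{trans}, move the double point to $p=[1:1:0:0:0]$ and project from it. By Lemma~\ref{proj_double_pt}(1), this gives a bijection between such curves and degree~2 real rational curves (conics) $C'$ in $\mathbb{RP}^{3}$. Since $d-4=0$, both points of $C'$ on the plane at infinity lie \emph{off} the special conic. Conversely, any such conic pulls back to a degree~4 curve in $Q_{3,2}$ whose double point is at $p$.

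The double point is non-solitary or solitary according to whether the two branches through $p$ have real or complex conjugate tangent directions. Equivalently, $C'$ meets the plane at infinity in two real points or in a pair of complex conjugate points. I would also check that $C'$ must not touch the special conic, since otherwise the pullback drops degree or acquires extra singularities. The path-lifting tool is the rigid version of the projective-transformation trick used for degree~1 and degree~4. Given a path of conics $C'_t$ in $\mathbb{RP}^{3}$ together with target paths for the intersection points, I would compose with a continuous family of projective transformations $T_t$ fixing the plane at infinity and a point off it. This keeps the intersection points away from the special conic, so the path lifts to $Q_{3,2}$ and moving the double point along it stays within the stratum of curves with one double point.

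\textbf{Invariant separating two classes.} For the real case I would use the position of the two real points relative to the special conic (the inside component is a disc, the outside a Möbius band) together with the type of the double point. Two real points cannot merge into a complex pair without passing through a tangency of $C'$ to the plane at infinity, which corresponds to a cusp by Lemma~\ref{proj_double_pt}(2). Hence the solitary and non-solitary curves are genuinely different, which already gives at least two classes.

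\textbf{Main obstacle: connectedness within each class.} The hard step is showing that each class is connected, so there are not more than two. Configurations such as both real points inside the conic, both outside, or one of each look a priori distinct. I would try to show they are connected by moving the plane at infinity within the pencil through a suitable line, as in Lemma~\ref{rk1} and the alternative proof of degree~4 connectivity. The aim is to slide one intersection point across the conic at a moment where it coincides with a point of tangency that is still liftable; otherwise I would argue that the inside/outside distinction is not invariant because changing the point of projection along the curve mixes them. Concretely, for the non-solitary case, a point of $C'$ off the conic can be moved to the other component by a rotation of the plane containing $C'$. I would verify explicitly that during this rotation $C'$ never meets the special conic, by choosing the axis through the two points at infinity.

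Finally I would exhibit explicit representatives of the two classes and note, as a sanity check, that by Theorem~\ref{dim} this space has dimension $3\cdot 4+2=14$, i.e.\ it is a wall of codimension one in the $15$-dimensional space of degree~4 curves.
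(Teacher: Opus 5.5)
There is a genuine gap. Your reduction (project from the double point to get a conic $C'$ whose two points at infinity lie off the special conic, real or conjugate according as the node is non-solitary or solitary) is exactly the paper's alternative proof, but the invariant you use to separate the classes is the wrong one. The paper's stratum of curves with one double point contains the cuspidal curves, and its proof passes through them. A conic with both real points inside the special conic is made tangent to the plane at infinity at an inside point, which gives a cusp by Lemma~\ref{proj_double_pt}(2). The two points then become a conjugate pair (a solitary node). The conic is then made tangent at an outside point and split into two real points outside. So solitary and non-solitary curves lie in the same class, and your ``at least two classes'' is not established. If you instead excluded cusps, your split would be invariant, but you would then get four classes (solitary, both inside, both outside, mixed), not two.

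The invariant that actually separates the classes is whether the two real points lie in the \emph{same component} of the plane at infinity minus the special conic. A single real point can change component only by crossing the special conic, and that is forbidden because the pullback then drops degree. Two points can merge (and then turn into a conjugate pair) only at a point off the conic, so just before merging they are already in the same component. Hence the parity of the number of real intersection points inside the conic is preserved. The paper phrases this as $C'$ linking the special conic with linking number $\pm1$ in an affine chart in the mixed case.

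Consequently your planned connecting step cannot work. The rotation of the plane of $C'$ about the axis through the two points at infinity fixes both points, so it moves neither across the conic, and no path at all joins the mixed configuration to a same-component one. The correct two classes are: first, both points inside, both outside, or a conjugate pair; second, one point inside and one outside. Your partition into solitary versus non-solitary is not this one.
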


\begin{proof}
	
	By using lemma \ref{proj_curve_pt}, when we project a curve of degree $4$ with one double point from a point which lies on the curve but is not the double point, we get a degree $3$ singular curve in $\mathbb{RP}^{3}$ which intersects the plane at infinity in three points out of which two points lie on the special conic. Degree $3$ singular curves are planar curves. This plane (which contains the curve) intersects the plane at infinity in a line (copy of $\mathbb{RP}^{1}$, say line at infinity) such that intersection points of the curve with plane at infinity lie on this line at infinity. There is only one class of degree $3$ singular curves in $\mathbb{RP}^{3}$. The special conic intersects the plane (which contains the curve) in two points. Based upon the position of intersection points there are three possibilities as shown in figure \ref{fig 3a}.
	
	\begin{figure}[h]
		\centering
		\includegraphics[width=1\textwidth]{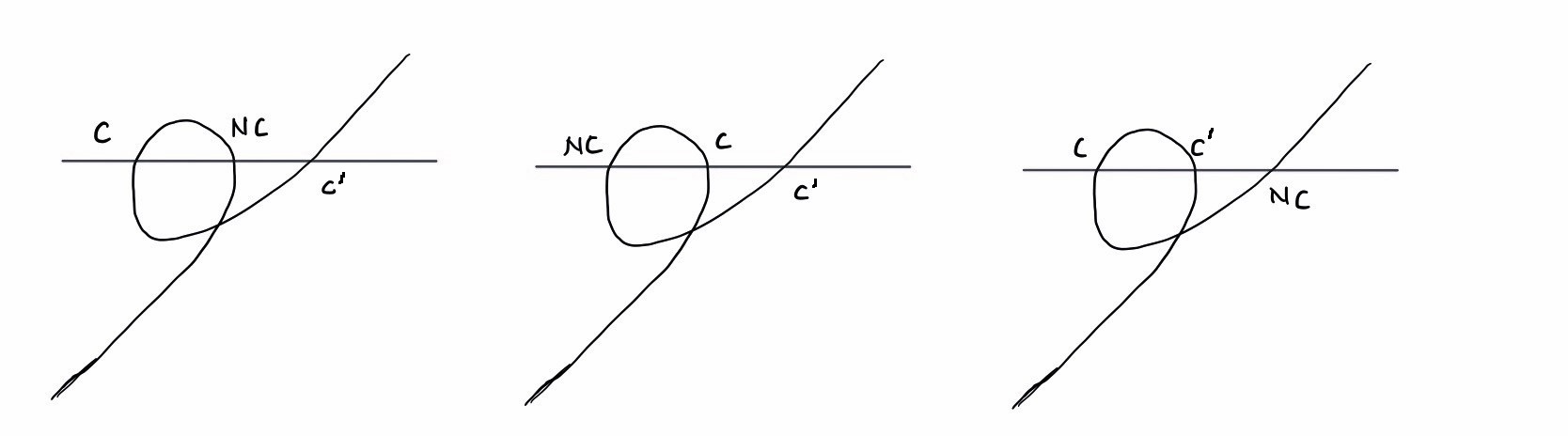}\hfill
		\caption{points $C$, $C^{'}$ and $NC$ are the intersection points of curve and the plane at infinity where $C$, $C^{'}$ lie on the special conic and $NC$ is not on the conic}
		\label{fig 3a}
	\end{figure}
	
	The first and the second possibility in figure \ref{fig 3a} can be rigidly isotoped to each other by using lemma \ref{rk1}. For this, we have to interchange the position of the point not on the conic. These can  be isotoped by first making the plane at infinity tangent to the curve by merging point $C$ and $NC$ and after that taking the point not on the conic outside the conic. So this forms a class for degree $4$ curves with exactly one double point.
	
	This class can not be isotoped to the third possibility as in the third case we can isotope the plane at infinity to a plane which intersects the curve in two conjugate and one real point such that two imaginary conjugate points lie on the special conic. This cannot be done in the first class. So, this gives us the second class of degree $4$ curves with one double point.

	These two classes can be parametrised as:
	
	$[0:-2s^{3}t+2st^{3}:-2s^{2}t^{2}+2st^{3}:-s^{4}-2t^{4}+2s^{2}t^{2}+2st^{3}:s^{4}+2t^{4}-2st^{3}]$
	
	$[2s^{3}t-2st^{3}:-2\sqrt{2}s^{2}t^{2}+2\sqrt{2}st^{3}:0:s^{4}-3s^{2}t^{2}+4st^{3}-t^{4}:-s^{4}+5s^{2}t^{2}-4st^{3}+t^{4}]$.
\end{proof}

\begin{proof}[Alternative Proof]\label{4.4.1}

	By using Lemma \ref{proj_double_pt}, when we project a degree $4$ curve with a double point from the double point of the curve we get a degree 2 real rational knot in $\mathbb{RP}^{3}$ intersecting the plane at infinity in two points, such that both of them do not lie on the special conic. If the  double point is non-solitary, then the  intersection points are real; if the double point is solitary, then the intersection points are the images of non-real conjugate pair of points in $\mathbb{RP}^1$.
	
	The special conic divides the plane at infinity into two components: one which is a disc and the another which is homeomorphic to a mobius band. The former will be called the inside component and the latter will be called the outside component.  So we have three possibilities either both points of intersection  lie in the interior of the conic, both points lie   in the exterior of the conic, or one of them is  in the interior  and the another one is  in the exterior (See Figure \ref{fig1}).
	
	Degree $2$ knots are planar. So the degree~2 knot that we get after projecting the degree~4 knot from the double point will intersect the plane at infinity in points that lie on a line.
	
	\begin{figure}[h]
		\centering
		\includegraphics[width=0.4\textwidth]{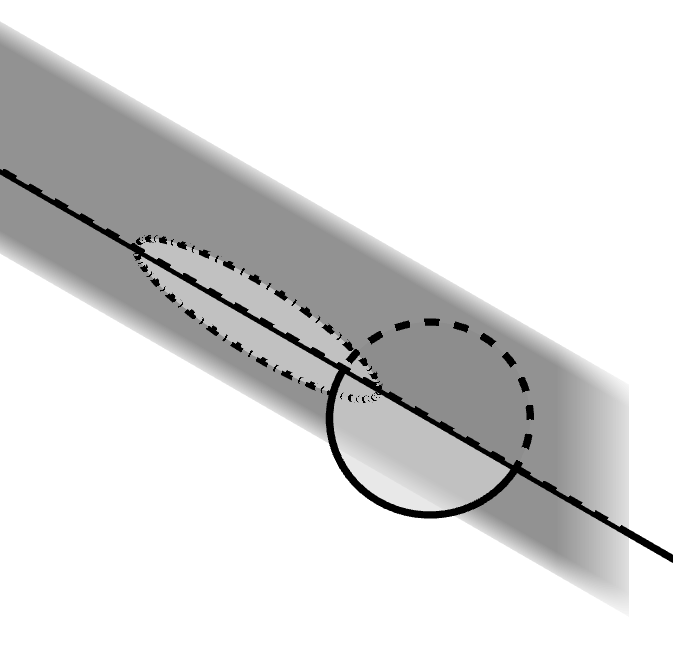}\hfill
		\includegraphics[width=0.4\textwidth]{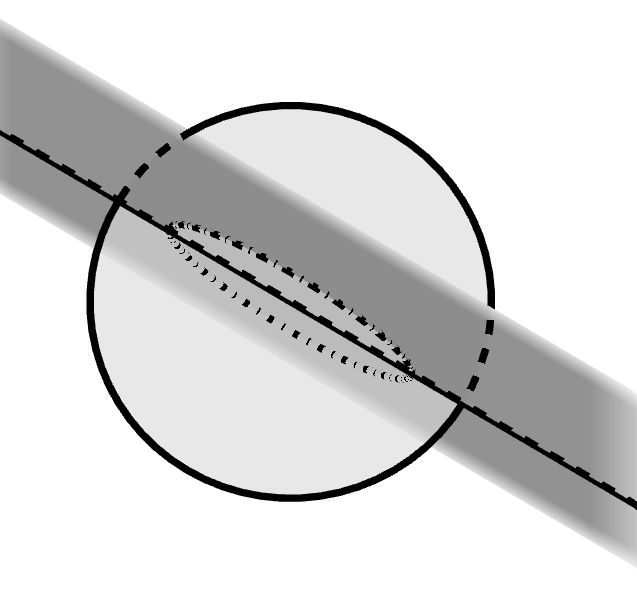}\hfill
		\includegraphics[width=0.4\textwidth]{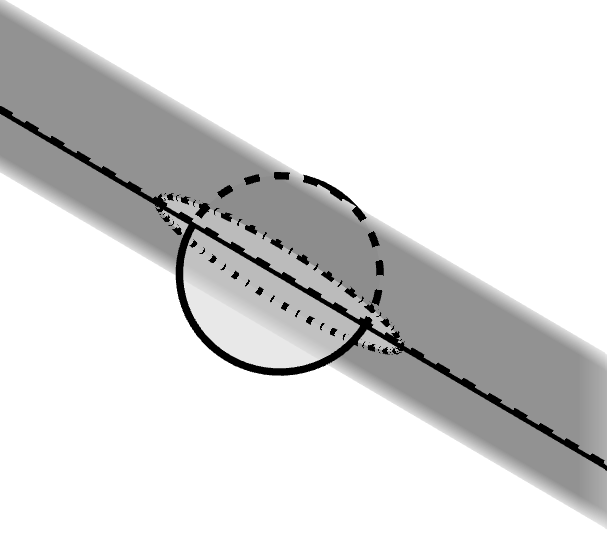}\hfill
		\caption{}
		\label{fig1}
	\end{figure}
	
	Suppose $C_{1}$ and $C_{2}$ are two curves of degree $4$ with exactly one double point in $Q_{3,2}$. By using a projective transformation which preserves $Q_{3,2}$, we can make sure that both curves have same double point. Project these from the double point. Projections of the curves in $\mathbb{RP}^{3}$ with same position of intersection points can be rigidly isotoped to each other.
	
	Degree 2 knots with both points of intersection inside the conic and degree 2 knots with both points of intersection outside the conic can be isotoped to each other by making it first tangent to the plane at infinity at a point inside the conic (By lemma \ref{proj_double_pt} in $Q_{3,2}$ its pull back is a curve with cusp singularity). Then move the degree 2 knot  to make points of intersection imaginary (in $Q_{3,2}$, it will be a curve with a solitary double point). Then make it tangent to the plane at infinity on a point outside the conic (in $Q_{3,2}$ it will be a curve with cusp singularity). Finally, make the intersection points distinct as shown in figure \ref{fig2} (in $Q_{3,2}$ it will be a curve with a solitary double point).

	\begin{figure}[h]
		\centering
		\includegraphics[width=0.9\textwidth]{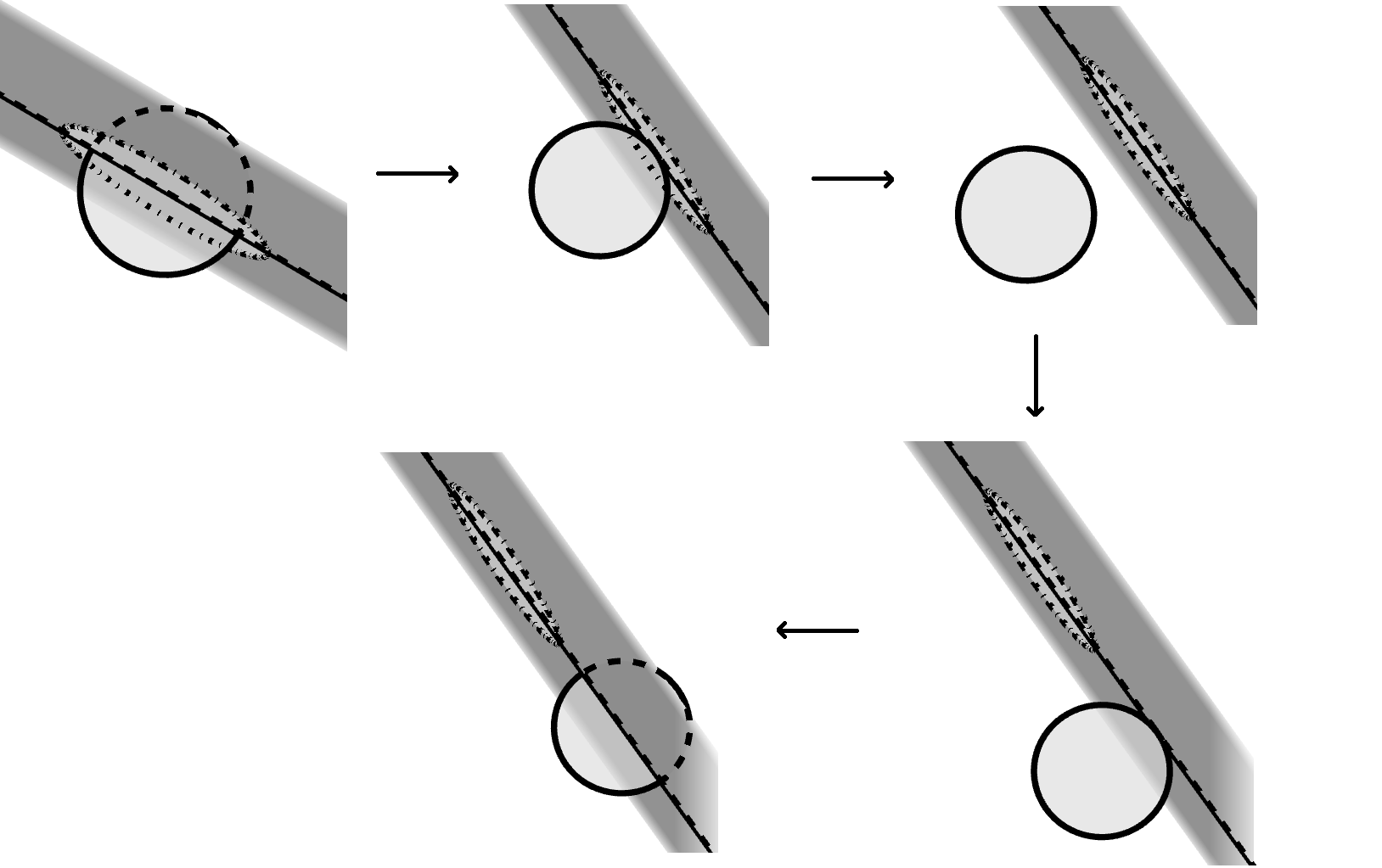}\hfill
		\caption{}
		\label{fig2}
	\end{figure}

	As figure \ref{fig1} clearly depicts that knot with point of intersection in different components can not be unlinked from the special conic. In an affine chart, they have linking number $\pm{1}$ and therefore can not be isotoped to the unlink which is the case when the knot intersects the plane at infinity in points that lie in the same component.
	
	Therefore, there are exactly two rigid isotopy classes of degree 4 real rational curves with exactly one double point (without orientation) in $Q_{3,2}$.
\end{proof}

	

\begin{lemma}\label{wall1}
	\begin{enumerate}
		\item The degree~4 oriented wall in $Q_{3,2}$ which corresponds to a degree~2 knot in $\mathbb{RP}^{3}$ whose both points of intersection with plane at infinity lies in the same component can be isotoped to its reverse orientation wall. 
		\item The degree~4 oriented wall in $Q_{3,2}$ which corresponds to a degree~2 knot in $\mathbb{RP}^{3}$ whose points of intersection with plane at infinity lies in the different components cannot be isotoped to its reverse orientation wall. 
	\end{enumerate}
\end{lemma}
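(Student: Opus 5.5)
We work throughout with the projection from the double point, as in the alternative proof of Theorem \ref{wall}. By Lemma \ref{proj_double_pt}, an oriented degree~4 wall in $Q_{3,2}$ with its double point at $p=[1:1:0:0:0]$ corresponds to an oriented degree~2 knot $D$ in $\mathbb{RP}^{3}$. Neither of the two points of $D$ on the plane at infinity lies on the special conic. Reversing the orientation of the wall corresponds to reversing the orientation of $D$. A rigid isotopy of walls can be moved, using the transitivity of $O(3,2)$ (Lemma \ref{trans}) applied along the path, to one that fixes the double point. So the question becomes whether $D$ can be isotoped to $-D$ through conics that never meet the special conic at infinity. Tangency to the plane at infinity away from the conic is allowed (it corresponds to a cusp, a point still in the wall), and so is passing through conjugate intersection points.

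For part (1), we use the moves of the alternative proof of Theorem \ref{wall} to reduce to the case where both intersection points are imaginary, i.e.\ the conic $D$ lies in an affine chart. We then choose coordinates with plane at infinity $x_3=0$ and special conic $x_0^2=x_1^2+x_2^2$, and take $D=[s^2+t^2:(s^2-t^2)\cos\theta:(s^2-t^2)\sin\theta:2st+c(s^2+t^2)]$ with $|c|>1$. This is a circle-like conic whose plane is transverse to the plane at infinity, and for $|c|>1$ its intersections with the plane at infinity stay non-real for all $\theta$. We rotate by $\theta\in[0,\pi]$, exactly as in the rotation used in Lemma \ref{lemma 3}. For every $\theta$ the curve meets $x_3=0$ only in a conjugate pair, so it never meets the real special conic. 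At $\theta=\pi$ the parametrisation is the reverse of the one at $\theta=0$, up to the reparametrisation $s\mapsto -s$. Pulling back via Lemma \ref{proj_double_pt} gives a path of walls from the original wall to its reverse.

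For part (2), we would show that the linking number argument of Theorem \ref{wall} gives an orientation-sensitive invariant. Take an affine chart containing the conic $D$ and the special conic, with the plane at infinity pushed slightly. The real points of $D$ meet the plane at infinity once inside and once outside the special conic. We view the special conic as a circle in the plane at infinity and $D$ as a closed curve, and define the \emph{oriented} linking number $\mathrm{lk}(D,\text{special conic})\in\{\pm1\}$. The special conic needs a fixed orientation for this, which is possible because we fixed $p$ and the hyperplane, so the special conic is fixed. More concretely, the invariant is the sign of the crossing at the point where $D$ meets the disc bounded by the special conic. This sign is locally constant along any admissible isotopy, since it can only change when an intersection point crosses the special conic. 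Reversing the orientation of $D$ negates it. Hence $D$ and $-D$ are not admissibly isotopic.

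The main obstacle is making part (2) rigorous. We must check that the moves allowed in part (1), namely tangency and passage through conjugate points, cannot occur here without first forcing a point onto the special conic. One point lies inside and the other outside, so merging them requires crossing the conic. We also need to control the normalisation that fixes the double point, since the stabiliser of $p$ in $O(3,2)$ could in principle act on the special conic by an orientation-reversing map. We would handle this by restricting to the identity component acting on the path, which suffices because the normalising path starts at the identity.
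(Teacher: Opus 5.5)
Your part (1) is fine and is essentially the paper's argument. The paper rotates $[2s^{2}+2t^{2}:0:2st:s^{2}-t^{2}]$ in the $(x_1,x_2)$-plane while both real intersection points stay inside the special conic. You instead first pass to a conjugate pair and then rotate a circle that misses the plane at infinity. (At $\theta=\pi$ your curve is the original composed with $[s:t]\mapsto[t:s]$, not $s\mapsto -s$, but it is indeed the reverse.) For part (2) your route differs from the paper's. The paper asserts that after fixing $q_1,q_2$ the isotopy ``is forced to be a rotation'' about the line $q_1q_2$. Your orientation-sensitive crossing sign at the inside point is the better idea. However, the step you flag at the end does not work the way you propose to handle it.

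\textbf{The gap: the endpoint of the normalisation.} Normalise a rigid isotopy $W_t$ from $W$ to $-W$ by a path $g_t$ in the identity component $G_0$ of $PO(3,2)$, with $g_0=\mathrm{id}$ and $g_t$ sending the double point to $p$. (This needs path lifting for $G_0\to Q_{3,2}$, not just transitivity.) The result is an admissible path of conics from $D$ to $-\bar g_1(D)$, not to $-D$, where $g_1\in\mathrm{Stab}(p)\cap G_0$.

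Starting at the identity does not put $g_1$ in the identity component of this stabiliser. The stabiliser is $(\mathbb{R}^{+}\times O^{+}(2,1))\ltimes\mathbb{R}^{3}$, with $O^{+}(2,1)$ the orthochronous group of $x_2^2+x_3^2-x_4^2$, so it has two components. Moreover $\pi_1(Q_{3,2})\cong\mathbb{Z}$ surjects onto its $\pi_0$.

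Concretely, take the reflection $x_2\mapsto -x_2$. It fixes $p$ and is projectively equal to $\mathrm{diag}(-1,-1,1,-1,-1)\in SO(3)\times SO(2)$. It is the value at $\phi=\pi$ of the simultaneous rotation by $\phi$ in the $(x_0,x_4)$- and $(x_1,x_3)$-planes. Along that path $p$ runs once around the line $[\cos\phi:\cos\phi:0:\sin\phi:\sin\phi]\subset Q_{3,2}$, so this endpoint genuinely occurs. Its induced map on the plane at infinity reverses the orientation of the special conic, which is exactly the case you wanted to exclude. So with only a fixed orientation of the special conic, your bookkeeping cannot conclude.

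\textbf{How to repair it.} What rescues the argument is that this map also reverses the orientation of $\mathbb{RP}^{3}$. In the coordinates $[x_1-x_0:x_2:x_3:x_4]$ of the projection it acts as $\mathrm{diag}(1,-1,1,1)$. Hence the crossing sign, which depends only on the resulting co-orientation of the disc, is unchanged. More generally, on the affine chart $\mathbb{RP}^3$ minus the plane at infinity, every element of $\mathrm{Stab}(p)\cap G_0$ acts by an orthochronous similarity of $x_2^2+x_3^2-x_4^2$. Such maps preserve the co-orientation of the disc of timelike directions, so $\epsilon(-\bar g_1 D)=-\epsilon(D)$ as needed.

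Cleaner still is to avoid normalising altogether. Define the invariant intrinsically as whether the timelike branch at the double point is future- or past-directed. This is well defined because $Q_{3,2}$ is time-orientable (though not orientable) and $G_0$ preserves the time orientation. In this class the two branches can never merge or become null.
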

\begin{proof}
	\begin{enumerate}
		\item Consider a degree $2$ knot in $\mathbb{RP}^{3}$ whose both points of intersection lie in the same component of the plane at infinity minus the special conic. By using some projective transformation, this can be transformed to $[2s^{2}+2t^{2}:0:2st:s^{2}-t^{2}:0]$ and plane at infinity to be $x_{3}=0$. Then the special conic is given by $x_{0}^{2}=x_{1}^{2}+x_{2}^{2}$. This can be isotoped to its reverse orientation via isotopy given by $[2s^{2}+2t^{2}:2st sin{\theta}:2st cos{\theta}:s^{2}-t^{2}:0]$ as shown in figure \ref{fig 4}.\\
		By using lemma \ref{proj_curve_pt}, if we project a degree $4$ curve with one double point from a point on the curve then it get projected to degree $3$ curve with one double point in $\mathbb{RP}^{3}$. We now try to understand the above isotopy in the degree $3$ setup.\\
		The pull back of the above isotopy in $\mathbb{RP}^{4}$ is given by $[4s^{4}-4t^{4}:4st(s^{2}-t^{2})sin{\theta}:4st(s^{2}-t^{2})cos{\theta}:5s^{4}+5t^{4}+2s^{2}t^{2}:3s^{4}+3t^{4}+6s^{2}t^{2}]$. Then we project this from the point $[-4:0:0:5:3]$ (image of $[0:1]$), lying on the curves throughout the isotopy on the hyperplane $x_{0}=0$. This projection is given by $[0:x_{1}:x_{2}:\frac{5}{4}x_{0}+x_{3}:\frac{3}{4}x_{0}+x_{3}]$. So the projection of the isotopy is given by $[0:4t(s^{2}-t^{2})sin{\theta}:4t(s^{2}-t^{2})cos{\theta}:10s^{3}+2st^{2}:6s^{3}+6st^{2}]$. This gives us an isotopy between the degree $3$ curve with one double point and its orientation reverse curve as shown in figure \ref{fig 5}.
		\begin{figure}[h]
			\centering
			\includegraphics[width=0.8\textwidth]{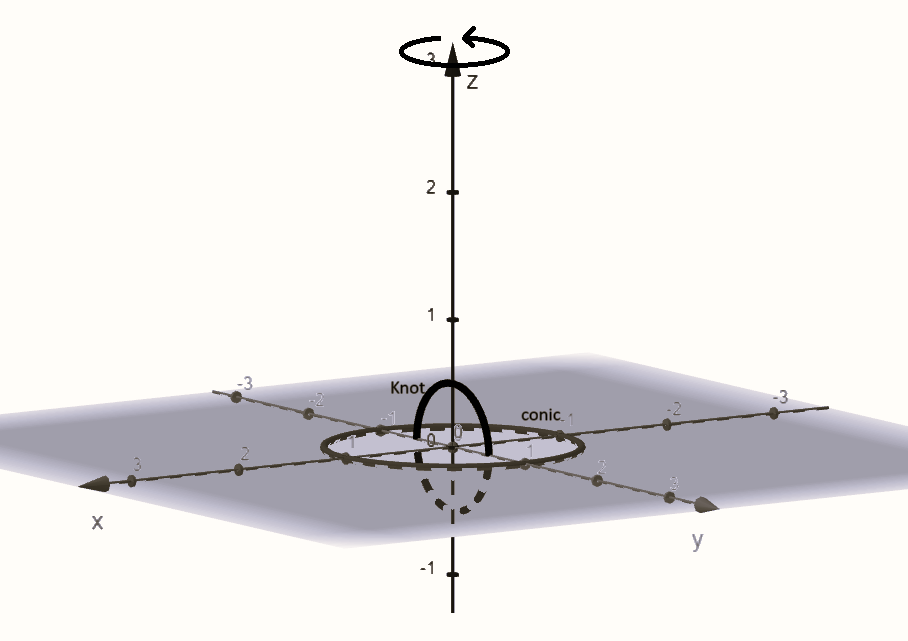}\hfill
			\caption{}
			\label{fig 4}
		\end{figure}
		\begin{figure}[h]
			\centering
			\includegraphics[width=0.6\textwidth]{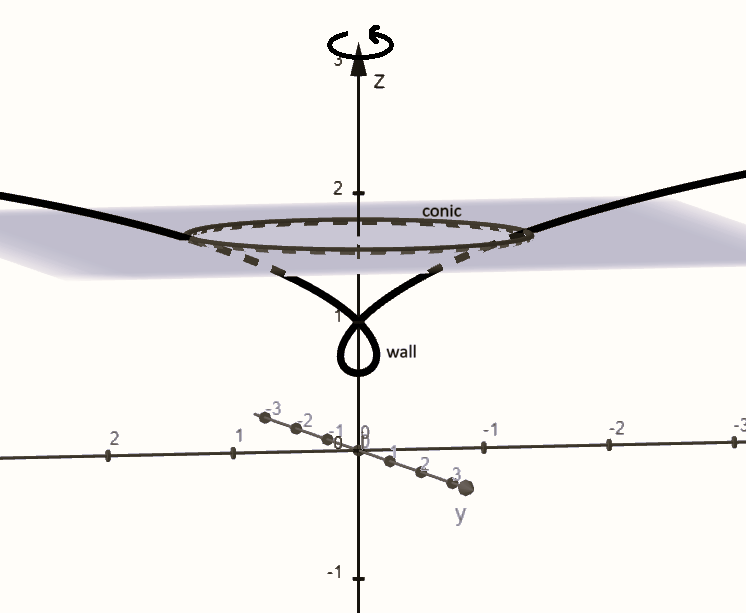}\hfill
			\caption{}
			\label{fig 5}
		\end{figure}
		\item Consider the degree ~4 wall in $Q_{3,2}$ whose projection from double point is a ~2 degree knot in $\mathbb{RP}^{3}$ such that points of intersection say $q_{1}$ and $q_{2}$ with plane at infinity are in different components.  Suppose there is an isotopy between such a oriented degree $2$ knot and the its reverse orientation then we can transform this isotopy in such a way that $q_{1}$ and $q_{2}$ are fixed throughout the isotopy. As we have fixed $q_{1}$ and $q_{2}$ throughout the isotopy and it isotopes to its reverse orientation then the transformed isotopy is forced to be a rotation about the axis passing through $q_{1}$ and $q_{2}$. The axis of rotation is along the intersection of the plane at infinity and the plane containing the knot. Then during the rotation, for some t, the knot is forced to completely lie on the plane at infinity as shown in figure \ref{fig 3"'} and cannot be lifted to $Q_{3,2}$.
		
		So, this type of degree~2 knot can not isotoped to its reverse orientation.
		\begin{figure}[h]
			\centering
			\includegraphics[width=0.49\textwidth]{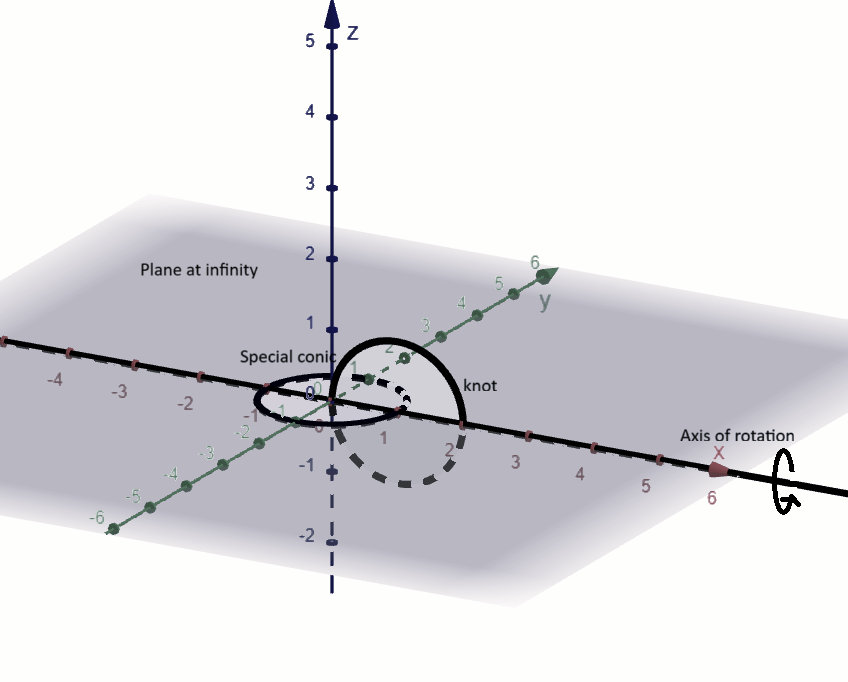}\hfill
			
			\caption{ axis of rotation is along intersection of plane at infinity $\&$ the plane containing knot }
			\label{fig 3"'}
		\end{figure} 
	\end{enumerate}
\end{proof}

\begin{remark}
	The pull back of the degree $2$ knot both of whose points are inside the conic is isotopic to the pull back of degree $2$ knot both of whose points are outside the conic. Lemma \ref{reflection} can also be used to isotope these as when we project these pull backs from a point which does not lie on the curve they get projected to curves which are reflections of each other.  
\end{remark}

\begin{theorem}
	There are three rigid isotopy classes of degree $4$ curves with exactly one double point in $Q_{3,2}$.
\end{theorem}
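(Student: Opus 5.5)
The plan is to combine Theorem \ref{wall} (equivalently, its alternative proof) with Lemma \ref{wall1}. We work entirely through projection from the double point, using Lemma \ref{proj_double_pt}. Translated this way, a degree~$4$ curve in $Q_{3,2}$ with exactly one double point corresponds to a degree~$2$ knot in $\mathbb{RP}^{3}$. Its two intersection points with the plane at infinity avoid the special conic. Rigid isotopies of such curves in $Q_{3,2}$ correspond to isotopies of such conics that never acquire an intersection point on the special conic. Cusps and solitary double points are allowed along the way, and they correspond to tangency to, or complex intersection with, the plane at infinity. By Lemma \ref{trans} we may assume that any two curves under comparison share the same double point. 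Hence it suffices to classify oriented pairs (degree~$2$ knot, plane at infinity) up to this restricted isotopy.

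\textbf{Step 1: the unoriented classes.} By Theorem \ref{wall} there are exactly two unoriented classes. In the first, both intersection points lie in the same component of the plane at infinity minus the special conic; inside and outside are connected through the cusp and solitary stages described in the alternative proof. In the second, the two points lie in different components. In an affine chart the conic then has linking number $\pm1$ with the special conic, so this class cannot be joined to the first.

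\textbf{Step 2: adding orientation.} Each unoriented class gives one or two oriented classes.
\begin{enumerate}
\item By Lemma \ref{wall1}(1), the same-component class contains a curve rigidly isotopic to its own reverse. Every oriented curve in this class is connected by an unoriented path to this representative. Lifting that path to oriented curves, and composing with the reversing isotopy if needed, connects both orientations. So the first class stays a single oriented class.
\item By Lemma \ref{wall1}(2), a curve in the different-component class cannot be isotoped to its reverse. So this class splits into two oriented classes.
\end{enumerate}
This gives $1+2=3$ classes.

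\textbf{Main obstacle.} The hardest part is making Lemma \ref{wall1}(2) airtight, that is, showing that no reversing isotopy exists at all. The existing argument normalises by projective transformations fixing $q_1,q_2$ and then asserts that the isotopy must be a rotation. I would instead look for an oriented invariant. The first thing to try is the \emph{signed} linking number of the oriented projected conic with the special conic in an affine chart. Reversing orientation negates this number, and it cannot change sign without an intersection point crossing the special conic.

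The point requiring care is that this number must stay well defined through the degenerations allowed on the path. When the curve becomes tangent to the plane at infinity, or its intersection points become complex, the pair of points in different components cannot pass through these stages without crossing the conic. I would check this directly from the component bookkeeping: a tangency or a complex conjugate pair forces both points into the same component, which is impossible in the different-component class. Granting this invariant, the count of three follows.
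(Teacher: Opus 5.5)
Your proposal is the paper's proof: Theorem~\ref{wall} gives the two unoriented classes, and Lemma~\ref{wall1} shows that the same-component class is reversible while the different-component class is not, giving $1+2=3$. Your remark that one reversible representative makes the whole same-component class a single oriented class is a detail the paper leaves implicit.

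If you replace the rotation argument of Lemma~\ref{wall1}(2) by the signed linking number, you need one more check. Normalising the double point along a rigid isotopy leaves you with an element of the stabiliser of $[1:1:0:0:0]$ that need not be isotopic to the identity within that stabiliser. An example is $\mathrm{diag}(-1,-1,-1,1,-1)$: it lies in the identity component of $O(3,2)$, yet acts on the projection as the orientation-reversing reflection $x_3\mapsto -x_3$. The sign survives only because this reflection also reverses the orientation of the special conic, so you must fix orientations of both $\mathbb{RP}^3$ and the special conic and verify this.
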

\begin{proof}
	The proof follows from theorem \ref{wall} and lemma \ref{wall1}.
\end{proof}

We now use the classification of degree $4$ real rational curves with exactly one double point in $Q_{3,2}$ to find an upper bound for the rigid isotopy classes of degree $4$ real rational knots lying in $Q_{3,2}$.

\begin{lemma}
	The space of real parametrisations of degree $4$ curves in $\mathbb{RP}^{4}$ that lies on $Q_{3,2}$ is a manifold of dimension 15.
\end{lemma}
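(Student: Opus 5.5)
This is the case $d=4$ of Theorem \ref{dimension}, so the plan is to specialise that regular-value argument and check that it applies to every degree~4 parametrisation. Write a degree~4 parametrisation as $[p_0:p_1:p_2:p_3:p_4]$, with $25$ homogeneous coefficients. Up to scaling, this gives the projective space $\mathbb{RP}^{24}=\mathbb{RP}^{5d+4}$. Put $q=p_1^2+p_2^2+p_3^2-p_4^2-p_0^2$, a binary form of degree $8$. The curve lies on $Q_{3,2}$ exactly when $q\equiv 0$, which is equivalent to $q(t_i)=0$ at $9=2d+1$ fixed distinct points $t_1,\dots,t_9\in\mathbb{RP}^1$. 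So the space in question is $f^{-1}(0)$ for
\[f:\mathbb{R}^{25}\setminus\{0\}\to\mathbb{R}^{9},\qquad f(p)=(q(t_1),\dots,q(t_9)).\]
It then suffices to show that $0$ is a regular value and to pass to the quotient by scaling. This gives dimension $24-9=15=3(d+1)$.

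For surjectivity of $df$ at a point $K=[p_0:\dots:p_4]\in f^{-1}(0)$, observe that the derivative in the direction $(h_0,\dots,h_4)$ is the vector of values at the $t_i$ of $2(-p_0h_0+p_1h_1+p_2h_2+p_3h_3-p_4h_4)$. I would pick two coordinate polynomials $p',p''$ with no common root. Lemma \ref{2} with $d=4$, $k=0$ shows that $p',tp',t^2p',t^3p',p'',tp'',\dots,t^4p''$ are $9$ linearly independent forms of degree $8$. Lemma \ref{1} then shows that the corresponding $9\times 9$ matrix of values at $t_1,\dots,t_9$ is nonsingular, so $df$ has rank $9$.

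The main obstacle is when every pair of the $p_i$ shares a root. Instead of the transversality appeal used in the proof of Theorem \ref{dimension}, I would use the fact that the $p_i$ have no common factor. By that fact, some linear combination $\sum a_ip_i$ is coprime to $p_0$, and I would make a real linear change of coordinates in $\mathbb{R}^5$ that sends this combination to a coordinate. Since only the quadratic form $\langle\,,\rangle$ enters the derivative, I would instead argue directly: the image of $df$ is the set of values of $\langle p,h\rangle$, where $h$ ranges over all $5$-tuples of degree~$4$ forms. Because the form is nondegenerate, this is the same as the set of values of $\sum_i p_ih_i'$ with $h'=Jh$ arbitrary. As the $p_i$ have no common factor, the degree~$8$ part of the ideal they generate is all degree~$8$ forms once the degree is large enough. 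To check this for degree $8=2d$, I would use two coprime real combinations $u,v$ of the $p_i$, which exist generically. Lemma \ref{2} applied to $u,v$ then gives $9$ independent forms of degree $8$. Evaluating at the $t_i$ as above yields rank $9$ at every point of $f^{-1}(0)$, so $f^{-1}(0)$ is a manifold of dimension $25-9=16$, and its projectivisation has dimension $15$.
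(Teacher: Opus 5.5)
Your argument is correct. At the top level it is the paper's route: the paper proves this lemma simply by quoting Theorem~\ref{dimension} at $d=4$, and that proof is your regular-value computation, including the same $9\times 9$ minor built from a coprime pair via Lemmas~\ref{2} and~\ref{1}.

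The genuine difference is the degenerate case, where every pair of coordinate polynomials shares a root. There the paper appeals to a transversality theorem and an unspecified diffeomorphism preserving $Q_{3,2}$. You instead note that, because $J=\mathrm{diag}(-1,1,1,1,-1)$ is invertible, the image of $df_p$ is exactly the evaluation at the $t_i$ of the degree-$8$ part of the ideal $(p_0,\dots,p_4)$. So it suffices that the real span of the $p_i$ contain two coprime forms $u,v$. This is self-contained, treats both cases at once, and shows exactly where the no-common-factor hypothesis enters. At a tuple with a common (possibly complex) root, every $\langle p,h\rangle$ vanishes there, so $df_p$ is not onto. Such tuples do lie in the unrestricted $f^{-1}(0)$, for instance a common factor times a lower-degree curve on $Q_{3,2}$. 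You should therefore say explicitly that $f$ is restricted to the open set of tuples without common factor.

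The one step you leave unargued, ``exist generically'', takes a line. Fix a nonzero real $u$ in the span. For each of its finitely many complex roots, the real $v$ in the span vanishing there form a proper subspace, since that root is not a common root of the $p_i$. Take $v$ outside this finite union.

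Lemma~\ref{2} as stated needs the form playing the role of $p''$ not to vanish at $t=\infty$. To avoid that fine print, note instead that $(a,b)\mapsto au+bv$, from pairs of quartic forms to octic forms, has kernel $\{(cv,-cu)\}$ of dimension $1$. It is therefore onto the $9$-dimensional target.

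Finally, drop your intermediate remark about a combination coprime to $p_0$. It fails when $p_0\equiv 0$, and your final argument does not use it.
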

\begin{proof}
	By using theorem \ref{dimension} on page \pageref{dimension}.
\end{proof}

Suppose $X_{4}$ denote the space of parametrisations of degree~4 curves with at least one double point lying on $Q_{3,2}$ in $\mathbb{RP}^{4}$, $X^{'}_{4}$ denote the space of parametrisations of curves of degree ~4 with more than one double point lying on $Q_{3,2}$ in $\mathbb{RP}^{4}$.

\begin{lemma}
	$X_{4} \setminus X_{4}^{'}$ is a manifold of dimension 14.
\end{lemma}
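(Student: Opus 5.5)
This is the case $d=4$ of Theorem~\ref{dim}, which gives dimension $3d+2=14$. So the plan is to invoke Theorem~\ref{dim} directly and then make the dimension count concrete in this degree, rather than build anything new.

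The first step sets up the fibration. The map $g: X_{4}\setminus X_{4}^{'} \rightarrow Q_{3,2}\times X_{4}^{q}$ sends a curve $C$ with unique double point $p$ to the pair $(p, A_{p}C)$. Here $A_{p}\in O(3,2)$ carries $p$ to $q=[1:1:0:0:0]$, and such an $A_{p}$ exists by Lemma~\ref{trans}. To make $g$ well defined and smooth, I choose $A_{p}$ locally smoothly in $p$, which is possible via the Gram--Schmidt type construction in the proof of Theorem~\ref{dim}. It follows that $X_{4}\setminus X_{4}^{'}$ is locally a product of $Q_{3,2}$, which has dimension $3$, with $X_{4}^{q}$.

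The second step counts the dimension of $X_{4}^{q}$. Let the two preimages of $q$ be $[\alpha:\beta]$ and $[\gamma:\delta]$, which are either real or a conjugate pair. Then $p_{1}-p_{0}=rm$ and $p_{i}=r_{i}m$ for $i=2,3,4$, where $m$ is the quadratic form vanishing at the two preimages and $r,r_{2},r_{3},r_{4}$ have degree $2$. The quadric condition becomes $-(p_{0}+p_{1})r = r_{2}^{2}+r_{3}^{2}-r_{4}^{2}$, up to sign conventions. Hence $p_{0}+p_{1}$ is determined by $r,r_{2},r_{3},r_{4}$ whenever $r$ divides the right-hand side.

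I then count parameters:
\begin{itemize}
\item the factor $m$ contributes $2$;
\item $r,r_{2},r_{3},r_{4}$ contribute $4\cdot 3=12$;
\item the divisibility of the quartic $r_{2}^{2}+r_{3}^{2}-r_{4}^{2}$ by the quadric $r$ imposes $2$ conditions;
\item projective scaling removes $1$;
\item reparametrisations of $\mathbb{RP}^{1}$ preserving the pair of preimages remove $1$, if one works modulo them.
\end{itemize}
The result should match Theorem~\ref{dim}'s count $3d-1=11$, which with the $3$ dimensions from $Q_{3,2}$ gives $14$. I expect the main obstacle to be reconciling this count with the parametrisation-space convention of Theorem~\ref{dimension}, where the full space has dimension $15$. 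The honest way through is to verify that the double-point condition is a single transverse condition inside the $15$-manifold.

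To verify this, I would follow Lemma~\ref{proj_double_pt}. Projecting from the double point identifies $X_{4}^{q}$ with degree-$2$ curves in $\mathbb{RP}^{3}$ whose two intersections with the plane at infinity avoid the special conic, and this is an open condition. That space has the dimension of degree-$2$ parametrisations in $\mathbb{RP}^{3}$, together with the extra data needed to reconstruct the lift. Surjectivity of the differential then follows as in Theorem~\ref{dimension}, by applying Lemmas~\ref{1} and~\ref{2} to $r$ and $r_{2}$. Finally, $X_{4}^{'}$ is empty by the preceding lemma on two double points, so no stratum needs to be removed and $X_{4}\setminus X_{4}^{'}=X_{4}$ is a $14$-dimensional manifold.
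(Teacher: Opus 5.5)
Your first step, reading the lemma as the case $d=4$ of Theorem~\ref{dim} so that $3\cdot 4+2=14$, is the paper's entire proof, and it suffices. Your remark that $A_p$ must be chosen locally smoothly in $p$ also improves on the paper's global map $g$. The ``concrete'' verification you add, however, is wrong in its details and should not be offered as the honest check.

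\textbf{The quadric relation.} With $p_1-p_0=rm$ and $p_i=r_im$, the relation $(p_1-p_0)(p_1+p_0)+p_2^2+p_3^2-p_4^2=0$ becomes, after cancelling one $m$,
$$r(p_0+p_1)=-m(r_2^2+r_3^2-r_4^2).$$
You dropped the factor $m$; as written, your two sides have degrees $6$ and $4$.

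\textbf{The divisibility condition.} Once the relation is corrected, the condition you impose, $r\mid r_2^2+r_3^2-r_4^2$, forces $m\mid p_0+p_1$. Together with $m\mid p_1-p_0$, this gives $m\mid p_i$ for all $i$. So you are counting parametrisations with a base point, which are excluded. The true constraint goes the other way. Since $p_0+p_1\neq 0$ at the roots of $m$ (those parameters map to $q=[1:1:0:0:0]$), the relation forces $m\mid r$; geometrically, each branch is tangent to the hyperplane $x_0=x_1$. For $d=4$ this means $r=cm$ with $c$ a scalar, and the relation becomes $c(p_0+p_1)+r_2^2+r_3^2-r_4^2=0$. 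For $c\neq 0$ this determines $p_0+p_1$ with no further condition, giving $2+1+9-1=11$.

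\textbf{The reparametrisation item.} Your $11$ agrees only numerically, and only if you drop that item. It does not belong: $X_4$ is a space of parametrisations, like the $15$-manifold of Theorem~\ref{dimension}. Keeping it gives $10$.

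\textbf{Your last paragraph.} For $c\neq 0$, the curve is recovered from its projection $[cm:r_2:r_3:r_4]$ by inverse stereographic projection, with no extra data. So that part of $X_4^q$ is an open subset of the $11$-dimensional projective space of conic parametrisations in $\mathbb{RP}^3$. No regular-value argument via Lemmas~\ref{1} and~\ref{2} is needed.

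The projection does not see all of $X_4^q$, though. Curves with $c=0$ lie in the tangent hyperplane at $q$; an example is $[s^4+t^4:s^4+t^4:s^3t-st^3:2s^2t^2:s^3t+st^3]$. These project onto the special conic itself. They are handled by the equation above instead. Its differential in $(c,r_2,r_3,r_4,p_0+p_1)$ is still onto there, because $r_2,r_3,r_4$ then span the quadratic forms.

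\textbf{Transversality.} Nothing in your sketch shows that the double-point condition is transverse inside the $15$-manifold. The lemma does not need it: the local product of $Q_{3,2}$ with the $11$-manifold $X_4^q$ already gives a $14$-manifold.
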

\begin{proof}
	By using theorem \ref{dim} on page \pageref{dim}.
\end{proof}

\onlythesis{
\subsection{Bound on Knot components}
Poincare duality is extended to non-compact orientable manifolds using Borel Moore homology. For an oriented $n$ dimensional manifold $X$, Poincare duality is an isomorphism from singular cohomology to Borel Moore homology. i.e.

\[H^{i}(X) \simeq H_{n-i}^{BM}(X)\]

Any manifold is $\mathbb{Z}/2$ orientable. So for any manifold $X$, by Poincare duality

\[ H^{i}(X, \mathbb{Z}/2) \simeq H_{n-i}^{BM}(X, \mathbb{Z}/2)\]

Let $Y$ denote the space of parametrisations of degree $4$ curves lying on $Q_{3,2}$ in $\mathbb{RP}^{4}$ which is of dimension $15$. $X_{4}$ is a codimension $1$ closed subspace of parametrisation of degree $4$ curves with atleast one double point. $Y \setminus X_{4}$ is the space of real rational parametrisations of degree $4$ knots. This is an open subspace hence $Y \setminus X_{4}$ is a manifold of dimension $15$. We want an upper bound on number of knot components so we are interested in zeroth cohomology. By using Poincare duality,

\begin{equation}
 H^{0}(Y \setminus X_{4}, \mathbb{Z}/2) \simeq H_{15}^{BM}(Y \setminus X_{4}, \mathbb{Z}/2)
\end{equation}

Long exact sequence of Borel Moore homology \cite{BM} for closed space $X_{4}$ is given by

\[... \rightarrow H_{i}^{BM}(X_{4}) \rightarrow H_{i}^{BM}(Y) \rightarrow H_{i}^{BM}(Y \setminus X_{4}) \rightarrow H_{i-1}^{BM}(X_{4}) \rightarrow ...\]

This gives us,

\[ dim H_{i}^{BM}(Y \setminus X_{4},\mathbb{Z}/2) \leq dim H_{i-1}^{BM}(X_{4},\mathbb{Z}/2) + dim H_{i}^{BM}(Y,\mathbb{Z}/2) \]

\[	\text{dim}  H_{15}^{BM}(Y \setminus X_{4},\mathbb{Z}/2) \leq \text{dim} H_{14}^{BM}(X_{4},\mathbb{Z}/2) + \text{dim} H_{15}^{BM}(Y,\mathbb{Z}/2)\]

$Y$ is a connected manifold of dimension $15$ hence dim $H_{15}^{BM}(Y,\mathbb{Z}/2)$=1. Now consider space $X_{4}$. The space of curves with exactly one singularity is the complement of the space $X_{4}'$. By using long exact sequence again we get,
\[	dim H_{i}^{BM}(X_{4},\mathbb{Z}/2) \leq dim H_{i}^{BM}(X_{4}',\mathbb{Z}/2) + dim H_{i}^{BM}(X_{4} \setminus X_{4}',\mathbb{Z}/2)\]
\[\text{dim} H_{14}^{BM}(X_{4},\mathbb{Z}/2) \leq \text{dim} H_{14}^{BM}(X_{4}',\mathbb{Z}/2) + \text{dim} H_{14}^{BM}(X_{4} \setminus X_{4}',\mathbb{Z}/2)\]
$X_{4}'$ is a space of codimension $2$ therefore dim $H_{14}^{BM}(X_{4}',\mathbb{Z}/2)=0$.

$X_{4} \setminus X_{4}'$ is a manifold of dimension $14$. By using Poincare duality,

\[H_{14}^{BM}(X_{4} \setminus X_{4}',\mathbb{Z}/2) \simeq H^{0}(X_{4} \setminus X_{4}',\mathbb{Z}/2)\]

As $X_{4} \setminus X_{4}'$ has three connected components 
so dim $H^{0}(X_{4} \setminus X_{4}',\mathbb{Z}/2)=3$
Now by using (1),(2) and (3), we get
\[H^{0}(Y \setminus X_{4}, \mathbb{Z}/2) \leq 4\]
So there can be at most $4$ rigid isotopy classes of real rational knots of degree $4$ in $\mathbb{RP}^{4}$ lying on $Q_{3,2}$.}

\begin{theorem}
	There are exactly $2$ rigid isotopy classes of degree $4$ real rational knots in $Q_{3,2}$.
\end{theorem}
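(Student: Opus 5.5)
We need to show exactly two classes: an upper bound and a lower bound.

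\emph{Upper bound.} The plan is to run the wall-crossing count at the level of unoriented classes, rather than the crude Borel--Moore bound of $4$. By the preceding lemmas, the space $Y$ of degree~$4$ parametrisations in $Q_{3,2}$ is a connected $15$-manifold. The discriminant $X_4$ has as its top stratum $X_4\setminus X_4'$, a $14$-manifold with three components: the three oriented wall classes of the last theorem. The higher strata $X_4'$ have codimension at least $2$.

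Each wall component is two-sided, and crossing it changes a knot by a single crossing change. So every knot component of $Y\setminus X_4$ is adjacent to a wall, and each wall component meets at most two knot components. A generic path in $Y$ between two knots crosses walls transversally and avoids $X_4'$. Hence the knot components, joined along walls, form a connected graph whose edges are the three walls.

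The key step is to exhibit explicit knots on both sides of each wall and identify which knot classes they lie in:
\begin{itemize}
\item[(a)] For the wall whose projection from the double point has both points at infinity in the same component, perturbing the double point to either side should give knots related by the symmetry of Theorem~\ref{reflection}. By the Remark after Lemma~\ref{wall1}, the two sides are reflections of one another, and the reflection is realised by a rotation in $\mathbb{RP}^4$. If the two sides are rigidly isotopic through that rotation, this wall is a loop in the graph.
\item[(b)] For the other wall, the one with linking number $\pm1$ with the special conic, resolving the node in the two ways should give the unknot on one side and the two-crossing knot on the other.
\end{itemize}
With these identifications, the graph has at most two vertices, so there are at most two knot classes.

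\emph{Lower bound.} I expect this to be the easier half. I will take explicit degree~$4$ knots in $Q_{3,2}$ whose projections to $\mathbb{RP}^3$ from a point off the curve are isotopic to the unknot and to the two-crossing knot respectively. These can be obtained by gluing a degree~$3$ knot with a line, or by pulling back Bj\"orklund's degree~$4$ representatives via Lemma~\ref{proj_not_on_curve}, after arranging the four points at infinity onto a signature $(2,1)$ conic.

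Rigid isotopy in $Q_{3,2}$ projects, from a point that can be chosen to avoid the moving curve, to a smooth isotopy in $\mathbb{RP}^3$. So projections from nearby points give smooth isotopy invariants. The projection from a generic point should be well-defined up to isotopy, since the set of bad projection points has codimension $\geq1$ in $Q_{3,2}$ and a path of projection centres can avoid the curve.

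The two projected knot types are distinguished smoothly, since the two-crossing knot in $\mathbb{RP}^3$ is not isotopic to the affine unknot; for instance, they differ in their class in $H_1(\mathbb{RP}^3)$ versus the knot type in the affine part. The paper itself notes that encomplexed writhe is not an invariant in $Q_{3,2}$, so I must rely on topological knot type rather than writhe.

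\emph{Main obstacle.} The hardest step is the identification in (a): proving that the two sides of that wall lie in the same knot class. This requires an explicit rigid path, for example the rotation family $[\dots:(\cdot)\cos\theta:(\cdot)\sin\theta:\dots]$ as in the degree~$3$ remark, to be checked to stay nonsingular and to avoid the discriminant. I would first try the rotation used in Lemma~\ref{wall1}(1), perturbed off the wall.
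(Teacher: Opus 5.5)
There is a genuine gap, and it lies in an input you take from the paper. The space $Y$ of degree-$4$ parametrisations on $Q_{3,2}$ is not connected, so your step ``the knot components, joined along walls, form a connected graph'' fails.

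A real point of $Q_{3,2}$ cannot have $x_0=x_4=0$, since that forces $x_1=x_2=x_3=0$. So $[x]\mapsto[x_0:x_4]$ is an $S^2$-bundle $Q_{3,2}\to\mathbb{RP}^1$. Consequently, for every curve in $Y$, singular or not, $[s:t]\mapsto[p_0:p_4]$ is a well-defined map $\mathbb{RP}^1\to\mathbb{RP}^1$. Call its degree the \emph{value} of the curve. The value is the class of the curve in $\pi_1(Q_{3,2})\cong\mathbb{Z}$, it is locally constant on all of $Y$, and no wall crossing changes it.

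In degree $4$ the value lies in $\{0,\pm2,\pm4\}$, and all three absolute values occur on knots. The paper's two representatives have values $0$ and $2$, while
\[
[\,5(s^4-6s^2t^2+t^4) : 3(s^2+t^2)^2 : 4(s^4-t^4) : 8st(s^2+t^2) : 20st(s^2-t^2)\,]
\]
is a degree-$4$ knot on $Q_{3,2}$ with value $4$. To check this, put $s=\cos\phi$, $t=\sin\phi$; the curve becomes $P(\phi)=(5\cos4\phi,3,4\cos2\phi,4\sin2\phi,5\sin4\phi)$.
\begin{itemize}
\item The constant coordinate forces $\lambda=1$ in $P(\phi)=\lambda P(\psi)$, and then $e^{2i\phi}$ gives injectivity.
\item $P'\neq0$ has second coordinate $0$, so it is never parallel to $P$; the curve is immersed.
\item The forms have no common factor, because $s^2+t^2\nmid s^4-6s^2t^2+t^4$.
\end{itemize}
Hence there are at least three unoriented rigid isotopy classes. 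The count of two is wrong, and no wall-crossing count (nor any other argument) can give that upper bound. The same invariant refutes the paper's lemma that degree-$4$ curves on $Q_{3,2}$ form a connected space. It also shows that the three wall components you import do not exhaust $X_4\setminus X_4'$: for instance, $[\mathrm{Re}(s+it)^4:4s^2t^2:2st(t^2-s^2):s^4-t^4:\mathrm{Im}(s+it)^4]$ has a single real node and value $4$.

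Several further steps would fail as written.

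\textbf{Lower bound.} Projection from a point $p\in Q_{3,2}$ off the curve is not a smooth isotopy invariant. The map $\pi_p$ collapses each generator of the cone $T_p\cap Q_{3,2}$ to a single point of the special conic. So the projected curve acquires a double point at infinity whenever two points of $K\cap T_p$ lie on a common generator, which is a codimension-one event as $K$ or $p$ moves. This is exactly why writhe fails to be an invariant here. Also, both degree-$4$ knot types in $\mathbb{RP}^3$ are null-homologous, so $H_1(\mathbb{RP}^3)$ does not separate them. The lower bound itself is true, and the value above proves it.

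\textbf{Step (a).} The Remark after Lemma~\ref{wall1} compares two wall configurations (both points inside versus both outside). It says nothing about the two resolutions of one node. Moreover, Theorem~\ref{reflection} concerns curves lying in a hyperplane, and knots just off the wall do not lie in one.

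\textbf{Orientation bookkeeping.} You use three oriented wall components as edges, but (a) and (b) identify the sides of the two oriented linked walls only up to smooth type. A connected graph with these properties can still have three vertices: the two-crossing sides of those two walls could be orientation reverses of each other. Ruling this out needs a reversibility argument that you do not supply.
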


\begin{proof}
	Using lemma \ref{proj_curve_pt}, when we project a knot of degree 4 from a point which lies on the knot, we get a degree 3 knot in $\mathbb{RP}^{3}$ such that out of three points of intersection, two lie on the special conic (a third point may lie outside or inside the conic actually it does not matter as this is degree 3 we can always switch the position of the point by using lemma \ref{rk1}). 
	
	Suppose $K_{i},i=1,2$ are two degree $4$ knots in $Q_{3,2}$. By using a projective transformation in $\mathbb{RP}^{4}$ which preserves $Q_{3,2}$ we make these two intersect at a point then project these from their common point to $\mathbb{RP}^{3}$. Assume that the projection of these two are $3$ degree knots rigidly isotopic to each other and have points of intersection with the plane at infinity as $a_{i},b_{i}$ and $c_{i}$. Suppose $a_{i}$'s and $b_{i}$'s lie on the special conic and $c_{1},c_{2}$ are points not on the conic. Without loss of generality we assume that both $c_{1}$ and $c_{2}$ either lies in the same component of plane at infinity minus the special conic (If not we can switch the position of the point). In $\mathbb{RP}^{3}$, $K_{1}$ and $K_{2}$ are isotopic so there is a path $K_{t}$ between $K_{1}$ and $K_{2}$. The isotopy $K_{t}$ gives us a path $a_{t}$ between $a_{1}$ and $a_{2}$, $b_{t}$ between $b_{1}$ and $b_{2}$, $c_{t}$ between $c_{1}$ and $c_{2}$.
	
	As the conic is connected, we can always connect $a_{1}$ and $a_{2}$ via a path say $a_{t}^{'}$, $b_{1}$ and $b_{2}$ via path say $b_{t}^{'}$ lying on the conic. $c_{1}$ and $c_{2}$ both lies in the same component so we can find a path $c_{t}^{'}$ between $c_{1}$ and $c_{2}$ which lies entirely in that component. Take a point $x$ outside the plane at infinity. Now for each $t$, we define a projective transformation $T_{t}$ such that it fixes x, maps $a_{t} \rightarrow a_{t}^{'}$, $b_{t} \rightarrow b_{t}^{'}$ and $c_{t} \rightarrow c_{t}^{'}$. $T_{t}(K_{t})$ gives us a path between $K_{1}$ and $K_{2}$ which can be pulled back because now two points of intersection lie on the special conic throughout the isotopy. 
	
	In $\mathbb{RP}^{3}$, we have two degree~3 knots one is having $+1$ writhe and another one is with $-1$ writhe. So, there are only $2$ rigid isotopy classes of degree $4$ real rational knots. The projections of these classes in $\mathbb{RP}^{3}$ are writhe $1$ and writhe $3$ knots respectively which are smoothly isotopic to a circle and two crossing knot.
	
	The representatives of these two classes of degree 4 knots in $Q_{3,2}$ are given by
	$[-s^{2}t^{2}+t^{4}+2s^{4}-2s^{3}t+6st^{3}:-2s^{4}+2s^{3}t+6s^{2}t^{2}-4st^{3}-2t^{4}:2s^{2}t^{2}+2s^{4}-4s^{3}t+6st^{3}:-2s^{3}t-s^{2}t^{2}+2st^{3}+t^{4}:2s^{4}-4s^{3}t-2s^{2}t^{2}+2st^{3}+2t^{4}]$,\\
	$[-3s^{2}t^{2}+2s^{4}-t^{4}-2s^{3}t+6st^{3}:-2s^{4}+4s^{3}t+2s^{2}t^{2}-6st^{3}+2t^{4}:2s^{4}+2s^{2}t^{2}-4s^{3}t+2st^{3}:s^{2}t^{2}+2st^{3}-t^{4}-2s^{3}t:-6s^{3}t+2s^{2}t^{2}-2t^{4}+4st^{3}+2s^{4}]$.
\end{proof}

\section{Degree 5} 
To investigate the space of real rational knots of degree~5 in $Q_{3,2}$ we first prove that in degree~5, the space of real rational curves (even with singularities) is connected.

\begin{theorem}
	The space of all real rational parametrisations of degree~5 curves in $Q_{3,2}$ is a connected space. 
\end{theorem}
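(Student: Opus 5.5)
I would follow the same strategy as the degree~4 connectedness lemma: reduce to $\mathbb{RP}^{3}$ by projecting from a common point and then correct the path by a family of projective transformations. Given two degree~5 parametrisations $C_{0},C_{1}$ in $Q_{3,2}$, I first use Lemma~\ref{trans} to move $C_{1}$ by an element of $O(3,2)$ so that $C_{0}$ and $C_{1}$ share a point $p$. I choose $p$ to be a smooth point of both curves, not a double point or cusp. This is possible since $O(3,2)$ is transitive and such points are generic. Because $O(3,2)$ preserves $Q_{3,2}$, the component of the identity moves $C_{1}$ continuously. The other component only matters up to a reflection, and reflections are handled by Theorem~\ref{reflection} or by composing with a fixed element. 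By Lemma~\ref{proj_curve_pt}, projecting from $p$ gives degree~4 curves $C_{0}',C_{1}'$ in $\mathbb{RP}^{3}$. Each meets the plane at infinity in four points $a_{i},b_{i},c_{i},e_{i}$. Three of these lie on the special conic, and the fourth, $e_{i}$, does not.

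Next I normalise the intersection data. Using the pencil argument from the alternative degree~4 proof, I rotate the plane at infinity about real lines through conjugate pairs so that all intersection points become real. At the same time I keep a conic of signature $(2,1)$ through the three special points, which is possible because the points are in general position. The point $e_{i}$ may sit inside or outside the special conic. I use Lemma~\ref{rk1}, which is stated for $d\le 4$ and so applies to these degree~4 projections, to move $e_{0}$ and $e_{1}$ into the same component. At the instant of tangency the pullback is controlled by Lemma~\ref{tangent}, which keeps the degree in $Q_{3,2}$ equal to $5$.

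The space of all degree~4 real rational parametrisations in $\mathbb{RP}^{3}$, singular curves included, is connected, since it is an open dense subset of a projective space whose complement has codimension $\geq 2$. So there is a path $C_{t}'$ from $C_{0}'$ to $C_{1}'$, and I can choose it to avoid curves meeting the plane at infinity non-transversally or lying in it, because these are positive-codimension conditions. Along $C_{t}'$ the four intersection points $a_{t}',b_{t}',c_{t}',e_{t}'$ move continuously. I pick paths $a_{t},b_{t},c_{t}$ on the special conic joining the endpoint data, and a path $e_{t}$ inside the chosen complementary component. I then take the projective transformation $T_{t}$ of $\mathbb{RP}^{3}$ that preserves the plane at infinity, fixes an auxiliary point $x$ off that plane, and sends the four moving points to the four target points. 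The path $T_{t}(C_{t}')$ then satisfies the hypothesis of Lemma~\ref{proj_curve_pt} for all $t$. Pulling back by the inverse stereographic map gives a path of degree~5 parametrisations in $Q_{3,2}$ joining $C_{0}$ and $C_{1}$.

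The main obstacle is the existence of $T_{t}$. A projective map of the plane at infinity is determined by four points in general position, so I must keep both quadruples $(a_{t}',b_{t}',c_{t}',e_{t}')$ and $(a_{t},b_{t},c_{t},e_{t})$ free of collinear triples for all $t$. On the target side this is automatic for three distinct conic points together with an off-conic point chosen away from the chords. On the source side I would perturb the path $C_{t}'$ generically to avoid collinear configurations, which form a codimension-one condition in a one-parameter family. If that fails at isolated times, I would instead allow the plane at infinity to rotate slightly, as in the alternative degree~4 proof, to restore general position. I also need to ensure that the three special points stay distinct or, when they merge, do so tangentially to the conic so that the pullback degree is still $5$.
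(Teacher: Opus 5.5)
\textbf{Verdict: the key step fails, and it cannot be repaired, because the statement itself is false.}

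Your route is the paper's route: project from a common point, connect the degree~$4$ projections in $\mathbb{RP}^3$, then drag the intersection data back onto the special conic. It breaks at the same place, namely the choice of the path $C'_t$.

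\textbf{Where the proposal fails.} The two conditions you propose to avoid ``by genericity'' are codimension-one walls in the space of degree~$4$ parametrisations.
\begin{itemize}
\item Tangency to the plane at infinity means the binary quartic cutting out the intersection has a double root, which is a hypersurface condition.
\item Collinearity of three of the four intersection points is a single equation.
\end{itemize}
Perturbation only removes phenomena of codimension $\geq 2$. A generic path crosses codimension-one walls transversally at isolated times.

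At a time when the three points destined for the special conic become collinear, no non-degenerate conic contains them, since such a conic would contain the whole line. So neither your $T_t$ nor any other choice of conic exists there, and the pull-back breaks. Tilting the plane at infinity is just another deformation of the curve, so it cannot get across a wall that separates the admissible set. What you actually need is that the set of degree~$4$ curves with admissible intersection data is connected, and that is the whole content of the statement.

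\textbf{Why the statement is false.} On $Q_{3,2}$ one has $x_0^2+x_4^2=x_1^2+x_2^2+x_3^2$. Hence $\phi\colon Q_{3,2}\to\mathbb{RP}^1$, $[x]\mapsto[x_0:x_4]$, is well defined; it is an $S^2$-bundle, and $\pi_1(Q_{3,2})\cong\mathbb{Z}$.
\begin{itemize}
\item For a parametrisation without common factor, $p_0$ and $p_4$ have no common real zero, since $p_1,p_2,p_3$ would vanish there too.
\item So the degree $k$ of $[s:t]\mapsto[p_0:p_4]$ is an odd integer with $|k|\le 5$.
\item $k$ is constant along any path of parametrisations, and $|k|$ does not depend on orientation.
\end{itemize}

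Now put $p_3=0$ and take either
\begin{itemize}
\item $p_0+ip_4=(t-is)^5$ and $p_1+ip_2=(t^2+s^2)(t-is)^3$, or
\item $p_0+ip_4=(t^2+s^2)^2(t-is)$ and $p_1+ip_2=(t-is)^5$.
\end{itemize}
In both cases $|p_0+ip_4|^2=|p_1+ip_2|^2=(s^2+t^2)^5$, so the curve lies on $Q_{3,2}$. The five forms have no common factor. On the real circle $t-is=e^{-i\theta}$ these are the embedded loops $[\cos5\theta:\cos3\theta:-\sin3\theta:0:-\sin5\theta]$ and $[\cos\theta:\cos5\theta:-\sin5\theta:0:-\sin\theta]$. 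They have $|k|=5$ and $|k|=1$ respectively, so they lie in different components. The value $|k|=3$ also occurs.

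Consequently, in your reduction the set of admissible degree~$4$ projections is necessarily disconnected, and no choice of $C'_t$ can work. The paper's proof rests on the same unjustified claims. It tracks the four intersection points along an arbitrary homotopy of degree~$4$ curves, and it moves the plane while asserting that three of the points stay non-collinear.
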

\begin{proof}
	Suppose $C_{1}$ and $C_{2}$ are two real rational parametrisations of degree $5$ curves in $Q_{3,2}$. By using lemma \ref{proj_curve_pt}, when we project these curves from the point on the curve to a copy of $\mathbb{RP}^{3}$, we get two pairs $(C_{1}^{'},X)$ and $(C_{2}^{'},X^{'})$ such that $C_{1}^{'}$ and $C_{2}^{'}$ are degree $4$ curves in $\mathbb{RP}^{3}$ which intersects the planes $X$ and $X^{'}$ respectively at four points out of which three points of intersection lie on the special conic and one point does not lie on the special conic. 
	
	The two points of intersection which lie on the special conic can be complex conjugates, but the other two points of intersection cannot be complex otherwise being conjugate pairs, they would either have to both lie on the conic or both not lie on it. Take the line passing through these pair of real points of intersection and consider the pencil of planes containing that line. Choose a real point on the curve and the (unique) plane from the pencil containing that point. Such a plane must intersect the curve in at least 3 real points, but then the 4th point of intersection must also be real since any non real points must appear in conjugate pairs. Moving the plane along this pencil and we get a path between the plane whose intersection points are imaginary to a plane whose points of intersection with curve are all real. Any four points determines a conic. We can avoid the situation that the three points of intersection which lie on the conic are collinear so, we can always choose a conic of signature $(2,1)$ throughout the isotopy containing the three points of intersection and one does not lies on the conic.
	
	The space of real rational parametrisations of degree~4 in $\mathbb{RP}^{3}$ is a connected space. So, there is a homotopy between the curves $C_{1}^{'}$ and $C_{2}^{'}$ in $\mathbb{RP}^{3}$. $C_{1}^{'}$ intersects the plane $X$ in four points say, $a^{'},b^{'},c^{'},d^{'}$ out of which three points $a^{'},b^{'},c^{'}$ lie on the special conic and $d^{'}$ does not lies on the conic. During the homotopy between $C_{1}^{'}$ and $C_{2}^{'}$, suppose the intersection points $a^{'},b^{'},c^{'},d^{'}$ continously deformed to points $a_{0},b_{0},c_{0},d_{0}$. Consider the unique plane determined by $a_{0},b_{0}$ and $c_{0}$ and denote it by $X^{''}$. This gives a homotopy between the pairs $(C_{1}^{'},X)$ and $(C_{2}^{'},X^{''})$. Note that $X^{''}$ intersects the curve $C_{2}^{'}$ in $4$ points out of which exactly three points lie on the special conic and all points of intersection are real. $C_{2}^{'}$ intersects each of the planes $X^{''}$ and $X^{'}$ in $4$ points denoted by $a_{0},b_{0},c_{0},d_{0}$ and $a_{1},b_{1},c_{1}, d_{1}$ respectively where $a_{0}$ and $a_{1}$ are the points which do not lie on the special conic. Denote the preimages of $b_{0},c_{0},d_{0}$ under the parametrisation of knot $C_{2}^{'}$ by $p_{0}^{1}, p_{0}^{2},p_{0}^{3}$ and the preimages of $b_{1},c_{1},d_{1}$ under the parametrisation of knot $C_{2}^{'}$ by $p_{1}^{1}, p_{1}^{2},p_{1}^{3}$. We can always find a path $p_{t}$ in $\mathbb{RP}^{1}$ which takes $\{p_{0}^{1}, p_{0}^{2},p_{0}^{3}\}$ to $\{p_{1}^{1}, p_{1}^{2},p_{1}^{3}\}$. Any plane in $\mathbb{RP}^{3}$ is determined by three points. Consider the plane determined by the images of $\{p_{t}^{1}, p_{t}^{2},p_{t}^{3}\}$ under parametrisation of $C_{2}^{'}$ and denote it by $X_{t}$. This $X_{t}; 0 \leq t \leq 1$ gives an isotopy between the planes $X^{''}$ and $X^{'}$. $X_{t}$ intersects the knot $C_{2}^{'}$ in exactly four points say $a_{t},b_{t},c_{t}$ and $d_{t}$ for $0 \leq t \leq 1$ where $a_{0}$ and $a_{1}$ are the points which do not lie on the special conic. There is always an isotopy between $X^{''}$ and $X^{'}$ such that  $b_{t},c_{t}$ and $d_{t}$ are not collinear for any $0 \leq t \leq 1$. 
	
	If $a_{0}$ and $a_{1}$ lie in the same component of the plane at infinity minus the special conic, we can isotope $(C_{2}^{'},X^{''})$ to $(C_{2}^{'},X^{'})$ as follows:
	As $b_{t},c_{t}$ and $d_{t}$ are non collinear, we can always find a conic of signature $(2,1)$ which is special conic passing through $b_{t},c_{t},d_{t}$ and $a_{t}$ does not lie on the conic. If $a_{t}$ and $a_{0}$ lie in the different components then by using lemma \ref{rk1} it can be isotoped such that $a_{t}$ will lie in the same component of $a_{0}$ and $a_{1}$. Therefore, $(C_{2}^{'},X_{t})$ gives us an isotopy between $(C_{2}^{'},X^{''})$ to $(C_{2}^{'},X^{'})$.
	
	If $a_{0}$ and $a_{1}$ lie in the different components of the plane at infinity minus the special conic, we can switch the position of $a_{1}$ by using lemma \ref{rk1} and proceed as above.

\end{proof}

\begin{lemma}
	There does not exist any real rational curve of degree $5$ in $Q_{3,2}$ with more than two nodes.
\end{lemma}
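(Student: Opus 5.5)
We argue by contradiction, in the same way as the degree~$4$ case with two double points. Suppose $C$ is a real rational curve of degree~$5$ in $Q_{3,2}$ with at least three nodes. Using Lemma~\ref{trans}, we apply a projective transformation preserving $Q_{3,2}$ that moves one node, call it $q$, to $[1:1:0:0:0]$. We then project from $q$ as in Lemma~\ref{proj_double_pt}(1). The image $C'$ is a real rational curve of degree $5-2=3$ in $\mathbb{RP}^{3}$. It meets the plane at infinity in $3$ points, and exactly $1$ of them lies on the special conic.

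Next we check that the remaining nodes survive the projection. Let $q_1$ and $q_2$ be two further nodes of $C$. Each has two preimages in $\mathbb{CP}^1$ distinct from the preimages of $q$. Since $q_i\neq q$ and $q_i\in Q_{3,2}$, the line through $q$ and $q_i$ meets $Q_{3,2}$ only in $q$ and $q_i$, unless that line lies on the cone $T_q\cap Q_{3,2}$. In either case the two branches through $q_i$ are sent to a single point of $C'$, because $\pi_q$ identifies two points only when they lie on a common line through $q$. Hence $C'$ is a real rational cubic in $\mathbb{RP}^{3}$ with at least two double points. These may be real, solitary, or a complex conjugate pair, which we treat uniformly by working in the complexification.

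The contradiction now comes from the fact that a rational cubic has at most one singular point. Suppose $C'$ had two double points. The line through them meets $C'$ with multiplicity at least $4>3$, so it is a component of $C'$, which is impossible for an irreducible curve. Equivalently, a cubic with a double point is planar, and a plane rational cubic has arithmetic genus $1$ and so carries exactly one node or cusp. This is the count $\frac{(d-1)(d-2)}{2}=1$ quoted in the Background section.

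The main obstacle is the degenerate configurations. Two nodes might become identified under the projection, which happens when $q, q_1, q_2$ are collinear. A node might also land on the plane at infinity, which happens when $q_i$ lies on the cone. In the collinear case, the line through $q, q_1, q_2$ meets $Q_{3,2}$ in at least three points, so it lies on $Q_{3,2}$. It then meets $C$ with multiplicity at least $6>5$, counted via a hyperplane containing the line, and we would argue this forces the line to be a component of $C$, a contradiction. If $q_i$ lies on the cone, its image on the plane at infinity is still a point where two branches of $C'$ meet, so the counting argument above still applies. If needed, we could instead project from a different node to avoid these cases.
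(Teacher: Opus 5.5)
Your proposal is correct and follows the paper's proof: project from one node to get a rational cubic in $\mathbb{RP}^{3}$ with at least two double points, which cannot exist. Your treatment of the degenerate cases (collinear nodes, nodes on the cone) spells out details the paper only asserts, and the collinear case needs no separate argument, since an irreducible cubic cannot have a point with four branches either.
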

\begin{proof}
	Suppose $C$ is a real rational curve of degree $5$ in $Q_{3,2}$ with at least three nodes. When we project this curve from one of the double points in $\mathbb{RP}^{3}$, we get a degree $3$ curve with atleast two nodes. But this type of curve does not exist in $\mathbb{RP}^{3}$. So, $Q_{3,2}$ does not have any real rational curve of degree $5$ with more than two double points. 
\end{proof}

We will now classify the degree ~5 curves with ~2 nodes and then knots.

\begin{theorem}
	There are $4$ rigid isotopy classes of real rational parametrisations of degree $5$ curves with exactly two nodes.
\end{theorem}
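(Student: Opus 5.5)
The plan is to follow the alternative proof of Theorem \ref{wall}: project from one of the double points. Let $C$ be a degree $5$ curve in $Q_{3,2}$ with exactly two nodes. Using Lemma \ref{trans}, move one node to $p=[1:1:0:0:0]$ and project from it. By Lemma \ref{proj_double_pt}, the image is a degree $3$ real rational curve $C'$ in $\mathbb{RP}^{3}$. It meets the plane at infinity in $3$ points, of which exactly $1$ lies on the special conic. The two remaining points are the images of the two branches through $p$: they are real if the node is non-solitary and complex conjugate if it is solitary. Because $C$ has a second node, $C'$ has exactly one double point. A rational cubic in $\mathbb{RP}^{3}$ with a double point is planar, so $C'$ is a nodal (or acnodal) plane cubic lying in a plane $X$. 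The line $L=X\cap\{\text{plane at infinity}\}$ meets the special conic in two points $a_{1},a_{2}$. Conversely, Lemma \ref{proj_double_pt} pulls back any such configuration to a two-nodal quintic, so the problem reduces to classifying the pairs (plane nodal cubic, plane at infinity with special conic) under isotopies that keep exactly one intersection point on the conic.

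I will first settle the data of $C'$ alone. There is a single rigid isotopy class of nodal planar cubics. I will use the same device as in the degree $4$ arguments, a continuous family of projective transformations $T_t$ that carries the intersection points along prescribed paths, to match the cubics while keeping the conic point on the conic. What remains as invariants is the position of the two non-conic points relative to $L\setminus\{a_1,a_2\}$, i.e. inside or outside the conic, as in Figure \ref{fig1}.

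\begin{itemize}
\item If the two points lie in the same component, I expect to pass between ``both inside'' and ``both outside'' by making them collide (a cusp upstairs) and then become conjugate (a solitary node upstairs). This is the move used in the alternative proof of Theorem \ref{wall}.
\item If they lie in different components, the linking argument of that proof should be an obstruction.
\end{itemize}

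This gives two unoriented classes. Orientation then doubles one of them, exactly as Lemma \ref{wall1} does. The ``different components'' configuration cannot be reversed, because the forced rotation about the axis through the two fixed points puts the curve in the plane at infinity. The ``same component'' configuration can be reversed by a rotation like the one in Lemma \ref{wall1}(1). That yields $3$ classes so far.

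The fourth class has to come from an extra invariant, since the real node of $C'$ (which corresponds to the second node of $C$) carries its own information. The point of intersection lying on the conic sits on one branch arc of the cubic relative to the loop. So the candidate invariant is whether that point lies on the loop of the nodal cubic or on the unbounded branch. I would then check which combinations with the previous invariants survive the moves. If the first step fails to give a clean count, the fallback is to project from each node in turn and compare: the invariant must be symmetric under swapping the two nodes, and this symmetry should cut the naive count down to $4$.

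The main obstacle is proving non-isotopy between the classes. Degenerations must be excluded: the curve entering the plane at infinity, the conic point leaving the conic, and three points becoming collinear. I would handle each of these with the rotation/fixed-axis argument of Lemma \ref{lemma 6}, or with a linking-number argument in an affine chart. To finish, I would exhibit explicit representatives by pulling back four explicit planar cubics with the inverse stereographic map.
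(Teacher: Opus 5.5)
\textbf{Gap: the count of $4$ is never derived.} You reach $3$ classes and then posit a fourth invariant (whether the conic point lies on the loop of $C'$) because the statement demands one. The decisive step is deferred to ``check which combinations survive''.

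Two things go wrong. First, you mix counting conventions. Your third class comes from orientation (the analogue of Lemma~\ref{wall1}). The paper's four classes are unoriented positional configurations, and orientation plays no role in its proof. If you count oriented classes, every further invariant must also go through the orientation-reversal analysis. Then, if the loop/branch datum were a genuine invariant independent of inside/outside, your scheme would produce more than $4$ classes, not exactly $4$.

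Second, the loop/branch datum is not evidently an invariant. The node of $C'$ (the image of the second node of $C$) may be solitary, in which case there is no loop. Moreover, the nodal--cuspidal--solitary transitions you invoke for the two non-conic points are equally available for this node, and along them the loop shrinks away. Your fallback (project from each node in turn and let symmetry ``cut the count down to $4$'') is not an argument.

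\textbf{What is missing: a complete enumeration and a separating invariant.} For the enumeration, the paper records positions on the line where the plane of $C'$ meets the plane at infinity: the two conic points $c,c'$, the intersection point lying on the conic, and the two points $Nc,Nc'$ not on it. This gives nine cases. They are identified by sliding the conic point along the special conic from $c$ to $c'$ (I$\sim$II, IV$\sim$VI, VII$\sim$IX) and by Lemma~\ref{rk1} (VII$\sim$V, VIII$\sim$IV). Case III is kept apart from I and II because its non-conic points cannot be made conjugate.

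For separation, the paper does not use rotation or linking arguments. It projects from a point of $Q_{3,2}$ off the curve (Lemma~\ref{proj_not_on_curve}), obtaining a two-nodal degree $5$ curve in $\mathbb{RP}^{3}$. The four remaining cases I, III, IV, V have projections of different encomplexed writhe up to sign. Your tools (fixed-axis rotation, linking with the conic) at best detect the same/different-component dichotomy and orientation. They give you no way to separate the remaining candidate classes from one another.
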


\begin{proof}
	By using lemma \ref{proj_double_pt}, when we project a curve $C$ with two double points in $Q_{3,2}$ from one of its double points we get a degree 3 curve in $\mathbb{RP}^{3}$ with one double point such that it intersects the plane at infinity in 3 points and exactly one point of intersection lies on the special conic. Degree~$3$ curves with exactly one double point are planar curves and there is only one class of these type of curves in $\mathbb{RP}^{3}$. The plane on which the degree~$3$ nodal curve lies and plane at infinity intersect in a projective line and the special conic intersects the plane of the curve in two points. So all the points of intersection must lie on this projective line.
	
	Depending upon the position of these points, there are 9 possibilities of these type of curves as shown in figure \ref{edges}. $c$ and $c^{'}$ are the points of intersection of the special conic and the plane of the degree~$3$ nodal curve. $Nc$ and $Nc^{'}$ points are points of intersection of degree~3 planar curve with the plane at infinity which does not lie on the special conic.
	
	\begin{figure}[h]
		\centering
		\includegraphics[width=0.5\textwidth]{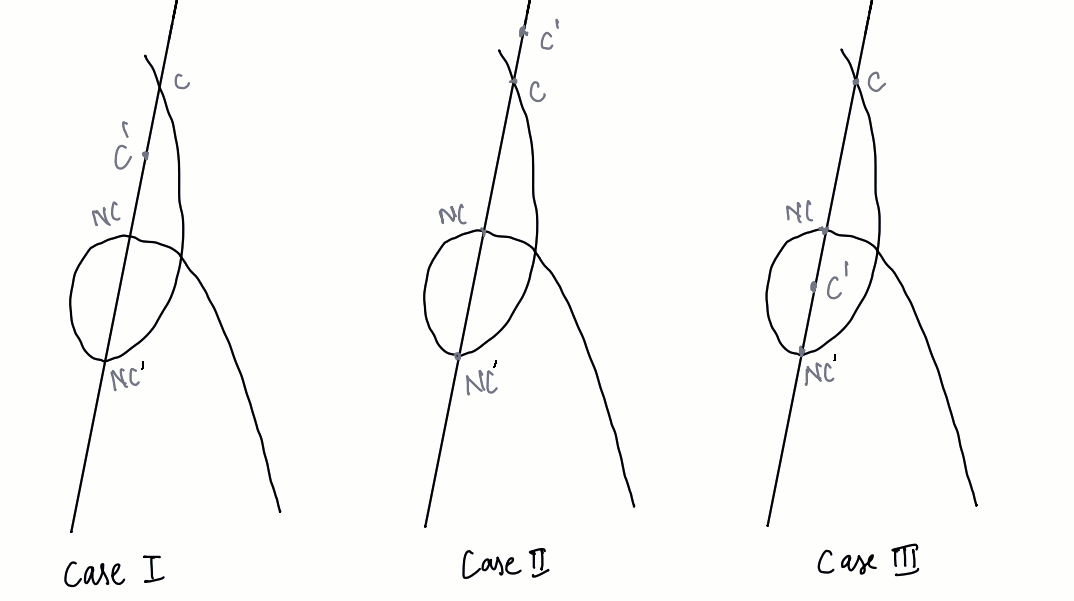}\hfill
		\includegraphics[width=0.5\textwidth]{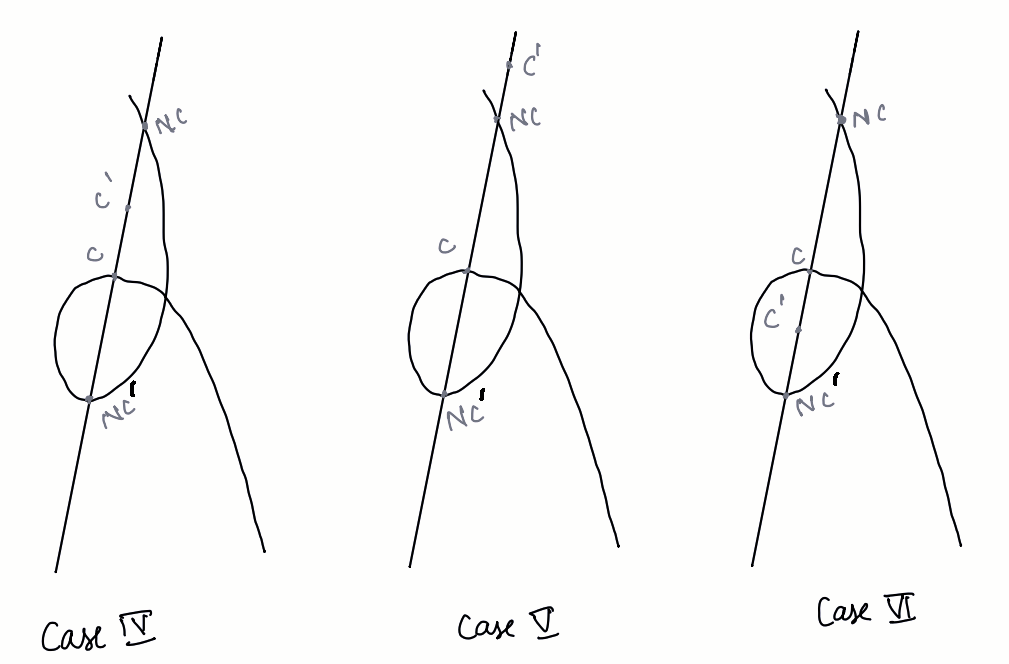}\hfill
		\includegraphics[width=0.5\textwidth]{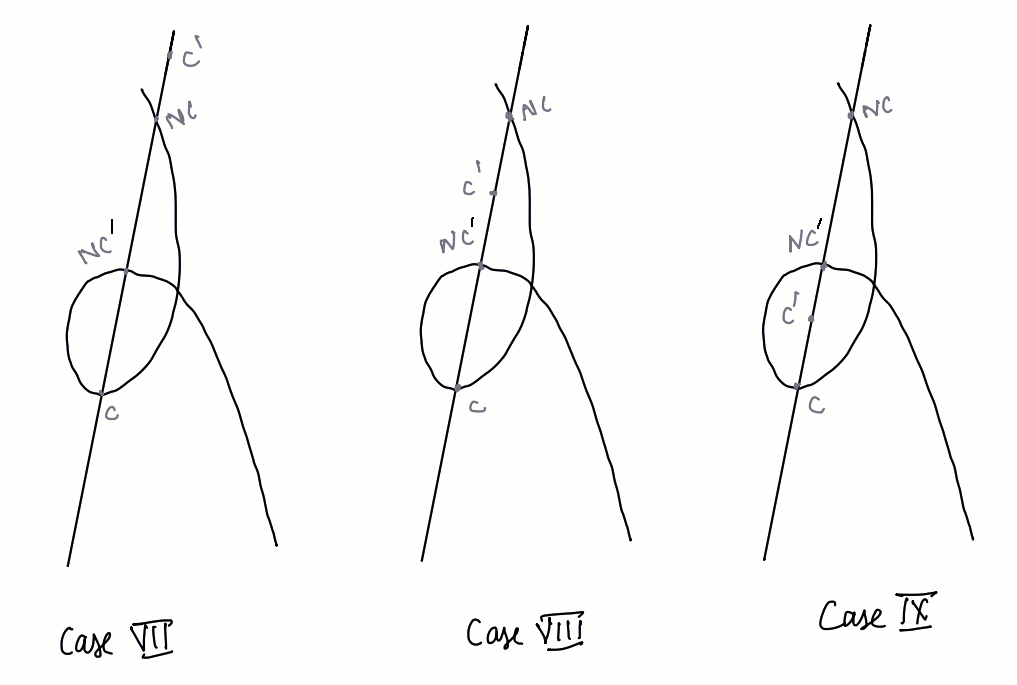}\hfill
		\caption{possibilities for degree ~3 walls with the required intersection with plane at infinity, $c,c^{'}$ are the points of intersection of the special conic and plane containing the curve. $NC, NC^{'}$ are the intersection points of curve with the plane at infinity which do not lie on the special conic}
		\label{edges}
	\end{figure}
	
	Case I and case II can be isotoped to each other as the conic is connected we will move the point from $c$ to $c^{'}$. Similarly, case IV can be isotoped to case VI and case VII can be isotoped to case IX. Case III is different from case I and case II as in case I and case II points not at the conic can be made imaginary but not in case III. Case VII and case VIII can be isotoped to case V and case IV respectively by using lemma \ref{rk1}. So we are left with four cases, I, III, IV, and V. Pull back of these four cases corresponds to degree $5$ real rational curves in $Q_{3,2}$. By using lemma \ref{proj_not_on_curve}, the projection of the curve with exactly two double points in $Q_{3,2}$ from a point which does not lie on the curve but on the $Q_{3,2}$ gives us a degree $5$ curve with exactly two double points in $\mathbb{RP}^{3}$, all of whose points of intersection with the plane at infinity lie on the special conic. Projections of the pull backs of these four cases correspond to different writhe (up to sign) edges of degree $5$ in $\mathbb{RP}^{3}$.
	
	So, there are $4$ rigid isotopy classes of degree $5$ edges which lie on $Q_{3,2}$.
\end{proof}

\begin{remark}
	Degree $5$ curves with exactly two double points lie on a hyperplane in $Q_{3,2}$. Therefore, by using lemma \ref{reflection}, these can be isotoped to their reflections in $\mathbb{RP}^{4}$. 
	
	By using lemma \ref{proj_not_on_curve}, the projection of the curve with exactly two double points in $Q_{3,2}$ from a point which does not lie on the curve but on the $Q_{3,2}$ gives us a degree $5$ curve with exactly two double points in $\mathbb{RP}^{3}$, all of whose points of intersection with the plane at infinity lie on the special conic. Four classes of degree $5$ edges in $Q_{3,2}$ correspond to different writhe (up to sign) edges of degree $5$ in $\mathbb{RP}^{3}$.
\end{remark}


\begin{theorem}
	There are~3 smooth isotopy classes of degree~5 real rational knots in $Q_{3,2}$.
\end{theorem}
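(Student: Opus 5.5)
The plan is to bound the number of smooth isotopy classes from above using the wall-crossing structure of the space of degree~$5$ curves. Then we identify which components are actually distinct smooth knot types by projecting to $\mathbb{RP}^{3}$.

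First, the upper bound. By theorem \ref{dimension} the space $Y_{5}$ of degree~$5$ parametrisations on $Q_{3,2}$ is a manifold of dimension $18$, and by the preceding theorem it is connected. The discriminant $X_{5}$ of curves with at least one double point has codimension one; its top stratum $X_{5}\setminus X_{5}'$ is a manifold of dimension $17$ by theorem \ref{dim}. Running the Borel--Moore / Poincar\'e duality argument exactly as in degree~$4$ gives
\[\dim H^{0}(Y_{5}\setminus X_{5})\le 1+\#\pi_{0}(X_{5}\setminus X_{5}').\]
This bound is too weak on its own, since we have not classified one-nodal degree~$5$ curves. So I would instead argue directly: any two knots lying in components adjacent across a wall differ by a single crossing change. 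Hence it suffices to understand which crossing changes occur and how the encomplexed writhe of the projection changes.

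The main step is to use the classified edges, namely the $4$ classes of two-nodal curves from the preceding theorem. Near each edge, the two nodes can be resolved independently, giving four adjacent knot chambers around each edge. The projections of these edges from a point of $Q_{3,2}$ off the curve are degree~$5$ two-nodal curves in $\mathbb{RP}^{3}$ with writhes (up to sign) determined by cases I, III, IV, V. Resolving both nodes produces degree~$5$ knots in $\mathbb{RP}^{3}$ of writhes $w\pm1\pm1$. Using Bj\"orklund's classification, this realises projected writhes $0,\pm2,\pm4,\pm6$. Reflections are identified via theorem \ref{reflection}, since curves lying on a hyperplane can be rotated to their mirror image. By Bj\"orklund, the knot types are: writhe $0,\pm2$ are unknot-type or simple, $\pm4$ is a two-crossing type, and $\pm6$ is the trefoil-like torus type. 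Grouping the resulting knots by their smooth type in $\mathbb{RP}^{3}$ should yield exactly three smooth types. I expect writhes $0$ and $2$ to give the same smooth knot, namely the line-isotopic class, with $4$ and $6$ giving two further types.

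Next, to show that smoothly isotopic projections give smoothly isotopic knots in $Q_{3,2}$, invoke theorem \ref{B}. Combined with the fact that every degree~$5$ knot on $Q_{3,2}$ lies in a chamber adjacent to one of these edges, since the space is connected and walls connect chambers, this gives at most three smooth classes. For distinctness, the projections from a fixed point off the knots are pairwise non-isotopic in $\mathbb{RP}^{3}$. A smooth isotopy in $Q_{3,2}$ can be perturbed to avoid a fixed point, since the point has codimension $3$ and the isotopy sweeps only a surface. It therefore projects to an isotopy in $\mathbb{RP}^{3}$, so distinct projected types stay distinct.

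The main obstacle is the chamber-adjacency step: showing that every knot component touches some two-nodal edge, or at least a wall whose resolution we control, without a full classification of one-nodal degree~$5$ curves. I would first try to deduce it via projection from a point on the knot, as in lemma \ref{proj_curve_pt}. This reduces the question to degree~$4$ knots in $\mathbb{RP}^{3}$ with three of their intersection points with the plane at infinity on the special conic, where Bj\"orklund's classification and lemma \ref{rk1} give explicit deformations to nodal degenerations.
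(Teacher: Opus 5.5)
Your distinctness step fails: an isotopy in $Q_{3,2}\setminus\{p\}$ does not project to an isotopy in $\mathbb{RP}^{3}$. Avoiding $p$ is not the issue. The problem is that $\pi_p$ is not injective on $Q_{3,2}\setminus\{p\}$: it collapses every real ruling of the cone $T_p\cap Q_{3,2}$ to a single point of the special conic. Two points of a moving knot lying on a common ruling is a codimension-one event in a one-parameter family. So a generic isotopy can pass through it, and small perturbations do not remove it. At that moment the projection acquires a real double point on the special conic, i.e.\ the projected family undergoes a crossing change at infinity. The paper's degree~$3$ remark, where a rotation inside $Q_{3,2}$ projects through a singular cubic, is an instance of this. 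So pairwise non-isotopic projections prove nothing. The paper's proof gives no distinctness argument either.

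You need an invariant of $Q_{3,2}$ itself. The map $[x]\mapsto[x_0:x_4]$ is defined on all of $Q_{3,2}$, since $x_0=x_4=0$ forces $x_1=x_2=x_3=0$. It makes $Q_{3,2}$ an $S^{2}$-bundle over $\mathbb{RP}^{1}$, so $\pi_1(Q_{3,2})\cong\mathbb{Z}$. Hence $n(K)=|\deg[p_0:p_4]|\in\{1,3,5\}$ is a smooth isotopy invariant of an unoriented degree~$5$ knot. All three values occur. With $z=s+it$ and $x_3=0$, the following are embedded degree~$5$ curves on $Q_{3,2}$:
\begin{itemize}
\item $x_0+ix_4=z(s^2+t^2)^2$, $x_1+ix_2=z^5$, with $n=1$;
\item $x_0+ix_4=z^3(s^2+t^2)$, $x_1+ix_2=z^5$, with $n=3$;
\item $x_0+ix_4=z^5$, $x_1+ix_2=z(s^2+t^2)^2$, with $n=5$.
\end{itemize}

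The upper bound has a more serious problem, and it is shared with the paper. Your appeal to Theorem~\ref{B} is false, as is the paper's claim that knots of the same smooth type in $\mathbb{RP}^{3}$ can be isotoped while keeping their points at infinity on the special conic. Let $K$ be the $n=5$ curve above and $K'$ the $n=1$ curve. Then $K'$ is the image of $K$ under the coordinate permutation $(x_0\,x_1)(x_2\,x_4)$ of $\{x_3=0\}\cong\mathbb{RP}^{3}$. This permutation has determinant $+1$, so it lies in the identity component of $\mathrm{PGL}_4(\mathbb{R})$. For any $p\in Q_{3,2}$ with $x_3(p)\neq0$, $\pi_p$ maps $\{x_3=0\}$ projectively isomorphically onto the target. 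So $\pi_p(K)$ and $\pi_p(K')$ are rigidly isotopic in $\mathbb{RP}^{3}$, with the same writhe and the same knot type. Yet $K$ and $K'$ are not even homotopic in $Q_{3,2}$.

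Thus neither the writhe nor the knot type of the projection determines the smooth class in $Q_{3,2}$. Grouping Bj\"{o}rklund's writhes into $\{0,\pm2\}$, $\{\pm4\}$, $\{\pm6\}$ therefore cannot give the upper bound, whether or not the chamber-adjacency step you flag is settled. Your knot-type labels are also off: the paper identifies $\pm4$ with the long trefoil and $\pm6$ with the projective $5_3$ knot. A correct proof would have to organise the count by an invariant intrinsic to $Q_{3,2}$, such as $n$, and show that knots with the same invariant are isotopic. Neither your proposal nor the paper does this.
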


\begin{proof}
	By using lemma \ref{proj_not_on_curve}, when we project a curve in the degree~$5$ edge from a point which does not lie on the curve then we get a degree $5$ curve with exactly two double points, say $E$, in $\mathbb{RP}^{3}$ whose five points of intersection with the plane at infinity lie on the special conic. As we know that in $\mathbb{RP}^{3}$, any degree~$5$ knot is adjacent to an edge so we can smoothly isotope $E$ to a knot while maintaining the right number of intersections with the plane at infinity.  This way this path of smooth isotopy can also be pulled back to the $Q_{3,2}$. There are $7$ classes of degree $5$ knots in $\mathbb{RP}^{3}$ with writhe $0,\pm2,\pm4, \pm6$ such that $0,\pm2$ are smoothly isotopic to line, $\pm4$ are long trefoil and $\pm6$ are projective $5_{3}$ knot. Moreover, knots belonging to the same smooth isotopy class can be smoothly isotoped while maintaining the right intersection number with the plane at infinity. So these can be pulled back and we have $3$ smooth isotopy classes of degree $5$ knots which lie on the $Q_{3,2}$ in $\mathbb{RP}^{4}$.
\end{proof}

\subsubsection{Rigid Isotopy classification of degree~5 knots in $Q_{3,2}$}
Suppose $K$ is a degree~$5$ knot lying on $Q_{3,2}$. The projection of this from a point which lies on the knot into a copy of $\mathbb{RP}^{3}$ is a real rational knot of degree $4$ which intersects the plane at infinity in $4$ points out of which $3$ points lie on the special conic. Suppose $K_{4,3}$ is the set of pairs $(C,X)$ in $\mathbb{RP}^{3}$ where $C$ is a degree $4$ knot and $X$ is a hyperplane in $\mathbb{RP}^{3}$ such that $C$ intersects $X$ in $4$ points and exactly three of them lies on a conic of signature $(2,1)$. The classification of knots of degree $5$ in $Q_{3,2}$ corresponds to the classification of pairs $(C,X)$ in $K_{4,3}$. In the pair $(C,X)$, only the points of intersection which lie on the conic can be imaginary because imaginary points exists in conjugate pairs and there is only one point not on the conic. So, there can be at most two imaginary points of intersection and these lie on the special conic. Take the line passing through the real points of intersection and consider the pencil of planes through that line. There is a plane in this pencil which intersects the curve in only real points. Move the plane along this and we get a path between the plane whose intersection points are imaginary to a plane whose points of intersection with curve are all real. Any four points determines a conic. We can avoid the situation that the three points of intersection which lie on the conic are collinear so, we can always choose a conic of signature $(2,1)$ throughout the isotopy containing the three points of intersection and one does not lies on the conic.

Suppose $(C,X)$ and $(C^{'},X^{'})$ are two pairs in $K_{4,3}$ where $a$ and $a^{'}$ are the points of intersection of $C$ and $C^{'}$ with $X$ and $X^{'}$ respectively, which do not lie on the conic. Now there are two possibilities, depending on whether $a$ and $a^{'}$ lie in the same or in different components of the plane at infinity minus the special conic. There are~4 knots of degree~4 in $\mathbb{RP}^{3}$ with writhe $\pm1,\pm3$ smoothly isotopic to a circle and a two crossing knot, respectively.

\begin{theorem}\label{deg5knots}
	There are exactly~4 rigid isotopy classes of degree~5 real rational knots lying on $Q_{3,2}$ in space $\mathbb{RP}^{4}$.
\end{theorem}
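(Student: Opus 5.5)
To prove Theorem~\ref{deg5knots}, I would combine an upper bound obtained from the pairs $(C,X)\in K_{4,3}$ with a lower bound coming from the encomplexed writhe of the projection from a point off the knot. For the upper bound, I would project two degree~5 knots $K_1,K_2$ in $Q_{3,2}$ from a common point, after moving them by an element of $O(3,2)$ via Lemma~\ref{trans}. Lemma~\ref{proj_curve_pt} then gives pairs $(C_i,X_i)\in K_{4,3}$, and I would first normalise each pair as in the preceding discussion, so that all four intersection points with the plane at infinity are real and the three points on the conic are in general position. By Lemma~\ref{rk1}, and since $d=4$ is allowed there, I would move the point $a_i$ not on the conic into a fixed component of the plane at infinity minus the special conic; along the way the plane becomes tangent to $C_i$ on the conic, which is harmless by Lemma~\ref{tangent}. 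Next, if $C_1$ and $C_2$ are rigidly isotopic degree~4 knots in $\mathbb{RP}^3$, I would take a rigid isotopy $C_t$ and, exactly as in the degree~4 classification, choose a continuous family of projective transformations $T_t$ fixing a point off the plane at infinity. These $T_t$ carry three of the four intersection points onto paths on the special conic, and the fourth point onto a path inside the chosen component. One has to check that four points in general position can be sent to four prescribed points, and this is the step where I would use the normalisation to avoid collinear triples. The result is that the isotopy $T_t(C_t)$ lifts to $Q_{3,2}$. Since there are four rigid isotopy classes of degree~4 knots in $\mathbb{RP}^3$, with writhes $\pm1$ and $\pm3$, this gives at most four classes in $Q_{3,2}$.

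It is worth noting that this upper bound is not the same as the bound of $4$ in the statement, because the reflection phenomenon can identify further classes. I would use Theorem~\ref{reflection}, or rather its spirit via the remark on degree~3, to test whether the classes coming from writhe $+1$ and $-1$ (and likewise $\pm3$) merge. The introduction lists degree~5 writhes $0,2,4,6$ up to reflection, which suggests the correct bookkeeping runs through the walls. So I would instead bound the count via the four classes of degree~5 edges (two-nodal curves) found above. Each knot component must be adjacent to some one-nodal wall, and the degree~5 parameter space is connected, so the four edge types, each a generic double-crossing locus, organise the chambers. I would then show that each knot chamber is adjacent to a chamber determined by smoothing an edge, which means the number of knot classes is at most the number of such resolutions modulo identification.

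For the lower bound, I would project each knot from a point of $Q_{3,2}$ not on it, as in Lemma~\ref{proj_not_on_curve}, and get a degree~5 knot in $\mathbb{RP}^3$ whose absolute encomplexed writhe is $0,2,4$ or $6$. The smooth type, unknot versus long trefoil versus projective $5_3$, separates $\{0,2\}$, $4$ and $6$, by the smooth isotopy theorem just proved together with Theorem~\ref{B}. The remaining task is to separate writhe $0$ from $|w|=2$. Here I would argue that a rigid isotopy in $Q_{3,2}$ can be projected from a point avoiding the whole path. A wall-crossing in $\mathbb{RP}^3$ that is not the projection of a wall in $Q_{3,2}$ occurs only when the path meets the projection point. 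I would try to show that such crossings change $w$ by $\pm2$ only in reflection-symmetric pairs, so that $|w|$ modulo reflection is preserved. This is the main obstacle, because encomplexed writhe is explicitly not an invariant in $Q_{3,2}$. If that argument fails, I would fall back on constructing explicit representatives by gluing lines, as in the gluing theorem, for each of $|w|=0,2,4,6$. I would then distinguish writhe $0$ from writhe $2$ through the parity-free invariant given by the linking of the projected knot with the special conic in an affine chart, as was done for the walls in the alternative proof of Theorem~\ref{wall}.
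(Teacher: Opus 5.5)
\textbf{The upper bound.} This part is essentially the paper's. You project from a common point to get pairs $(C,X)\in K_{4,3}$, use Lemma~\ref{rk1} to put the off-conic point in a fixed component, and transport along a rigid isotopy of degree~4 knots in $\mathbb{RP}^3$. That gives at most one class for each of $w(C)=+1,-1,+3,-3$.

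Two cautions apply here. First, you keep the plane at infinity fixed and correct by $T_t$. This needs the four intersection points of $C_t$ with that plane to stay real and distinct, with no three collinear, for every $t$. Normalising the endpoints does not ensure this, since two points can collide and become complex mid-isotopy. The paper avoids the problem by letting the plane move with the curve: it takes the plane through three transported real points, then slides it along $C_1$.

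Second, your next paragraph undercuts the count rather than refining it. If reflection merged the $w(C)=\pm1$ classes and the $\pm3$ classes, you would get at most $2$ classes, contradicting the statement. The edge/chamber sketch that follows is not an argument. The paper counts the four pairs separately.

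\textbf{The lower bound.} This is where the genuine gap lies.

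Theorem~\ref{B} points the wrong way for separating smooth types. It only says that knots with smoothly isotopic projections are smoothly isotopic in $Q_{3,2}$, so it can merge classes but never separate them. To separate $\{0,2\}$, $4$ and $6$ you need the converse: that an isotopy in $Q_{3,2}$ projects to an isotopy in $\mathbb{RP}^3$. This fails in general, because $\pi_p$ collapses each line of the cone $T_p\cap Q_{3,2}$ to a single point of the special conic. The remark on degree~3 reflections exhibits a rigid isotopy in $Q_{3,2}$ whose projection passes through a singular curve. At best you can lean on the stated count of exactly three smooth classes.

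More seriously, you leave $|w|=0$ versus $|w|=2$ open, and your fallback invariant does not exist. Projected from a point of $Q_{3,2}$ off the knot, the knot meets the plane at infinity in five points, all on the special conic (Lemma~\ref{proj_not_on_curve}). Projected from a point on the knot, three of its four points at infinity lie on the conic (Lemma~\ref{proj_curve_pt}). Either way the projected knot passes through the conic, so it has no linking number with it. The linking argument in the alternative proof of Theorem~\ref{wall} worked only because there both points at infinity were off the conic.

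Finally, your two halves are indexed differently. The upper bound uses $w(C)\in\{\pm1,\pm3\}$ of the degree~4 projection, while the lower bound uses $|w|\in\{0,2,4,6\}$ of the degree~5 projection. You never show that the four classes of pairs land on four distinct values of $|w|$. That identification, together with realising all four writhes by the gluing constructions (figures~\ref{b}--\ref{e}), is the step on which the paper's proof closes the count.
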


\begin{proof}
	
	Consider $(C_{0},X_{0})$ and $(C_{1},X_{1})$ are two pairs such that points of intersection of the curves with the respective planes are real.
	
	Suppose $C_{0}$ and $C_{1}$ have same writhe number either $1$ or $-1$ so $C_{0}$ and $C_{1}$ are rigidly isotopic. $C_{0}$ intersects the plane $X_{0}$ in four points say, $a^{'},b^{'},c^{'},d^{'}$ out of which three points $a^{'},b^{'},c^{'}$ lie on the special conic and $d^{'}$ does not lies on the conic. During the isotopy between $C_{1}^{'}$ and $C_{2}^{'}$, suppose the intersection points $a^{'},b^{'},c^{'},d^{'}$ continously deformed to points $a_{0},b_{0},c_{0},d_{0}$. Consider the unique plane determined by $a_{0},b_{0}$ and $c_{0}$ and denote it by $X_{0}^{'}$. This gives a isotopy between the pairs $(C_{0},X_{0})$ and $(C_{1},X_{0}^{'})$ so we can isotope pair $(C_{0},X_{0})$ to $(C_{1},X_{0}^{'})$. Note that the points of intersection of $C_{1}$ with $X_{0}^{'}$ are real. $C_{1}$ intersects each of the planes $X_{0}^{'}$ and $X_{1}$ in $4$ points denoted by $a_{0},b_{0},c_{0},d_{0}$ and $a_{1},b_{1},c_{1}, d_{1}$ respectively where $a_{0}$ and $a_{1}$ are the points which do not lie on the special conic. Denote the preimages of $b_{0},c_{0},d_{0}$ under the parametrisation of knot $C_{1}$ by $p_{0}^{1}, p_{0}^{2},p_{0}^{3}$ and the preimages of $b_{1},c_{1},d_{1}$ under the parametrisation of knot $C_{1}$ by $p_{1}^{1}, p_{1}^{2},p_{1}^{3}$. We can always find a path $p_{t}$ in $\mathbb{RP}^{1}$ which takes $\{p_{0}^{1}, p_{0}^{2},p_{0}^{3}\}$ to $\{p_{1}^{1}, p_{1}^{2},p_{1}^{3}\}$. Any plane in $\mathbb{RP}^{3}$ is determined by three points. Consider the plane determined by the images of $\{p_{t}^{1}, p_{t}^{2},p_{t}^{3}\}$ and denote it by $X_{t}$. This $X_{t}; 0 \leq t \leq 1$ gives an isotopy between the planes $X_{0}^{'}$ and $X_{1}$. Each $X_{t}$ intersects the knot $C_{1}$ in exactly four points say $a_{t},b_{t},c_{t}$ and $d_{t}$ for $0 \leq t \leq 1$ where $a_{0}$ and $a_{1}$ are the points which do not lie on the special conic. There is always an isotopy between $X_{0}^{'}$ and $X_{1}$ such that $b_{t},c_{t}$ and $d_{t}$ are not collinear for any $0 \leq t \leq 1$. 
	
	If $a_{0}$ and $a_{1}$ lie in the same component of the plane at infinity minus the special conic, we can isotope $(C_{1},X_{0}^{'})$ to $(C_{1},X_{1})$ as follows:
	As $b_{t},c_{t}$ and $d_{t}$ are non collinear, we can always find a conic of signature $(2,1)$ which is special conic passing through $b_{t},c_{t},d_{t}$ and $a_{t}$ does not lie on the conic. If $a_{t}$ and $a_{1}$ lie in different components of the plane at infinity minus the special conic then by using lemma \ref{rk1}, it can be isotoped such that $a_{t}$ will lie in the same component of $a_{0}$ and $a_{1}$. Therefore, $(C_{1},X_{t})$ gives us an isotopy between $(C_{1},X_{0}^{'})$ to $(C_{1},X_{1})$.
	
	If $a_{0}$ and $a_{1}$ lie in the different components of the plane at infinity minus the special conic, then we can change the position of $a_{1}$ by using lemma \ref{rk1}, as shown in figure \ref{figure 5} and proceed as above.
	
	
	\begin{figure}[h]
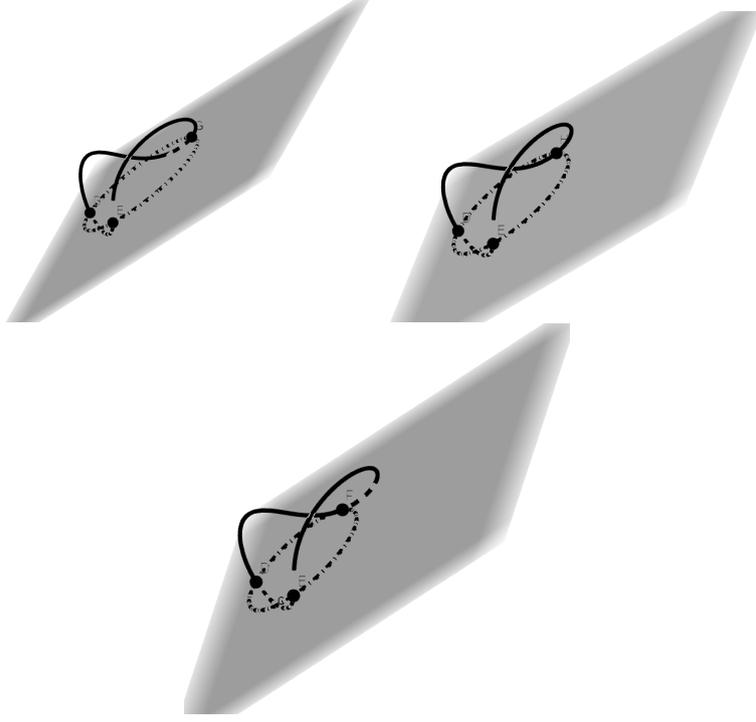

		\centering
		\includegraphics[width=0.4\textwidth]{degree51knot12}
		\includegraphics[width=0.4\textwidth]{degree51knot111}
		\includegraphics[width=0.4\textwidth]{degree51knot11}
		\caption{isotopy between pairs $(C,X)$ and $(C^{'},X^{'})$ where $C$ and $C^{'}$ are same writhe knots of $\pm{1}$ and points of intersection not on conic lie in different components.}
		\label{figure 5}
	\end{figure}
	This way we get two rigid isotopy classes of $(C,X)$ each one when $C$ is a knot of writhe $\pm1$.
	
	If $C$ and $C^{'}$ are knots of writhe $\pm 3$ which are isotopic to the two crossing knot, we can find the isotopy classes in the similar way. As in figure \ref{figure 6}, consider a plane intersecting it in the desired way then it intersects both strands of the two crossing knot at two points. One point out of these does not lie on the special conic. If in any other pair $(C^{'},X^{'})$  where $C^{'}$ is isotopic to $C$, the point not on the conic is in the different component then by using lemma \ref {rk1} we can move the plane $X$ in such a way that it becomes tangent to the curve at the point on the conic obtained as shown in figure \ref{figure 6}. So in this way we can interchange the position of the point not on the conic in different components by moving the plane.
	\begin{figure}[h]
		\centering
		\includegraphics[width=0.45\textwidth]{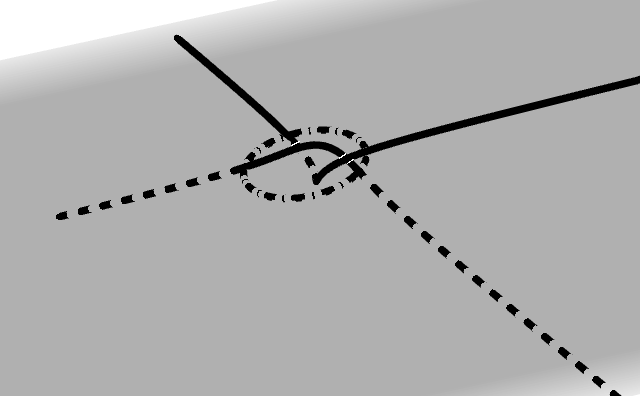}\hfill
		\includegraphics[width=0.45\textwidth]{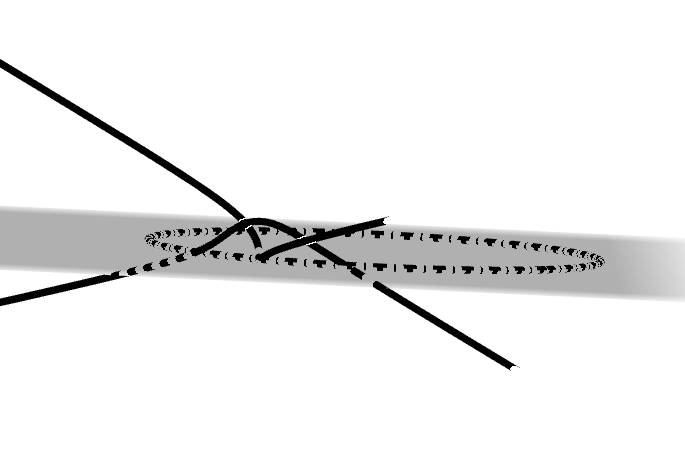}\hfill
		\vspace{0.5 cm}
		\includegraphics[width=0.45\textwidth]{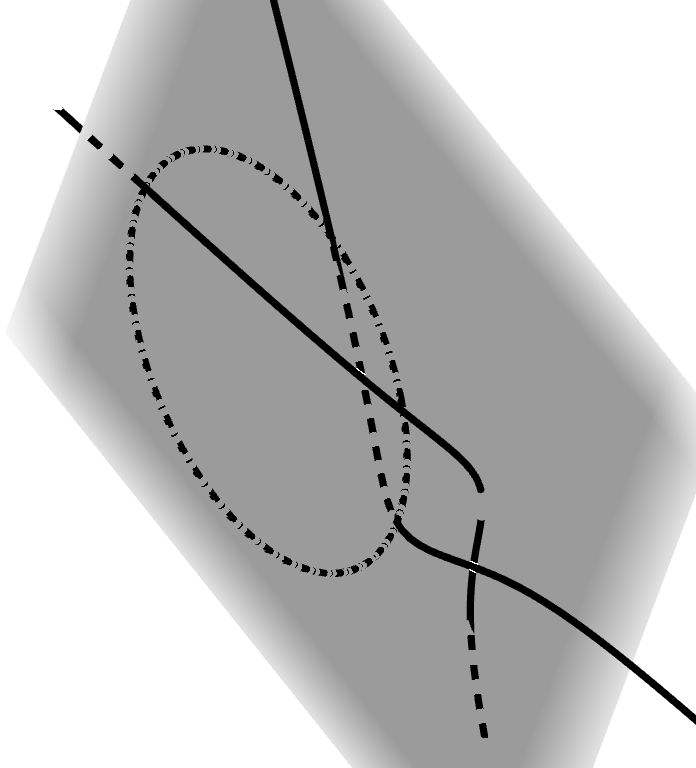}\hfill
		\caption{isotopy between pairs $(C,X)$ and $(C^{'},X^{'})$ where $C$ and $C^{'}$ are same writhe knots of $\pm{3}$ and points of intersection not on conic lie in different components.}
		\label{figure 6}
	\end{figure}
	
	This gives two other classes of $(C,X)$ pairs with required intersection and where $C$ is a knot of degree $4$ with encomplexed writhe number $\pm3$.
	
	These $4$ pairs corresponds to $4$ rigid isotopy classes of degree $5$ knots in $Q_{3,2}$. Projections of these four rigid isotopy classes in $\mathbb{RP}^{3}$ gives us four different rigid isotopy classes of degree $5$ in $\mathbb{RP}^{3}$ up to reflections. Pull backs of degree $5$ knots of writhe $0,2,4$ and $6$ in $\mathbb{RP}^{3}$ give us different isotopy classes in $Q_{3,2}$ of degree $5$ as shown in figure \ref{b}, \ref{c}, \ref{d} and \ref{e}.
\end{proof}

\chapter{Constructing knots in $Q_{3,2}$ and some general results}\label{chap-5}
In this \myref{}, we extend the gluing technique for the real rational knots in $Q_{3,2}$ and
able to construct representatives for each class up to degree 5. Some general results are
also presented in this \myref{}.

\section{Gluing}
Bj\"{o}rklund gives us a method to construct knots of higher degree in $\mathbb{RP}^{3}$ by using knots of smaller degree. He stated that:

\begin{theorem}\textbf{(Bj\"{o}rklund)}\label{glu}\cite{MR2844809}
	Suppose $C_{1}$ and $C_{2}$ are two oriented real rational knots of degree $d_{1}$ and $d_{2}$ respectively in $\mathbb{RP}^{3}$ such that they intersect transversally at a point, say $p$ then there exists knot parametrisations $[p_{o}:p_{1}:p_{2}:p_{3}]$ and $[q_{o}:q_{1}:q_{2}:q_{3}]$ such that $[p_{o}q_{0}:p_{1}q_{0}+p_{0}q_{1}:p_{2}q_{0}+p_{0}q_{2}:p_{3}q_{0}+q_{3}p_{0}]$ of degree $d_{1}+d_{2}$ is rigidly isotopic to union of two knots $C_{1}$ and $C_{2}$ except in a small neighbourhood of point $p$. The resulting knot is the knot obtained by smoothing of strands of $C_{1}$ and $C_{2}$ at the intersection point. 
\end{theorem}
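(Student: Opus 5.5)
We work in an affine chart around $p$ in which both knots are visible. By a projective transformation of $\mathbb{RP}^{3}$ we may assume $p=[1:0:0:0]$. We choose parametrisations $C_1=[p_0:p_1:p_2:p_3]$ and $C_2=[q_0:q_1:q_2:q_3]$ with the following properties: $p$ is the image of $[1:0]$ under $C_1$ and of $[0:1]$ under $C_2$; the coordinate $p_0$ vanishes only where $C_1$ meets the plane $x_0=0$, and similarly for $q_0$. In the chart $x_0\neq 0$ write $f=(p_1/p_0,p_2/p_0,p_3/p_0)$ and $g=(q_1/q_0,q_2/q_0,q_3/q_0)$, so that $f([1:0])=g([0:1])=0$. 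The glued parametrisation $[p_0q_0:p_1q_0+p_0q_1:\dots]$ is then exactly $[1:f+g]$ in affine terms: it is the sum of the two affine maps. It has degree $d_1+d_2$ as long as the components have no common factor. We replace $C_2$ by the family $C_2^{\epsilon}$, obtained by precomposing with $[s:t]\mapsto[s:\epsilon t]$ (equivalently, rescaling the parameter), and we glue $C_1$ with $C_2^{\epsilon}$ for small $\epsilon>0$.

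The key steps are as follows. First, away from small parameter neighbourhoods $U$ of $[0:1]$ and $V$ of $[1:0]$, we check that $f+g_\epsilon$ is $C^1$-close to $f$ on $\mathbb{RP}^1\setminus U$ and close to $g$ on the rescaled region. On the complement of $U$ the reparametrised $g_\epsilon$ is $C^1$-small, since it is evaluated near the preimage of $p$, where $g\approx 0$. Symmetrically, after rescaling the parameter by $\epsilon$, the region near $[0:1]$ sees $f\approx 0$ and $g$ in full. Hence the glued curve is $C^1$-close to $C_1\cup C_2$ outside a small ball $B$ around $p$. Second, inside $B$ we linearise: $f(u)\approx a u$ and $g_\epsilon$ looks like $b/(\epsilon u)$ near the junction, with $a,b$ the tangent vectors. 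Transversality of $a$ and $b$ means the local curve $u\mapsto au+b\epsilon^{-1}u^{-1}$ (a hyperbola in the plane spanned by $a,b$) is embedded and smooths the crossing. The sign of $\epsilon$ and the orientations select which of the two smoothings occurs. Third, we verify non-singularity globally. Outside $B$ it follows from $C^1$-closeness to embedded arcs of $C_1$ and $C_2$ that are disjoint there. Inside $B$ it follows from the hyperbola picture. Fourth, as $\epsilon$ varies in $(0,\epsilon_0]$ we obtain a path of knots of degree $d_1+d_2$, and the smooth isotopy type is that of the smoothing of $C_1\cup C_2$ at $p$.

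We expect the main obstacle to be making the second and third steps uniform. We need a single scale on which the error terms (the higher-order parts of $f$ and $g$) are dominated by the linear terms, both in the overlap annulus and on the points at infinity $x_0=0$. The points at infinity matter because the product $p_0q_0$ vanishes at the zeros of both factors, and there we must ensure that no new singularity or common factor appears. We would handle this by first applying a generic projective transformation so that the planes $p_0=0$ and $q_0=0$ meet the knots transversally at points away from $p$. Then near each such zero only one of the summands blows up and the other is bounded, so the glued curve locally coincides, up to $C^1$-small error, with the corresponding knot. Finally, we would control the overlap region by an explicit estimate of the form $|f(u)-au|\le C|u|^2$ and $|g_\epsilon(u)-b/(\epsilon u)|\le C/(\epsilon u)^2$ on the annulus $\sqrt{\epsilon}/K\le |u|\le K\sqrt{\epsilon}$.
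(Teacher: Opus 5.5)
The paper gives no proof of this statement; it is quoted from \cite{MR2844809}, so your sketch has to stand on its own. Your strategy is the right one: in the chart $x_0\neq 0$ the glued map is $f+g$, you reparametrise $C_2$ so that it bubbles off at the preimage of $p$ under $C_1$, and you model the junction by a hyperbola in the plane of $a,b$. However, \textbf{your rescaling goes the wrong way}, and as written your first two steps fail.

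Take $u=t/s$, so your conventions read $f(0)=0$ and $g(\infty)=0$. Precomposing with $[s:t]\mapsto[s:\epsilon t]$ gives $g_\epsilon(u)=g(\epsilon u)$.
\begin{itemize}
\item On the complement of $U$ (bounded $u$), $g_\epsilon$ tends to the point $C_2([1:0])\neq p$, not to $0$. So off $U$ you see a translate of $C_1$, not $C_1$.
\item Likewise $f$ is not $\approx 0$ near $[0:1]$; it vanishes at $[1:0]$. Your first step is written for the convention $C_1([0:1])=C_2([1:0])=p$, and your linearisation for the opposite one.
\item On your annulus $\sqrt\epsilon/K\le|u|\le K\sqrt\epsilon$ one has $\epsilon u\to 0$, so $g_\epsilon\approx b/(\epsilon u)$ is false there.
\item Even taken at face value, the terms $|au|\sim\sqrt\epsilon$ and $|b/(\epsilon u)|\sim\epsilon^{-3/2}$ do not balance, and your error bound $C/(\epsilon u)^2\sim\epsilon^{-3}$ swamps both.
\item The curve $u\mapsto au+b/(\epsilon u)$ is the hyperbola $xy=1/\epsilon$ in the $(a,b)$-plane. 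It escapes to infinity instead of smoothing the crossing.
\end{itemize}

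The fix is to precompose with $[s:t]\mapsto[\epsilon s:t]$, i.e.\ $g_\epsilon(u)=g(u/\epsilon)$. Then:
\begin{itemize}
\item $g_\epsilon\to g(\infty)=0$ in $C^1$ on compacta of $u\neq 0$.
\item $C_2$ (outside a small ball around $p$) is traced for $|u|\lesssim\epsilon$, where $f\approx 0$.
\item $f+g_\epsilon=au+\epsilon b/u+O(|u|^2)+O(\epsilon^2/|u|^2)$. On $|u|\sim\sqrt\epsilon$ the main terms have size $\sqrt\epsilon$ and the errors are $O(\epsilon)$.
\item With $u=\sqrt\epsilon\,w$ the curve is $\sqrt\epsilon\,(aw+b/w)+O(\epsilon)$ in $C^1$ on $1/K\le|w|\le K$. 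This is the hyperbola $xy=\epsilon$, exactly the uniform scale you were after.
\end{itemize}
The degree check is then immediate. A common factor of the glued components forces a common root of $p_0$ and $q_0(\epsilon s,t)$. In the coordinate $u$, the roots of the latter are $\epsilon$ times the roots of $q_0$, which are finite because $q_0([0:1])\neq 0$. So they cluster at $[1:0]$, where $p_0\neq 0$.

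There is also an omission in your third step. $C^1$-closeness on $\mathbb{RP}^1$ rules out double points with two real preimages and real cusps. But a knot must also have no solitary real double point, i.e.\ no two conjugate non-real parameters with the same real image; the paper itself treats these as walls. Such points are invisible on the real locus. Run the limiting argument on the complexified maps instead:
\begin{itemize}
\item on compacta of $\mathbb{CP}^1\setminus\{[1:0]\}$ the glued maps converge to $C_1$;
\item in the variable $v=u/\epsilon$, on compacta of $\mathbb{C}$, they converge to $C_2$;
\item in the neck, the model $z\mapsto az+\epsilon b/z$ is an injective immersion of $\mathbb{C}^{*}$, since $a,b$ are independent over $\mathbb{C}$.
\end{itemize}
Conjugation preserves $|u|$, so both preimages of a solitary point lie in the same region. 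A solitary point surviving as $\epsilon\to 0$ would therefore limit to one of the following: a solitary point of $C_1$ or $C_2$; two preimages collapsing onto a real parameter where $C_i$ is an immersion; or a double point of the neck model. None of these exists.
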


By using this and projections we can extend the gluing technique to knots lying on $Q_{3,2}$ in $\mathbb{RP}^{4}$.

\begin{theorem} \label{gluing in hyperboloid} If $K_{1}$ and $K_{2}$ are two real rational knots with degrees $d_{1}$ and $d_{2}$ respectively in $\mathbb{RP}^{4}$ lying on $Q_{3,2}$ which intersects each other only at one point $p$, then we can construct a real rational knot of degree $d_{1}+d_{2}$ which is isotopic to the union of original two knots except in a small neighbourhood of point p.
\end{theorem}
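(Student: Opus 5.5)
The plan is to reduce the statement to Bj\"{o}rklund's gluing theorem (Theorem \ref{glu}) in $\mathbb{RP}^{3}$ by stereographic projection from a point of $Q_{3,2}$ that lies on neither knot. By Lemma \ref{trans} we may move the configuration by an element of $O(3,2)$. We then choose a point $q\in Q_{3,2}$ away from $K_{1}\cup K_{2}$, generic enough that the tangent hyperplane $T_{q}$ does not pass through $p$ and meets $\mathbb{C}K_{1}$ and $\mathbb{C}K_{2}$ in distinct points. Normalising $q=[1:1:0:0:0]$ and projecting to $x_{0}=0$, Lemma \ref{proj_not_on_curve} turns $K_{i}$ into a degree $d_{i}$ knot $C_{i}$ in $\mathbb{RP}^{3}$. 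Each $C_{i}$ meets the plane at infinity in $d_{i}$ points, all on the special conic. Since $\pi_{q}$ is a diffeomorphism off $T_{q}$, the projections $C_{1}$ and $C_{2}$ still meet in exactly one point $\pi_{q}(p)$, and only in the affine part. If $K_{1},K_{2}$ meet transversally at $p$, the two images are transversal there. If they are tangent, we first perturb one knot by a small element of $O(3,2)$ fixing $p$ to make them transversal.

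Next we apply Theorem \ref{glu} to $C_{1},C_{2}$. This gives parametrisations $[p_{0}:\dots:p_{3}]$ and $[q_{0}:\dots:q_{3}]$, normalised so that the affine chart $x_{0}\neq 0$ is the complement of the plane at infinity, and the glued curve
\[C=[p_{0}q_{0}:p_{1}q_{0}+p_{0}q_{1}:p_{2}q_{0}+p_{0}q_{2}:p_{3}q_{0}+p_{0}q_{3}].\]
The key observation is that the first coordinate of $C$ is $p_{0}q_{0}$. So $C$ meets the plane at infinity exactly at the zeros of $p_{0}$ and of $q_{0}$.

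We must check that these $d_{1}+d_{2}$ points are the original intersection points of $C_{1}$ and $C_{2}$ with the plane at infinity. At a root $t$ of $p_{0}$ the glued point is $[0:p_{1}q_{0}:p_{2}q_{0}:p_{3}q_{0}]=[0:p_{1}:p_{2}:p_{3}](t)$, which is the point of $C_{1}$ at infinity. Symmetrically, at a root of $q_{0}$ we get the point of $C_{2}$ at infinity. Multiplicities add, including non-real conjugate roots. Hence all $d_{1}+d_{2}$ points of $C\cap\{\text{plane at infinity}\}$ lie on the special conic. Lemma \ref{proj_not_on_curve} then pulls $C$ back to a real rational curve $K$ of degree $d_{1}+d_{2}$ in $Q_{3,2}$. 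It is non-singular: away from the plane at infinity the pullback is a diffeomorphism, and at infinity the $d_{1}+d_{2}$ points are distinct by our genericity choice.

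Finally we transport the isotopy statement. Theorem \ref{glu} says that outside a small ball $U$ around $\pi_{q}(p)$, the curve $C$ is isotopic to $C_{1}\cup C_{2}$, and this ball can be taken inside the affine chart. The one delicate point, which I expect to be the main obstacle, is that Bj\"{o}rklund's isotopy is realised by the family $[p_{0}q_{0}:p_{1}q_{0}+\varepsilon p_{0}q_{1}:\dots]$ as $\varepsilon\to 0$. We must check that throughout this family the intersection with the plane at infinity stays on the special conic. The computation above applies verbatim for every $\varepsilon$, since the first coordinate is unchanged. So the whole family pulls back to $Q_{3,2}$, and away from the preimage of $U$ it converges to $K_{1}\cup K_{2}$. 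Pulling back via the diffeomorphism $\pi_{q}^{-1}$ on the complement of the plane at infinity, together with control near infinity by the same root computation, shows that $K$ is isotopic to $K_{1}\cup K_{2}$ outside a small neighbourhood of $p$. This is the claimed statement.
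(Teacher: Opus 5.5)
Your proposal is correct and is essentially the paper's proof. You project from a point $q\in Q_{3,2}$ off both knots, glue the images with Theorem~\ref{glu}, and use that the first coordinate of the glued parametrisation is $p_0q_0$; hence its points at infinity are exactly the original ones on the special conic, and Lemma~\ref{proj_not_on_curve} pulls the curve back.

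Your genericity, transversality and deformation-tracking remarks are extra care the paper omits. The one inaccuracy is that Bj\"{o}rklund's deformation reparametrises one factor (roughly $q_i(\varepsilon t)$) rather than multiplying the $q_i$ by $\varepsilon$, and your version would not converge to $C_1\cup C_2$. Your root computation still applies to the correct family, because its first coordinate remains the product of the two first coordinates.
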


\begin{proof} Consider two knots $K_{1}$ and $K_{2}$ in $\mathbb{RP}^{4}$ lying on $Q_{3,2}$  with degrees $d_{1}$ and $d_{2}$ respectively which intersects at only one point $p$. Now we project these on a hyperplane from a point q which does not lies on both the knots but lies in $Q_{3,2}$. After applying the projection $\pi_{q}$, we get two curves $\pi_{q}(K_{1})$ and $\pi_{q}(K_{2})$ of degree $d_{1}$ and $d_{2}$ respectively in $\mathbb{RP}^{3}$ which intersects the plane at infinity (say $x_{0}=0$) in $d_{1}$ and $d_{2}$ points respectively and by Lemma~\ref{proj_not_on_curve}, all these points lie on the special conic. Notice that $\pi_{q}(K_{1})$ and $\pi_{q}(K_{2})$ also intersect in one point $\pi_{q}(p)$. Now by the previous theorem \ref{glu}, there are parametrisations $[p_{0}:p_{1}:p_{2}:p_{3}]$ and $[q_{0}:q_{1}:q_{2}:q_{3}]$ of $\pi_{q}(K_{1})$ and $\pi_{q}(K_{2})$ respectively of degree $d_{1}$ and $d_{2}$ such that $[p_{0}q_{0}:p_{1}q_{0}+q_{1}p_{0}:p_{2}q_{0}+q_{2}p_{0}:p_{3}q_{0}+q_{3}p_{0}]$ is a knot parametrisation of $K$ of degree $d_{1}+d_{2}$ which is isotopic to the union of two knots outside a small neighbourhood of point $\pi_{q}(p)$. Suppose $[a_{0}:a_{1}:a_{2}:a_{3}]$ is one of the points of intersection of $\pi_{q}(K_{1})$ with the plane at infinity. Then this lies on the special conic and $a_{0}=0$. Suppose the preimage of $[a_{0}:a_{1}:a_{2}:a_{3}]$ under the parametrisation $[p_{0}:p_{1}:p_{2}:p_{3}]$ of $\pi_{q}(K_{1})$  is $[\alpha:\beta]$ and image of $[\alpha:\beta]$ under $[q_{0}:q_{1}:q_{2}:q_{3}]$  of $\pi_{q}(K_{2})$ is $[b_{0}:b_{1}:b_{2}:b_{3}]$. Now image of $[\alpha:\beta]$ under the parametrisation $[p_{0}q_{0}:p_{1}q_{0}+q_{1}p_{0}:p_{2}q_{0}+q_{2}p_{0}:p_{3}q_{0}+q_{3}p_{0}]$ of K is $[a_{0}b_{0}:a_{1}b_{0}+a_{0}b_{1}:a_{2}b_{0}+b_{2}a_{0}:a_{3}b_{0}+b_{3}a_{0}]=[0b_{0}:a_{1}b_{0}+0b_{1}:a_{2}b_{0}+b_{2}0:a_{3}b_{0}+b_{3}0]= [0:a_{1}b_{0}:a_{2}b_{0}:a_{3}b_{0}]=[0:a_{1}:a_{2}:a_{3}]$. Therefore, any point on the curve that lies on the plane at infinity remains unchanged even after gluing.  Since before gluing, the points of intersections of the plane at infinity with both $\pi_{q}(K_{1})$ and $\pi_{q}(K_{2})$  lie on the special conic, they continue to do so even after gluing. So the parametrisation of the glued knot intersects the plane at infinity in $d_{1}+d_{2}$ points and all lie on the special conic. Therefore, by Lemma~\ref{proj_not_on_curve}, it can be pulled back to $Q_{3,2}$.
\end{proof}

\label{lines} In \cite{MR2844809}, it was shown that any real rational knot of degree $d \leq 5$ in $\mathbb{RP}^{3}$ can be constructed by gluing a line with a real rational knot of degree $d-1$ in $\mathbb{RP}^{3}$. This, along with the previous theorem will help us to construct representatives of the rigid isotopy class allowed earlier by gluing lines. We do this by induction by proving the induction hypothesis in the following theorem:



\begin{theorem} Every knot of degree $d \leq 5$ in $\mathbb{RP}^{4}$ which lies on the $Q_{3,2}$ is isotopic to the glued knot obtained by gluing a knot of degree $d-1$ on the $Q_{3,2}$ with a line in $Q_{3,2}$.
\end{theorem}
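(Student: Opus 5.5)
The plan is to argue degree by degree, $d=2,\dots,5$, using the rigid isotopy classification already obtained in \myref{} \ref{chap-3}. By that classification, for each $d\le 5$ there are only finitely many rigid isotopy classes in $Q_{3,2}$: three oriented classes in degree~2, two in degree~3, two in degree~4 and four in degree~5. Rigid isotopy classes are unions of components of the space of knots, so it suffices to show that every class contains a knot obtained by gluing. Concretely, for each class we exhibit a knot $K'$ of degree $d-1$ in $Q_{3,2}$ and a line $L\subset Q_{3,2}$ meeting $K'$ transversally in exactly one point. Theorem \ref{gluing in hyperboloid} then produces a degree $d$ knot in $Q_{3,2}$, and we check that it lands in the given class. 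The induction hypothesis lets us take $K'$ itself to be glued from lines, so in the end every representative is built from lines.

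To decide which class a glued knot lies in, we project from a point $q\in Q_{3,2}$ lying on neither $K'$ nor $L$. By Lemma \ref{proj_not_on_curve}, $\pi_q(K')$ and $\pi_q(L)$ are curves in $\mathbb{RP}^3$ whose points at infinity lie on the special conic. The proof of Theorem \ref{gluing in hyperboloid} shows that the projection of the glued knot is exactly Bj\"{o}rklund's glued knot of $\pi_q(K')$ and $\pi_q(L)$. Bj\"{o}rklund showed that every knot of degree $d\le 5$ in $\mathbb{RP}^3$ arises by gluing a line to a degree $d-1$ knot, and that the encomplexed writhe of the result is the writhe of the degree $d-1$ knot changed by $\pm1$ according to the local crossing data at the gluing point. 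So, by varying the line $L$ through a point of $K'$ (inside the cone $T_p\cap Q_{3,2}$ at that point), and the orientation of $L$, we can realize the projected writhes listed in the classification theorem: $0$ for $d=1,2$, $\pm1$ for $d=3$, $\pm1,\pm3$ for $d=4$, and $0,2,4,6$ for $d=5$.

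In degrees where the projected writhe does not separate the classes, we use the invariants employed earlier in the classification. In degree~2 this means whether the point at infinity of the projection from a point on the knot lies inside or outside the special conic, together with the orientation. In degree~3 it means the orientation. These are read off from the pulled-back curve. For degrees 4 and 5, the classes are distinguished by projected writhe up to reflection, so matching writhes is enough. By Theorem \ref{reflection}, mirror images are rigidly isotopic in $Q_{3,2}$, which lets us ignore signs.

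The main obstacle is the realizability step. For a given class we need a line $L\subset Q_{3,2}$ through a point of a chosen $K'$ that meets $K'$ only there and transversally, and whose gluing produces the required local crossing change. Lines in $Q_{3,2}$ through a point form only a circle's worth (a generator of the cone), which is far more restrictive than in $\mathbb{RP}^3$. My first attempt is to choose the point $p\in K'$ and move the projection centre so that the relevant generator of the cone at $p$ projects to a line in $\mathbb{RP}^3$ resembling Bj\"{o}rklund's gluing line. Failing that, I would write explicit parametrisations, checking with the representatives listed in \myref{} \ref{chap-3}, and compute the writhes of the glued knots directly.
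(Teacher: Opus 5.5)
Your overall plan matches the paper's: show that each class in the classification contains a glued knot, glue via Theorem \ref{gluing in hyperboloid}, and identify the class by projecting and invoking Bj\"{o}rklund. The gap is that the step carrying all the content is left open. That step is producing, for each class, a degree $d-1$ knot and a line in $Q_{3,2}$ whose gluing lands in that class. Moreover, the mechanism you offer for it is wrong. Gluing smooths the node at the gluing point, so that point contributes nothing to the writhe. The new crossings are the real ones among the other $d-2$ intersection points of the projections of $K'$ and $L$ to $\mathbb{RP}^2$. Hence the writhe changes by a sum of signs whose number is at most $d-2$ and has the parity of $d-2$; it changes by $\pm1$ only when $d=3$. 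Under your rule, no degree~$4$ writhe could be reached at all, since degree~$3$ and degree~$4$ writhes are both odd by $w\equiv (d-1)(d-2)/2 \pmod{2}$. Likewise, the degree~$5$ class of writhe $6$ could not be reached from a degree~$4$ knot, whose writhe is at most $3$ in absolute value; it needs a change of $+3$.

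The missing idea is to build the representative in $\mathbb{RP}^3$ and fit the special conic to it. This replaces fixing $K'$ in $Q_{3,2}$ and searching the circle of cone generators through one of its points. Start from Bj\"{o}rklund's glued representative of the required writhe. Position it so that the plane at infinity meets it in $d$ real points, no three collinear; the paper arranges this by moving the plane at infinity in a pencil. Since $d\le 5$, these points lie on a nondegenerate conic with real points: for $d\le 4$ there is at least a pencil of them, for $d=5$ exactly one. A projective transformation of $\mathbb{RP}^3$ preserving the plane at infinity then carries this conic to the special conic. The paper fits the conic either before choosing the line, using the $d-1$ points of the first piece and then taking the line through a point of that knot and a point of the conic, or after gluing, using all $d$ points. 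Gluing does not move points at infinity, so both pieces end up with all their points at infinity on the special conic. By Lemma \ref{proj_not_on_curve}, the first piece then pulls back to a degree $d-1$ knot in $Q_{3,2}$, the line pulls back to a line in $Q_{3,2}$, and the glued knot pulls back to their glued knot. This is exactly where $d\le 5$ is used: six general points lie on no conic. Finally, drop the appeal to Theorem \ref{reflection}. It concerns curves lying in a hyperplane, which generic knots of degree $4$ and $5$ do not. It is also unnecessary, because the classification already lists those classes by writhe up to sign.
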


\begin{proof}
	\textbf{Degree $1$} knots are already lines so we are done.

	\textbf{Degree $2$}:
	The projection of degree $2$ knots in $Q_{3,2}$ corresponds to degree $2$ knots in $\mathbb{RP}^{3}$ both of whose points of intersection with the plane at infinity lie on the special conic. There is only one rigid isotopy class of degree $2$ real rational knots in $\mathbb{RP}^{3}$ but the position of the points of intersection with the plane at infinity and orientation of the knots specifies a rigid isotopy class in $Q_{3,2}$ as shown in figure \ref{figure 7}.
	
	\begin{figure}[htbp]
		\centering
		\includegraphics[width=0.39\textwidth]{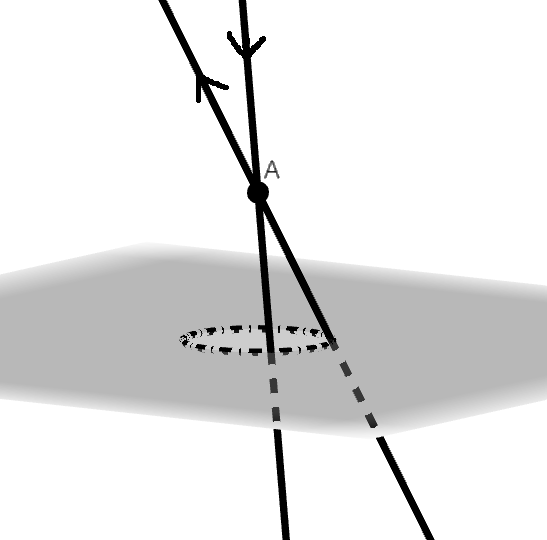}\hfill
		\includegraphics[width=0.39\textwidth]{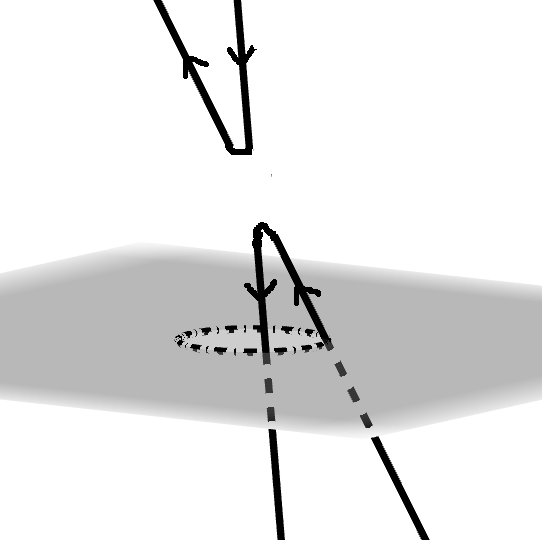}\hfill
		\vspace{0.06 cm}
		\includegraphics[width=0.39\textwidth]{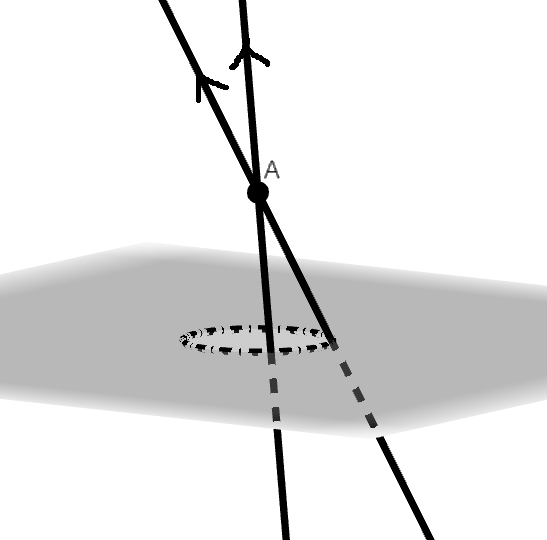}\hfill
		\includegraphics[width=0.39\textwidth]{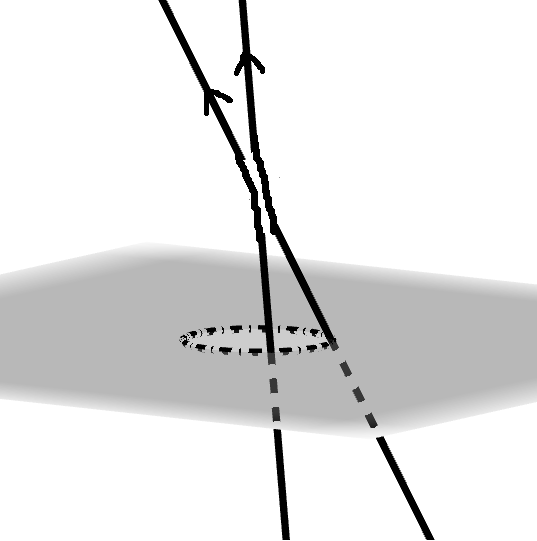}\hfill
		\includegraphics[width=0.39\textwidth]{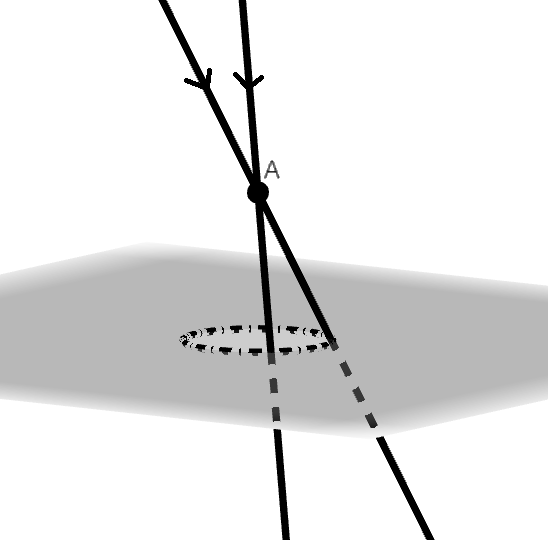}\hfill
		\includegraphics[width=0.39\textwidth]{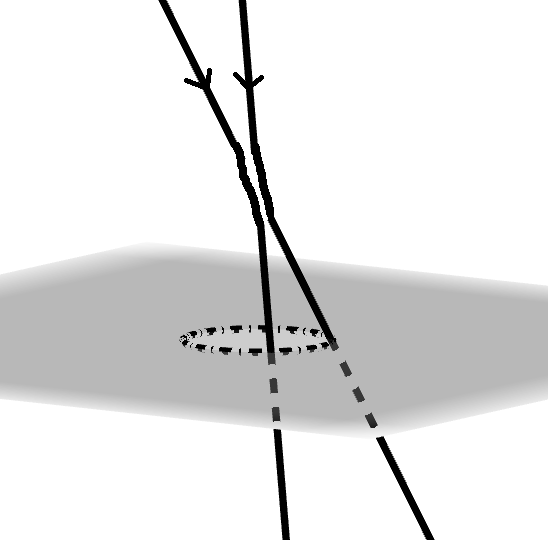}\hfill
		\caption{projection of degree $2$ real rational knots in $Q_{3,2}$ as glued knots.}
		\label{figure 7}
	\end{figure}
	
	Take a line $L_{1}$ in $\mathbb{RP}^{3}$ as shown in figure \ref{figure 7}. This line intersects the plane at infinity at a single point and it is trivial to find a conic of signature $(2,1)$ passing through this point. Choose other line $L_{2}$ which intersects $L_{1}$ at a point and intersects the plane at infinity at a point on the special conic. By using theorem \ref{glu}, glue these two lines in different ways by changing the orientation of $L_{1}$ and $L_{2}$ which gives us three different knots of degree $2$ as shown in figure \ref{figure 7}. These glued knots intersect the plane at infinity at two points both of which lie on the special conic and these can be pulled back to $Q_{3,2}$.
	
	To show that the pull backs correspond to three isotopy classes of degree $2$ in $Q_{3,2}$, project the degree $2$ knots in $Q_{3,2}$ from a point on the curve in $\mathbb{RP}^{3}$. Just before gluing, the projection of pair of lines is a line which intersects the plane at infinity at special conic point. After the gluing, tangent at point $p$ intersects plane of projection inside the special conic and outside the special conic with respect to the thing that which type of gluing we have done. This projection looks like as in figure \ref{figure 8}.
	\begin{figure}[htbp]
		\centering
		\includegraphics[width=0.39\textwidth]{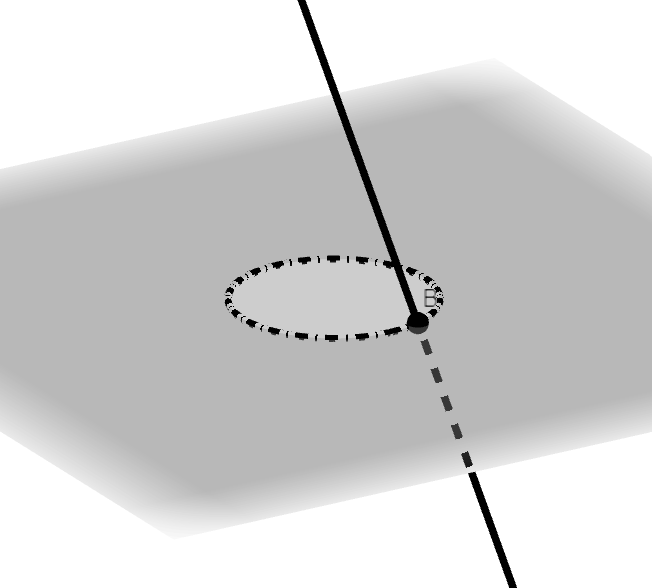}\hfill
		\includegraphics[width=0.39\textwidth]{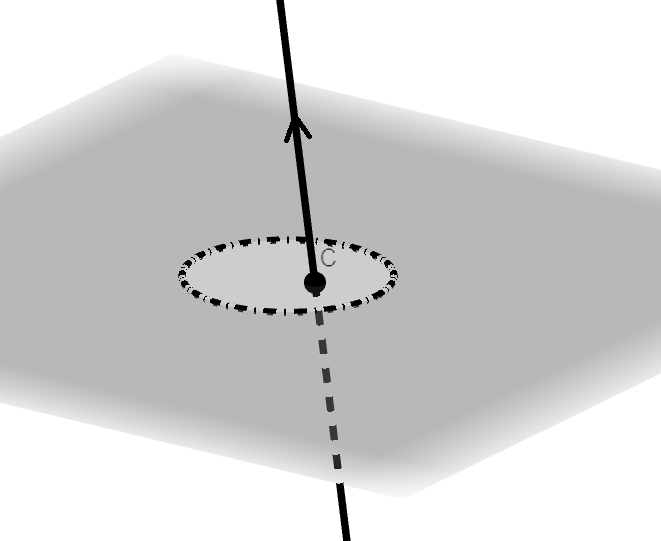}\hfill
		\includegraphics[width=0.39\textwidth]{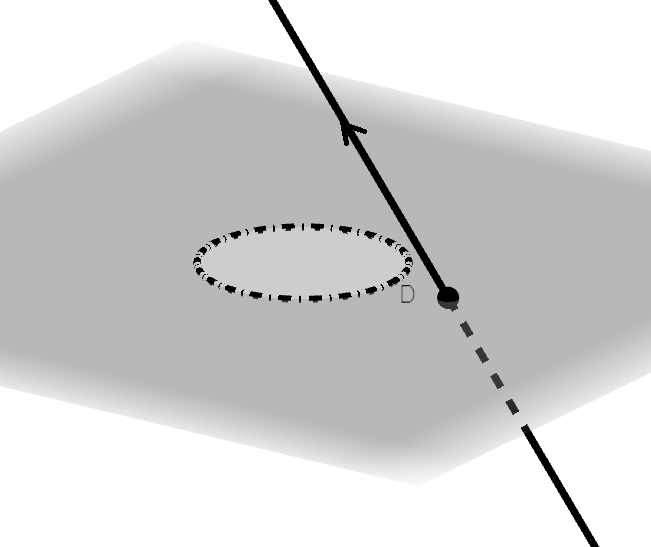}\hfill
		\includegraphics[width=0.39\textwidth]{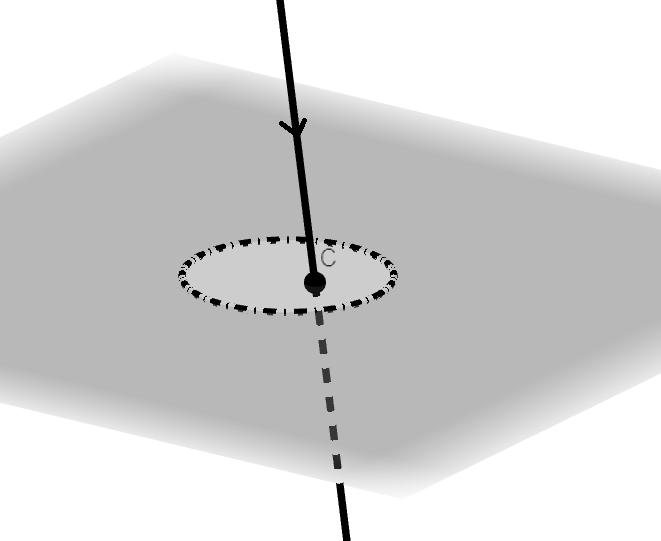}\hfill
		\caption{rigid isotopy classes of projection of degree $2$ curves in $Q_{3,2}$ from a point on curve.}
		\label{figure 8}
	\end{figure}

	\textbf{Degree 3:} Real rational knots of degree $3$ in $Q_{3,2}$ can be projected to degree $3$ real rational knot in $\mathbb{RP}^{3}$ all of whose points of intersection with the plane at infinity lie on the special conic. Any degree $3$ real rational knot in $\mathbb{RP}^{3}$ can be constructed by gluing a real rational knot of degree $2$ with a line in $\mathbb{RP}^{3}$. It is always possible to find a conic of signature $(2,1)$ passing through the intersection points of degree $2$ rational knot and the plane at infinity. Glue this two degree knot to a line passing through a point on $2$ degree knot and a point on the conic of signature $(2,1)$. The glued knot is a degree $3$ real rational knot in $\mathbb{RP}^{3}$ and all of its $3$ points of intersection with the plane at infinity lie on the conic of signature $(2,1)$ (as shown in figure \ref{figure 9}) which can be pulled back to $Q_{3,2}$. This pull back gives us a degree $3$ rigid isotopy class in $Q_{3,2}$.\\
	The second isotopy class of degree $3$ knot is constructed by gluing a degree $2$ knot with a line but with the opposite orientation.
	\begin{figure}[htbp]
		\centering
		\includegraphics[width=0.39\textwidth]{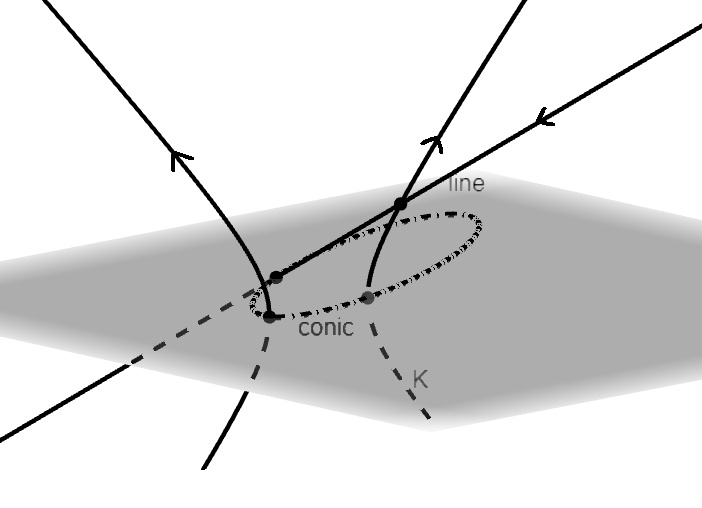}\hfill
		\includegraphics[width=0.39\textwidth]{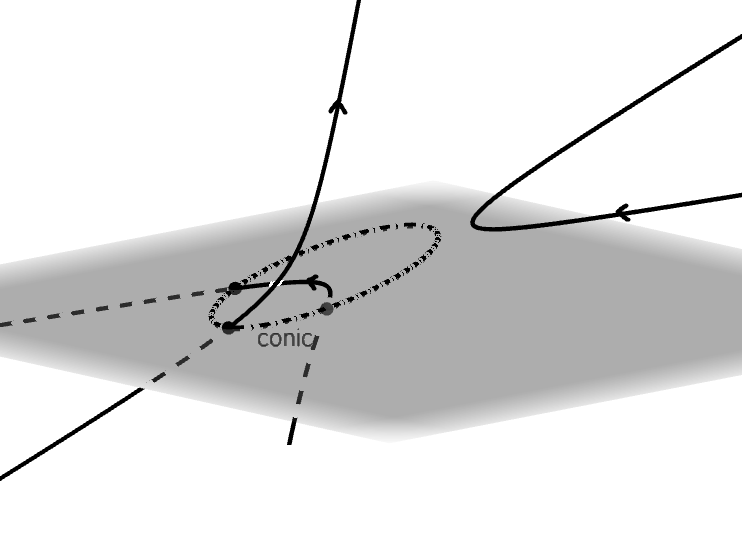}\hfill
		\includegraphics[width=0.39\textwidth]{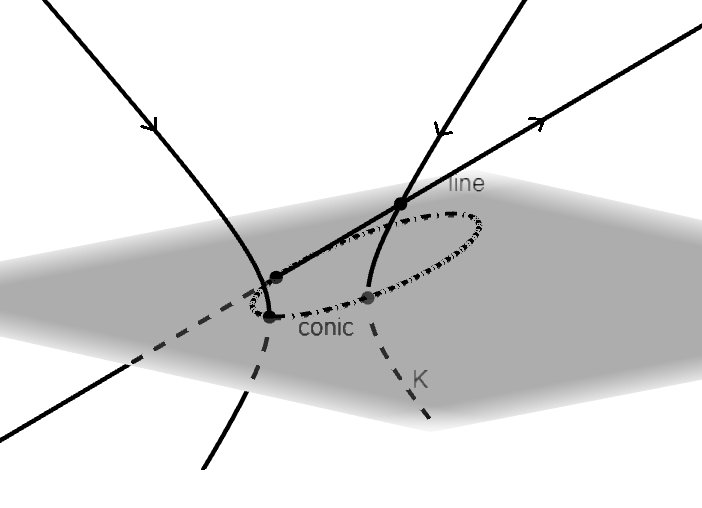}\hfill
		\includegraphics[width=0.39\textwidth]{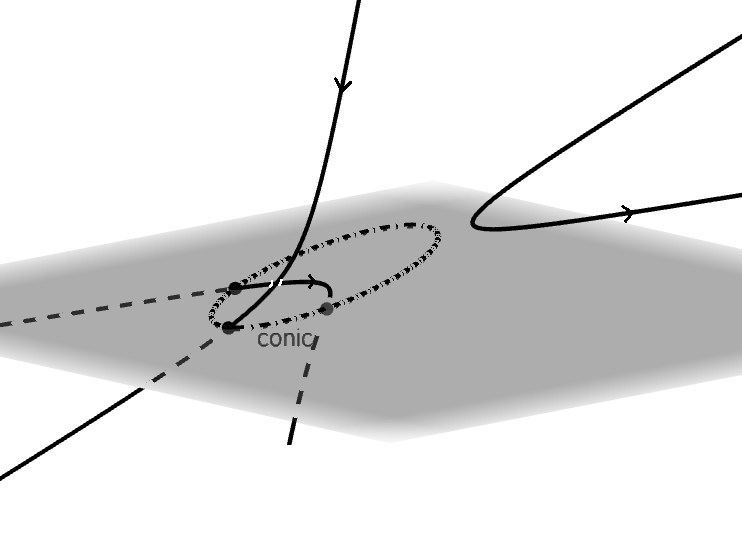}\hfill
		\caption{The projection of degree $3$ real rational knot in $Q_{3,2}$ as glued knot.}
		\label{figure 9}
	\end{figure}

	\textbf{Degree 4:} There are two classes of degree $4$ real rational knots lying on $Q_{3,2}$. When we project these from a point not on the knot but on the $Q_{3,2}$, the projected knot is a degree $4$ real rational knot in $\mathbb{RP}^{3}$ and all of its $4$ points of intersection with the plane at infinity lies on the special conic. Any real rational knot of degree $4$ in $\mathbb{RP}^{3}$ can be constructed by gluing a degree $3$ real rational knot with a line. 
	
	Any conic in $\mathbb{RP}^{2}$ can be determined by $5$ points in the plane. So, it is possible to find a conic of signature $(2,1)$ lying on the plane at infinity which passes through intersection points of degree $3$ knot and the plane at infinity. Take a line intersecting the conic lying in the plane at infinity and the degree $3$ knot at a point. Glue this line with both orientations to degree $3$ knot by using theorem \ref{glu} to get two real rational knots of degree 4 with encomplexed writhe $1$ and $3$ as shown in figure \ref{figure 10} and figure \ref{figure a} respectively.
	\begin{figure}[htbp]
		\centering
		\includegraphics[width=0.5\textwidth]{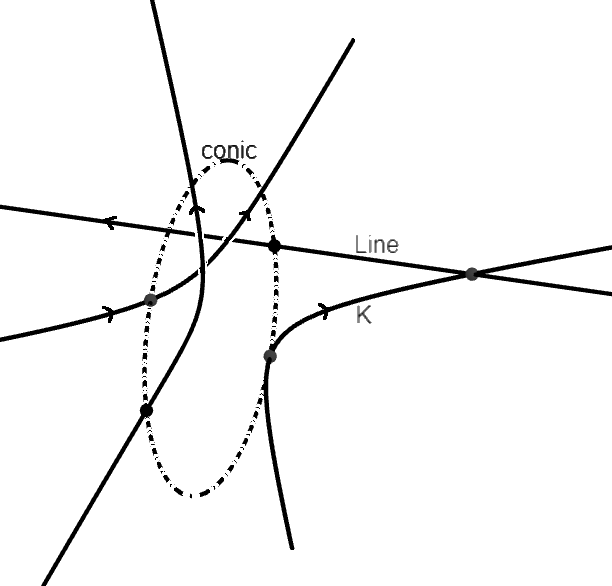}\hfill
		\includegraphics[width=0.5\textwidth]{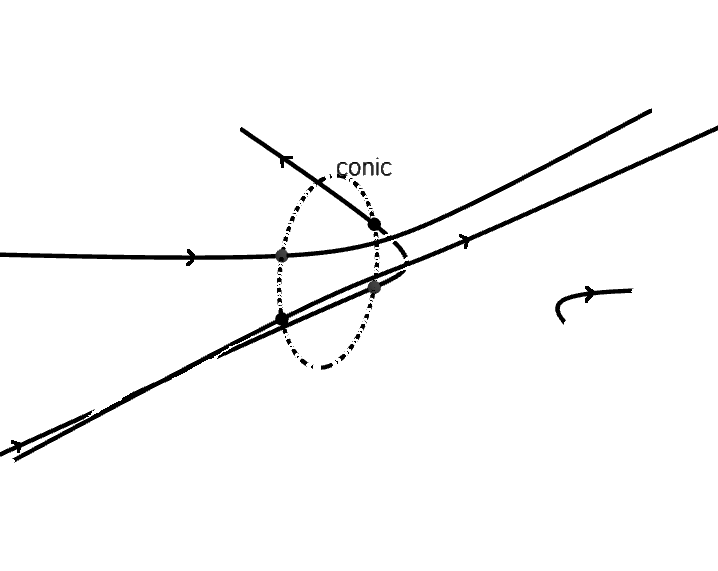}\hfill
		\caption{The projection of degree 4 knot in $Q_{3,2}$ of writhe $1$ as a glued knot.}
		\label{figure 10}
	\end{figure} 
	\begin{figure}[htbp]
		\centering
		\includegraphics[width=0.5\textwidth]{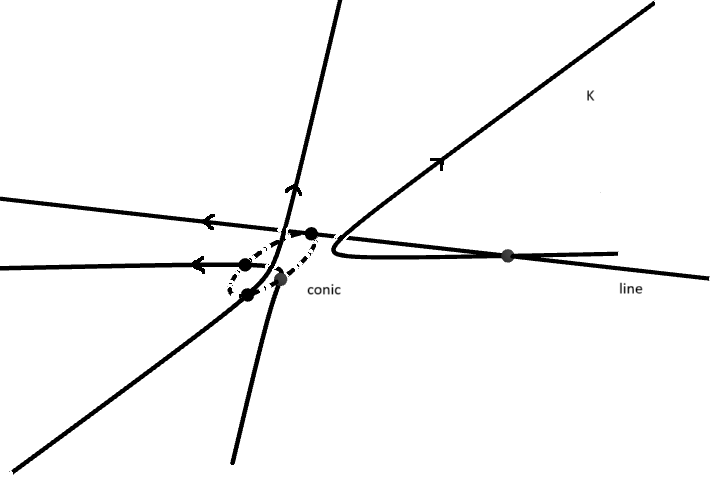}\hfill
		\includegraphics[width=0.5\textwidth]{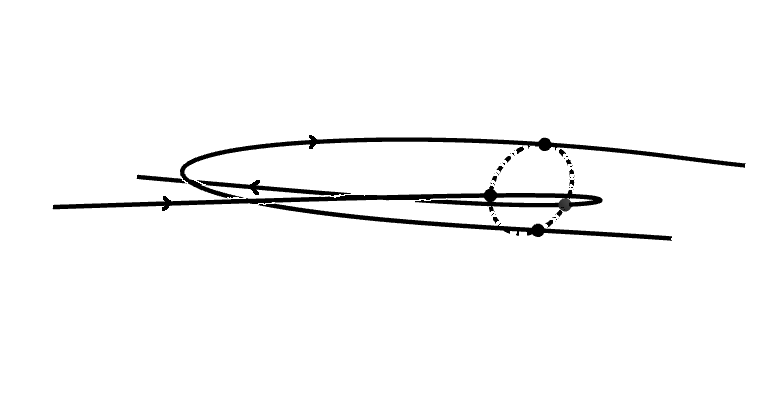}\hfill
		\caption{The projection of degree 4 knot in $Q_{3,2}$ of writhe $3$ as a glued knot.}
		\label{figure a}
	\end{figure}
	
	Gluing these two at point $p$ in $\mathbb{RP}^{3}$ gives us a two crossing knot and writhe $1$ knot of degree $4$ such that their points of intersection with the plane at infinity lies on the special conic. These glued curves can be pulled back to $Q_{3,2}$ and they represent two distinct classes of degree $4$.

	\textbf{Degree 5:} As seen in theorem \ref{deg5knots}, there are four rigid isotopy classes of degree $5$ real rational knots in $Q_{3,2}$ and after projection, these correspond to four different writhe knots of degree $5$ in $\mathbb{RP}^{3}$ up to sign. Any real rational knot of degree $5$ in $\mathbb{RP}^{3}$ can be constructed by gluing a degree $4$ real rational knot with a line.
	
	A degree $5$ knot in $Q_{3,2}$ which is a pull back of a real rational knot of encomplexed writhe $0$ of degree $5$ in $\mathbb{RP}^{3}$, can be constructed by gluing a degree $4$ real rational knot of encomplexed writhe $1$ with a line in $\mathbb{RP}^{3}$ as shown in figure \ref{b}.  The degree $4$ knot intersects the plane at infinity at $4$ points and there is a pencil of conics passing through these $4$ points. We choose conic of signature $(2,1)$ and glue this degree $4$ knot to a line by ensuring that intersection of the line with the plane at infinity also lies on the signature $(2,1)$ conic. The pull back of this is a degree $5$ knot in $Q_{3,2}$.
	\begin{figure}[htbp]
		\centering
		\includegraphics[width=0.5\textwidth]{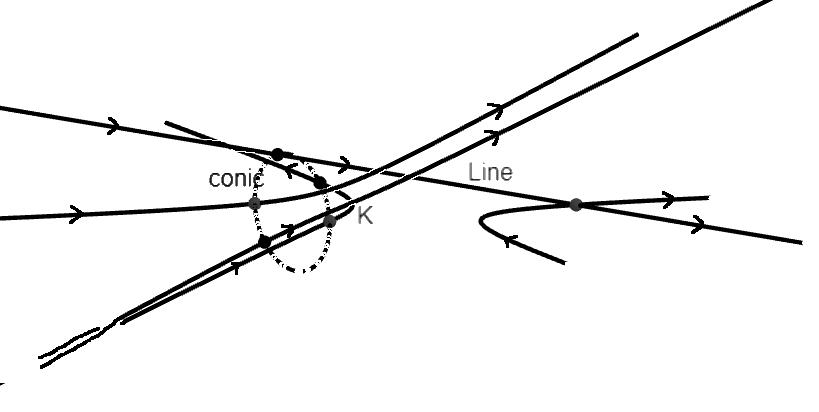}\hfill
		\includegraphics[width=0.5\textwidth]{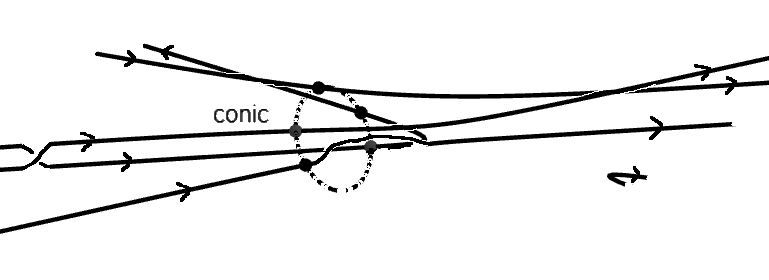}\hfill
		\caption{The projection of degree $5$ knot in $Q_{3,2}$ of writhe $0$ as a glued knot.}
		\label{b}
	\end{figure}  
	
	Consider a real rational knot of degree $4$ with encomplexed writhe $3$. This intersects the plane at infinity in four points and there is a pencil of conics passing through these $4$ points. Choose a conic of signature $(2,1)$ as shown in figure \ref{c}. Glue this with a line intersecting the knot at one point and a point on the special conic. The glued knot is a degree $5$ real rational knot of encomplexed writhe $2$ all of whose points of intersection lie on the special conic. This glued knot can be pulled back to a real rational knot of degree $5$ in $Q_{3,2}$.
	\begin{figure}[htbp]
		\centering
		\includegraphics[width=0.5\textwidth]{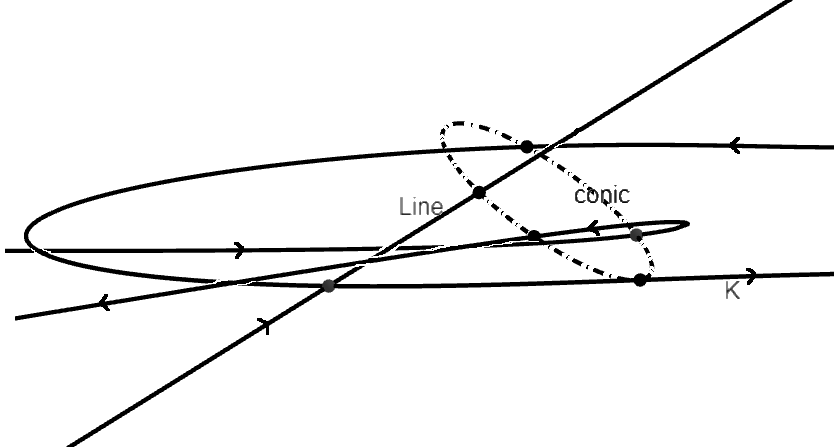}\hfill
		\includegraphics[width=0.5\textwidth]{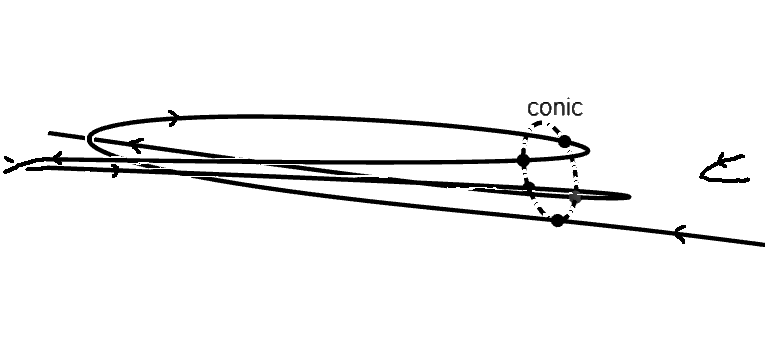}\hfill
		\caption{The projection of degree $5$ knot in $Q_{3,2}$ of writhe $2$ as a glued knot.}
		\label{c}
	\end{figure}  
	
	A real rational knot of degree $5$ of encomplexed writhe $4$ can be constructed by gluing a degree $4$ knot of encomplexed writhe $3$ with a line which intersects the degree $4$ knot and the plane at infinity at point. The glued knot is a degree $5$ knot with encomplexed writhe $4$. This glued knot intersects the plane at infinity at $5$ points and a conic of signature $(2,1)$ is determined by these $5$ points as shown in figure \ref{d}. We can then apply a projective transformation to transform the space so that the conic of signagure $(2,1)$ is sent to the special conic at infinity. Then the image, under this tranformation, of the glued real rational knot of degree $5$ with encomplexed writhe $4$ can be pulled back to $Q_{3,2}$ giving us an isotopy class of degree $5$ knots.
	\begin{figure}[htbp]
		\centering
		\includegraphics[width=0.5\textwidth]{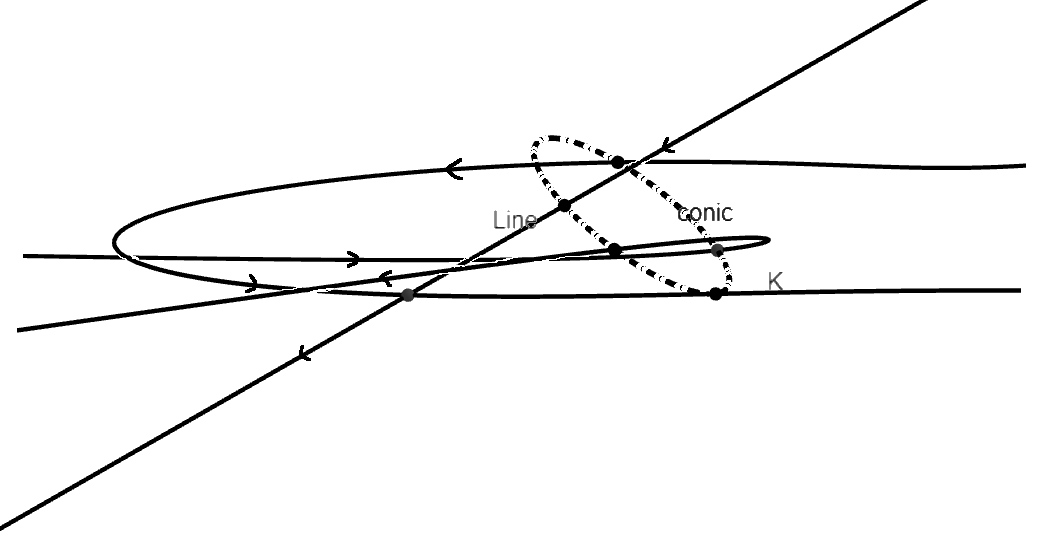}\hfill
		\includegraphics[width=0.5\textwidth]{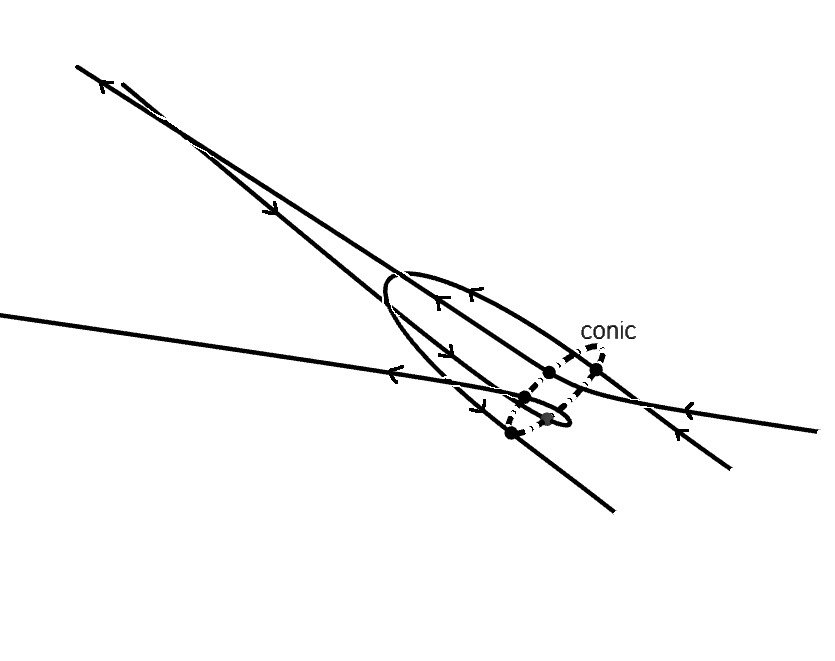}\hfill
		\caption{The projection of degree $5$ knot in $Q_{3,2}$ of writhe $4$ as a glued knot.}
		\label{d}
	\end{figure}
	
	Consider a real rational knot of degree $4$ with encomplexed writhe $3$. The degree $4$ knot intersects the plane at infinity at $4$ points and there is a pencil of conic passing through these. We choose a conic with signature $(2,1)$. Glue this degree $4$ knot with a line which intersects knot at a point and intersects the plane at infinity at a point which lies on the special conic. This glued knot is a degree $5$ real rational knot of writhe $6$ in $\mathbb{RP}^{3}$ and intersects the plane at infinity at $5$ points as shown in figure \ref{e}. As before, use a projective transformation that transforms the conic with signature $(2,1)$ to the special conic at infinity. The $5$ points of the image of the glued knot now lie on the special conic. So, this can be pulled back in $Q_{3,2}$ to a degree $5$ real rational knot.
	\begin{figure}[htbp]
		\centering
		\includegraphics[width=0.5\textwidth]{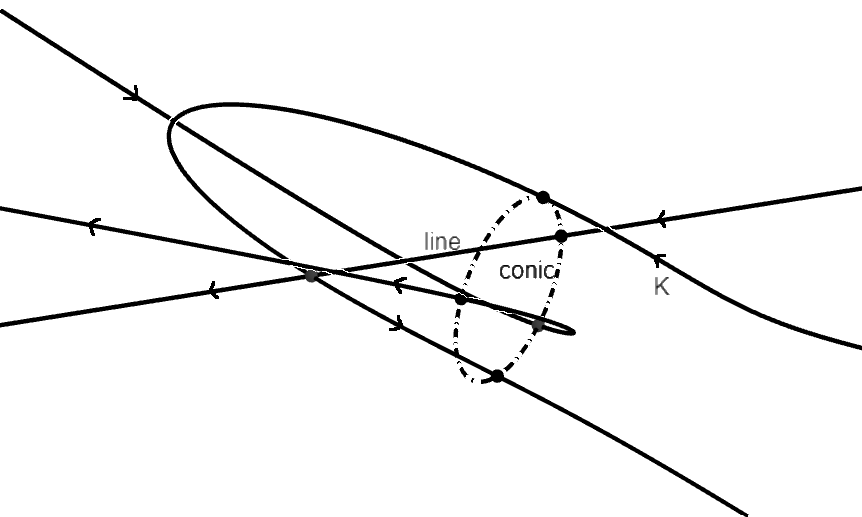}\hfill
		\includegraphics[width=0.5\textwidth]{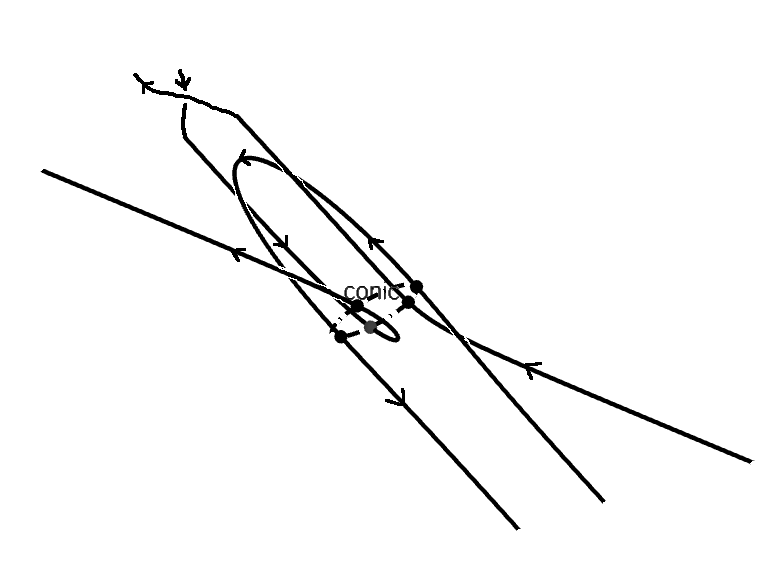}\hfill
		\caption{The projection of a degree $5$ knot in $Q_{3,2}$ of writhe $6$ as a glued knot.}
		\label{e}
	\end{figure}   
	
\end{proof}

\begin{corollary}
	Any real rational knot of degree $d \leq 5$ in $Q_{3,2}$ can also be constructed by gluing of $d$ lines in $Q_{3,2}$.
\end{corollary}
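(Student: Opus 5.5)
The corollary follows by induction on $d$ from the preceding theorem together with the gluing theorem \ref{gluing in hyperboloid}. For $d=1$ there is nothing to prove: a degree~$1$ knot in $Q_{3,2}$ is itself a line in $Q_{3,2}$.

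\textbf{Inductive step.} Suppose $2\le d\le 5$, and assume every degree $d-1$ knot in $Q_{3,2}$ is rigidly isotopic to a knot obtained by successively gluing $d-1$ lines of $Q_{3,2}$. Let $K$ be a degree $d$ knot in $Q_{3,2}$.

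\begin{enumerate}
\item By the preceding theorem, $K$ is rigidly isotopic to a knot $K'$ obtained from a degree $d-1$ knot $K_{d-1}$ and a line $L$ in $Q_{3,2}$, where $K_{d-1}$ and $L$ meet in a single point $p$. The gluing is done via theorem \ref{gluing in hyperboloid}.
\item By the inductive hypothesis, there is a rigid isotopy $K_{d-1}^{t}$, $0\le t\le 1$, from $K_{d-1}$ to a knot $G$ glued from $d-1$ lines.
\item Carry $L$ along this isotopy, keeping a line $L^{t}$ that meets $K_{d-1}^{t}$ in exactly one point $p^{t}$. Then apply the gluing construction along the whole path.
\item The result is a path of degree $d$ knots joining $K'$ to the knot glued from $G$ and $L^{1}$. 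That endpoint is by construction a gluing of $d$ lines of $Q_{3,2}$.
\end{enumerate}

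\textbf{Main obstacle.} The hard part is step~3, together with the continuity of the gluing construction used in step~3. One must produce a one-parameter family $L^{t}$ of lines meeting $K_{d-1}^{t}$ in exactly one point, and the glued parametrisation must vary continuously and stay nonsingular.

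For the lines, I would use transitivity of $O(3,2)$ (lemma \ref{trans}). Choose a continuous path of points $p^{t}$ on $K_{d-1}^{t}$. The lines of $Q_{3,2}$ through $p^{t}$ form the real cone $T_{p^{t}}\cap Q_{3,2}$, which is a circle's worth of lines. A generic line of this cone meets $K_{d-1}^{t}$ only at $p^{t}$, since the extra incidences form a finite set of cone directions. So I can choose $L^{t}$ continuously, perturbing away from the finitely many bad values of $t$ if necessary.

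For the gluing, I would work after projecting from a point of $Q_{3,2}$ off all the curves. Bj\"orklund's formula $[p_0q_0:p_iq_0+p_0q_i]$ depends continuously on the parametrisations, and it preserves the points at infinity on the special conic, as in the proof of theorem \ref{gluing in hyperboloid}. Hence every member of the family pulls back to $Q_{3,2}$ by lemma \ref{proj_not_on_curve}.

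The remaining risk is that the glued curve acquires a singularity at some intermediate time. This is controlled by taking the smoothing parameter small enough, uniformly over the compact interval $[0,1]$. The glued knot is then isotopic to $K_{d-1}^{t}\cup L^{t}$ away from $p^{t}$ for every $t$ at once.
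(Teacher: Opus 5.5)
Your induction skeleton is what the paper intends: the corollary is stated without proof, as an iteration of the preceding theorem. However, step~3, which your formulation genuinely needs, is not established, because the perturbation argument fails.

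At each $t$ the bad cone directions at $p^t$ form a finite set, so as $t$ varies they sweep out curves in the annulus $[0,1]\times S^1$ of pairs (time, line of $Q_{3,2}$ through $p^t$). This is a codimension-one set. A generic path $t\mapsto L^t$ meets it only at finitely many times, as you say, but these meetings are transverse crossings, and transverse crossings survive small perturbations. At such a time $L^t$ meets $K_{d-1}^t$ a second time, away from $p^t$. No choice of smoothing parameter then keeps the glued curve from acquiring a node there, so the family leaves the space of knots and the rigid isotopy class may change.

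Whether a crossing-free continuation exists is a global question, and it can fail. For $d=5$, the hyperplane $T_{p^t}Q_{3,2}$ meets the quartic $K_4^t$ doubly at $p^t$ and in two further points. When these are real they give two bad cone directions. In a generic family they can collide and become a complex conjugate pair, which pinches the arc of good directions between them to a point. A line $L^t$ lying in that arc has no continuation. Letting the contact point move along $K_4^t$ does not obviously help: the bad pairs (point of $K_4^t$, cone line through it) form a curve in a surface, and contractible ovals of such a curve can shrink and vanish. For $d\le 4$ there is at most one bad direction at each point (for $d=4$, $T_pQ_{3,2}$ meets the cubic in exactly one further point, necessarily real), so your continuation does work there. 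The gap is in the passage to degree $5$.

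The way around this is not to transport lines along rigid isotopies, but to make the induction constructive, as the proof of the preceding theorem allows. Glue onto the knot $G$ that is \emph{literally} glued from $d-1$ lines. At a single moment your genericity argument is valid, so there are lines of $Q_{3,2}$ meeting $G$ transversally in exactly one point. Glue with the orientation and position choices used in the paper's construction. Then use the completeness of the classification to check that the resulting knots realise every rigid isotopy class of degree $d$:
\begin{itemize}
\item in degrees $\le 3$ the classes are distinguished by orientation and by the inside/outside position relative to the special conic;
\item in degrees $4$ and $5$ they are distinguished by the projection writhes $\pm1,\pm3$ and $0,\pm2,\pm4,\pm6$.
\end{itemize}
This replaces the compatibility between gluing and rigid isotopies of the pieces, which your step~3 would have to prove, by the finite list of classes already established.
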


\begin{theorem}
	All real rational curves of degree $4$ with exactly one double point can be constructed by the gluing a degree $3$ knot with a line in $Q_{3,2}$.
\end{theorem}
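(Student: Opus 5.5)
The plan is to mirror the knot case, replacing knots by walls. By Theorem \ref{wall} and Lemma \ref{wall1}, degree~4 curves in $Q_{3,2}$ with exactly one double point form exactly three rigid isotopy classes. Projecting from the double point (Lemma \ref{proj_double_pt}), these correspond to degree~2 knots in $\mathbb{RP}^{3}$ meeting the plane at infinity in two points off the special conic. There are three configurations, up to orientation:
\begin{itemize}
\item both points in the same component, giving one class that is isotopic to its reverse;
\item the two points in different components, giving two classes that are mutual reverses.
\end{itemize}
So it suffices to produce, for each of these three classes, one glued representative. Each representative will be obtained from a degree~3 knot $K_{3}$ and a line $L$ in $Q_{3,2}$ meeting in a single point, using Theorem \ref{gluing in hyperboloid}. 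Any curve rigidly isotopic to a glued curve is then accepted as ``constructed by gluing'', exactly as in the degree $\leq 5$ theorem above.

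The construction is carried out in $\mathbb{RP}^{3}$ after projecting from a point of $Q_{3,2}$ off both curves, as in the proof of Theorem \ref{gluing in hyperboloid}. Take the projection of $K_{3}$, a degree~3 knot of writhe $\pm1$ whose three points at infinity lie on the special conic. Take also a line $L$ through a point of the special conic that meets $K_{3}$ at one point $p$. Bj\"{o}rklund's gluing (Theorem \ref{glu}) gives a degree~4 knot whose four points at infinity still lie on the conic, by the computation in Theorem \ref{gluing in hyperboloid}. Now choose $L$ so that, instead of meeting $K_{3}$ transversally in a generic way, it passes through a second nearby point of $K_{3}$ in the limit. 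Equivalently, glue with the sign of the smoothing such that the resolved crossing degenerates: the nodal curve appears at the boundary between the writhe $1$ and writhe $3$ gluings of $K_{3}$ with the two orientations of $L$ (cf.\ Figures \ref{figure 10} and \ref{figure a}). Concretely, one varies the line $L_{s}$ in a one-parameter family so that the glued knot passes from the writhe~$1$ class to the writhe~$3$ class. The family must cross the discriminant, and at the crossing parameter the glued curve has exactly one double point. Since every glued curve of the family is pulled back to $Q_{3,2}$ by Lemma \ref{proj_not_on_curve}, this produces a glued degree~4 nodal curve in $Q_{3,2}$.

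It remains to check that all three nodal classes arise, and to identify which class each construction produces. For this I would project the resulting nodal curve from its double point (Lemma \ref{proj_double_pt}) and read off where the two points at infinity of the resulting conic lie relative to the special conic. The freedom available is threefold:
\begin{itemize}
\item the choice of conic of signature $(2,1)$ in the pencil through the points at infinity;
\item the choice of which component contains the point of $L$ not on $K_{3}$;
\item the orientations of $K_{3}$ and $L$.
\end{itemize}
Reversing both orientations gives the reverse-orientation wall, which handles the pair of classes from Lemma \ref{wall1}(2). For the symmetric class of Lemma \ref{wall1}(1), a single configuration suffices.

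The main obstacle is the second step: ensuring that the node produced at the wall crossing of the glued family is a genuine double point of the glued curve. It must not be a tangency at infinity, which Lemma \ref{tangent} shows would raise the degree upon pulling back. It must also not be a cusp. I would first try to keep the plane at infinity and all four intersection points fixed during the family, moving only $L$ in a pencil through a fixed conic point. Then a transversality argument, using Theorem \ref{dim} (the nodal locus is a codimension-one manifold), shows the generic crossing is an honest single node.
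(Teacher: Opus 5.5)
\textbf{There is a genuine gap where the node is supposed to appear.}

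The two orientations of $L$ give the two resolutions of the intersection point $p$ of the reducible curve $K_{3}\cup L$. What lies ``between'' them is that reducible curve itself, not an irreducible glued degree-$4$ curve with a node. So your ``equivalently'' is false.

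Your one-parameter family has the same problem. As long as each $L_{s}$ meets $K_{3}$ transversally in exactly one point, the glued curve is rigidly isotopic to $K_{3}\cup L_{s}$ smoothed at $p$ (Theorem~\ref{gluing in hyperboloid}). The gluing can be done continuously in $s$, so the glued knots move by rigid isotopy and never reach the discriminant. They can only do so when $L_{s}$ passes through a line of $Q_{3,2}$ meeting $K_{3}$ twice. You mention this only ``in the limit'' and never construct it. Theorem~\ref{dim} also cannot give transversality here: it describes the nodal locus, and says nothing about a special, non-generic family of glued curves.

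\textbf{The missing idea is the paper's bisecant construction.} For $B\in K_{3}$, the tangent hyperplane $T_{B}$ meets the cubic $K_{3}$ with multiplicity $\geq 2$ at $B$. Hence it meets $K_{3}$ in exactly one further real point $I$, and $I$ lies on the cone $T_{B}\cap Q_{3,2}$. So the ruling $L=BI$ is a line of $Q_{3,2}$ meeting $K_{3}$ at both $B$ and $I$. One glues at $I$ and keeps $B$ as the double point. The projection centre $m$ is taken on the cone at $B$, so that $B$ goes to the special conic at infinity, where the gluing formula leaves points unchanged.

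\textbf{Second gap: you never identify which classes occur.} You defer this (``I would project \ldots''), but the statement requires all three classes of Theorem~\ref{wall} and Lemma~\ref{wall1} to be realised. In your version the node sits wherever the perturbed strands happen to cross. Nothing then controls on which side of the special conic the two tangent directions fall when you project from it. So you cannot show that both the same-component and the different-components configurations arise.

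The paper's construction supplies this control. Project from the node $B$ (Lemma~\ref{proj_double_pt}). Since $L$ is a ruling of $T_{B}\cap Q_{3,2}$, its tangent direction at $B$ projects exactly onto the special conic. The choice of smoothing at $I$ pushes it inside or outside. Meanwhile the tangent of $K_{3}$ at $B$ gives a fixed point off the conic. This yields both configurations, and reversing orientations gives the remaining class.
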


\begin{proof}
	Consider a degree $3$ real rational knot $K$ in $Q_{3,2}$. Take a point on this knot, say $B$, and consider tangent plane $P$ at the point $B$ to $Q_{3,2}$. $P \cap Q_{3,2}$ is a cone having apex at point $B$. Take a line $L$ along this cone as in figure \ref{figure 11} and which intersects $K$ at $B$ and at one another point $I$, say. Now choose a point $m$ on $Q_{3,2}$ which does not lie on the line $L$ and also not lying on $K$ such that tangent plane at $m$ passes through point $B$. Project  $K \cup L$ from point $m$ in $\mathbb{RP}^{3}$. The projection of the degree $3$ knot in $\mathbb{RP}^{3}$  intersects projection of line $L$ at two points, one of these projection of point $B$ lies on the plane at infinity and glue these two on another intersecting point projection of $I$. This gluing gives us two different classes of degree $4$ walls. The third isotopy class of degree $4$ wall is constructed by gluing a degree $3$ knot with a line but with the opposite orientation.

	\begin{figure}[htbp]
		\centering
		\includegraphics[width=0.7\textwidth]{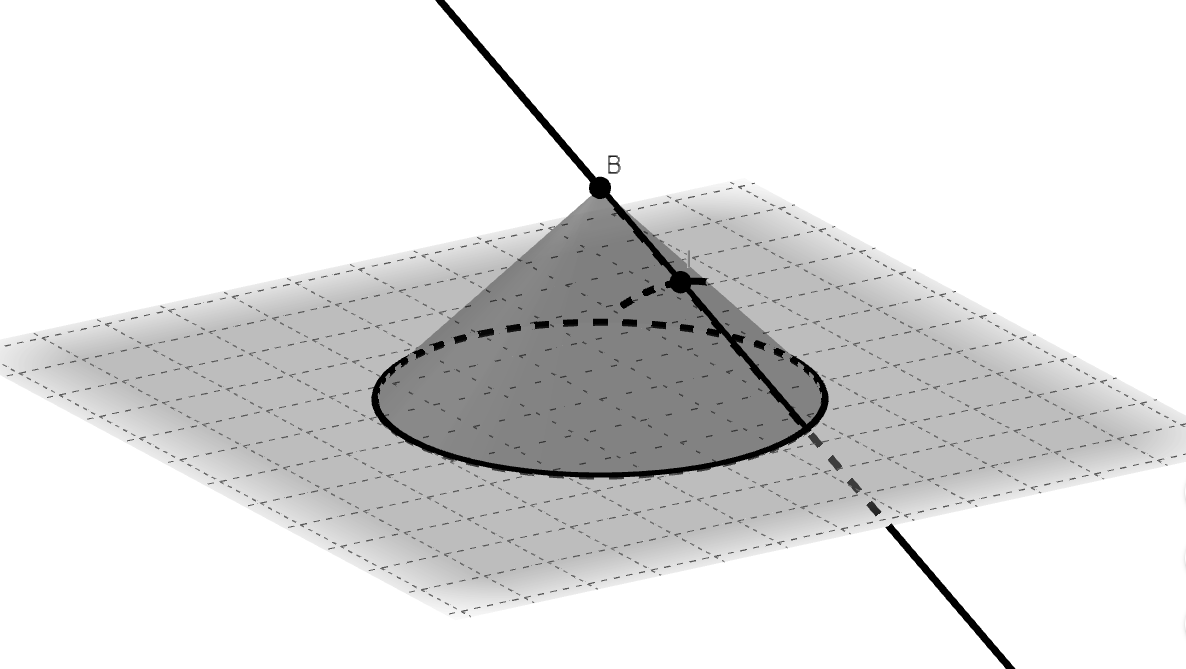}\hfill
		\caption{}
		\label{figure 11}
	\end{figure}
	We can check this by considering the situation just before gluing in $\mathbb{RP}^{4}$, as shown in figure \ref{figure 11}. The cone is the intersection of the tangent plane at the point $B$ with $Q_{3,2}$. As discussed above, we are gluing these two at the point $I$. If we project this glued curve from the double point $B$ then the projection of $B$ is along the tangents of strands at the point $B$ and one tangent is decided by gluing i.e. whether it lies inside the conic or outside the conic. Without loss of generality, assume that the tangent to the other strand is inside the conic. Then the projection of $K \cup L$ from the double point is a degree 2 knot which intersects the plane at infinity in two points; either both lie inside the conic or one lies inside and another outside the conic. These two correspond to two different classes of degree $4$ walls as proved in subsection \ref{4.4.1} and the third one is obtained by reversing the orientation of $L$.
\end{proof}

\section{General Results}
\textbf{Observation:} If we glue an ellipse to a real rational knot $K$ in $\mathbb{RP}^{3}$ in such a way that it adds no more crossings in the projection then glued knot is smoothly isotopic to the knot $K$.

\begin{theorem}\label{C}
	Any real rational knot of degree $d$ in $\mathbb{RP}^{3}$ can be smoothly isotoped to a real rational knot which can be pulled back to 3D hyperboloid.
\end{theorem}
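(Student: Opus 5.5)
The plan is to use the ``Observation'' just above: gluing small ellipses does not change the smooth isotopy type, and each gluing with a suitably chosen conic lets us push the points at infinity onto a conic of signature $(2,1)$. By Lemma \ref{proj_not_on_curve}, a degree $d'$ knot in $\mathbb{RP}^{3}$ whose $d'$ intersection points with some plane all lie on a signature $(2,1)$ conic in that plane can be pulled back to $Q_{3,2}$. After a projective transformation, this plane and conic can be taken to be the plane at infinity and the special conic. So it suffices to smoothly isotope $K$, possibly raising the degree by ellipse-gluing, until such a plane and conic exist.

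The steps are as follows.

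\textbf{Step 1.} Choose a generic plane $X$ meeting $K$ transversally. Tilt $X$ in a pencil, as in the degree $4$ and $5$ arguments, to reduce to the case where the real intersection points are as many as needed. Imaginary points come in conjugate pairs.

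\textbf{Step 2.} If $d\le 4$, the intersection points number at most four. After avoiding collinear triples, they lie on a pencil of conics, and this pencil contains a nonempty nondegenerate conic of signature $(2,1)$. Then we are done without changing $K$.

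\textbf{Step 3.} For $d\geq5$, five or more points generally lie on no common conic. Here we reduce the count of ``bad'' points one at a time. Fix a signature $(2,1)$ conic $c$ through four of the points of $K\cap X$, and take a point $x\in K\cap X$ not on $c$. Near $x$, choose a small ellipse $E$ (degree $2$) meeting $K$ at one point close to $x$. Choose it so that:
\begin{itemize}
\item its projection adds no crossings;
\item it meets $X$ in two points lying on $c$.
\end{itemize}
This is possible because $E$ can be placed in a plane cutting $X$ along a short chord near $x$ that crosses $c$ twice. For that, $x$ should first be slid along $K$ close to $c$, which is a smooth isotopy. Bj\"{o}rklund's gluing (Theorem \ref{glu}) keeps the points at infinity unchanged, as in the computation in the proof of Theorem \ref{gluing in hyperboloid}. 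The glued knot therefore has the old points plus the two points of $E$. The remaining task is to remove the bad point $x$ itself. For this, choose the gluing point to be $x$'s strand crossing $X$, so that the smoothing at a point just off $X$ replaces the crossing at $x$ by crossings coming from $E$. The cleaner alternative is to let $x$ slide onto $c$ by an isotopy of the glued curve, pushing the strand through $X$ near $c$; this is allowed since we only need a smooth isotopy, not a rigid one. Iterate until every intersection point lies on $c$, then apply Lemma \ref{proj_not_on_curve}.

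The main obstacle is Step 3. We must ensure that moving a bad intersection point onto the fixed conic $c$ is achieved within real rational curves of some degree, while staying smoothly isotopic. The first thing I would try is to do all moves as smooth isotopies of the curve, and only at the end replace the curve by a rational approximation. The approximation would go as follows. Approximate $K$ by a rational curve that agrees with the prescribed tangency data at finitely many points. Rational curves are dense, with degree allowed to grow, and interpolating prescribed points on $X\cap c$ costs only finitely many linear conditions. We must also check that the approximation creates no extra real intersections with $X$ off $c$. I would handle this by making $K$ transverse to $X$ and taking the approximation $C^1$-close on compact sets. The part of $\mathbb{CP}^1$ mapping near $X$ must be controlled, and degree parity also constrains how many points can be placed. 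Ellipse gluing, which adds $2$ to the degree, is exactly the device for fixing this parity and count.
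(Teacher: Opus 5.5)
Your Step 2 is fine: for $d\le 4$, a member of the pencil of conics through the intersection points works without changing $K$ or its degree. For $d\ge 5$, however, Step 3 never reduces the number of bad points while staying among real rational curves, so there is a genuine gap.

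As you observe yourself, Bj\"{o}rklund's gluing at a point off $X$ produces a curve meeting $X$ in exactly the union of the two original intersection sets (see the computation in the proof of Theorem~\ref{gluing in hyperboloid}). So the glued curve still passes through $x$ however close to $x$ you glue, and option (a) contradicts that computation. Option (b), like the preliminary sliding of $x$ towards $c$, leaves the class of rational curves. Everything therefore rests on the approximation paragraph, and that paragraph misses the essential constraint.

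Lemma~\ref{proj_not_on_curve} needs \emph{all} $D$ complex intersection points of the degree-$D$ curve with $X$ to lie on $c$, not just the real ones. Equivalently, the linear form of $X$ restricted to the curve must divide the restricted quadratic form of $c$, which is $D$ further nonlinear conditions. A $C^{1}$-close approximant of large degree $D$ has only $r$ real intersection points with $X$, where $r$ is the number of crossings of the smooth curve. Its remaining $D-r$ nonreal intersection points generically lie off $c$. The inverse stereographic map sends all of these to the centre of projection, so the pullback has degree greater than $D$ and a solitary singular point there. Gluing more ellipses cannot repair this, since it only adds intersection points and never moves existing ones onto $c$.

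The paper instead glues each ellipse \emph{at} the point of $K\cap X$ itself. The ellipse runs from that point to a point of the special conic. With suitable orientations, the smoothing there cancels the crossing of $K$ with the plane against the ellipse's crossing, trading the bad point for the ellipse's second intersection, which lies on the conic. Together with the Observation, that is the whole proof. This is the mechanism your Step 3 lacks.

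That mechanism, too, only controls the real crossings; the paper itself remarks that the pullback need not be an algebraic knot. That smooth-level conclusion is all Theorem~\ref{B} uses. At that level your option (b) alone, sliding each bad crossing along $X$ onto $c$, already suffices.

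If you want the genuinely algebraic statement, do the approximation on $Q_{3,2}$ rather than in $\mathbb{RP}^{3}$. Pull back the slid smooth knot, approximate it by real rational curves lying on the quadric, and project again. Lemma~\ref{proj_not_on_curve} then puts every intersection point on the conic automatically. An approximation result of that kind on $Q_{3,2}$ is what would remain to be proved.
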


\begin{proof}
	A real rational knot of degree $d$ in $\mathbb{RP}^{3}$ intersects the plane at infinity in $d$ points. Any ellipse not lying on the plane at infinity intersects the plane at infinity in two points. Consider $d$ ellipses which intersect the plane at infinity at the point of intersection of the knot with the plane at infinity and a point on the special conic as shown in figure~\ref{figure 12}. Glue these ellipses to the knot at the point of intersection. The glued knot is a knot in $\mathbb{RP}^{3}$ all of whose points of intersection with the plane at infinity lie on the special conic and which is smoothly isotopic to the given real rational knot. This glued knot can be pulled back to $Q_{3,2}$ but the pull back is not ncessarily an algebraic knot.
	\begin{figure}[htbp]
		\centering
		\includegraphics[width=0.3\textwidth]{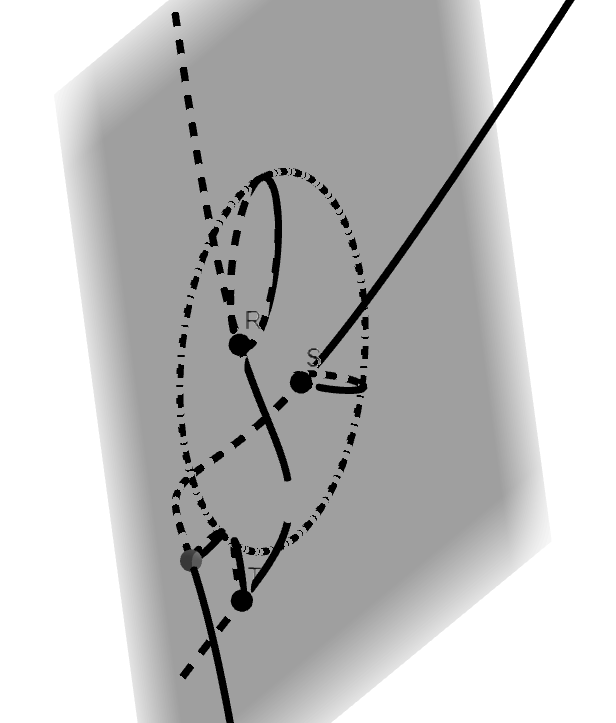}\hfill
		\caption{}
		\label{figure 12}
	\end{figure}  
\end{proof}

\begin{theorem} \label{B}
	If $K_{0}$ and $K_{1}$ are two real rational knots of any degree in $Q_{3,2}$ such that their projections in $\mathbb{RP}^{3}$ are smoothly isotopic then $K_{0}$ and $K_{1}$ are also smoothly isotopic in $Q_{3,2}$. 
\end{theorem}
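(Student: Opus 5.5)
We reduce the statement to a problem in $\mathbb{RP}^{3}$ through the stereographic projection $\pi_{p}$ of Lemma~\ref{proj_not_on_curve}. First, using Lemma~\ref{trans} and a generic choice, we fix one point $p\in Q_{3,2}$ lying on neither $K_{0}$ nor $K_{1}$, and project both knots from $p$ onto the same copy of $\mathbb{RP}^{3}$. The images $\pi_{p}(K_{0})$ and $\pi_{p}(K_{1})$ meet the plane at infinity only in points of the special conic $S$. The key observation is that $\pi_{p}$ restricts to a diffeomorphism from $Q_{3,2}\setminus T_{p}$ onto $\mathbb{RP}^{3}\setminus(\text{plane at infinity})$. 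The cone $T_{p}\cap Q_{3,2}$ is blown down to $S$, and $p$ is blown up to the plane at infinity. So a smooth isotopy $K'_{t}$ in $\mathbb{RP}^{3}$ pulls back to a smooth isotopy in $Q_{3,2}$ provided that, at every time, $K'_{t}$ meets the plane at infinity only at points of $S$, transversally, and in a fixed number of points. The pull back is then a family of embedded closed curves which vary smoothly, including near the cone.

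The main step is to deform the given smooth isotopy $K'_{t}$ between $\pi_{p}(K_{0})$ and $\pi_{p}(K_{1})$ into one satisfying this condition. Since the isotopy is only smooth, not algebraic, we first make it generic with respect to the plane at infinity $H$ by a small perturbation. Then $K'_{t}\cap H$ is transverse except at finitely many times, where a birth or death of a pair of intersection points occurs through a tangency. The approach has two parts.

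\textbf{(a)} Between these critical times we follow the intersection points. As in the degree~1 and degree~4 arguments, we compose with a continuous family of diffeomorphisms of $\mathbb{RP}^{3}$ preserving $H$. These move each moving point $a_{i}(t)$ onto a chosen path on $S$, which is possible because $S$ is connected. The family is supported near $H$ and equals the identity at $t=0,1$, where the points already lie on $S$.

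\textbf{(b)} Near each birth/death event we use the trick of Theorem~\ref{C}. Before the event, we replace the strand by one which approaches $H$ and crosses it twice, at two nearby points of $S$; this is a small finger move, smoothly isotopic rel everything else. The two new points then cancel on $S$ itself, as two transverse points of $S$ merging and separating along $S$. Equivalently, we never let the number of intersection points change: whenever the original isotopy would create or destroy a pair, we instead keep a small arc which pierces $H$ at two points on $S$ and pushes the rest of the knot away.

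A cleaner global alternative: at $t=0$ the number of intersection points is $\deg K_{0}$, and at $t=1$ it is $\deg K_{1}$. Since the degrees may differ, we allow births and deaths only at points of $S$. A birth of two transverse intersection points of a knot with $H$, located at a point of $S$, corresponds upstairs to a small finger of the curve pushed through the cone near the apex. This finger move is itself a smooth isotopy in $Q_{3,2}$, since the curve passes near $p$ and crosses $T_{p}$ twice. We will verify this local model in coordinates, using the explicit inverse projection $[0:x_{1}:x_{2}:x_{3}:x_{4}]\mapsto[x_{1}^{2}+x_{2}^{2}+x_{3}^{2}-x_{4}^{2}:\dots]$.

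The expected main obstacle is this last local analysis. We must check that a curve in $\mathbb{RP}^{3}$ which is tangent to $H$ at a point of $S$, but not tangent to $S$, pulls back to a smooth embedded curve. We must also check that the birth/death there is a smooth isotopy upstairs. Lemma~\ref{tangent} suggests this is the right model: such a tangent curve pulls back to a curve of lower degree in which the two intersection points have coalesced at the apex $p$. We must additionally make sure that no two distinct points of the knot are sent to the same point upstairs. This could happen if two intersection points sat on the same ruling of the cone, that is, at the same point of $S$. We avoid it by keeping the points on $S$ distinct throughout, which is possible generically since $S$ is one-dimensional and we move only finitely many points. Once this is done, the pull back is a smooth isotopy from $K_{0}$ to $K_{1}$ in $Q_{3,2}$.
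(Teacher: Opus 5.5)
There is a genuine gap, and it cannot be repaired, because the statement as formulated is false.

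\textbf{A counterexample.} On $Q_{3,2}$ one has $x_0^2+x_4^2=x_1^2+x_2^2+x_3^2$, so $(x_0,x_4)\neq 0$ and $[x]\mapsto[x_0:x_4]$ is well defined. It makes $Q_{3,2}$ an $S^2$-bundle over $\mathbb{RP}^1$. Hence $\pi_1(Q_{3,2})\cong\mathbb{Z}$, and $|\deg([x_0:x_4]|_K)|$ is a smooth isotopy invariant of unoriented knots $K\subset Q_{3,2}$.

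\begin{itemize}
\item The paper's cubic $[t^3+s^3:t^3-s^3:0:-st^2-s^2t:-st^2+s^2t]$ has $|\deg|=1$, since $x_0$ has the single simple real root $[1:-1]$.
\item Take $K=[s^3-3st^2:s^3+st^2:s^2t+t^3:0:3s^2t-t^3]$, i.e.\ $[\cos3\theta:\cos\theta:\sin\theta:0:\sin3\theta]$. It lies on $Q_{3,2}$, because $x_0^2+x_4^2=(s^2+t^2)^3=x_1^2+x_2^2$. It is a twisted cubic in $\{x_3=0\}$, and it has $|\deg|=3$.
\item Project both from a point of $Q_{3,2}$ with $x_2x_3\neq0$. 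This maps each of the hyperplanes $\{x_2=0\}$ and $\{x_3=0\}$ isomorphically onto $\mathbb{RP}^3$. So both projections are twisted cubics, hence smoothly isotopic to a line and to each other.
\end{itemize}

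Yet the two knots are not even freely homotopic in $Q_{3,2}$. If orientations are kept, a line and its reverse already give a counterexample. The paper's own proof, which just invokes Theorem~\ref{C} along the path, has the same defect. Ellipse gluing makes each individual knot pull back, but it controls neither continuity in $t$ nor this winding number.

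\textbf{Where your argument fails: step (a).} Connectedness of $S$ lets you slide each intersection point onto $S$. It does not give $H$-preserving diffeomorphisms $\phi_t$ with $\phi_0=\phi_1=\mathrm{id}$ that do so.

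Work in the coordinates of Lemma~\ref{proj_not_on_curve}. The fibre $Q_{3,2}\cap\{x_4=0\}$ passes through $p$ and projects onto the plane $\{x_4=0\}$. Its line at infinity $\{x_0=x_1=x_4=0\}$ misses $S$.

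For a curve meeting $H$ only on $S$, the winding number of its lift equals, up to sign, its signed number of crossings with the affine part of $\{x_4=0\}$. Along any isotopy keeping the intersection with $H$ inside $S$, this count is constant, because crossings could only escape through that line. An arbitrary isotopy in $\mathbb{RP}^3$, however, can drag a strand through the line and change the count by $2$, since the plane is one-sided. That is monodromy of the points $a_i(t)$ that no admissible choice of $\phi_t$ can undo.

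\textbf{Step (b) also uses the wrong local model.} If two intersection points stay on $S$ while merging at a point $x$, their secants tend to $T_xS$. So the curve must become tangent to $S$ itself, not merely to $H$.

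A tangency to $H$ that is transverse to $S$ is exactly where the lift passes through $p$; this is the degree jump of Lemma~\ref{tangent}. Nearby curves cannot keep both nearby intersection points on $S$. Their lifts do not converge to the lift of the tangent curve: they either pass through $p$ or run along almost the whole ruling line over $x$.

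Tangency to $S$ is the correct model, and it lifts to a harmless tangency with the cone $T_p\cap Q_{3,2}$ away from $p$. But this fixes only (b), not (a).
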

\begin{proof}
	Consider a point $p$ in $Q_{3,2}$ which does not lie on both the knots $K_{0}$ and $K_{1}$. Project the knots from the point $p$ to a hyperplane in $\mathbb{RP}^{4}$. Suppose $K_{0}{'}$ and $K_{1}^{'}$ are the projections of the knots $K_{0}$ and $K_{1}$ from the point $p$. These projections are given to be smoothly isotopic in $\mathbb{RP}^{3}$ via some isotopy, say $K_{t}^{'}$. By using the theorem \ref{C}, we can isotope the path $K_{t}^{'}$ such that it can be pulled back in $Q_{3,2}$. This pullback gives us a smooth isotopy between $K_{0}$ and $K_{1}$ in $Q_{3,2}$.
\end{proof}

\begin{theorem}
	There are at most $14$ smooth isotopy classes of degree $6$ real rational knots in $Q_{3,2}$.
\end{theorem}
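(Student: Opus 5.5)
The plan is to combine Theorem \ref{B} with the Mikhalkin--Orevkov count of 14 topological isotopy types of degree~$6$ real rational knots in $\mathbb{RP}^{3}$. The key step is to define a map from smooth isotopy classes of degree~$6$ knots in $Q_{3,2}$ to smooth isotopy classes of degree~$6$ real rational knots in $\mathbb{RP}^{3}$, and then show that this map is injective. The bound of $14$ then follows at once.

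First, fix a point $p\in Q_{3,2}$ and a hyperplane of projection. Given a degree~$6$ knot $K\subset Q_{3,2}$, choose $p$ off $K$. This can always be arranged by using the transitivity of $O(3,2)$ (Lemma \ref{trans}) and moving $K$ by an element of $O(3,2)$ in the identity component, which is a smooth isotopy. By Lemma \ref{proj_not_on_curve}, $\pi_p(K)$ is a real rational curve of degree~$6$ in $\mathbb{RP}^{3}$. We need $\pi_p(K)$ to be non-singular. Since $\pi_p$ is a bijection from $Q_{3,2}\setminus T_p$ onto $\mathbb{RP}^{3}\setminus(\text{plane at infinity})$, singularities could only arise from points of $K\cap T_p$. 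Those points lie on the cone, and for generic $p$ distinct points of $K\cap T_p$ lie on distinct rulings of the cone, so they project to distinct points of the special conic. We also need that $p$ is not on any tangent line of $K$. A dimension count shows that the bad set of $p$ has positive codimension in $Q_{3,2}$: the secant and tangent lines of $K$ meeting the quadric form a family of dimension at most $2$. Hence a generic $p$ gives a knot $\pi_p(K)$ of degree~$6$ in $\mathbb{RP}^{3}$.

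Next, check that the class of $\pi_p(K)$ does not depend on choices. If $p$ varies along a generic path in $Q_{3,2}\setminus K$, it avoids the codimension-$\geq 2$ bad set. The projections then vary through knots, so the smooth isotopy class of $\pi_p(K)$ is well defined; this is the same argument the paper uses to define the writhe. A smooth isotopy of $K$ inside $Q_{3,2}$ likewise projects to a smooth isotopy once $p$ is chosen generically along the path. Strictly, we only need some assignment $K\mapsto[\pi_p(K)]$ for which injectivity holds, so even the well-definedness can be weakened.

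Injectivity is exactly Theorem \ref{B}: if $\pi_p(K_0)$ and $\pi_p(K_1)$ are smoothly isotopic in $\mathbb{RP}^{3}$, then $K_0$ and $K_1$ are smoothly isotopic in $Q_{3,2}$. To apply it we first move $K_0$ and $K_1$, by isotopies in $Q_{3,2}$, so that a common generic $p$ works for both. Since the image lies among the 14 topological isotopy types of degree~$6$ real rational knots in $\mathbb{RP}^{3}$ (Mikhalkin--Orevkov), there are at most $14$ smooth isotopy classes in $Q_{3,2}$.

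The main obstacle is the genericity step, namely ensuring that $\pi_p(K)$ is an embedded degree~$6$ curve. Secant lines of $K$ through $p$ must be excluded, including the subtle case of two points of $K\cap T_p$ on the same ruling of the cone. I would handle this by the dimension count above, since the union of secant lines of $K$ lying in or meeting $Q_{3,2}$ gives a subset of dimension at most $2$, and by noting that the rulings through the $6$ points of $K\cap T_p$ coincide only for $p$ in a lower-dimensional set.
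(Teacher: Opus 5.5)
Your proposal is essentially the paper's proof: Theorem \ref{B} combined with the Mikhalkin--Orevkov count of $14$ topological types in degree $6$, with the choice of a generic projection point made explicit. One correction to your aside. The bad set of $p$ is in general a surface in $Q_{3,2}$, namely the union of the lines of $Q_{3,2}$ joining two points of $\mathbb{C}K$, so it has codimension one. Paths of projection points, or isotopies of $K$, therefore cannot in general be perturbed off it; where they cross it, the projection acquires a double point on the special conic, so your well-definedness claim is unjustified. As you note, it is not needed: with a single fixed $p$ that is generic for all chosen representatives, Theorem \ref{B} gives injectivity directly and the bound follows.
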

\begin{proof}
	There are $14$ smooth isotopy classes of degree ~6 real rational knots in $\mathbb{RP}^{3}$ as proved in \cite{MR3571386}. By using theorem \ref{B}, we can conclude that there are at most $14$ smooth  isotopy classes of degree $6$ real rational knots in $Q_{3,2}$.
\end{proof}

\bibliographystyle{amsplain}
\FloatBarrier
\bibliography{References/references} 

\end{document}